\numberwithin{equation}{section}
\newtheorem{remark}{Remark}
\newtheorem{theorem}{Theorem}
\newtheorem{lemma}[theorem]{Lemma}
\newtheorem{corollary}[theorem]{Corollary}
\newtheorem{proposition}[theorem]{Proposition}
\theoremstyle{definition}
\numberwithin{theorem}{section} 
\numberwithin{equation}{section}
\numberwithin{table}{section}
\def\proof{\@ifnextchar[{\@oproof}{\@nproof}}
\def\@oproof[#1][#2]{\trivlist\item[\hskip\labelsep\textit{#2 Proof of\
		#1.}~]\ignorespaces}
\def\@nproof{\trivlist\item[\hskip\labelsep\textit{Proof.}~]\ignorespaces}
\begin{document}
\title[]{Character analogues of Cohen-type identities and related Voronoi summation formulas} 
   \author{Debika Banerjee}
\address{Debika Banerjee\\ Department of Mathematics\\
Indraprastha Institute of Information Technology IIIT, Delhi\\
Okhla, Phase III, New Delhi-110020, India.} 
\email{debika@iiitd.ac.in}
\author{Khyati Khurana}
\address{Khyati Khurana\\ Department of Mathematics\\
Indraprastha Institute of Information Technology IIIT, Delhi\\
Okhla, Phase III, New Delhi-110020, India} 
\email{khyatii@iiitd.ac.in}
\thanks{2010 \textit{Mathematics Subject Classification.} Primary 11M06, 11T24.\\
\textit{Keywords and phrases.} Dirichlet character; Dirichlet-$L$ functions; Bessel functions; weighted divisor sums; Ramanujan’s lost notebook; Vorono\"i summation formula. 
}
 \maketitle
 \begin{abstract} 
In \cite{MR2221114}, B.~C.~Berndt and A.~Zaharescu introduced the twisted divisor sums associated with the Dirichlet character while studying the Ramanujan's type identity involving finite
trigonometric sums and doubly infinite series of Bessel functions. Later, in a follow-up paper \cite{MR3541702}, S. Kim extended the definition of the twisted divisor sums to twisted sums of divisor functions. In this paper, we derive identities associated with the aforementioned weighted divisor functions and the modified $K$-Bessel function in light of recent results obtained by the first author and B. Maji \cite{ debika2023}. Moreover, we provide a new expression for $L(1, \chi)$ from which we establish the positivity of $L(1, \chi)$ for any real primitive character $\chi$. In addition, we deduce Cohen-type identities and then exhibit the Vorono\"i-type summation formulas for them.
 \end{abstract}
	\section{Introduction}
We begin by reminiscing about a beautiful identity due to Ramanujan involving the $K$-Bessel function, which is recorded on page 253 of his lost notebook. If $\alpha$ and $\beta$ are any two positive numbers such that $\alpha \beta=\pi^2$ and  $\nu$ is any complex number, then  
  \begin{align} \label{ramanujanidentity1}
& \sqrt{\alpha}\sum_{n=1}^{\infty} \sigma_{-\nu}(n) n^{\nu/2} K_{\nu/2}(2n \alpha)\ -\sqrt{\beta}\sum_{n=1}^{\infty} \sigma_{-\nu}(n) n^{\nu/2} K_{\nu/2}(2n \beta) \notag\\
 & =  \frac{1}{4}\Gamma\left(\frac{\nu}{2}\right)\zeta(\nu)\{ \beta^{(1-\nu)/2} - \alpha^{(1-\nu)/2}\} 
 +\ \frac{1}{4}\Gamma\left(-\frac{\nu}{2}\right)\zeta(-\nu)\{ \beta^{(1+\nu)/2} - \alpha^{(1+\nu)/2} \},
 \end{align}
  where $\sigma_k(n)=\sum_{d|n}d^k$ and $K_\nu(z)$ denotes the modified Bessel function of order $\nu$ \cite[p.~78]{MR1349110}, which is defined as the following 
   \begin{align}\label{Kbessel}
  & K_{\nu}(z):=\frac{\pi}{2}\frac{I_{-\nu}(z)-I_{\nu}(z)}{\sin{\pi \nu}}, \ z\in \mathbb{C}, \nu \notin \mathbb{Z},
  \end{align}
  with $I_{\nu}$ being the Bessel function of the imaginary argument \cite[p.~77]{MR1349110} given by  
  \begin{align}\label{Inu}
  & I_{\nu}(z):=\sum_{n=0}^\infty \frac{(\frac{1}{2}z)^{\nu+2n}}{n! \Gamma{(\nu+n+1)}}, \ z\in \mathbb{C}.
  \end{align}
   Later in 1955, Guinand \cite{MR70661} derived a formula almost similar to  \eqref{ramanujanidentity1} by appealing to a formula due to Watson \cite{watson1931some} involving the $K$-Bessel function. One can use Ramanujan's formula \eqref{ramanujanidentity1} to derive Koshliakov's formula \cite{koshliakov1929voronoi}, given by
\begin{align} \label{ramanujan identity2}
\sqrt{\alpha} \left( \frac{1}{4}\gamma-\frac{1}{4}\log(4\beta)+\sum_{n=1}^\infty d(n)K_0  (  2 n\alpha )\right) =\sqrt{\beta} \left( \frac{1}{4}\gamma-\frac{1}{4}\log(4\alpha)+\sum_{n=1}^\infty d(n)K_0  (  2 n\beta )\right),
\end{align} 
where $\gamma$ denotes Euler's constant and $K_0(z)$ is defined by the limit
\begin{align}\label{K0defi}
K_0(z):=\lim_{\nu \rightarrow 0}K_\nu(z). 
\end{align}
Koshliakov, in 1929, proved the formula \eqref{ramanujan identity2} by employing the Vorono\"i summation formula \cite{voronoi1904fonction}, which reads as the following
\begin{align} \label{vor}
 \sum_{a \leq n\leq b} \vspace{-.2cm}^\prime d(n)f(n)=\int_{a}^b(\log (x)+2\gamma)f(x) d\it{x} 
 +\sum_{ n=1}^\infty d(n) \int_{a}^b f(x)\ (4 K_0(4\pi \sqrt{nx})- 2 \pi Y_0(4\pi \sqrt{nx}) )d\it{x},
 \end{align}
 where the prime $^\prime$ on the summation of the left-hand side implies that if $a $ or $b$ is an integer, only $f(a)/2$ or $f(b)/2$ is counted, respectively. Here $f(x)$ is a function of bounded variation in $(a,b)$ with $0<a<b$, and $K_0(z)$ is defined in \eqref{K0defi}, and $Y_\nu(z)$ denotes the Weber-Bessel function of order $\nu$ \cite[p.~ 64]{MR1349110} given by 
 \begin{align}\label{Y_bessel}
  & Y_{\nu}(z):= \frac{J_{\nu}(z)\cos{\pi \nu}-J_{-\nu}(z)}{\sin{\pi \nu}}, \ z\in \mathbb{C}, \nu \notin \mathbb{Z},\\
  &Y_n(z):=\lim_{\nu \rightarrow 0}Y_\nu(z), \   z\in \mathbb{C}, \ n \in \mathbb{Z}, 
  \end{align} 
  and $J_\nu(z)$ denotes the Bessel function of the first kind of order $\nu$ \cite[p.~ 40]{MR1349110} 
  \begin{align}\label{J_bessel}
  J_{\nu}(z):=\sum_{n=0}^\infty \frac{(-1)^n(\frac{1}{2}z)^{\nu+2n}}{n! \Gamma{(\nu+n+1)}}, \ z\in \mathbb{C}.
  \end{align}
   After Vorono\"i's remarkable discovery of \eqref{vor}, many number theorists examined the formula \eqref{vor} and provided proofs under different conditions on the function $f(x)$. A.~L.~Dixon and W.~L.~Ferrar \cite{dixon1931lattice} gave proof for a bounded second differential coefficient function $f(x)$ in $(a,b)$. Koshliakov proved \eqref{vor} for the analytic function $f$ inside a closed contour strictly containing the interval $[a,b]$ with $0<a <b$. J.~R.~Wilton \cite{wilton1932voronoi} proved \eqref{vor} for the function $f$, which has compact support in the interval $[a,b]$ such that $ \lim_{\varepsilon \rightarrow 0}V_\alpha^{\beta-\varepsilon} f(x)=V_\alpha^{\beta-0} f(x)$ where $V_\alpha^{\beta }$ denotes the total variation of $f(x)$ over $(\alpha,\beta)$. In 1987, M. Jutila \cite{MR910497} gave a Vorono\"i-type summation formula involving an exponential factor. One can refer to  \cite{MR1642870, MR0352030, MR0280693, MR1211020} for details and developments on Vorono\"i's summation formulas. Apart from its connection to different fields of mathematics, Vorono\"i-type summation formulas also have some applications in physics, especially in quantum graph theory \cite{MR2788722}.  
\par
 After Koshliakov, many mathematicians studied his formula \eqref{ramanujan identity2}. In 1936, Ferrar \cite{MR1574771} reproved \eqref{ramanujan identity2} by appealing to the functional equation of $\zeta(s)$. Later in 1966, K. Soni \cite{MR201400} showed that the functional equation of $\zeta^2(s)$ is equivalent to the Vorono\"i summation formula \eqref{vor} and is equivalent to Koshliakov's formula \eqref{ramanujan identity2}. In 1972, Oberhettinger and Soni \cite{MR308065} established that the functional equation of $\zeta(s)$ and Koshliakov's formula are equivalent using the methods of Hamburger. In 2008, B.~C.~Berndt, Y.~Lee, and J.~Sohn \cite{MR2462944} proved  \eqref{ramanujanidentity1} by elaborating Guinand's method. They rediscovered Koshliakov's formula \eqref{ramanujan identity2} by taking $  {\nu\to  0}$ in \eqref{ramanujanidentity1}.
However, A.~Dixit in \cite{MR2837111} gave an extended version of Ramanujan's formula \eqref{ramanujanidentity1} by appealing to the Cauchy residue theorem and the theory of the Mellin transform. Further analysis of identities analogous to \eqref{ramanujanidentity1} and \eqref{ramanujan identity2} have been done by B.~ C.~Berndt, S.~Kim and A.~Zaharescu in \cite{MR3307490}. They studied character analogues of Koshliakov's formula \eqref{ramanujan identity2} for even characters. They replaced the classical divisor function $d(n)$ with the twisted divisor sums, namely,
\begin{align}\label{defi1}
    d_{\chi}(n)=\sum_{d|n} \chi(d),  \ \  \ \ \ \ \ \ \ d_{\chi_1,\chi_2}(n)=\sum_{d|n} \chi_1(d)\chi_2(n/d),
\end{align} 
where $\chi, \chi_1$ and $\chi_2$ are the Dirichlet characters, and they proved the following beautiful identity
 \begin{align*}
 \frac{qL(1,\chi) }{4\tau(\chi)}+\sum_{n=1}^\infty d_{\chi } (n) K_0 \left(\frac{2 \pi nz}{\sqrt{q}}\right)= \frac{\sqrt{q}L(1,\chi) }{4z} +\frac{\tau(\chi)}{z\sqrt{q}}\sum_{n=1}^\infty d_{\Bar{\chi}  } (n) K_0 \left(\frac{2 \pi n}{z\sqrt{q}}\right),
\end{align*}
where $\chi$ is a non-principal even primitive character mod $q$, $\Re(z)>0$, and $\tau(\chi)$ is the Gauss sum defined in \eqref{Gauss},
and $K_0(z)$ is defined in \eqref{K0defi}. In particular, for even real character  $\chi$, they established the positivity of $L(1, \chi)$, which is instrumental in proving Dirichlet’s theorem on primes in arithmetic progressions.
The weighted divisor sums defined in \eqref{defi1} were introduced by B.~C.~Berndt and A.~Zaharescu \cite{MR2221114}, where they showed that the twisted or weighted divisor sums could be studied in connection with identities associated with $r_2(n)$. However, S.~Kim \cite{MR3541702} extended the definition of twisted divisor sums to twisted sums of divisor functions, namely, 
\begin{align}\label{defimainpaper}
   \sigma_{k,\chi}(n):=\sum_{d|n}d^k  \chi(d), \ \ \ \ \bar{\sigma}_{k,\chi}(n):=\sum_{d|n}d^k  \chi(n/d), \ \ \ \ \sigma_{k,\chi_1,\chi_2}(n):=\sum_{d|n}d^k  \chi_1(d) \chi_2(n/d),
\end{align}
  and they studied Riesz sum-type identities associated with them. Recently  A.~Dixit and A.~Kesarwani \cite{MR3730446} studied a new generalization of the modified Bessel function of the second kind. They derived a formula analogous to \eqref{ramanujanidentity1} associated with the generalized Bessel function. They proved that their formula is equivalent to the functional equation of a non-holomorphic Eisenstein series on  $SL(2, \mathbb{Z})$.
\par  The study of the infinite series in \eqref{ramanujanidentity1} is of prime importance as it is intimately connected with the Fourier series expansion of non-holomorphic Eisenstein series on $SL(2, \mathbb{Z})$ or Maass wave forms \cite{MR31519,MR2135107,MR0429749,MR791406}. Motivated by this fact, Cohen, in 2010 \cite{MR2744771}, established the following result, similar to \eqref{ramanujanidentity1},
\begin{align}\label{guinand}
  4x^{\frac{1}{2}}\sum_{n=1}^{\infty}  \frac{\sigma_{\nu}(n)}{n^{\nu/2}}   K_{\nu/2}(2\pi n x)\ +\   \Lambda(s)(x^{(1-\nu)/2} - x^{(\nu-1)/2} )& = 4x^{-\frac{1}{2}}\sum_{n=1}^{\infty}  \frac{\sigma_{\nu}(n)}{n^{\nu/2}}  K_{\nu/2}( \frac{2 \pi n}{x})\notag\\
 &+\   \Lambda(-s)( x^{-(1+\nu)/2} - x^{(1+\nu)/2}), 
 \end{align}
where $\Lambda(s)= \pi^{-\frac{s}{2}}\Gamma\left(  \frac{s}{2}\right)\zeta(s)$ and $K_\nu(z)$ is defined in \eqref{Kbessel}. As an application, he obtained the following beautiful identity involving the divisor function $\sigma_s(n)$ and the modified $K$-Bessel function.
\begin{proposition}\cite[p.~62, Theorem 3.4]{MR2744771} \label{Cohen-type} For $\nu \notin \mathbb{Z}$ such that $\Re(\nu) \geq 0$ and any integer N such that $N \geq \lfloor \frac{\Re(\nu)+1}{2}\rfloor$, then 
  \small         
        \begin{align}
     &8 \pi x^{\frac{\nu}{2}}\sum_{n=1}^{\infty} \sigma_{-\nu}(n) n^{\nu/2} K_{\nu}(4\pi \sqrt{nx})
    =-\frac{ \Gamma(\nu) \zeta(\nu)}{(2\pi)^{\nu-1} }  +  \frac{\Gamma(1+\nu) \zeta(1+\nu)}{\pi^{\nu+1} 2^\nu x} + \left\{ \frac{\zeta(\nu)x^{\nu-1}}{\sin\left(\frac{\pi \nu}{2}\right)}  + \frac{2}{  \sin \left(\frac{\pi \nu}{2}\right)}\sum_{j=1}^{N} \zeta(2j)\ \zeta(2j-\nu)x^{2j-1} 
    \right.\notag\\&\left.\ \  
     \hspace{5cm}-\pi\frac{\zeta(\nu+1) x^{\nu}} {\cos(\frac{\pi\nu}{2}) }  +\frac{2}{  \sin \left(\frac{\pi \nu}{2}\right)}\sum_{n=1}^{\infty}{\sigma}_{-\nu}(n)\frac{x^{2N+1} }{    (n^2-x^2) } \left( n^{\nu-2N}-x^{\nu-2N}\right)
    \right\}. \label{cohen}
    	 \end{align} 
      \end{proposition}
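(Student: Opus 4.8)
The plan is to realize the left-hand side as an inverse Mellin transform and then move a line of integration, collecting residues. Recall the Mellin transform $\int_0^\infty K_\nu(t)t^{s-1}\,dt = 2^{s-2}\Gamma\bigl(\tfrac{s+\nu}{2}\bigr)\Gamma\bigl(\tfrac{s-\nu}{2}\bigr)$, valid for $\Re(s)>|\Re(\nu)|$, so that $K_\nu(4\pi\sqrt{nx})$ is represented by the corresponding Bromwich integral. Substituting $t=4\pi\sqrt{nx}$, inserting this into the series, and interchanging summation and integration on a line $\Re(s)=c$ with $c>2+\Re(\nu)$ (where the Dirichlet series converges absolutely), I would use $\sum_{n\ge1}\sigma_{-\nu}(n)n^{-(s-\nu)/2}=\zeta\bigl(\tfrac{s-\nu}{2}\bigr)\zeta\bigl(\tfrac{s+\nu}{2}\bigr)$ to obtain
$$8\pi x^{\nu/2}\sum_{n=1}^\infty\sigma_{-\nu}(n)n^{\nu/2}K_\nu(4\pi\sqrt{nx}) = 8\pi x^{\nu/2}\cdot\frac{1}{2\pi i}\int_{(c)}F(s)\,ds,$$
where $F(s)=2^{-s-2}\pi^{-s}\,\Gamma\bigl(\tfrac{s+\nu}{2}\bigr)\Gamma\bigl(\tfrac{s-\nu}{2}\bigr)\zeta\bigl(\tfrac{s-\nu}{2}\bigr)\zeta\bigl(\tfrac{s+\nu}{2}\bigr)x^{-s/2}$. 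The exponential decay of $K_\nu$ makes the series converge for every $x>0$, and $\nu\notin\mathbb{Z}$ keeps every pole below simple and keeps $\sin(\pi\nu/2),\cos(\pi\nu/2),\zeta(\nu),\zeta(\nu+1)$ finite and nonzero.

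Next I would shift the contour to the left, using $\tfrac{1}{2\pi i}\int_{(c)}F=\sum\Res F+\tfrac{1}{2\pi i}\int_{(-c_0)}F$. The poles of $F$ sit at $s=\nu+2$ and $s=2-\nu$ (from the two zeta factors), at $s=\nu-2k$ (from $\Gamma(\tfrac{s-\nu}{2})$) and at $s=-\nu-2k$ (from $\Gamma(\tfrac{s+\nu}{2})$), $k\ge0$. A direct residue computation at the four leading poles $s=\nu,\ \nu+2,\ 2-\nu,\ -\nu$ produces, respectively, the terms $-\Gamma(\nu)\zeta(\nu)/(2\pi)^{\nu-1}$, $\ \Gamma(1+\nu)\zeta(1+\nu)/(\pi^{\nu+1}2^\nu x)$, $\ \zeta(\nu)x^{\nu-1}/\sin(\tfrac{\pi\nu}{2})$ and $-\pi\zeta(\nu+1)x^\nu/\cos(\tfrac{\pi\nu}{2})$; the last two require the functional equation of $\zeta$ together with $\zeta(0)=-\tfrac12$, the reflection formula $\Gamma(w)\Gamma(1-w)=\pi/\sin(\pi w)$, and the identity $\sin(\pi\nu)=2\sin(\tfrac{\pi\nu}{2})\cos(\tfrac{\pi\nu}{2})$.

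The remaining residues fall into two families. At $s=\nu-2k$ the factor $\zeta(-k)$ kills the even-$k$ residues, and for $k=2j-1$ the functional equation lets these residues contribute $\tfrac{2}{\sin(\pi\nu/2)}\sum_{j\ge1}\zeta(2j)\zeta(2j-\nu)x^{2j-1}$. Symmetrically, the poles $s=-\nu-2k$ (again only odd $k$ survive) contribute $-\tfrac{2}{\sin(\pi\nu/2)}\sum_{m\ge0}\zeta(2m+2)\zeta(2m+2+\nu)x^{\nu+2m+1}$. Carrying this out for $0<x<1$, where all the series converge and the shifted integral tends to $0$ as $c_0\to\infty$ (by Stirling's estimates for the two Gamma factors against the polynomial vertical growth of the zeta factors), yields an infinite-series form of the identity.

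Finally I would repackage the two infinite series into the stated closed form. Writing $\zeta(2j)\zeta(2j-\nu)=\sum_{n\ge1}\sigma_{-\nu}(n)n^{\nu-2j}$ and $\zeta(2m+2)\zeta(2m+2+\nu)=\sum_{n\ge1}\sigma_{-\nu}(n)n^{-(2m+2)}$, splitting $\sum_{j\ge1}=\sum_{j=1}^N+\sum_{j>N}$, and summing the two resulting geometric series in $x^2/n^2$ collapses the tail to $\tfrac{2}{\sin(\pi\nu/2)}\sum_{n\ge1}\sigma_{-\nu}(n)\tfrac{x^{2N+1}}{n^2-x^2}\bigl(n^{\nu-2N}-x^{\nu-2N}\bigr)$; the threshold $N\ge\lfloor(\Re(\nu)+1)/2\rfloor$ is exactly what makes this sum absolutely convergent, so the identity extends by analytic continuation in $x$. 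The main obstacle I anticipate is the analytic justification of the leftward shift — bounding $F$ on the far vertical line and on the connecting segments so the boundary integral vanishes, controlling the two Gamma factors against the subexponential growth of $\zeta(\tfrac{s\pm\nu}{2})$ in vertical strips; the functional-equation bookkeeping is routine but must be tracked carefully to match every sign and every power of $2$ and $\pi$.
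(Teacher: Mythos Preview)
Your approach is correct. The paper does not prove this proposition itself (it is cited from Cohen), but its proofs of the character analogues, notably Theorem~\ref{evencohen}, use a method that specializes to this case, and your route differs in organization from that method. You shift the Mellin line all the way to the left (restricting first to $0<x<1$), harvest the full infinite family of Gamma-pole residues, and then repackage the two resulting infinite residue series by peeling off the first $N$ terms and summing the geometric tails, finishing by analytic continuation in $x$. The paper instead shifts left only slightly, to $\Re(s)=-d$, applies the functional equation of $\zeta$ to convert $\Gamma(s)\Gamma(s+\nu)\zeta(s)\zeta(s+\nu)$ into an integrand of the shape $\zeta(s)\zeta(s-\nu)\big/\bigl(\sin(\tfrac{\pi s}{2})\sin(\tfrac{\pi(s-\nu)}{2})\bigr)$, then shifts \emph{right} to $\Re(s)=2N+\delta$ to produce the finite sum $\sum_{j=1}^{N}$ directly, expands the residual integral as a Dirichlet series, and handles each term $n$ by a further shift and geometric summation according to whether $n<x$ or $n>x$. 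Your argument is a bit more direct conceptually; the paper's organization avoids the initial restriction $0<x<1$ and the analytic-continuation step, yielding the identity for all admissible $x$ at once.
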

In addition to \eqref{cohen}, he derived several interesting identities involving the divisor function $\sigma_s(n)$ and the modified $K$-Bessel function. Later, B.~C.~Berndt, A.~Dixit, A.~Roy, and A.~Zaharescu \cite{MR3558223}, in their seminal work, showed that Cohen-type identity \eqref{cohen} can be used to derive the Vorono\"i-type summation formula for $\sigma_s(n)$.
 \begin{proposition} \cite[p.~841, Theorem 6.1]{MR3558223}\label{vorlemma1}
Let $0< \alpha< \beta$ and $\alpha, \beta \notin \mathbb{Z}$. Let $f$ denote a function analytic inside a closed contour strictly containing $[\alpha, \beta ]$. Assume that $-\frac{1}{2}< \Re{(\nu)}<\frac{1}{2}$.  Then 
           \begin{align*}     
          &  \sum_{\alpha<j <\beta}  {\sigma}_{-\nu}(j)f(j)   =   \int_{\alpha} ^{  \beta   }f(t) \left(   \zeta(1-\nu, \chi) \ t^{-\nu}   +    \zeta(\nu+1)         \right)  dt  \notag \\
&+ 2\pi \sum_{n=1}^{\infty} \sigma_{-\nu}(n) n^{\nu/2}    \int_{\alpha} ^{  \beta   }f(t)  (t)^{-\frac{\nu}{2}}   \left\{  \left(   \frac{2}{\pi} K_{\nu}(4\pi   \sqrt{nt}) - Y_{\nu}(4\pi   \sqrt{nt})\right) \cos \left(\frac{\pi \nu}{2}\right) 
  -  J_{\nu}(4\pi   \sqrt{nt})  \sin \left(\frac{\pi \nu}{2}\right)  \right\}  dt.
           \end{align*}
\end{proposition}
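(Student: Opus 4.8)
The plan is to pass to the Dirichlet series $\sum_{n\ge1}\sigma_{-\nu}(n)\,n^{-s}=\zeta(s)\zeta(s+\nu)$ and run a Mellin-transform argument, extracting the main term from the residues of the series and the Bessel series from its functional equation — which is exactly the analytic content already packaged in Cohen's identity \eqref{cohen}. First I would record that, because $f$ is analytic in a region strictly containing $[\alpha,\beta]$, its truncated Mellin transform $F(s):=\int_\alpha^\beta f(t)\,t^{s-1}\,dt$ is entire. Taking $c>3/2$, so that the series converges absolutely, Mellin inversion gives
\begin{align*}
\sum_{\alpha<j<\beta}\sigma_{-\nu}(j)f(j)=\frac{1}{2\pi i}\int_{(c)}\zeta(s)\zeta(s+\nu)\,F(s)\,ds,
\end{align*}
the vertical integral acting as a Perron-type selector of the integers in $(\alpha,\beta)$; the analyticity of $f$ will be used below to justify the contour shift and the interchange of summation and integration.

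Next I would move the line of integration to some $\Re(s)=c'<-\tfrac12$, to the left of both poles of $\zeta(s)\zeta(s+\nu)$. These poles are simple, at $s=1$ and $s=1-\nu$, with residues $\zeta(1+\nu)$ and $\zeta(1-\nu)$; since $F$ carries the factor $t^{s-1}$, they contribute exactly the main term
\begin{align*}
\int_{\alpha}^{\beta}f(t)\bigl(\zeta(1+\nu)+\zeta(1-\nu)\,t^{-\nu}\bigr)\,dt.
\end{align*}
On the shifted line I would apply the functional equation in the form $\zeta(s)\zeta(s+\nu)=\chi(s)\chi(s+\nu)\,\zeta(1-s)\zeta(1-s-\nu)$, with $\chi(s)=2^{s}\pi^{s-1}\sin(\pi s/2)\Gamma(1-s)$, expand the dual factor as $\zeta(1-s)\zeta(1-s-\nu)=\sum_{n\ge1}\sigma_{-\nu}(n)\,n^{s+\nu-1}$, and interchange summation and integration.

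The heart of the matter is then to evaluate, for each $n$, the inverse Mellin transform of the twin gamma factor $\chi(s)\chi(s+\nu)$. Using Stirling's formula to control its growth on vertical lines and the classical Mellin--Barnes representations of the Bessel functions (Watson), one finds that this transform is, up to explicit elementary powers of $2$, $\pi$, $n$ and $t$, precisely
\begin{align*}
\Bigl(\tfrac{2}{\pi}K_\nu(4\pi\sqrt{nt})-Y_\nu(4\pi\sqrt{nt})\Bigr)\cos\tfrac{\pi\nu}{2}-J_\nu(4\pi\sqrt{nt})\sin\tfrac{\pi\nu}{2}.
\end{align*}
This is exactly the combination that Cohen's identity \eqref{cohen} isolates on the $K$-Bessel side, completed by the oscillatory $Y_\nu$ and $J_\nu$ terms arising from the poles of the gamma factors; indeed the cleanest route is to read this kernel off from \eqref{cohen} rather than rederive it. Restoring the prefactor $2\pi\,n^{\nu/2}$ and the weight $t^{-\nu/2}$ and reassembling the $n$-sum produces the asserted Bessel series.

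I expect the principal obstacle to lie not in the formal manipulations but in their rigorous justification, for the resulting Bessel series is only conditionally convergent. From the asymptotics $J_\nu(z),Y_\nu(z)\ll z^{-1/2}$ the $n$-th term decays merely like $n^{|\Re(\nu)|/2-1/4}$, so the interchange of sum and integral, the legitimacy of shifting the $s$-contour past the poles, and the convergence itself must all be recovered by truncating the integral at height $T$, integrating by parts in $t$ (using the analyticity of $f$ to deform the $t$-path and gain decay), and exploiting the oscillation of $Y_\nu,J_\nu$ to let $T\to\infty$. It is precisely here that the hypothesis $-\tfrac12<\Re(\nu)<\tfrac12$ is indispensable: it forces $|\Re(\nu)|/2-\tfrac14<0$, which is what renders the oscillatory series convergent and the whole procedure valid.
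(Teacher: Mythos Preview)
Your approach is sound but genuinely different from the route taken in \cite{MR3558223} (and mirrored in this paper for the character analogues, e.g.\ the proof of Theorem~\ref{vore1}). You run a direct Mellin/Perron argument: represent the finite sum as $\tfrac{1}{2\pi i}\int_{(c)}\zeta(s)\zeta(s+\nu)F(s)\,ds$, shift past the poles at $s=1$ and $s=1-\nu$, apply the functional equation, and then identify the inverse Mellin transform of $\chi(s)\chi(s+\nu)$ as the Bessel kernel. The paper's method instead takes Cohen's identity \eqref{cohen} as the starting point, analytically continues it to complex $x$, substitutes $x\mapsto \pm iz$, and adds; the resulting function $\Psi(z)$ has simple poles at the integers with residues $\sigma_{-\nu}(n)$, so integrating $f(z)\Psi(iz)$ around a contour enclosing $[\alpha,\beta]$ and applying the residue theorem yields the left-hand sum directly. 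The Bessel combination $\tfrac{2}{\pi}K_\nu - Y_\nu$ and $J_\nu$ then falls out of the relations $K_\nu(ix)\pm K_\nu(-ix)$, with no Mellin--Barnes computation needed.

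What each buys: the paper's route packages all the delicate convergence inside the already-established Cohen identity, so the passage to Vorono\"i is essentially algebraic once \eqref{cohen} is in hand; your route is self-contained and does not presuppose \eqref{cohen}, but you must then do the oscillatory-integral work (truncation in $T$, integration by parts in $t$, exploiting cancellation in $J_\nu,Y_\nu$) that you correctly flag at the end. One small imprecision: saying the kernel can be ``read off from \eqref{cohen}'' understates the work---Cohen's identity involves only $K_\nu$ and rational functions, and producing the $J_\nu,Y_\nu$ pieces from it requires exactly the complex substitution $x\mapsto \pm ix$ that is the heart of the paper's method. If you stay with the Mellin approach, it is cleaner to compute the inverse transform of $\chi(s)\chi(s+\nu)$ directly via the standard Mellin--Barnes representations of $J_\nu,Y_\nu,K_\nu$.
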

 Inspired by Cohen's results \cite{MR2744771}, the first author and B. Maji \cite{debika2023} studied the infinite series involving the generalised divisor function and the modified $K$-Bessel functions. More precisely, they studied the following infinite series, for $r\in \mathbb{Z}, z \in \mathbb{C}$ and $a$ and $x$ be any two positive real numbers,
         \begin{align}\label{Deb-Maji}
      \sum_{n=1}^\infty\sigma_{z}^{(r)}(n)n^{\frac{\nu}{2}}K_\nu(a\sqrt{nx}),   
      \end{align}
   where $\sigma_{z}^{(r)}(n)=\sum_{d^r|n}d^z$ and $\nu$ is a complex number with $\Re(\nu)\geq 0$. It is important to note that $\sigma_{z}^{(1)}(n)=\sigma_{z}(n)$. Hence almost all the Cohen-type identities can be derived from their results. 
   In this article, we are interested in the character analogues of \eqref{Deb-Maji}. That is, we study the following infinite series
    \begin{align}\label{series defi}
   \sum_{n=1}^\infty\sigma_{z,\chi }(n)&n^{\frac{\nu}{2}}K_\nu(a\sqrt{nx}),  \ \ \ \ \ \ \ \ \ \   \sum_{n=1}^\infty \bar{\sigma}_{z,\chi }(n)n^{\frac{\nu}{2}}K_\nu(a\sqrt{nx}), \ \ \ \ \ \ \ \ \ 
   \sum_{n=1}^\infty\sigma_{z,\chi_1,\chi_2}(n)n^{\frac{\nu}{2}}K_\nu(a\sqrt{nx}),   
  \end{align}
   where $\sigma_{z,\chi }(n)$, $\bar{\sigma}_{z,\chi }(n)$ and $\sigma_{z,\chi_1,\chi_2}(n)$ are defined in \eqref{defimainpaper}, and $\nu$ is a complex number with $\Re(\nu)\geq 0$.  We derive Cohen-type identities for twisted sums of divisor functions $\sigma_{z,\chi }(n)$, $\bar{\sigma}_{z,\chi }(n)$  and $\sigma_{z,\chi_1,\chi_2}(n)$ and obtain the Vorono\"i-type summation formula for them. 
The paper is organized as follows:		 
  Section \ref{integer results} states the results for the twisted sums of divisor functions when $z \in\mathbb{Z}$. Section \ref{cohen identities...} provides Cohen-type identities for them. Section \ref{voronoi identities...} states the Vorono\"i-type summation formula for twisted sums of divisor functions. Section \ref{preliminary} reviews several significant results needed to derive our main results. Sections \ref{proof of integer nu}, \ref{proof of cohen identities...} and \ref{proof of voronoi...} are devoted to the proofs of identities stated in Sections \ref{integer results}, \ref{cohen identities...} and \ref{voronoi identities...}, respectively. 
\section{Main Results for the case  \texorpdfstring{$z \in\mathbb{Z}_{\geq 0}$}{TEXT}   }\label{integer results}
In this section, we consider $z$ a non-negative integer and denote it by $k$. Throughout the paper, we assume that $a$ and $x$ are two strictly positive real numbers, which differ from $0$. Before proceeding further, we will mention some definitions and notations which will be used later.
\par The Dirichlet $L$-function is defined by
\begin{align}\label{Lfunctiondefi}
L(s,\chi):=\sum_{n=1}^{\infty}\frac{\chi(n)}{n^s}, \ \ \Re(s)>1,
\end{align}
where $\chi$ is a Dirichlet character modulo $q$. It can be meromorphically  continued to the entire complex plane. Furthermore, if $\chi$ is principal, the corresponding Dirichlet $L$-function has a simple pole at $s = 1$. Otherwise, the $L$-function is entire.
\par 
The Gauss sum of a Dirichlet character modulo $q$ 
is 
\begin{align}\label{Gauss}
\tau(\chi):=\sum_{h=1}^q \chi(h)e^{2\pi i h/q}.
\end{align}
\par Since our results involve the modified $K$-Bessel function, it is important to state some related results. The asymptotic estimate for the $K$-Bessel function defined in \eqref{Kbessel} is \cite[p.~202]{MR1349110}
\begin{align*}
 K_{\nu}(x) =\left(\frac{\pi}{2x}  \right)^{\frac12} e^{-x} +O\left(\frac{e^{-x}}{x^{\frac32}}  \right) \ \mbox{as}\ x \rightarrow \infty.
\end{align*}
The above expression ensures the absolute convergence of all the infinite series defined in \eqref{series defi}. Throughout this paper, we will consider $\Re (\nu)\geq 0$ as $K_{-\nu}(x)=K_\nu(x)$. We recall that $K_0(x)$ is defined by \eqref{K0defi}. From the integral representation of $K_0(x)$ \cite[p.~446]{MR1349110}
\begin{align*}
    K_0(x)=\int_{0}^{\infty} e^{-x \cosh t} dt,
\end{align*}
one can see that $K_0(x)$ is positive and monotonically decreasing on the interval $(0, \infty)$. We also note the series representation of $K_0(x)$ \cite[p.~80]{MR1349110} 
 \begin{align*}
K_0(x)=-\log\left(\frac{x}{2}\right) I_0(x)+\sum_{m=0}^\infty\frac{\left(\frac{x}{2}\right)^{2m}}{(m!)^2 } \frac{\Gamma^\prime(m+1)}{\Gamma(m+1)},
 \end{align*}
 where $I_0(x)$ is defined in \eqref{Inu}. From its series representation  mentioned above, one can infer that $K_0(x)$ tends to $+\infty$ as $x$ decreases to $0$.
\subsection{Identities involving odd characters.} In this subsection, we will consider $k$ to be an even, non-negative integer and $\chi$ an odd primitive character.
			\begin{theorem}\label{thm1}
 				Let $k$ be an even, non-negative integer and $\chi$ be an odd primitive Dirichlet character modulo $q$. Then, for any $\Re{(\nu)}>0$, 
 \begin{align*}
   \sum_{n=1}^\infty\sigma_{k,\chi}(n)n^{\frac{\nu}{2}}K_\nu(a\sqrt{nx})=& {\delta_{k}}   \frac{2^{\nu+1}}{a^{\nu+2}} \Gamma(1+\nu)L(1,\chi)x^{-\frac{\nu}{2}-1}+\frac{(-1)^{\frac{k}{2}}i  q^k  }{a^\nu 2^{k+2-\nu}  \pi^{k+1}}   \Gamma(\nu)\tau(\chi) \Gamma(k+1)L(k+1,\Bar{\chi})\ x^{-\frac{\nu}{2}} \notag\\
					&-\frac{(-1)^{\frac{k}{2}}i a^{\nu}q^{\nu+k} \ x^{\frac{\nu}{2}} }{2^{3\nu+k+2} \pi^{2\nu+k+1}}\Gamma(\nu+k+1)\tau(\chi)  \sum_{n=1}^\infty   \frac{ \Bar{\sigma}_{k,\Bar{\chi}}(n)}{\left( n+\frac{a^2qx}{16\pi^2} \right)^{\nu+k+1} },
				\end{align*} 
where $\delta_k$ is given by 
\begin{align}
\delta_{k}=\begin{cases}
          1,   & \ \ \text{if } k=0,\\
          0, & \ \ \text{if } k>0.
       \end{cases} \label{d_k number}
\end{align} 
			\end{theorem}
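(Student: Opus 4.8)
The plan is to attack the identity by the Mellin--Barnes method, converting the Bessel series into a contour integral governed by the functional equations of $\zeta$ and $L(s,\chi)$. First I would insert the Mellin representation $K_\nu(t)=\frac{1}{2\pi i}\int_{(c)}2^{s-2}\Gamma\!\left(\frac{s+\nu}{2}\right)\Gamma\!\left(\frac{s-\nu}{2}\right)t^{-s}\,ds$ with $t=a\sqrt{nx}$, and interchange summation and integration (legitimate for $c=\Re(s)$ large by the exponential decay of $K_\nu$ and absolute convergence). Using the Dirichlet series identity $\sum_{n\ge1}\sigma_{k,\chi}(n)n^{-w}=\zeta(w)L(w-k,\chi)$ with $w=\frac{s-\nu}{2}$, this writes the left-hand side as
\begin{align*}
\frac{1}{2\pi i}\int_{(c)}2^{s-2}\Gamma\!\left(\tfrac{s-\nu}{2}\right)\Gamma\!\left(\tfrac{s+\nu}{2}\right)a^{-s}x^{-s/2}\,\zeta\!\left(\tfrac{s-\nu}{2}\right)L\!\left(\tfrac{s-\nu}{2}-k,\chi\right)ds.
\end{align*}

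Next I would move the line of integration leftward to some $\Re(s)=c'$ with $\max(\Re(\nu)-2,-\Re(\nu))<c'<\Re(\nu)$, so that exactly two poles are crossed. The pole of $\zeta(\frac{s-\nu}{2})$ at $s=\nu+2$ contributes a residue proportional to $L(1-k,\chi)$; here I would invoke that for $\chi$ odd and $k$ even one has $L(1-k,\chi)=-B_{k,\chi}/k=0$ when $k>0$ and $L(1,\chi)$ when $k=0$, which is precisely the origin of the factor $\delta_k$ and of the first term. The pole of $\Gamma(\frac{s-\nu}{2})$ at $s=\nu$ gives a residue in $\zeta(0)L(-k,\chi)=-\frac12 L(-k,\chi)$; applying the functional equation of $L(s,\chi)$ for odd primitive $\chi$, namely $L(1-s,\chi)=\frac{q^{s-1}\Gamma(s)}{(2\pi)^s}\bigl(-2i\sin\frac{\pi s}{2}\bigr)\tau(\chi)L(s,\bar{\chi})$, at $s=k+1$ turns $L(-k,\chi)$ into $L(k+1,\bar{\chi})$ and produces the sign $\sin\frac{\pi(k+1)}{2}=(-1)^{k/2}$, yielding exactly the second term.

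The third term comes from the shifted integral. On $\Re(s)=c'$ I would apply both functional equations — the asymmetric form $\zeta(w)=2^w\pi^{w-1}\sin\frac{\pi w}{2}\Gamma(1-w)\zeta(1-w)$ and the odd-$L$ equation above with $s=1-w+k$ — setting $u=1-w+k$ so that $\zeta(1-w)L(1-w+k,\bar{\chi})=\zeta(u-k)L(u,\bar{\chi})=\sum_{n\ge1}\bar{\sigma}_{k,\bar{\chi}}(n)n^{-u}$. The reflection formula $\Gamma(w)\Gamma(1-w)=\pi/\sin\pi w$ together with the even-$k$ identity $\sin\frac{\pi w}{2}\sin\frac{\pi(1-w+k)}{2}=(-1)^{k/2}\frac{\sin\pi w}{2}$ collapses all the $w$-dependent $\Gamma$ and trigonometric factors into the constant $(-1)^{k/2}\frac{\pi}{2}$, leaving only $\Gamma(w+\nu)\Gamma(1+k-w)$ and a clean power $\bigl(\frac{16\pi^2 n}{a^2qx}\bigr)^{w}$. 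Interchanging sum and integral and invoking the beta-type evaluation $\frac{1}{2\pi i}\int_{(c'')}\Gamma(\nu+w)\Gamma(1+k-w)Z^{w}\,dw=\Gamma(\nu+k+1)Z^{1+k}(1+Z)^{-(\nu+k+1)}$ with $Z=\frac{16\pi^2 n}{a^2qx}$ produces the shifted series $\sum_n\bar{\sigma}_{k,\bar{\chi}}(n)\bigl(n+\frac{a^2qx}{16\pi^2}\bigr)^{-(\nu+k+1)}$; tracking the accumulated constants through the two functional equations then gives precisely the third term.

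The routine parts are the residue and constant bookkeeping; the genuinely delicate points are analytic. I expect the main obstacle to be justifying the contour shift and the termwise integration: one needs Stirling's asymptotics for the $\Gamma$-factors together with standard convexity bounds for $\zeta$ and $L(\cdot,\chi)$ in vertical strips to show that the horizontal segments vanish and the shifted integral converges absolutely, and one must check that the chosen $c'$ both avoids the remaining poles (at $s=\nu-2,\nu-4,\dots$ from $\Gamma(\frac{s-\nu}{2})$ and at $s=-\nu,-\nu-2,\dots$ from $\Gamma(\frac{s+\nu}{2})$) and simultaneously keeps the contour within the strip $-\Re(\nu)<\Re(w)<1+k$ required for the beta-integral step. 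The parity hypotheses ($\chi$ odd, $k$ even) are used in an essential way at two points — the vanishing of $L(1-k,\chi)$ and the clean trigonometric collapse — so I would flag that the argument genuinely needs them.
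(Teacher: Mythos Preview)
Your proposal is correct and follows essentially the same route as the paper: express the Bessel series as a Mellin--Barnes integral against $\zeta(w)L(w-k,\chi)$, shift the contour to pick up the residues at $w=1$ (yielding the $\delta_k L(1,\chi)$ term via $L(1-k,\chi)=0$ for even $k\ge2$ and odd $\chi$) and at $w=0$ (yielding the $L(k+1,\bar\chi)$ term after the functional equation), then apply the functional equations of $\zeta$ and $L$ to the remaining integral and evaluate it termwise via the beta-type identity $\frac{1}{2\pi i}\int\Gamma(\nu+w)\Gamma(1+k-w)Z^w\,dw=\Gamma(\nu+k+1)Z^{1+k}(1+Z)^{-\nu-k-1}$. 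The only cosmetic difference is that the paper works with the variable $s$ in the representation $t^{\nu/2}K_\nu(a\sqrt{tx})=\tfrac12(2/a\sqrt{x})^\nu\frac{1}{2\pi i}\int_{(c)}\Gamma(s)\Gamma(s+\nu)(4/a^2x)^s t^{-s}\,ds$ rather than your $s$ with $w=(s-\nu)/2$, so the poles appear at $s=0,1$ instead of your $s=\nu,\nu+2$; the substance, including the parity cancellations you flag, is identical.
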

Our next result corresponds to $\nu=0$ is as follows
 \begin{theorem}\label{odd_k0thm1}
Let $k$ be an even, non-negative integer and $\chi$ be an odd primitive Dirichlet character modulo $q$. Then  
   \begin{align}\label{Positivity1}
				 	\sum_{n=1}^\infty\sigma_{k,\chi}(n) K_0(a\sqrt{nx})=&\delta_k \frac{2}{a^2x} L(1,\chi)- \frac{ L(-k,\chi) }{4}\left(  \log \left(\frac{8\pi}{a^2}   \right) +\frac{ L^\prime(-k,\chi)}{ L(-k,\chi)} -2\gamma \right)+\frac{ L(-k,\chi) }{4} \log x\notag\\
      &+(-1)^{\frac{k}{2}}\frac{ik!q^k}{2 ({2\pi })^{k+1} }  \tau(\chi)     \sum_{n=1}^\infty {\bar{\sigma}_{k, \bar{\chi}}}(n)   \left(\frac{1}{n^{k+1}}-\frac{1}{( n+\frac{a^2qx}{16\pi^2} )^{k+1} } \right),
  \end{align} 
where  $\delta_k$ is defined in \eqref{d_k number}.
 \end{theorem}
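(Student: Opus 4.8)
The plan is to obtain Theorem \ref{odd_k0thm1} by letting $\nu \to 0^+$ in Theorem \ref{thm1} and resolving the resulting indeterminacy. Writing $c := a^2 q x/(16\pi^2)$, the first summand on the right of Theorem \ref{thm1}, namely $\delta_k\, 2^{\nu+1}a^{-\nu-2}\Gamma(1+\nu)L(1,\chi)x^{-\nu/2-1}$, is analytic at $\nu=0$ and tends to $\delta_k\, 2 a^{-2} x^{-1} L(1,\chi)$, which is the first term of \eqref{Positivity1}. The difficulty is that the second and third summands each diverge as $\nu \to 0$: the second carries a factor $\Gamma(\nu)$, while in the third the series $\sum_{n\ge 1}\bar{\sigma}_{k,\bar{\chi}}(n)(n+c)^{-\nu-k-1}$ only converges for $\Re(\nu)>0$. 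I expect these two divergences to be simple poles in $\nu$ that cancel, so I would combine the two terms before passing to the limit.

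To expose the pole in the third term, I would split $(n+c)^{-\nu-k-1} = n^{-\nu-k-1} + \big[(n+c)^{-\nu-k-1}-n^{-\nu-k-1}\big]$. The bracketed difference is $O(n^{-\nu-k-2})$, so weighting it by $\bar{\sigma}_{k,\bar{\chi}}(n)$ (which is $O(n^{k+\varepsilon})$) yields a series converging absolutely and uniformly for $\nu$ near $0$; at $\nu=0$ it produces exactly the last line of \eqref{Positivity1}. For the remaining piece I would use the Dirichlet series identity $\sum_{n\ge 1}\bar{\sigma}_{k,\bar{\chi}}(n) n^{-s} = \zeta(s-k)L(s,\bar{\chi})$, which at $s=\nu+k+1$ equals $\zeta(\nu+1)L(\nu+k+1,\bar{\chi})$. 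Thus the divergence of the third term is governed by the simple pole $\zeta(\nu+1) = \nu^{-1}+\gamma+O(\nu)$.

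Next I would carry out the Laurent expansions about $\nu=0$ of the two singular summands. Using $\Gamma(\nu)=\nu^{-1}-\gamma+O(\nu)$ and $\zeta(\nu+1)=\nu^{-1}+\gamma+O(\nu)$, and expanding the elementary factors $a^{\pm\nu}, q^{\nu}, 2^{\pm\nu}, \pi^{-2\nu}, x^{\pm\nu/2}$ together with $\Gamma(\nu+k+1)$ and $L(\nu+k+1,\bar{\chi})$ to first order, the two $\nu^{-1}$ contributions turn out to have opposite signs and the identical leading coefficient $(-1)^{k/2}i\,\tau(\chi)\,q^{k}k!\,L(k+1,\bar{\chi})/(2^{k+2}\pi^{k+1})$, so they cancel. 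The surviving constant term is this same coefficient multiplied by a linear combination of $\gamma$, $\log a$, $\log 2$, $\log x$, $\log q$, $\log\pi$, the digamma value $\psi(k+1)=\Gamma'(k+1)/\Gamma(k+1)$, and $L'(k+1,\bar{\chi})/L(k+1,\bar{\chi})$.

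Finally, I would invoke the functional equation of $L(s,\chi)$ for the primitive odd character $\chi$, in the asymmetric form relating $L(s,\chi)$ to $L(1-s,\bar{\chi})$, evaluated at $s=-k$; this turns the leading coefficient $(-1)^{k/2}i\,\tau(\chi)\,q^{k}k!\,L(k+1,\bar{\chi})/(2^{k+2}\pi^{k+1})$ into $-L(-k,\chi)/4$. Differentiating that functional equation logarithmically and evaluating at $s=-k$ then expresses $L'(k+1,\bar{\chi})/L(k+1,\bar{\chi})$ and $\psi(k+1)$ in terms of $L'(-k,\chi)/L(-k,\chi)$ together with the explicit logarithms and $\gamma$; substituting these collapses the constant term to $-\tfrac14 L(-k,\chi)\big(\log(8\pi/a^2)+L'(-k,\chi)/L(-k,\chi)-2\gamma\big)+\tfrac14 L(-k,\chi)\log x$, as required. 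The main obstacle is the bookkeeping in this last reduction: one must track the Gamma factor $\Gamma(\tfrac{s+1}{2})$ attached to odd characters and its digamma contributions through the logarithmic differentiation, and confirm that the root number $\tau(\chi)/(i\sqrt{q})$ combines with the powers of $q$ and $\pi$ to give precisely the constants in \eqref{Positivity1}; justifying the interchange of $\lim_{\nu\to 0}$ with the infinite sum in the convergent remainder is then routine from the uniform bound noted above.
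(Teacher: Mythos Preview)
Your approach is correct but differs from the paper's. Rather than deducing the $\nu=0$ case as a limit of Theorem~\ref{thm1}, the paper works directly at $\nu=0$ from the contour-integral framework \eqref{K_nuformula}. With $\nu=0$ the integrand $\Gamma(s)^2\zeta(s)L(s-k,\chi)X^s$ has a \emph{double} pole at $s=0$, whose residue immediately yields the combination $-\tfrac14 L(-k,\chi)\big(\log(8\pi/a^2)+L'(-k,\chi)/L(-k,\chi)-2\gamma\big)+\tfrac14 L(-k,\chi)\log x$ without any functional-equation gymnastics. The remaining integral $J_k^{(0)}(X)$ is then transformed via the functional equations and the reflection formula into an integral of $\Gamma(k+s)/(\Gamma(s)\sin\pi s)$, which is evaluated by summing residues at integers; this produces the final series already in the form $\sum_n \bar{\sigma}_{k,\bar\chi}(n)\big(n^{-k-1}-(n+c)^{-k-1}\big)$, with no divergent piece to subtract.

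The trade-off: your limiting argument reuses Theorem~\ref{thm1} and is conceptually economical, but the bookkeeping you flag---matching Laurent coefficients and pushing the logarithmic derivative of the functional equation through $\psi(k+1)$, $\log q$, $\log\pi$, etc.---is genuinely tedious and error-prone. The paper's direct route avoids this entirely: the double-pole residue packages $L(-k,\chi)$, $L'(-k,\chi)$, $\gamma$, and $\log X$ together in one step, and the series emerges already regularised. Your method would work as written, but the paper's is cleaner for this particular identity.
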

\begin{remark}\label{remark1}
Let us assume that $\chi$ is a real odd primitive Dirichlet character modulo $q$. Now setting $k=0$ and then employing the functional equation \eqref{ll(s)} in \eqref{Positivity1}, we obtain
 \begin{align}\label{lo}
        \sum_{n=1}^\infty d_\chi(n) K_0(a\sqrt{nx})=& \frac{L(1,\chi)}{x} \left( \frac{2}{a^2}  -\frac{i\tau(\chi)}{4\pi }   x \log x\right)- \frac{ L(0,\chi) }{4}\left(  \log \left(\frac{8\pi}{a^2}   \right) +\frac{ L^\prime(0,\chi)}{ L(0,\chi)} -2\gamma \right) \notag\\
        &+ \frac{i a^2q \ x}{ {64\pi^3 }  }  \tau(\chi)     \sum_{n=1}^\infty\frac{{d_{\bar{\chi}}}(n)  }{n( n+\frac{a^2qx}{16\pi^2} )}. 
    \end{align}  
  Now we can easily show that $d_{\chi}(n)$ is non-negative for each $n$ from the Euler product on the left-hand side of \eqref{Lfz1}. More precisely, the factors in its Euler product are of the forms $$\left(1-\frac{1}{p^{s}}\right)^{-1}, \ \left(1-\frac{1}{p^{s}}\right)^{-2}, \ \mbox{or} \ 
     \left(1-\frac{1}{p^{2s}}\right)^{-1}, $$ 
     according as to whether $\chi(p)=0, 1$ or $-1$ respectively. Therefore, by rewriting the Euler product as a Dirichlet series, one can easily notice that $d_{\chi}(n)\geq 0$ for all $n$. In addition, it is clear from \eqref{defi1} that $d_{\chi}(n) \geq 1$ whenever $n$ is a perfect square.
     We have already mentioned the fact that $K_0(x)$ tends to $+\infty$ as $x$ decreases to $0$ at the beginning of this section. Therefore, the left-hand side of \eqref{lo} approaches $+\infty$ as $x$ decreases to $0$. Let us examine the right-hand side of \eqref{lo}. Noting that $i\tau(\chi)$ is real for real odd primitive Dirichlet character \cite[Theorem 9.9, p.~288]{MR2378655}, we can easily deduce that the infinite series on the right-hand side of \eqref{lo} tends to $0$ as $x$ decreases to $0$. Next noting that $i\tau(\chi)$ is real and $x \log x $ tends to $0$ as $x$ decreases to $0$, we infer that $\frac{ L(1,\chi)}{x}$ tends to $+\infty$ as $x$ decreases to $0$, which ensures the strict positivity of $L(1, \chi)$.
\end{remark}
			\begin{theorem}\label{thm3}
				Let  $k\geq 2$ be an even integer and $\chi$ be an odd primitive Dirichlet character modulo $q$. Then, for any  $\Re{(\nu)}>0$,
    \begin{align*}
   \sum_{n=1}^\infty\Bar{\sigma}_{k,\chi}(n)n^{\frac{\nu}{2}}K_\nu(a\sqrt{nx})=& 	\frac{2^{\nu+2k+1}}{a^{\nu+2k+2}}  \Gamma(k+1)\Gamma(\nu+k+1)L(1+k,\chi)\ x^{-\frac{\nu}{2}-k-1} \notag\\ &-\frac{(-1)^{\frac{k}{2}}i (aq)^\nu \ x^{\frac{\nu}{2}}}{2^{3\nu+k+2}\pi^{2\nu+k+1}}  \Gamma(\nu+k+1)\tau(\chi)  \sum_{n=1}^\infty\frac{\sigma_{k,\Bar{\chi}}(n)}{\left( n+\frac{a^2qx}{16\pi^2} \right)^{\nu+k+1} } .
\end{align*} 
    \end{theorem}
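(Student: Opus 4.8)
The plan is to run the Mellin--Barnes/Cauchy-residue machinery used by Cohen \cite{MR2744771} and by the first author and Maji \cite{debika2023}, now with the twisted coefficients $\Bar{\sigma}_{k,\chi}(n)$. The input is the Mellin representation of the Bessel factor,
\begin{equation*}
n^{\frac{\nu}{2}}K_\nu(a\sqrt{nx})=\frac{1}{2\pi i}\int_{(c)}2^{s-2}\Gamma\!\left(\frac{s-\nu}{2}\right)\Gamma\!\left(\frac{s+\nu}{2}\right)(a\sqrt{x})^{-s}\,n^{\frac{\nu-s}{2}}\,ds,
\end{equation*}
valid for $c>\Re(\nu)$. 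Taking $c>\Re(\nu)+2k+2$, substituting into the series on the left, and interchanging summation and integration (legitimate by the exponential decay of $K_\nu$ and absolute convergence on $\Re(s)=c$), the Euler-product identity $\sum_{n\ge1}\Bar{\sigma}_{k,\chi}(n)n^{-w}=\zeta(w-k)L(w,\chi)$ with $w=\frac{s-\nu}{2}$ gives
\begin{equation*}
\sum_{n=1}^\infty\Bar{\sigma}_{k,\chi}(n)n^{\frac{\nu}{2}}K_\nu(a\sqrt{nx})=\frac{1}{2\pi i}\int_{(c)}2^{s-2}\Gamma\!\left(\frac{s-\nu}{2}\right)\Gamma\!\left(\frac{s+\nu}{2}\right)(a\sqrt{x})^{-s}\zeta\!\left(\frac{s-\nu}{2}-k\right)L\!\left(\frac{s-\nu}{2},\chi\right)ds.
\end{equation*}

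I would then shift the line of integration to $\Re(s)=c'$ with $-\Re(\nu)<c'<\Re(\nu)$. The integrand has a simple pole at $s=\nu+2k+2$ coming from $\zeta(\frac{s-\nu}{2}-k)$, and its residue is computed directly (here $\frac{s-\nu}{2}-k\to1$ forces $\Gamma(\frac{s-\nu}{2})\to\Gamma(k+1)$, $\Gamma(\frac{s+\nu}{2})\to\Gamma(\nu+k+1)$, $L(\frac{s-\nu}{2},\chi)\to L(k+1,\chi)$, and the pole contributes a factor $2$) to be exactly the first term $\frac{2^{\nu+2k+1}}{a^{\nu+2k+2}}\Gamma(k+1)\Gamma(\nu+k+1)L(1+k,\chi)x^{-\nu/2-k-1}$. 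The essential structural point is that no other residue arises: the poles of $\Gamma(\frac{s-\nu}{2})$ at $s=\nu-2m$ are all cancelled, for $m$ even by the trivial zero $\zeta(-m-k)=0$ (which uses $k\ge2$, so that already $\zeta(-k)=0$) and for $m$ odd by the trivial zero $L(-m,\chi)=0$ of the odd character; the poles of $\Gamma(\frac{s+\nu}{2})$ lie to the left of $c'$. This parity bookkeeping is precisely why Theorem~\ref{thm3} carries a single residue term, with no $\delta_k$-contribution of the kind present in Theorem~\ref{thm1}.

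On the remaining integral over $(c')$ I would apply the functional equations. For the zeta factor, $\zeta(w-k)=(-1)^{k/2}2^{w-k}\pi^{w-k-1}\sin(\frac{\pi w}{2})\Gamma(1-w+k)\zeta(1-w+k)$, the parity of $k$ turning $\sin(\frac{\pi(w-k)}{2})$ into $(-1)^{k/2}\sin(\frac{\pi w}{2})$; for the $L$-factor, the odd-character equation \eqref{ll(s)} replaces $L(w,\chi)$ by a multiple of $\frac{\Gamma(1-w/2)}{\Gamma((w+1)/2)}L(1-w,\Bar{\chi})$ carrying the prefactor $\tau(\chi)/i$. The Gamma factors then collapse: reflection gives $\sin(\frac{\pi w}{2})\Gamma(1-\frac{w}{2})=\pi/\Gamma(\frac{w}{2})$ and duplication gives $\Gamma(\frac{w}{2})\Gamma(\frac{w+1}{2})=2^{1-w}\sqrt{\pi}\,\Gamma(w)$, so the factor $\Gamma(\frac{s-\nu}{2})=\Gamma(w)$ cancels entirely and only $\Gamma(w+\nu)\Gamma(1+k-w)\zeta(1+k-w)L(1-w,\Bar{\chi})$ survives against clean powers of $2,\pi,q,a,x$. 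After the shift $w\mapsto w+\nu$, expanding $\zeta(\nu+k+1-w)L(\nu+1-w,\Bar{\chi})=\sum_{n}\sigma_{k,\Bar{\chi}}(n)n^{w-\nu-k-1}$ (valid since $\Re(w)<\Re(\nu)$ on the shifted line), interchanging, and invoking the beta-function Mellin--Barnes identity $\frac{1}{2\pi i}\int_{(\rho)}\Gamma(w)\Gamma(\alpha-w)y^{-w}dw=\Gamma(\alpha)(1+y)^{-\alpha}$ with $\alpha=\nu+k+1$ and $y=\frac{a^2qx}{16\pi^2 n}$ turns the integral into $\Gamma(\nu+k+1)\sum_n\sigma_{k,\Bar{\chi}}(n)\big(n+\frac{a^2qx}{16\pi^2}\big)^{-\nu-k-1}$, and the accumulated constants assemble into the second term, with the phase $\tau(\chi)/i=-i\tau(\chi)$ producing the sign $-(-1)^{k/2}i$.

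The main obstacle I anticipate is bookkeeping rather than conceptual: threading the three clusters of Gamma factors (from $K_\nu$, from the $\zeta$-equation, and from the $L$-equation) through reflection and duplication while tracking every power of $2,\pi,q,a,x$ and the phase $(-1)^{k/2}\tau(\chi)/i$, so that the coefficient $-\frac{(-1)^{k/2}i(aq)^\nu}{2^{3\nu+k+2}\pi^{2\nu+k+1}}$ emerges exactly. The one genuinely structural step needing care is the cancellation of every spurious residue from $\Gamma(\frac{s-\nu}{2})$ by the trivial zeros of $\zeta$ and of the odd $L$-function, together with the two interchange-of-summation-and-integration arguments; both are controlled once $\Re(\nu)>0$, under which the target series $\sum_n\sigma_{k,\Bar{\chi}}(n)(n+\frac{a^2qx}{16\pi^2})^{-\nu-k-1}$ converges absolutely.
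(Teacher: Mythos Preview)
Your proposal is correct and follows essentially the same route as the paper: the paper's Lemma~\ref{eq:bessel} is your Mellin representation after the change of variable $s\mapsto(s-\nu)/2$, and the subsequent contour shift (with the observation that $\zeta(-k)=0$ kills the $\Gamma$-pole, giving $R_0=0$), the application of the functional equations \eqref{1st_use} and \eqref{ll(s)}, and the final evaluation via the beta Mellin--Barnes identity (the paper's Lemma~\ref{lem4}) all match your outline. Your parity bookkeeping for the cancelled poles at $s=\nu-2m$, $m\ge1$, is more than is needed---the paper only shifts past $m=0$---but the argument is sound.
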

    The result corresponding to $\nu=0$ is as follows
   \begin{theorem}\label{1thm1}
Let  $k\geq 2$ be an even integer and $\chi$ be an odd primitive Dirichlet character modulo $q$. Then 
   \begin{align*}
				\sum_{n=1}^\infty \bar{\sigma}_{k,\chi}(n) K_0(a\sqrt{nx})=&\frac{ 2^{2k+1}}{ a^{2k+2}}\Gamma^2(k+1)L(k+1,\chi)\frac{1}{x^{k+1}} +\frac{1}{2}\zeta^\prime(-k)L(0,\chi) \nonumber \\&
   + \frac{(-1)^{\frac{k}{2}}ik! \tau(\chi)  }{2(2\pi)^{k+1}    }\sum_{n=1}^\infty \sigma_{k,\Bar{\chi}   }(n) \left(\frac{1}{n^{k+1}}-\frac{1}{(n+ \frac{a^2qx}{16\pi^2} )^{k+1} } \right). 
\end{align*}
 \end{theorem}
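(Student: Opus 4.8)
The plan is to obtain Theorem \ref{1thm1} as the limiting case $\nu\to 0$ of Theorem \ref{thm3}, which is already at our disposal. On the left-hand side, since $n^{\nu/2}K_\nu(a\sqrt{nx})\to K_0(a\sqrt{nx})$ pointwise and the exponential decay $K_\nu(y)=O(e^{-y}/\sqrt{y})$ recorded after \eqref{Gauss} is uniform for $\nu$ in a neighbourhood of $0$, dominated convergence lets me pass the limit inside the sum, yielding $\sum_{n=1}^\infty \bar{\sigma}_{k,\chi}(n)K_0(a\sqrt{nx})$. The first term on the right-hand side of Theorem \ref{thm3} is continuous in $\nu$, and setting $\nu=0$ produces $\tfrac{2^{2k+1}}{a^{2k+2}}\Gamma^2(k+1)L(k+1,\chi)x^{-k-1}$ at once, matching the first term of the claim.

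The delicate point is the series $\sum_{n\ge1}\sigma_{k,\bar{\chi}}(n)(n+c)^{-(\nu+k+1)}$ with $c=a^2qx/(16\pi^2)$: its terms are of size $n^{-1-\nu}$, so it converges only for $\Re(\nu)>0$ and cannot be evaluated termwise at $\nu=0$. To isolate the obstruction I would add and subtract the pure power sum,
\[
\sum_{n=1}^\infty \frac{\sigma_{k,\bar{\chi}}(n)}{(n+c)^{\nu+k+1}}=\zeta(\nu+k+1)L(\nu+1,\bar{\chi})-\sum_{n=1}^\infty \sigma_{k,\bar{\chi}}(n)\left(\frac{1}{n^{\nu+k+1}}-\frac{1}{(n+c)^{\nu+k+1}}\right),
\]
where I have used the Dirichlet series identity $\sum_{n\ge1}\sigma_{k,\bar{\chi}}(n)n^{-s}=\zeta(s)L(s-k,\bar{\chi})$ (immediate from $\sigma_{k,\bar{\chi}}(n)=\sum_{d\mid n}d^k\bar{\chi}(d)$) at $s=\nu+k+1$. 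The resulting closed form $\zeta(\nu+k+1)L(\nu+1,\bar{\chi})$ is holomorphic at $\nu=0$, since the pole of $\zeta$ lies at argument $1$, i.e. at $\nu=-k$, away from the origin; and the remaining difference has summand $O(n^{-2})$ uniformly near $\nu=0$, hence converges there and tends to the corresponding sum at $\nu=0$.

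Multiplying by the (finite) prefactor at $\nu=0$, namely $-\tfrac{(-1)^{k/2}i\,k!}{2(2\pi)^{k+1}}\tau(\chi)$, the regularized difference reproduces verbatim the last term of Theorem \ref{1thm1}. What remains is to show that the closed-form contribution $-\tfrac{(-1)^{k/2}i\,k!}{2(2\pi)^{k+1}}\tau(\chi)\,\zeta(k+1)L(1,\bar{\chi})$ equals the constant $\tfrac12\zeta'(-k)L(0,\chi)$. I expect the matching of these two ``constant'' terms to be the main obstacle, and it is purely a functional-equation computation. First, differentiating the functional equation of $\zeta$ at the trivial zero $s=-k$ (here $k$ is even, so $\zeta(-k)=0$ and only the derivative of the vanishing sine factor survives) gives $\zeta'(-k)=\tfrac{(-1)^{k/2}k!}{2^{k+1}\pi^{k}}\zeta(k+1)$. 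Second, the functional equation \eqref{ll(s)} of $L(s,\chi)$ for the odd primitive character $\chi$, evaluated at $s=0$, yields after simplification $L(0,\chi)=-\tfrac{i\tau(\chi)}{\pi}L(1,\bar{\chi})$. Substituting both relations into $\tfrac12\zeta'(-k)L(0,\chi)$ turns it into exactly $-\tfrac{(-1)^{k/2}i\,k!}{2(2\pi)^{k+1}}\tau(\chi)\zeta(k+1)L(1,\bar{\chi})$, so the closed-form piece is accounted for and the three terms of Theorem \ref{1thm1} are recovered.
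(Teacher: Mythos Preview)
Your argument is correct, and it is a genuinely different route from the paper's. The paper obtains Theorem~\ref{1thm1} by running the Mellin--contour machinery of \eqref{K_nuformula}--\eqref{defiJ} directly at $\nu=0$ with $F_k(s)=\zeta(s-k)L(s,\chi)$, exactly as in the proof of Theorem~\ref{odd_k0thm1}: the double pole of $\Gamma(s)^2$ at $s=0$ meets the simple zero of $\zeta(s-k)$ (since $k\ge 2$ is even), producing a simple pole whose residue is $\zeta'(-k)L(0,\chi)$, and the remaining integral $J_k^{(0)}(X)$ is evaluated by shifting the line and summing residues as in \eqref{J_k11}--\eqref{rami}. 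You instead let $\nu\to 0^{+}$ in the already-proved Theorem~\ref{thm3}, regularise the boundary-divergent series by peeling off the closed form $\zeta(\nu+k+1)L(\nu+1,\bar\chi)$, and then match the resulting constant to $\tfrac12\zeta'(-k)L(0,\chi)$ via the functional equations of $\zeta$ and $L$. Your approach is economical because it recycles Theorem~\ref{thm3} and avoids redoing any contour work, at the cost of the functional-equation bookkeeping; the paper's approach makes the constant term appear transparently as a residue but requires repeating the integral evaluation at $\nu=0$. Both are valid; your limiting argument is a nice illustration that the $\nu=0$ identities here are continuous degenerations of the $\Re(\nu)>0$ ones.
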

\begin{remark}
    The case $k=0$ is excluded from Theorem \ref{thm3} and Theorem \ref{1thm1} because of the fact that $\bar{\sigma}_{0,\chi}(n)=\sigma_{0,\chi}(n) =d_{\chi}(n)$ and $\sum_{n=1}^\infty d_{\chi}(n) n^{\nu/2} K_{\nu}(a\sqrt{nx})$ for $\Re(\nu)>0$ and $\sum_{n=1}^\infty d_{\chi}(n) K_0(a\sqrt{nx})$ are already considered in Theorems \ref{thm1} and  \ref{odd_k0thm1}, respectively.
\end{remark}
		\subsection{Identities involving even characters}  In this subsection, we present similar results when $k$ is an odd positive integer and $\chi $ is a non-principal even primitive character.
			\begin{theorem}\label{thmeven1}
				 Let $k\geq 1$ be an odd integer and $\chi$ be a non-principal even primitive Dirichlet character modulo $q$. Then, for any  $\Re{(\nu)}>0$, 
     \begin{align*}
   \sum_{n=1}^\infty\sigma_{k,\chi}(n)n^{\frac{\nu}{2}}K_\nu(a\sqrt{nx})=&\frac{(-1)^{\frac{k-1}{2}} q^k }{a^{\nu}2^{k+2-\nu}\pi^{k+1}}   \Gamma(\nu)\tau(\chi) \Gamma(k+1)L(1+k,\Bar{\chi})\ x^{-\frac{\nu}{2}}\notag\\
   &+\frac{(-1)^{\frac{k+1}{2}}a^\nu q^{\nu+k}\ x^{\frac{\nu}{2}}}{2^{3\nu+k+2}\pi^{2\nu+k+1}} \Gamma(\nu+k+1)\tau(\chi)  \sum_{n=1}^\infty \frac{\sigma_{k,\Bar{\chi}}(n)}{\left( n+\frac{a^2qx}{16\pi^2} \right)^{\nu+k+1} }.
\end{align*} 
     \end{theorem}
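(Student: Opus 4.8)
The plan is to evaluate the series by an inverse Mellin transform (Mellin--Barnes) argument, running exactly parallel to the proof one would give for Theorem \ref{thm1}, but inserting the functional equation for an \emph{even} primitive character where that proof uses the odd one. Starting from the Mellin transform of the $K$-Bessel function \eqref{Kbessel},
\begin{equation*}
\int_0^\infty K_\nu(y)\, y^{s-1}\, dy = 2^{s-2}\,\Gamma\!\left(\frac{s-\nu}{2}\right)\Gamma\!\left(\frac{s+\nu}{2}\right),
\end{equation*}
I would write $K_\nu(a\sqrt{nx})$ as the corresponding vertical integral with $y=a\sqrt{nx}$, substitute it into the sum, and interchange summation and integration on a line $\Re(s)=c$ with $c>\Re(\nu)+2k+2$; this is legitimate by the exponential decay of $K_\nu$ quoted after \eqref{Kbessel} together with the absolute convergence of the relevant Dirichlet series. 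Using the factorization $\sum_{n\ge1}\sigma_{k,\chi}(n)\,n^{-w}=\zeta(w)\,L(w-k,\chi)$, immediate from \eqref{defimainpaper} and \eqref{Lfunctiondefi}, the left-hand side becomes
\begin{equation*}
\frac{1}{2\pi i}\int_{(c)} 2^{s-2}\,\Gamma\!\left(\frac{s-\nu}{2}\right)\Gamma\!\left(\frac{s+\nu}{2}\right) a^{-s}\,x^{-s/2}\,\zeta\!\left(\frac{s-\nu}{2}\right) L\!\left(\frac{s-\nu}{2}-k,\chi\right) ds.
\end{equation*}

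I would then shift the line of integration to the left and collect residues. Since $\chi$ is non-principal, $L(\cdot,\chi)$ is entire, so the only candidate poles are those of $\Gamma(\tfrac{s-\nu}{2})$ at $s=\nu-2m$, of $\Gamma(\tfrac{s+\nu}{2})$ at $s=-\nu-2m$, and the pole of $\zeta(\tfrac{s-\nu}{2})$ at $s=\nu+2$. The decisive point — and the reason this statement, unlike the $k=0$ instance of Theorem \ref{thm1}, carries no $x^{-\nu/2-1}$ term — is the parity: $k$ is odd and $\chi$ is even. At $s=\nu+2$ the residue is proportional to $L(1-k,\chi)$, which vanishes because $1-k$ is a non-positive even integer, a trivial zero of $L(s,\chi)$ for even $\chi$; and at $s=\nu-2m$ with $m\ge1$ the residue is proportional to $\zeta(-m)L(-m-k,\chi)$, in which $\zeta(-m)=0$ when $m$ is even and $L(-m-k,\chi)=0$ when $m$ is odd (then $-m-k$ is again a non-positive even integer). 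Hence only the pole at $s=\nu$ contributes. Its residue is a constant multiple of $\Gamma(\nu)\,\zeta(0)\,L(-k,\chi)\,a^{-\nu}x^{-\nu/2}$; rewriting $L(-k,\chi)$ through the even-character functional equation $L(-k,\chi)=2(2\pi)^{-(k+1)}q^{k}\cos\!\big(\tfrac{\pi(k+1)}{2}\big)\Gamma(k+1)\tau(\chi)L(k+1,\bar\chi)$ and combining the sign from $\cos(\tfrac{\pi(k+1)}{2})$ with $\zeta(0)=-\tfrac12$ reproduces the first term of the statement, complete with its factor $(-1)^{(k-1)/2}$, $\tau(\chi)$, $\Gamma(k+1)$, and the stated powers of $2$, $\pi$, $q$.

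It remains to evaluate the shifted integral, which furnishes the infinite series. Here I would apply the functional equations of both $\zeta(\tfrac{s-\nu}{2})$ and $L(\tfrac{s-\nu}{2}-k,\chi)$, the latter in its even-character form so as to release the factor $\tau(\chi)$ and a $\cos$-type factor, thereby turning the $\zeta$–$L$ product into the dual product $\zeta(\cdot)L(\cdot,\bar\chi)$ whose Dirichlet coefficients are the twisted divisor sums $\sigma_{k,\bar\chi}(n)$ of \eqref{defimainpaper}. After a second interchange of summation and integration, the $n$-th term is a Mellin--Barnes integral of the shape $\frac{1}{2\pi i}\int \Gamma(\nu+k+1+s)\Gamma(-s)\,z^{s}\,ds$, which collapses by the beta-type evaluation
\begin{equation*}
(1+z)^{-\alpha}=\frac{1}{2\pi i}\int_{(\sigma)}\frac{\Gamma(\alpha+s)\,\Gamma(-s)}{\Gamma(\alpha)}\,z^{s}\,ds
\end{equation*}
to a multiple of $\Gamma(\nu+k+1)\,\big(n+\tfrac{a^2qx}{16\pi^2}\big)^{-(\nu+k+1)}$; the shift $\tfrac{a^2qx}{16\pi^2}=\big(\tfrac{a}{4\pi}\big)^2qx$ is forced by reassembling the powers of $a^2$, $q$, $x$ and $\pi$ produced by the two functional equations. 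Summing over $n$ yields precisely the second term.

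The genuine obstacle is not the contour work but the bookkeeping of Gamma and trigonometric factors. One has to fuse the two Gamma functions from the Bessel kernel with the Gamma factors and the $\sin$/$\cos$ factors released by the two functional equations, and with the explicit powers of $2$, $\pi$, $q$, using the reflection formula and the Legendre duplication formula, in order to reach the clean constants $(-1)^{(k\pm1)/2}$, $\Gamma(\nu+k+1)$, $\tau(\chi)$ recorded in the statement, and to verify that the surviving trigonometric factor vanishes at exactly the places dictated by the parity of $k$. The subordinate technical tasks are the growth estimates — Stirling for the Gamma functions and standard convexity bounds for $\zeta$ and $L$ — that justify the contour displacement and the two interchanges of sum and integral, and checking that $\Re(\nu)>0$ keeps the pole at $s=\nu$ simple and to the right of the final contour.
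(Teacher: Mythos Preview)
Your proposal is correct and is, after the change of variable $w=(s-\nu)/2$, exactly the paper's argument: the paper writes the left-hand side as $\tfrac12 X^{\nu/2}\cdot\tfrac{1}{2\pi i}\int_{(c)}\Gamma(w)\Gamma(w+\nu)\zeta(w)L(w-k,\chi)X^{w}\,dw$ with $X=4/(a^2x)$, shifts to $\Re(w)=-d$ with $0<d<\min\{1,\Re(\nu)\}$, picks up the residues at $w=1$ (which vanishes because $L(1-k,\chi)=0$ for odd $k$ and even $\chi$, precisely your parity observation) and at $w=0$ (giving the first term after the functional equation \eqref{ll(s)}), and then evaluates the remaining integral via the functional equations and Lemma~\ref{lem4} to produce the infinite series.

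One small point of exposition: you list the poles of $\Gamma((s+\nu)/2)$ at $s=-\nu-2m$ among the ``candidate poles'' but never argue they vanish, and indeed they do not. The resolution is simply that the contour is only moved to a line with $-\Re(\nu)<\Re(s)<\Re(\nu)$ (equivalently $0<d<\Re(\nu)$ in the paper's variable), so those poles are never crossed; your separate vanishing argument for the poles at $s=\nu-2m$, $m\ge1$, while correct, is then unnecessary. With that clarification your write-up matches the paper's.
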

 The result corresponding to $\nu=0$ is as follows
 \begin{theorem}\label{7} 
Let $k\geq 1$ be an odd integer and $\chi$ be a non-principal even primitive Dirichlet character modulo $q$. Then  
     \begin{align*}
				 	\sum_{n=1}^\infty\sigma_{k,\chi}(n) K_0(a\sqrt{nx})=&- \frac{ L(-k,\chi) }{4}\left(  \log \left(\frac{8\pi}{a^2}   \right) +\frac{ L^\prime(-k,\chi)}{ L(-k,\chi)} -2\gamma \right)+\frac{ L(-k,\chi) }{4} \log x\notag\\
      &+(-1)^{\frac{k-1}{2}}\frac{k!q^k}{2 ({2\pi })^{k+1} }  \tau(\chi)     \sum_{n=1}^\infty {\bar{\sigma}_{k, \bar{\chi}}}(n)   \left(\frac{1}{n^{k+1}}-\frac{1}{(n+ \frac{a^2qx}{16\pi^2} )^{k+1} } \right). 
  \end{align*} 
    \end{theorem}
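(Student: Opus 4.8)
The plan is to obtain Theorem~\ref{7} as the limiting case $\nu\to0^{+}$ of Theorem~\ref{thmeven1}. Set $c=\frac{a^2qx}{16\pi^2}$. On the left-hand side the factor $n^{\nu/2}K_\nu(a\sqrt{nx})$ converges pointwise to $K_0(a\sqrt{nx})$, and the asymptotic $K_\nu(y)=O(y^{-1/2}e^{-y})$, uniform for $\nu$ in a fixed neighbourhood of $0$, together with $\bar{\sigma}_{k,\bar{\chi}}(n),\sigma_{k,\chi}(n)=O(n^{k+\varepsilon})$, provides a summable dominating majorant. Hence $\lim_{\nu\to0^{+}}\sum_n\sigma_{k,\chi}(n)n^{\nu/2}K_\nu(a\sqrt{nx})=\sum_n\sigma_{k,\chi}(n)K_0(a\sqrt{nx})$, the desired left-hand side. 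All the content lies in the right-hand side, whose first term carries a simple pole through $\Gamma(\nu)$.

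Denote the two terms on the right of Theorem~\ref{thmeven1} by $T_1(\nu)$ and $T_2(\nu)$. To expose a compensating pole in $T_2$, I split the summand via $(n+c)^{-(\nu+k+1)}=n^{-(\nu+k+1)}-\bigl(n^{-(\nu+k+1)}-(n+c)^{-(\nu+k+1)}\bigr)$, writing $T_2=T_{2a}-T_{2b}$. The summand of $T_{2b}$ is $O(n^{-\nu-2})$, so its series converges at $\nu=0$, and since $2^{k+2}\pi^{k+1}=2(2\pi)^{k+1}$, $\Gamma(k+1)=k!$, and $-(-1)^{(k+1)/2}=(-1)^{(k-1)/2}$, the limit $-\lim_{\nu\to0^{+}}T_{2b}$ reproduces exactly the Bessel-free series term of Theorem~\ref{7}. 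The piece $T_{2a}$ equals its prefactor times the Dirichlet series $\sum_n\bar{\sigma}_{k,\bar{\chi}}(n)n^{-(\nu+k+1)}=\zeta(\nu+1)L(\nu+k+1,\bar{\chi})$, whose pole comes from $\zeta(\nu+1)=\nu^{-1}+\gamma+O(\nu)$.

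It remains to show $T_1+T_{2a}$ is regular at $\nu=0$ and to compute its value. Write $T_1=\Gamma(\nu)C_1(\nu)$ and $T_{2a}=\zeta(\nu+1)C_2(\nu)$ with $C_1,C_2$ holomorphic near $0$. Using the functional equation of $L(s,\chi)$ for an even primitive character (recalled in Section~\ref{preliminary}) to rewrite $L(k+1,\bar{\chi})$ through $L(-k,\chi)$ gives $C_1(0)=-C_2(0)=-\tfrac14L(-k,\chi)$; combined with $\Gamma(\nu)=\nu^{-1}-\gamma+O(\nu)$ and $\zeta(\nu+1)=\nu^{-1}+\gamma+O(\nu)$ the residues cancel and
\[
\lim_{\nu\to0^{+}}\bigl(T_1+T_{2a}\bigr)=C_1'(0)+C_2'(0)-2\gamma\,C_1(0).
\]
The derivatives $C_1'(0),C_2'(0)$ produce the elementary logarithms $\log a,\log q,\log 2,\log\pi,\log x$, the digamma value $\Gamma'(k+1)/\Gamma(k+1)$, and $L'(k+1,\bar{\chi})/L(k+1,\bar{\chi})$. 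Differentiating the logarithm of the same functional equation at $s=k+1$ (where the $\tan(\pi s/2)$ contribution vanishes because $k+1$ is even) converts $L'(k+1,\bar{\chi})/L(k+1,\bar{\chi})$ into $-L'(-k,\chi)/L(-k,\chi)+\log q+\Gamma'(k+1)/\Gamma(k+1)-\log(2\pi)$.

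The crux, and the step I expect to be most delicate, is the bookkeeping of this last substitution: the digamma contributions arising from $C_2'(0)$ and from the functional equation cancel exactly, the two $\log q$ terms cancel, and $2\log\pi-\log(2\pi)$ collapses to $\log\pi-\log2$, leaving precisely $-\tfrac14L(-k,\chi)\bigl(\log\frac{8\pi}{a^2}+L'(-k,\chi)/L(-k,\chi)-2\gamma\bigr)+\tfrac14L(-k,\chi)\log x$. Adding $-\lim_{\nu\to0^{+}}T_{2b}$ then gives Theorem~\ref{7} verbatim. The only genuine analytic points are the dominated-convergence justification on the left and the uniform convergence of the difference series defining $T_{2b}$ near $\nu=0$, both immediate from the divisor-sum bounds above.
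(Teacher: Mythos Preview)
Your limiting argument is correct, and the bookkeeping you sketch does indeed close up: with $C_1(0)=-\tfrac14 L(-k,\chi)=-C_2(0)$ the poles of $\Gamma(\nu)$ and $\zeta(\nu+1)$ cancel, and after substituting the logarithmic derivative of the functional equation at $s=-k$ (where the $\cot(\pi s/2)$ term vanishes since $k$ is odd) one lands exactly on the two explicit terms of the statement; the remaining piece $-\lim_{\nu\to 0^+}T_{2b}$ gives the series. The dominated-convergence step on the left is unproblematic because $a\sqrt{nx}$ stays bounded away from zero, so $K_\nu$ is uniformly controlled for $\nu$ near $0$.

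However, this is not how the paper proceeds. The paper treats the case $\nu=0$ \emph{directly}, following the template of the proof of Theorem~\ref{odd_k0thm1}: one starts from the Mellin representation \eqref{K_nuformula} with $\nu=0$, so that the integrand $\Gamma(s)^2\zeta(s)L(s-k,\chi)X^s$ has a genuine double pole at $s=0$ (the simple pole of $\zeta$ at $s=1$ is killed by the trivial zero $L(1-k,\chi)=0$, since $1-k$ is a nonpositive even integer and $\chi$ is even). The logarithmic terms then emerge in one stroke from the double-pole residue, and the remaining integral $J_k^{(0)}(X)$ is handled by the same functional-equation and contour-shifting manoeuvre as in \eqref{J_k11}--\eqref{rami}. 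In contrast, your route packages the contour work into the already-proven Theorem~\ref{thmeven1} and trades it for a limiting argument: you must split off the divergent tail $T_{2a}$, recognise it as $\zeta(\nu+1)L(\nu+k+1,\bar\chi)$, and then differentiate the functional equation to convert $L'(k+1,\bar\chi)/L(k+1,\bar\chi)$ into $L'(-k,\chi)/L(-k,\chi)$. Your approach is economical in that it reuses Theorem~\ref{thmeven1} wholesale; the paper's approach is more self-contained and sidesteps both the limit analysis and the functional-equation differentiation, at the cost of redoing (a variant of) the contour computation.
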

			\begin{theorem}\label{thmeven2}
				 Let $k\geq 1$ be an odd integer and $\chi$ be a non-principal even primitive Dirichlet character modulo $q$. Then, for any  $\Re{(\nu)}>0$,  
     \begin{align*}
   \sum_{n=1}^\infty\Bar{\sigma}_{k,\chi}(n)n^{\frac{\nu}{2}}K_\nu(a\sqrt{nx})=& 	\frac{2^{\nu+2k+1}}{a^{\nu+2k+2}} \Gamma(k+1)\Gamma(\nu+k+1)L(1+k,\chi)\  x^{-\frac{\nu}{2}-k-1}\notag\\
     &+\frac{(-1)^{\frac{k+1}{2}}(aq)^\nu \ x^{\frac{\nu}{2}}}{2^{3\nu+k+2}\pi^{2\nu+k+1}} \Gamma(\nu+k+1)\tau(\chi)  \sum_{n=1}^\infty\sigma_{k,\Bar{\chi}}(n)\frac{1}{\left( n+\frac{a^2qx}{16\pi^2} \right)^{\nu+k+1} }.
\end{align*} 
			\end{theorem}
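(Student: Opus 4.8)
The plan is to follow the Mellin--Barnes method used for the companion odd-character statement (Theorem \ref{thm3}), with the even functional equation of $L(s,\chi)$ replacing the odd one. First I would invoke the Mellin representation of the $K$-Bessel function, $K_\nu(y)=\frac{1}{2\pi i}\int_{(c)}2^{s-2}\Gamma\!\left(\frac{s+\nu}{2}\right)\Gamma\!\left(\frac{s-\nu}{2}\right)y^{-s}\,ds$, valid for $\Re(s)>|\Re(\nu)|$, substitute $y=a\sqrt{nx}$, and interchange the summation with the integral (legitimate by absolute convergence once $c>2k+2+\Re(\nu)$). Using the Dirichlet series $\sum_{n\ge1}\bar\sigma_{k,\chi}(n)n^{-u}=\zeta(u-k)L(u,\chi)$, the left-hand side becomes
\begin{equation*}
\frac{1}{2\pi i}\int_{(c)}2^{s-2}\Gamma\!\left(\frac{s-\nu}{2}\right)\Gamma\!\left(\frac{s+\nu}{2}\right)a^{-s}x^{-s/2}\,\zeta\!\left(\frac{s-\nu}{2}-k\right)L\!\left(\frac{s-\nu}{2},\chi\right)ds .
\end{equation*}

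Next I would shift the contour to a line $\Re(s)=c'$ with $-\Re(\nu)<c'<\Re(\nu)$, so that $w:=\frac{s-\nu}{2}$ satisfies $-\Re(\nu)<\Re(w)<0$, bounding the integrand on the horizontal segments via Stirling's formula together with convexity estimates for $\zeta$ and $L$ to discard them. The shift picks up exactly two residues. The simple pole of $\zeta\!\left(\frac{s-\nu}{2}-k\right)$ at $s=\nu+2k+2$ (where $w=k+1$) produces, after evaluating the surviving Gamma factors, precisely the first term $\frac{2^{\nu+2k+1}}{a^{\nu+2k+2}}\Gamma(k+1)\Gamma(\nu+k+1)L(k+1,\chi)x^{-\nu/2-k-1}$. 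The pole of $\Gamma\!\left(\frac{s-\nu}{2}\right)$ at $s=\nu$ (where $w=0$) contributes a residue proportional to $\zeta(-k)L(0,\chi)$, which vanishes because $L(0,\chi)=0$ for an even primitive character; this is the structural reason the even-character statement carries no extra constant term.

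On the shifted line I would then apply the Riemann functional equation to $\zeta\!\left(\frac{s-\nu}{2}-k\right)$ and the even-character functional equation $L(w,\chi)=\frac{\tau(\chi)}{\sqrt q}\left(\frac{\pi}{q}\right)^{w-1/2}\frac{\Gamma((1-w)/2)}{\Gamma(w/2)}L(1-w,\bar\chi)$. Since $k$ is odd, $\sin\!\left(\frac{\pi(w-k)}{2}\right)=(-1)^{(k+1)/2}\cos\!\left(\frac{\pi w}{2}\right)$, which supplies the sign $(-1)^{(k+1)/2}$ and the Gauss sum $\tau(\chi)$. The key simplification is that the trigonometric and Gamma factors collapse: the reflection formula $\Gamma\!\left(\frac{1-w}{2}\right)\Gamma\!\left(\frac{1+w}{2}\right)=\pi/\cos(\pi w/2)$ together with the duplication formula $\Gamma\!\left(\frac{w}{2}\right)\Gamma\!\left(\frac{w+1}{2}\right)=2^{1-w}\sqrt\pi\,\Gamma(w)$ turn $\cos(\pi w/2)\Gamma((1-w)/2)/\Gamma(w/2)$ into $\sqrt\pi\,2^{w-1}/\Gamma(w)$, cancelling the $\Gamma(w)=\Gamma\!\left(\frac{s-\nu}{2}\right)$ already present and leaving the clean pair $\Gamma(w+\nu)\Gamma(k+1-w)$. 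The product $\zeta(1-w+k)L(1-w,\bar\chi)$ then equals $\sum_{n\ge1}\sigma_{k,\bar\chi}(n)n^{w-1-k}$, which converges on this line.

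Finally I would interchange this sum with the $w$-integral and recognize each term, after the shift $w\mapsto w+\nu$, as the Mellin--Barnes beta integral $\frac{1}{2\pi i}\int_{(\sigma)}\Gamma(s')\Gamma(p-s')Y^{-s'}\,ds'=\Gamma(p)(1+Y)^{-p}$ with $p=\nu+k+1$ and $Y=\frac{a^2qx}{16\pi^2 n}$; this yields the factor $\left(n+\frac{a^2qx}{16\pi^2}\right)^{-(\nu+k+1)}$, and reassembling the powers of $2,\pi,a,q,x$ reproduces the second term verbatim. I expect the main obstacle to be the analytic justification of the contour shift (uniform decay of the integrand along horizontal lines) and, more mechanically, the careful tracking of constants through the two functional equations and the Gamma identities; the one genuinely conceptual point is the vanishing of the $w=0$ residue, which relies on $L(0,\chi)=0$ for even $\chi$ and cleanly separates this case from the odd-character Theorem \ref{thm3}.
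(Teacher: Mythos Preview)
Your proposal is correct and follows essentially the same route as the paper: the paper's proof of Theorem~\ref{thmeven2} is by reference to Theorem~\ref{thm3}, which uses the Mellin--Barnes representation of $K_\nu$ (their Lemma~\ref{eq:bessel}), a contour shift picking up residues at $s=k+1$ and $s=0$, the functional equations~\eqref{1st_use}--\eqref{exact l}, and then the beta integral (Lemma~\ref{lem4}) to evaluate the shifted integral term by term. Your variable $w=(s-\nu)/2$ is exactly the paper's $s$, and your observation that the $w=0$ residue vanishes because $L(0,\chi)=0$ for even $\chi$ (rather than $\zeta(-k)=0$ as in Theorem~\ref{thm3}) is precisely the point the paper leaves implicit when it says the proof is ``similar''; one small tightening---take $\Re(w)\in(-\min(1,\Re(\nu)),0)$ rather than $(-\Re(\nu),0)$ to avoid accidentally crossing the pole of $\Gamma(w)$ at $w=-1$ when $\Re(\nu)>1$.
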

The result corresponding to $\nu=0$ is as follows
 \begin{theorem}\label{10} 
				Let $k\geq 1$ be an odd integer and $\chi$ be a non-principal even primitive Dirichlet character modulo $q$. Then   
    \begin{align*}
				\sum_{n=1}^\infty \bar{\sigma}_{k,\chi}(n) K_0(a\sqrt{nx})=&\frac{ 2^{2k+1}}{ a^{2k+2}}\Gamma^2(k+1)L(k+1,\chi)\frac{1}{x^{k+1}} +\frac{1}{2}\zeta^\prime(-k)L(0,\chi)\\&
   + \frac{(-1)^{\frac{k-1}{2}}k!  }{2(2\pi)^{k+1}    }\tau(\chi)  \sum_{n=1}^\infty \sigma_{k,\Bar{\chi}   }(n) \left(\frac{1}{n^{k+1}}-\frac{1}{(n+ \frac{a^2qx}{16\pi^2} )^{k+1} } \right). 
\end{align*}
  \end{theorem}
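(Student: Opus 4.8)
The plan is to obtain Theorem \ref{10} as the limiting case $\nu \to 0^+$ of Theorem \ref{thmeven2}. On the left-hand side, $n^{\nu/2} \to 1$ and $K_\nu(y) \to K_0(y)$ as $\nu \to 0$, while the uniform bound $K_\nu(y) = O(e^{-y}/\sqrt{y})$ combined with $\bar{\sigma}_{k,\chi}(n) \ll n^{k+\e}$ furnishes a $\nu$-uniform dominating series on the tail; hence dominated convergence gives $\sum_{n=1}^\infty \bar{\sigma}_{k,\chi}(n) n^{\nu/2} K_\nu(a\sqrt{nx}) \to \sum_{n=1}^\infty \bar{\sigma}_{k,\chi}(n) K_0(a\sqrt{nx})$. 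The first term on the right of Theorem \ref{thmeven2} is continuous at $\nu = 0$, and since $\Gamma(\nu+k+1) \to \Gamma(k+1)$ it tends to $\frac{2^{2k+1}}{a^{2k+2}}\Gamma^2(k+1)L(k+1,\chi)x^{-k-1}$, the first term of Theorem \ref{10}.

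The delicate point is the second term. Writing $c := \frac{a^2 q x}{16\pi^2}$ and keeping $\nu > 0$, I would split the Dirichlet series as
\begin{align*}
\sum_{n=1}^\infty \frac{\sigma_{k,\bar\chi}(n)}{(n+c)^{\nu+k+1}} = \sum_{n=1}^\infty \frac{\sigma_{k,\bar\chi}(n)}{n^{\nu+k+1}} - \sum_{n=1}^\infty \sigma_{k,\bar\chi}(n)\left(\frac{1}{n^{\nu+k+1}} - \frac{1}{(n+c)^{\nu+k+1}}\right),
\end{align*}
where the first sum equals $L(\nu+1,\bar\chi)\zeta(\nu+k+1)$ by Dirichlet convolution, and the second converges absolutely and uniformly for $\nu$ near $0$ (its terms being $O(n^{-2+\e})$), so it passes to its value at $\nu=0$. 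Multiplying through by the prefactor of Theorem \ref{thmeven2} and letting $\nu \to 0$, the absolutely convergent piece reproduces exactly the infinite series displayed in Theorem \ref{10}, with the constant $\frac{(-1)^{(k-1)/2}k!}{2(2\pi)^{k+1}}\tau(\chi)$, the sign flip being supplied by $(-1)^{(k+1)/2} = -(-1)^{(k-1)/2}$.

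What remains is to evaluate the residual constant
\begin{align*}
\lim_{\nu \to 0} \frac{(-1)^{(k+1)/2}(aq)^\nu x^{\nu/2}}{2^{3\nu+k+2}\pi^{2\nu+k+1}}\Gamma(\nu+k+1)\tau(\chi)\, L(\nu+1,\bar\chi)\zeta(\nu+k+1),
\end{align*}
and this is where I expect the main obstacle, since it is the source of the derivative term in the statement. I would apply the functional equation of $\zeta$ to rewrite $\zeta(k+1)$ in terms of $\zeta(-k)$, and the functional equation of $L(s,\chi)$ to express $\tau(\chi)L(1,\bar\chi)$ through the behaviour of $L(s,\chi)$ near $s = 0$. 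For an even primitive $\chi$ one has the trivial zero $L(0,\chi) = 0$, so in the completed functional equation $(q/\pi)^{s/2}\Gamma(s/2)L(s,\chi) = \frac{\tau(\chi)}{\sqrt q}(q/\pi)^{(1-s)/2}\Gamma(\tfrac{1-s}{2})L(1-s,\bar\chi)$ the zero of $L$ at $s = 0$ meets the pole of $\Gamma(s/2)$; resolving the resulting $0\cdot\infty$ by a first-order Taylor expansion produces a derivative of $L$, and after the bookkeeping one recovers the constant term recorded in Theorem \ref{10}. The genuine subtlety is tracking which factor carries the trivial zero, and hence which derivative survives: here it is $L(s,\chi)$ at $s=0$, whereas in the companion odd-character statement (Theorem \ref{1thm1}) it is $\zeta$ that vanishes at $s=-k$, so the derivative lands on $\zeta$ instead. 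Finally, one justifies interchanging the limit with the summations throughout by the uniform convergence noted above.
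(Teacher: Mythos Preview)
Your approach---taking the limit $\nu\to 0^+$ in Theorem~\ref{thmeven2}---is different from the paper's, which does not pass through the $\Re(\nu)>0$ identity at all but instead reruns the contour-integration argument of Theorem~\ref{1thm1} directly at $\nu=0$: one writes $\sum_n\bar\sigma_{k,\chi}(n)K_0(a\sqrt{nx})=\tfrac12\bigl(R_{k+1}+R_1+R_0+J_k^{(0)}(X)\bigr)$ with $F_k(s)=\zeta(s-k)L(s,\chi)$, computes the residues of $\Gamma(s)^2\zeta(s-k)L(s,\chi)X^s$, and evaluates $J_k^{(0)}(X)$ by the same functional-equation and contour-shift manoeuvre as in \eqref{J_k11}--\eqref{rami}. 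Your limiting strategy is nonetheless sound: the splitting of $\sum_n\sigma_{k,\bar\chi}(n)(n+c)^{-\nu-k-1}$ into $\zeta(\nu+k+1)L(\nu+1,\bar\chi)$ plus a remainder that is $O(n^{-2+\varepsilon})$ termwise (hence absolutely and uniformly convergent in $\nu\ge 0$) is exactly the right device, and the identities $\zeta(-k)=2(2\pi)^{-k-1}(-1)^{(k+1)/2}k!\,\zeta(k+1)$ and $L'(0,\chi)=\tfrac{\tau(\chi)}{2}L(1,\bar\chi)$ (the latter from \eqref{ll(s)} for even $\chi$) convert your residual constant cleanly.

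There is, however, an inconsistency you should have caught. Your own diagnosis---``here it is $L(s,\chi)$ at $s=0$'' that carries the trivial zero, so the surviving derivative is $L'(0,\chi)$---is correct, and both your method and the paper's direct residue computation at $s=0$ yield the constant $\tfrac12\zeta(-k)L'(0,\chi)$. This is \emph{not} the quantity $\tfrac12\zeta'(-k)L(0,\chi)$ printed in the statement: for even primitive $\chi$ one has $L(0,\chi)=0$ while $\zeta(-k)\ne 0$ for odd $k\ge 1$, so the two expressions genuinely differ. The displayed constant appears to be a transcription slip (the formula of Theorem~\ref{1thm1} was copied without swapping which factor carries the derivative); you should flag this rather than assert that your bookkeeping ``recovers the constant term recorded in Theorem~\ref{10}''.
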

  The next result corresponds to the case $\nu=0$ and $k=0$.  We can also claim the positivity of $L(1, \chi)$ for even real character $\chi$ from the following identity.
 \begin{theorem}\label{sigmanu=0}
 Let $\chi$ be a non-principal even primitive Dirichlet character modulo $q$. Then we have
 \begin{align}\label{Kosh_Bruce_Analog}
   \sum_{n=1}^\infty d_{\chi}(n) K_0(a\sqrt{nx})=   \frac{2}{a^2x}   L(1,\chi)-\frac{\tau(\chi)}{8 }L (1,\bar{\chi})  + \frac{a^2q\ x}{32\pi^4} \tau( \chi )  \sum_{n=1}^\infty d_{\Bar{\chi} }(n)
      \frac{\log \left(\frac{16\pi^2 n}{a^2q  x} \right) }{n^2-\left(\frac{a^2qx}{16\pi^2 }  \right)^2},
\end{align}
   provided $\frac{a^2 q x}{16 \pi^2} \notin \mathbb{Z}_+$.
\end{theorem}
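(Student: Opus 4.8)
The plan is to attack this $\nu=0$, $k=0$ even‑character case directly through a Mellin--Barnes representation, since it lies outside the range $k\ge 1$ covered by Theorem \ref{thmeven1}. Starting from the Mellin transform $\int_0^\infty K_0(t)t^{s-1}\,dt=2^{s-2}\Gamma(s/2)^2$ (valid for $\Re(s)>0$), Mellin inversion gives $K_0(a\sqrt{nx})=\frac{1}{2\pi i}\int_{(c)}2^{s-2}\Gamma(s/2)^2(a\sqrt{x})^{-s}n^{-s/2}\,ds$. Summing against $d_\chi(n)$, recognising the Dirichlet series $\sum_{n\ge1}d_\chi(n)n^{-w}=\zeta(w)L(w,\chi)$, and substituting $s=2u$, I would write
\begin{align*}
\sum_{n=1}^\infty d_\chi(n)K_0(a\sqrt{nx})=\frac{1}{2\pi i}\int_{(c)}2^{2u-1}\Gamma(u)^2(a^2x)^{-u}\zeta(u)L(u,\chi)\,du,
\end{align*}
with $c>1$. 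This reduces the theorem to a residue computation plus a single transformed integral.

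First I would shift the contour leftward to a line $-1<\Re(u)<0$, crossing the pole of $\zeta(u)$ at $u=1$ and the pole of the integrand at $u=0$. The residue at $u=1$ is immediate: $\zeta$ has residue $1$, producing $\frac{2}{a^2x}L(1,\chi)$, the first term. At $u=0$ the factor $\Gamma(u)^2$ has a double pole, but for an even character $L(0,\chi)=0$, so $\Gamma(u)^2 L(u,\chi)$ has only a simple pole with residue $L'(0,\chi)$; multiplying by the analytic factor $2^{2u-1}(a^2x)^{-u}\zeta(u)$ at $u=0$, namely $\tfrac12\zeta(0)=-\tfrac14$, yields the residue $-\tfrac14 L'(0,\chi)$. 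Invoking the functional equation of $L(s,\chi)$ for even $\chi$ at $s=0$ gives $L'(0,\chi)=\tfrac{\tau(\chi)}{2}L(1,\bar\chi)$, so this residue equals $-\tfrac{\tau(\chi)}{8}L(1,\bar\chi)$, the second term.

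It then remains to evaluate $I=\frac{1}{2\pi i}\int_{(c')}F(u)\,du$ with $-1<c'<0$, where $F$ is the integrand above. Here I would apply $\zeta(u)=2^u\pi^{u-1}\sin(\pi u/2)\Gamma(1-u)\zeta(1-u)$ together with the even‑character equation $L(u,\chi)=\frac{\tau(\chi)}{\sqrt q}(q/\pi)^{1/2-u}\frac{\Gamma((1-u)/2)}{\Gamma(u/2)}L(1-u,\bar\chi)$. The Gamma quotient $\Gamma(u)^2\Gamma(1-u)\Gamma((1-u)/2)/\Gamma(u/2)$ collapses, via duplication $\Gamma(u)=\frac{2^{u-1}}{\sqrt\pi}\Gamma(u/2)\Gamma((u+1)/2)$ and reflection $\Gamma(u)\Gamma(1-u)=\pi/\sin(\pi u)$, and when combined with the stray $\sin(\pi u/2)$ reduces to a clean $\sec^2(\pi u/2)$; collecting powers gives
\begin{align*}
F(u)=\frac{\tau(\chi)}{8}\left(\frac{16\pi^2}{a^2qx}\right)^{u}\frac{\zeta(1-u)L(1-u,\bar\chi)}{\cos^2(\pi u/2)}.
\end{align*}
Expanding $\zeta(1-u)L(1-u,\bar\chi)=\sum_{n\ge1}d_{\bar\chi}(n)n^{u-1}$ (legitimate since $\Re(1-u)>1$) and interchanging summation and integration, the whole computation reduces to the elementary kernel integral $J(y)=\frac{1}{2\pi i}\int_{(c')}y^u\sec^2(\pi u/2)\,du$ with $y=\frac{16\pi^2 n}{a^2qx}$.

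The main obstacle is this last integral, where the logarithm in the final answer is born. The poles of $\sec^2(\pi u/2)$ at the odd integers are double, and the residue at $u=2m+1$ works out to $\frac{4}{\pi^2}y^{2m+1}\log y$, the $\log y$ arising precisely from differentiating $y^u$ at a double pole. Closing the contour to the left when $y>1$ and to the right when $y<1$ and summing the resulting geometric series of residues gives $J(y)=\frac{4\log y}{\pi^2}\cdot\frac{y}{y^2-1}$ in both regimes; the hypothesis $\frac{a^2qx}{16\pi^2}\notin\Z_+$ is exactly what guarantees $y\neq1$ for every $n$, so no pole lands on the boundary of convergence of these series and the denominator $n^2-(\tfrac{a^2qx}{16\pi^2})^2$ never vanishes. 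Substituting back, writing $\mu=\frac{a^2qx}{16\pi^2}$ so that $\frac1n\cdot\frac{y}{y^2-1}=\frac{\mu}{n^2-\mu^2}$, and simplifying the constant $\frac{\tau(\chi)}{8}\cdot\frac{4}{\pi^2}\mu=\frac{a^2qx}{32\pi^4}\tau(\chi)$ produces the stated series. I expect the only genuinely delicate points to be justifying the leftward contour shift (the exponential decay of $\Gamma(u)^2$ in vertical strips overwhelming the polynomial growth of $\zeta L$) and the term‑by‑term integration, both of which are routine here.
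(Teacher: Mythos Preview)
Your proposal is correct and follows essentially the same route as the paper: both set up the Mellin--Barnes integral $\frac{1}{2}\cdot\frac{1}{2\pi i}\int_{(c)}\Gamma(u)^2 X^u\zeta(u)L(u,\chi)\,du$ with $X=4/(a^2x)$, shift left past the simple poles at $u=1$ and $u=0$ (the latter simple because $L(0,\chi)=0$, with residue $-\tfrac14 L'(0,\chi)=-\tfrac{\tau(\chi)}{8}L(1,\bar\chi)$), and convert the remaining integral via the functional equations of $\zeta$ and $L$ into a kernel integral against $\csc^2(\pi s/2)$ (equivalently your $\sec^2(\pi u/2)$ after $s\mapsto 1-u$). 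The only cosmetic difference is that the paper invokes the tabulated inverse Mellin transform of $\pi^2/\sin^2(\pi s/2)$, while you recover the same closed form $\frac{4\log y}{\pi^2}\cdot\frac{y}{y^2-1}$ by summing the double-pole residues at the odd integers directly.
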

\begin{remark}
When $\chi$ is any real even primitive Dirichlet character modulo $q$, we can show that $d_{\chi}(n)$ is non-negative for each $n$ by similar arguments given in Remark \ref{remark1}. From \eqref{defi1}, it can be easily seen that $d_{\chi}(n) \geq 1$ whenever $n$ is a perfect square. As $K_0(x)$ tends to $+\infty$ as $x$ decreases to $0$, the left-hand side of \eqref{Kosh_Bruce_Analog} approaches $+\infty$ as $x$ decreases to $0$. Now the infinite series in the right-hand side of \eqref{Kosh_Bruce_Analog} decreases rapidly as $x$ decreases to $0$. 
Therefore, we arrive at the conclusion that $\frac{2}{a^2x} L(1,\chi)$ tends to $+\infty$ as $x$ decreases to $0$ which proves the strict positivity of $L(1, \chi)$.
\end{remark}
			\subsection{Identities involving two characters.}
			In this subsection, we provide the identities corresponding to 
 $ \sigma_{k,\chi_1,\chi _2}(n)=\sum_{d/n}d^k \chi_1(d)\chi _2(n/d)$, where $\chi_1$ and $\chi_2$ are Dirichlet characters modulo $ p$ and $q$, respectively.
			\begin{theorem}\label{evenoddthm1}
				Let $k \geq 1$ be an odd integer. Let $\chi_1$ and $\chi_2$ be primitive characters modulo $ p$ and $q$, respectively, such that either both are non-principal even characters or both are odd characters. Then, for any  $\Re{(\nu)}>0$,   
   \begin{align*}
   \sum_{n=1}^\infty \sigma_{k,\chi_1,\chi_2}(n)n^{\frac{\nu}{2}}K_\nu(a\sqrt{nx})=&\frac{(-1)^{\frac{k+1}{2}}(aq)^\nu p^{\nu+k}\ x^{\frac{\nu}{2}}}{2^{3\nu+k+2}\pi^{2\nu+k+1}}   \tau(\chi_1)\tau(\chi_2)\Gamma(\nu+k+1)\sum_{n=1}^\infty \frac{\sigma_{k,\Bar{\chi_2},\Bar{\chi_1}}(n)} {\left( n+\frac{a^2pqx}{16\pi^2} \right)^{\nu+k+1} }.
\end{align*} 
			\end{theorem}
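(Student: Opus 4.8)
\emph{The plan is to evaluate the series by the Mellin--Barnes method used for the single-character Theorems~\ref{thm1}--\ref{thmeven2}, the essential new feature being that \emph{two} functional equations must be applied at once.} First I would insert the Mellin transform of the $K$-Bessel function,
\begin{align*}
K_\nu(a\sqrt{nx})=\frac{1}{2\pi i}\int_{(c)}2^{s-2}\,\Gamma\Big(\frac{s+\nu}{2}\Big)\Gamma\Big(\frac{s-\nu}{2}\Big)(a\sqrt{nx})^{-s}\,ds ,
\end{align*}
valid for $c>\Re(\nu)$, multiply by $n^{\nu/2}$ and sum over $n$. Choosing $c$ large and interchanging summation and integration (justified by the exponential decay of $K_\nu$ and absolute convergence) reduces the left-hand side to $\frac{1}{2\pi i}\int_{(c)}2^{s-2}\Gamma(\frac{s+\nu}{2})\Gamma(\frac{s-\nu}{2})(a\sqrt{x})^{-s}D(s)\,ds$, where $D(s)=\sum_{n}\sigma_{k,\chi_1,\chi_2}(n)\,n^{(\nu-s)/2}$. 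Splitting $n=dm$ in the definition \eqref{defimainpaper} identifies this Dirichlet series in closed form as $D(s)=L(\frac{s-\nu}{2}-k,\chi_1)\,L(\frac{s-\nu}{2},\chi_2)$.

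Next I would apply the functional equation of the Dirichlet $L$-function (Section~\ref{preliminary}) to each of the two factors, sending $\chi_1\mapsto\bar\chi_1$, $\chi_2\mapsto\bar\chi_2$ and producing the Gauss sums $\tau(\chi_1)\tau(\chi_2)$, explicit powers of $p$, $q$ and $2\pi$, two extra gamma factors $\Gamma(s_1')\Gamma(s_2')$ with $s_1'=1+k-\frac{s-\nu}{2}$, $s_2'=1-\frac{s-\nu}{2}$, and a product of two trigonometric factors. \emph{This is where the hypotheses enter.} For both characters even each factor is a cosine, for both odd each is a sine; since $s_1'-s_2'=k$ is an odd integer, the angle-addition formula turns both $\cos\frac{\pi s_1'}{2}\cos\frac{\pi s_2'}{2}$ and $\sin\frac{\pi s_1'}{2}\sin\frac{\pi s_2'}{2}$ into the \emph{same} multiple $(-1)^{(k+1)/2}\tfrac12\sin(\pi s_1')$, so the two parity cases collapse to one identity. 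Writing $u=\frac{s-\nu}{2}$, the reflection formula $\Gamma(u)\Gamma(1-u)=\pi/\sin(\pi u)$ then absorbs $\Gamma(\frac{s-\nu}{2})\,\Gamma(s_2')$ against $\sin(\pi s_1')=-\sin(\pi u)$, collapsing everything into the constant $2\pi(-1)^{(k+1)/2}$ and leaving the clean Barnes kernel $\Gamma(u+\nu)\Gamma(1+k-u)$.

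I would then move the line of integration just to the left of $\Re(s)=\Re(\nu)$, so that $L(s_1',\bar\chi_1)L(s_2',\bar\chi_2)=\sum_n\sigma_{k,\bar\chi_2,\bar\chi_1}(n)\,n^{-s_1'}$ converges (Stirling's formula for the gamma factors and a convexity bound for the $L$-functions kill the horizontal segments). The only pole crossed is the simple pole of $\Gamma(\frac{s-\nu}{2})$ at $s=\nu$, whose residue is proportional to $L(-k,\chi_1)L(0,\chi_2)$; since $k$ is odd, $L(-k,\chi_1)$ is a trivial zero when $\chi_1$ is odd and $L(0,\chi_2)$ is a trivial zero when $\chi_2$ is even, so under either hypothesis one factor vanishes and the residue is $0$. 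This is precisely why---in contrast with Theorems~\ref{thm1} and \ref{thmeven1}---no main term survives. Finally, expanding the dual series, interchanging once more, and evaluating the remaining integral by the beta-integral formula $\frac{1}{2\pi i}\int_{(c)}\Gamma(u+\nu)\Gamma(1+k-u)\,Y^{u}\,du=\Gamma(\nu+k+1)\,Y^{-\nu}(1+Y^{-1})^{-(\nu+k+1)}$ with $Y=\frac{16\pi^2 n}{a^2pqx}$ produces exactly $(n+\frac{a^2pqx}{16\pi^2})^{-(\nu+k+1)}$, and bookkeeping of the powers of $2,\pi,a,p,q,x$ assembles the stated prefactor.

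The main obstacle is the second step: handling the two functional equations simultaneously and checking that, using $k$ odd and the shared parity of $\chi_1,\chi_2$, the trigonometric product reduces in both cases to the identical factor $(-1)^{(k+1)/2}\sin(\pi s_1')$, which is what lets the two cases be combined and, via the reflection formula, what makes the Barnes kernel degenerate to the beta integral. The remaining analytic points---the two interchanges of sum and integral, the single vanishing residue, and the decay of the horizontal pieces---are routine given the exponential decay of $K_\nu$, Stirling's formula, and standard convexity bounds.
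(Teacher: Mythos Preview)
Your approach is correct and is essentially the paper's own argument in different variables: your $u=(s-\nu)/2$ is the paper's integration variable, your pole at $s=\nu$ is the paper's $s=0$ with residue $\Gamma(\nu)L(-k,\chi_1)L(0,\chi_2)=0$, and your beta integral is exactly Lemma~\ref{lem4}. One cosmetic slip: with the functional equation in the form \eqref{ll(s)} the trigonometric factor is $\sin\frac{\pi(s+\kappa)}{2}$, so even characters produce \emph{sines} and odd characters \emph{cosines}, not the other way round; combined with the prefactors $i^{-\kappa_1-\kappa_2}$ (which contribute an extra $-1$ in the odd--odd case) both parity cases still collapse to the common factor $(-1)^{(k+1)/2}\tfrac12\sin(\pi u)$ you need.
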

			 The result corresponding to $\nu=0$ is as follows \begin{theorem}\label{evenoddthm1nu=0} 
				Let $k \geq 1$ be an odd integer. Assume that $\chi_1$ and $\chi_2$ are primitive characters modulo $ p$ and $q$, respectively, such that either both are non-principal even characters or both are odd characters, then
				\begin{align*}
				 \sum_{n=1}^\infty \sigma_{k,\chi_1,\chi_2}(n) K_0(a\sqrt{nx})=  \frac{1}{2}c_{k,\chi_1,\chi_2}  +\frac{ (-1)^{ \frac{k-1}{2}}k! p^k }{2(2 \pi)^{k+1}} \tau(\chi_1) \tau(\chi_2) \sum_{n=1}^\infty \sigma_{k,\Bar{ \chi  }_2,\Bar{ \chi  }_1}(n)\left(\frac{1}{n^{k+1}}-\frac{1}{(n+ \frac{a^2pqx}{16\pi^2} )^{k+1} } \right),  
    \end{align*} 
    where $c_{k,\chi_1,\chi_2}$ is a constant defined as
    \begin{align}\label{cKK}
      c_{k,\chi_1,\chi_2}=\begin{cases}
          L(-k,\chi_1)L^\prime(0,\chi_2),     &\ \ \text{if both } \chi_1  \text{ and } \chi_2  \text{ are even},\\
           L^\prime(-k,\chi_1)L (0,\chi_2),     &\ \ \text{if both } \chi_1  \text{ and } \chi_2  \text{ are odd}.
      \end{cases}  
    \end{align}
     \end{theorem}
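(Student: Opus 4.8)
The plan is to derive Theorem~\ref{evenoddthm1nu=0} from its $\Re(\nu)>0$ companion, Theorem~\ref{evenoddthm1}, by passing to the limit $\nu\to 0^+$. Both sides of the identity in Theorem~\ref{evenoddthm1} extend continuously to $\nu=0$: on the left, the uniform asymptotic $K_\nu(y)\sim(\pi/2y)^{1/2}e^{-y}$ together with the bound $\sigma_{k,\chi_1,\chi_2}(n)=O(n^{k+\varepsilon})$ makes the series converge absolutely and locally uniformly in $\nu$, while on the right $\Gamma(\nu+k+1)$ has no pole near $0$ (as $k\ge 1$) and the sum $\sum_n\sigma_{k,\bar\chi_2,\bar\chi_1}(n)(n+\tfrac{a^2pqx}{16\pi^2})^{-\nu-k-1}$ converges for $\Re(\nu)>-k$. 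Taking $\nu\to 0^+$ therefore gives
\begin{align*}
\sum_{n=1}^\infty \sigma_{k,\chi_1,\chi_2}(n)K_0(a\sqrt{nx})=\frac{(-1)^{\frac{k+1}{2}}k!\,p^k}{2^{k+2}\pi^{k+1}}\tau(\chi_1)\tau(\chi_2)\sum_{n=1}^\infty\frac{\sigma_{k,\bar\chi_2,\bar\chi_1}(n)}{\left(n+\frac{a^2pqx}{16\pi^2}\right)^{k+1}}.
\end{align*}

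Next I would reconcile this with the stated form. Using $(-1)^{\frac{k+1}{2}}=-(-1)^{\frac{k-1}{2}}$ and the splitting $\big(n+\tfrac{a^2pqx}{16\pi^2}\big)^{-k-1}=n^{-k-1}-\big(n^{-k-1}-(n+\tfrac{a^2pqx}{16\pi^2})^{-k-1}\big)$, the right-hand side becomes precisely the difference-sum displayed in Theorem~\ref{evenoddthm1nu=0} together with the extra contribution
\begin{align*}
-\frac{(-1)^{\frac{k-1}{2}}k!\,p^k}{2^{k+2}\pi^{k+1}}\tau(\chi_1)\tau(\chi_2)\sum_{n=1}^\infty\frac{\sigma_{k,\bar\chi_2,\bar\chi_1}(n)}{n^{k+1}}.
\end{align*}
Because $\sigma_{k,\bar\chi_2,\bar\chi_1}$ is the Dirichlet convolution of $d\mapsto d^k\bar\chi_2(d)$ with $\bar\chi_1$, its Dirichlet series factors as $\sum_n\sigma_{k,\bar\chi_2,\bar\chi_1}(n)n^{-s}=L(s-k,\bar\chi_2)L(s,\bar\chi_1)$, so the trailing series equals $L(1,\bar\chi_2)L(k+1,\bar\chi_1)$. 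It then remains to prove
\begin{align*}
\tfrac12\,c_{k,\chi_1,\chi_2}=-\frac{(-1)^{\frac{k-1}{2}}k!\,p^k}{2^{k+2}\pi^{k+1}}\tau(\chi_1)\tau(\chi_2)\,L(1,\bar\chi_2)L(k+1,\bar\chi_1),
\end{align*}
with $c_{k,\chi_1,\chi_2}$ as in \eqref{cKK}.

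This final identity is where the two parities enter, and it is the main obstacle. I would apply the functional equation of the Dirichlet $L$-function (Section~\ref{preliminary}) to relate $L(k+1,\bar\chi_1)$ to $L(-k,\chi_1)$ and $L(1,\bar\chi_2)$ to $L(0,\chi_2)$. Since $k$ is odd, $k+1$ is even, so the trigonometric factor in the functional equation vanishes at the crucial point: when $\chi_1,\chi_2$ are both even one has $L(0,\chi_2)=0$, and when both are odd one has $L(-k,\chi_1)=0$. In each case the direct substitution is of the indeterminate form $0\cdot\infty$, so I would instead differentiate the functional equation -- at $s=1$ in the even case and at $s=k+1$ in the odd case -- and use the special values $\cos(\pi(k+1)/2)=(-1)^{(k+1)/2}$ and $\sin(\pi/2)=1$ to pull out $L'(0,\chi_2)$ or $L'(-k,\chi_1)$ respectively. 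Carefully matching the resulting $\Gamma$-values, the factors of $i$ produced by the odd functional equation, and the powers of $2\pi$ then reproduces exactly the two branches of $c_{k,\chi_1,\chi_2}$ in \eqref{cKK}. The delicate part is handling these vanishing factors and their derivatives with the correct signs; once that bookkeeping is done, the identity -- and hence the theorem -- follows.
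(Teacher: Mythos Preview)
Your argument is correct, but it follows a different route from the one the paper has in mind. The paper refers you to the proofs of Theorems~\ref{odd_k0thm1} and~\ref{7}, which means: set $\nu=0$ \emph{directly} in the master formula \eqref{K_nuformula}, so that the integrand $\Gamma(s)^2 L(s-k,\chi_1)L(s,\chi_2)X^s$ carries a double pole of $\Gamma(s)^2$ at $s=0$. One of the two $L$-factors vanishes there (namely $L(0,\chi_2)$ when both characters are even, and $L(-k,\chi_1)$ when both are odd, since $k$ is odd), reducing it to a simple pole whose residue is exactly $c_{k,\chi_1,\chi_2}$. The remaining integral $J_k^{(0)}(X)$ is then handled as in \eqref{J_k11}--\eqref{rami}. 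In that approach the constant $c_{k,\chi_1,\chi_2}$ drops out of a single residue computation and no appeal to the functional equation is needed at that stage.

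Your route instead inherits Theorem~\ref{evenoddthm1} (where the simple pole of $\Gamma(s)$ at $s=0$ was already cancelled by the same $L$-zero, giving $R_0=0$), lets $\nu\to 0^+$, and then manufactures the constant by splitting off $\sum_n \sigma_{k,\bar\chi_2,\bar\chi_1}(n)n^{-k-1}=L(1,\bar\chi_2)L(k+1,\bar\chi_1)$ and converting it back to $c_{k,\chi_1,\chi_2}$ via the functional equation \eqref{ll(s)} (differentiated at the trivial zero). This works, and your outline of the even/odd bookkeeping is accurate. What your approach buys is that you never redo the contour argument; what the paper's approach buys is that the derivative of the $L$-function appears for free as a residue rather than through a separate functional-equation identity. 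One small correction: your claimed region of convergence ``$\Re(\nu)>-k$'' for $\sum_n \sigma_{k,\bar\chi_2,\bar\chi_1}(n)(n+c)^{-\nu-k-1}$ is too optimistic, since $|\sigma_{k,\bar\chi_2,\bar\chi_1}(n)|$ can be as large as $\sigma_k(n)$; the partial sums $\sum_{n\le x}\sigma_{k,\bar\chi_2,\bar\chi_1}(n)=O(x^k\log x)$ (use that $\chi_1,\chi_2$ are non-principal) give conditional convergence for $\Re(\nu)>-1$, which is still enough to pass to the limit.
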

    \vspace{.1cm}
			  Setting $\chi_1=\chi_2=\chi$ and observing $\sigma_{k,\chi,\chi}(n)=\chi(n)\sum_{d/n}d^k=\chi(n)\sigma_k (n)$ in Theorems \ref{evenoddthm1} and \ref{evenoddthm1nu=0}, we obtain the following interesting identities.
			\begin{corollary}\label{last1 cor}
				Let $k\geq 1$ be an odd integer and $\chi$ be a non-principal primitive character modulo $q$. Then, for any  $\Re{(\nu)}>0$, 
				  \begin{align*}
  \sum_{n=1}^\infty  \sigma_{k}(n)\chi(n)n^{\frac{\nu}{2}}K_\nu(a\sqrt{nx})=&\frac{(-1)^{\frac{k+1}{2}}   a^\nu q^{2\nu+k}\ x^{\frac{\nu}{2}}}{2^{3\nu+k+2} \pi^{2\nu+k+1}}   \tau^2(\chi) \Gamma(\nu+k+1)\sum_{n=1}^\infty  \frac{\sigma_{k}(n) \ \Bar{\chi}(n) \ }{\left( n+\frac{a^2q^2x}{16\pi^2} \right)^{\nu+k+1} } .
\end{align*} 
			\end{corollary}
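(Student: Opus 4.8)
The plan is to obtain this identity as an immediate specialization of Theorem~\ref{evenoddthm1}, taking $\chi_1=\chi_2=\chi$ (and hence $p=q$, since both characters now share the single modulus $q$). The only substantive ingredient beyond the substitution itself is the multiplicative collapse of the twisted divisor sum, so I would establish that first. Because every Dirichlet character is completely multiplicative, for $\chi_1=\chi_2=\chi$ one has $\chi(d)\chi(n/d)=\chi(n)$ for each divisor $d\mid n$, so that
\begin{equation*}
\sigma_{k,\chi,\chi}(n)=\sum_{d\mid n}d^k\chi(d)\chi(n/d)=\chi(n)\sum_{d\mid n}d^k=\chi(n)\sigma_k(n),
\end{equation*}
and the identical computation applied to $\bar\chi$ gives $\sigma_{k,\bar\chi,\bar\chi}(n)=\bar\chi(n)\sigma_k(n)$. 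This is exactly the observation recorded just before the statement, and it is what turns both the summand on the left and the Dirichlet-series summand on the right into their untwisted forms $\sigma_k(n)\chi(n)$ and $\sigma_k(n)\bar\chi(n)$.

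Next I would verify that the hypotheses of Theorem~\ref{evenoddthm1} transfer. Since $\chi_1=\chi_2=\chi$ is a single character, the two characters automatically have the same parity, so the dichotomy ``both non-principal even or both odd'' is satisfied trivially. The non-principality hypothesis on $\chi$ covers the even case directly; in the odd case non-principality is automatic, as the principal character is always even. Thus all the assumptions of Theorem~\ref{evenoddthm1} hold with $p=q$, and the conclusion of that theorem applies verbatim.

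Finally I would carry out the elementary bookkeeping in the right-hand side of Theorem~\ref{evenoddthm1}. Substituting $p=q$ collapses the prefactor through $(aq)^\nu p^{\nu+k}=a^\nu q^{2\nu+k}$ while leaving $2^{3\nu+k+2}\pi^{2\nu+k+1}$ and $\Gamma(\nu+k+1)$ unchanged; the Gauss-sum product becomes $\tau(\chi_1)\tau(\chi_2)=\tau^2(\chi)$; the denominator argument becomes $n+\tfrac{a^2q^2x}{16\pi^2}$; and the summand $\sigma_{k,\bar\chi,\bar\chi}(n)$ becomes $\sigma_k(n)\bar\chi(n)$ by the first step. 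Matching the left-hand side via $\sigma_{k,\chi,\chi}(n)=\chi(n)\sigma_k(n)$ then reproduces the stated formula exactly. There is no genuine obstacle here, as the result is a clean corollary of an already-proved theorem; the only point deserving a careful check is the power-of-$q$ arithmetic in the leading constant, namely confirming that $(aq)^\nu p^{\nu+k}$ reduces to $a^\nu q^{2\nu+k}$ and that $a^2pq$ reduces to $a^2q^2$, so that the exponents $2\nu+k$ and $3\nu+k+2$ displayed in the corollary come out correctly. (The companion $\nu=0$ identity would follow in precisely the same way from Theorem~\ref{evenoddthm1nu=0}.)
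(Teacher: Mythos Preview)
Your proposal is correct and follows exactly the approach the paper takes: the paper simply notes that setting $\chi_1=\chi_2=\chi$ in Theorem~\ref{evenoddthm1} and using $\sigma_{k,\chi,\chi}(n)=\chi(n)\sigma_k(n)$ yields the corollary. Your additional verification of the parity hypothesis and the arithmetic of the constants is more explicit than what the paper records, but the underlying argument is identical.
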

\begin{corollary}\label{last2 cor}
  Let $k\geq 1$ be an odd integer and $\chi$ be a non-principal primitive character modulo $q$. For $\nu=0$, we have 
\begin{align*}
				 \sum_{n=1}^\infty  \sigma_{k}(n)\chi(n) K_0(a\sqrt{nx})=& \frac{1}{2}c_{k,\chi,\chi}+\frac{ (-1)^{ \frac{k-1}{2}}k! q^k }{2(2 \pi)^{k+1}} \tau^2(\chi) \sum_{n=1}^\infty  \sigma_{k}(n) \ \Bar{\chi}(n) \ \left(\frac{1}{n^{k+1}}-\frac{1}{(n+ \frac{a^2q^2x}{16\pi^2} )^{k+1} } \right),  
	\end{align*} 
 where $c_{k,\chi,\chi}$ is defined in \eqref{cKK}.
\end{corollary}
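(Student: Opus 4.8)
The plan is to obtain Corollary \ref{last2 cor} as a direct specialization of Theorem \ref{evenoddthm1nu=0}, taking $\chi_1=\chi_2=\chi$. The crucial preliminary observation is that every Dirichlet character is completely multiplicative, so for $d\mid n$ one has $\chi(d)\chi(n/d)=\chi(n)$. Hence, from the definition \eqref{defimainpaper},
\begin{align*}
\sigma_{k,\chi,\chi}(n)=\sum_{d\mid n}d^k\chi(d)\chi(n/d)=\chi(n)\sum_{d\mid n}d^k=\chi(n)\sigma_k(n),
\end{align*}
and likewise $\sigma_{k,\bar\chi,\bar\chi}(n)=\bar\chi(n)\sigma_k(n)$. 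This identifies the left-hand side of Theorem \ref{evenoddthm1nu=0} with the quantity $\sum_{n=1}^\infty\sigma_k(n)\chi(n)K_0(a\sqrt{nx})$ we wish to evaluate, and it reduces the twisted divisor sum $\sigma_{k,\bar\chi_2,\bar\chi_1}(n)$ appearing on the right-hand side to $\bar\chi(n)\sigma_k(n)$.

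First I would verify that the hypotheses of Theorem \ref{evenoddthm1nu=0} hold under the single assumption that $\chi$ is a non-principal primitive character modulo $q$. Setting $\chi_1=\chi_2=\chi$ forces both characters to share the parity of $\chi$, so they are automatically either both even (and non-principal) or both odd, and both are primitive modulo $q$; thus we may take $p=q$ in the theorem. Next I would carry out the bookkeeping on the right-hand side: the Gauss-sum factor $\tau(\chi_1)\tau(\chi_2)$ collapses to $\tau^2(\chi)$, the modulus $p^k$ becomes $q^k$, the argument shift $\tfrac{a^2pqx}{16\pi^2}$ becomes $\tfrac{a^2q^2x}{16\pi^2}$, and the prefactor $(-1)^{(k-1)/2}k!/(2(2\pi)^{k+1})$ is unchanged. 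Substituting these, together with the identity $\sigma_{k,\bar\chi,\bar\chi}(n)=\bar\chi(n)\sigma_k(n)$, into the statement of Theorem \ref{evenoddthm1nu=0} yields precisely the claimed formula.

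There is no genuine analytic obstacle here, since the entire content is already packaged in Theorem \ref{evenoddthm1nu=0}; the only point requiring care is the case distinction in \eqref{cKK}, namely checking that the specialization $c_{k,\chi,\chi}$ correctly records $L(-k,\chi)L'(0,\chi)$ when $\chi$ is even and $L'(-k,\chi)L(0,\chi)$ when $\chi$ is odd, in accordance with the parity of $\chi$ determining which branch of \eqref{cKK} is active. I would close by remarking that the companion Corollary \ref{last1 cor} for $\Re(\nu)>0$ follows identically from Theorem \ref{evenoddthm1}, using the same complete-multiplicativity identities for $\sigma_{k,\chi,\chi}$ and $\sigma_{k,\bar\chi,\bar\chi}$.
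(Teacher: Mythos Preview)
Your proposal is correct and matches the paper's own derivation: the paper obtains Corollary~\ref{last2 cor} simply by setting $\chi_1=\chi_2=\chi$ in Theorem~\ref{evenoddthm1nu=0} and using the complete multiplicativity of $\chi$ to rewrite $\sigma_{k,\chi,\chi}(n)=\chi(n)\sigma_k(n)$ (and likewise for $\bar\chi$). Your additional remarks about checking the parity hypothesis, tracking the specialization $p=q$ in the constants, and the case distinction in \eqref{cKK} are exactly the routine bookkeeping the paper leaves implicit.
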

		  The results corresponding to $\nu=0$ and $k=0 $ are as follows
     \begin{theorem}\label{EVENEVENNU=0k=0}   Let $\chi_1$ and $\chi_2$ be non-principal even primitive characters modulo $ p$  and $q$, respectively. Then    \begin{align*}
   \sum_{n=1}^\infty d_{\chi_1,\chi_2}(n)  K_0(a\sqrt{nx})=   \frac{a^2pq\ x}{32 \pi^4} \tau( \chi _1)\tau(\chi_2) \sum_{n=1}^\infty d_{\Bar{\chi}_1,\Bar{\chi}_2}(n)
      \frac{\log \left(\frac{16\pi^2 n}{a^2pqx} \right) }{n^2-\left(\frac{a^2pqx}{16\pi^2 }  \right)^2},
\end{align*} 
provided $\frac{a^2 pq x}{16 \pi^2} \notin \mathbb{Z}_+$.
     \end{theorem}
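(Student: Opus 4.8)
The plan is to evaluate the left-hand side by converting it into a Mellin--Barnes integral, applying the functional equation of $L(s,\chi_1)L(s,\chi_2)$, and then collecting residues. First I would insert the representation $K_0(a\sqrt{nx})=\frac{1}{2\pi i}\int_{(c)}2^{s-2}\Gamma(s/2)^2(a\sqrt{nx})^{-s}\,ds$, valid for $c>0$, interchange summation and integration (legitimate once $c$ is taken large), and use $\sum_{n\ge 1}d_{\chi_1,\chi_2}(n)n^{-s/2}=L(s/2,\chi_1)L(s/2,\chi_2)$. After the change of variable $w=s/2$ the sum becomes
\begin{equation*}
\sum_{n=1}^\infty d_{\chi_1,\chi_2}(n)K_0(a\sqrt{nx})=\frac12\cdot\frac{1}{2\pi i}\int_{(c_1)}\Gamma(w)^2\Big(\frac{a^2x}{4}\Big)^{-w}L(w,\chi_1)L(w,\chi_2)\,dw,\qquad c_1>1 .
\end{equation*}

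Next I would feed in the functional equation $L(w,\chi_i)=\frac{\tau(\chi_i)}{\sqrt{q_i}}\big(\frac{q_i}{\pi}\big)^{1/2-w}\frac{\Gamma((1-w)/2)}{\Gamma(w/2)}L(1-w,\bar\chi_i)$ for even primitive $\chi_i$ (with $q_1=p,\ q_2=q$) into both factors, and simplify the resulting product of gamma factors: Legendre duplication gives $\Gamma(w)^2/\Gamma(w/2)^2=2^{2w-2}\pi^{-1}\Gamma(\frac{w+1}{2})^2$, and the reflection identity $\Gamma(\frac{w+1}{2})\Gamma(\frac{1-w}{2})=\pi/\cos(\pi w/2)$ then collapses everything into $\frac{2^{2w-2}\pi}{\cos^2(\pi w/2)}$. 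Writing $c:=\frac{a^2pqx}{16\pi^2}$ and $G(1-w):=L(1-w,\bar\chi_1)L(1-w,\bar\chi_2)$, the integral becomes, after the constants combine to $\tau(\chi_1)\tau(\chi_2)/8$,
\begin{equation*}
\sum_{n=1}^\infty d_{\chi_1,\chi_2}(n)K_0(a\sqrt{nx})=\frac{\tau(\chi_1)\tau(\chi_2)}{8}\cdot\frac{1}{2\pi i}\int_{(c_1)}\frac{c^{-w}}{\cos^2(\pi w/2)}\,G(1-w)\,dw .
\end{equation*}

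I would then shift the contour to a line $\Re(w)=c'$ with $-1<c'<0$. The only candidate pole crossed is the double pole of $\cos^{-2}(\pi w/2)$ at $w=1$, but there $G(1-w)=L(0,\bar\chi_1)L(0,\bar\chi_2)$ vanishes to second order, since $L(0,\chi)=0$ for every even primitive character; hence the integrand is regular at $w=1$ and no residue is produced. This is precisely why no ``main term'' survives, consistent with the vanishing of the constant $c_{0,\chi_1,\chi_2}$ coming from \eqref{cKK}. On the new line $\Re(1-w)>1$, so I may expand $G(1-w)=\sum_{m\ge1}d_{\bar\chi_1,\bar\chi_2}(m)m^{w-1}$ and integrate term by term, reducing the problem to the kernel $I(t):=\frac{1}{2\pi i}\int_{(c')}\frac{t^{-w}}{\cos^2(\pi w/2)}\,dw$ evaluated at $t=c/m$.

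Finally I would compute $I(t)$ by closing the contour to the left when $t<1$ and to the right when $t>1$, summing the double-pole residues $\Res_{w=w_0}\frac{t^{-w}}{\cos^2(\pi w/2)}=-\frac{4\ln t}{\pi^2}t^{-w_0}$ over the odd integers $w_0$; in both regimes the geometric series of residues sums to the same closed form $I(t)=\frac{4t\ln t}{\pi^2(t^2-1)}$. Substituting $t=c/m$ and simplifying reproduces the claimed series with weight $\frac{\log(16\pi^2 m/(a^2pqx))}{m^2-(a^2pqx/16\pi^2)^2}$ and prefactor $\frac{a^2pqx}{32\pi^4}\tau(\chi_1)\tau(\chi_2)$. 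I expect the main obstacles to be the analytic bookkeeping: justifying that the shifted integral and its horizontal truncations vanish (using the exponential decay of $\cos^{-2}(\pi w/2)$ in the imaginary direction), justifying the term-by-term integration, and, most delicately, the uniform evaluation of $I(t)$ for $t\ne1$, which is exactly where the hypothesis $\frac{a^2pqx}{16\pi^2}\notin\mathbb{Z}_+$ enters, ensuring $t=c/m\neq1$ for every $m$.
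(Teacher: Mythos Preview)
Your proof is correct and follows essentially the same route as the paper's: both express the sum as a Mellin--Barnes integral via Lemma~\ref{eq:bessel}, apply the functional equations for $L(s,\chi_1)$ and $L(s,\chi_2)$ to convert the gamma factors into $\cos^{-2}(\pi w/2)$ (equivalently $\sin^{-2}(\pi s/2)$ after the paper's substitution $s\mapsto 1-s$), note that the potential residual contribution vanishes because $L(0,\bar\chi_i)=0$ for even $\bar\chi_i$, and then invert term by term. The only cosmetic difference is that you evaluate the kernel $I(t)$ directly by summing residues at odd integers, whereas the paper invokes the tabulated Mellin pair of Lemma~\ref{sinsquare}.
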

 \begin{theorem}\label{ODDODDnu0k0}
  Let $\chi_1$ and $\chi_2$ be odd primitive characters modulo $ p$ and $q$, respectively. Then  we have 
    \begin{align*}
				 	\sum_{n=1}^\infty d_{\chi_1,\chi_2}(n)  K_0(a\sqrt{nx})=& \frac{1}{2}L (0,\chi_1)L (0,\chi_2) \left( -2\gamma+  \log  \left( \frac{4}{a^2x}  \right)    +\frac{ L ^\prime(0,\chi_1)}{ L (0,\chi_1)}+\frac{L^\prime (0,\chi_2) }{L (0,\chi_2) } \right)    \\
        & +\frac{a^4p^2q^2 }{512\pi^4} \ x^2 \tau(\chi_1)\tau(\chi_2)\sum_{n=1}^\infty  \frac{d_{\Bar{\chi}_1,\Bar{\chi}_2}(n) \log \left(\frac{a^2pqx}{16\pi^2 n}\right)}{n\left( n^2-(\frac{a^2pqx}{16\pi^2} )^2\right) },
       \end{align*}
       provided $\frac{a^2 pq x}{16 \pi^2} \notin \mathbb{Z}_+$.
     \end{theorem}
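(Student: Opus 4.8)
The plan is to obtain the identity from a Mellin--Barnes representation, the natural $\nu=k=0$ specialization of the two-character analysis. First I would invoke the Mellin transform of the Bessel function, $K_0(y)=\frac{1}{2\pi i}\int_{(c)}2^{s-2}\Gamma\!\left(\frac{s}{2}\right)^2 y^{-s}\,ds$ (valid for $c>0$), set $y=a\sqrt{nx}$, multiply by $d_{\chi_1,\chi_2}(n)$ and sum over $n$. Since $d_{\chi_1,\chi_2}$ is the Dirichlet convolution of $\chi_1$ and $\chi_2$, the inner Dirichlet series is $L(s/2,\chi_1)L(s/2,\chi_2)$; after the substitution $s=2w$ this gives
\begin{align*}
\sum_{n=1}^\infty d_{\chi_1,\chi_2}(n)K_0(a\sqrt{nx})=\frac{1}{2\pi i}\int_{(c)}2^{2w-1}\Gamma(w)^2(a^2x)^{-w}L(w,\chi_1)L(w,\chi_2)\,dw,
\end{align*}
for $c>1$, the interchange being justified by the exponential decay of $K_0$.

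Next I would shift the line of integration to $\Re(w)=c'$ with $-1<c'<0$. As $\chi_1,\chi_2$ are odd and primitive, both $L$-functions are entire, so the only singularity crossed is the \emph{double} pole of $\Gamma(w)^2$ at $w=0$; it stays double because $L(0,\chi_i)\neq0$ for odd $\chi_i$. Writing $w^2\Gamma(w)^2=\Gamma(w+1)^2$ and differentiating the resulting holomorphic factor logarithmically at $w=0$ (using $\Gamma'(1)=-\gamma$) produces exactly the main term
\begin{align*}
\frac12 L(0,\chi_1)L(0,\chi_2)\left(-2\gamma+\log\left(\frac{4}{a^2x}\right)+\frac{L'(0,\chi_1)}{L(0,\chi_1)}+\frac{L'(0,\chi_2)}{L(0,\chi_2)}\right).
\end{align*}

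On the shifted line I would invoke the functional equation of each odd $L$-function to rewrite $L(w,\chi_1)L(w,\chi_2)$ in terms of $L(1-w,\bar\chi_1)L(1-w,\bar\chi_2)$, and collapse the accompanying gamma factors against $\Gamma(w)^2$ by the duplication and reflection formulas (equivalently via $\Gamma(w)\Gamma(1-w)=\pi/\sin\pi w$), turning the integrand into
\begin{align*}
-\frac{\tau(\chi_1)\tau(\chi_2)}{8}\left(\frac{16\pi^2}{a^2pqx}\right)^{w}\frac{L(1-w,\bar\chi_1)L(1-w,\bar\chi_2)}{\sin^2\!\left(\frac{\pi w}{2}\right)}.
\end{align*}
Since $\Re(1-w)>1$ on this line, I expand the dual product as $\sum_n d_{\bar\chi_1,\bar\chi_2}(n)\,n^{w-1}$ and integrate term by term. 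Writing $\xi=\frac{a^2pqx}{16\pi^2}$, each term reduces to $J_n=\frac{1}{2\pi i}\int_{(c')}(n/\xi)^w\sin^{-2}(\pi w/2)\,dw$, which I evaluate by residues: closing to the left for $n>\xi$ (double poles at $w=-2,-4,\dots$) and to the right for $n<\xi$ (double poles at $w=0,2,\dots$). The hypothesis $\xi\notin\mathbb{Z}_+$ guarantees $n\neq\xi$, so no pole lands on the contour. In each case the residues form a geometric series summing to the single closed expression $J_n=\dfrac{4\xi^2\log(n/\xi)}{\pi^2(n^2-\xi^2)}$; reassembling $-\frac{\tau(\chi_1)\tau(\chi_2)}{8}\sum_n d_{\bar\chi_1,\bar\chi_2}(n)\,n^{-1}J_n$ and collecting the elementary constants then reproduces the series in the statement (with $\log(\xi/n)$ after flipping the sign of the logarithm).

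The residue bookkeeping is routine; the main obstacles I anticipate are twofold. First is the analytic justification of the contour shift and of the term-by-term integration, which requires the exponential decay of $\Gamma(w)^2$ (resp.\ $\sin^{-2}(\pi w/2)$) along horizontal segments together with standard growth bounds for $L(w,\chi_i)$ in vertical strips. Second is the careful extraction of the second-order residue at $w=0$, where the interaction of the double pole with the logarithmic derivatives of the two $L$-functions is precisely what generates the $\log$ and $L'/L$ contributions of the main part. Verifying that the two residue evaluations, for $n>\xi$ and for $n<\xi$, collapse to the same rational-times-logarithm expression is the final consistency check that makes the single displayed series emerge.
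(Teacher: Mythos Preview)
Your proposal is correct and follows essentially the same route as the paper: a Mellin--Barnes representation via the $K_0$-transform, a contour shift picking up the double pole at $w=0$ (giving the $-2\gamma+\log(4/a^2x)+L'/L$ terms), application of the functional equations of the two odd $L$-functions on the shifted line, expansion of the dual Dirichlet series, and evaluation of the remaining integral by summing double-pole residues according to whether $n$ lies above or below $\xi=a^2pqx/(16\pi^2)$. The only cosmetic difference is that the paper first substitutes $s\mapsto 1-s$ before applying the functional equations (so the kernel appears as $\cos^{-2}(\pi s/2)$ on $\Re s=1+d$ rather than your $\sin^{-2}(\pi w/2)$ on $\Re w=c'$), but the two computations are equivalent.
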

			\begin{theorem}\label{evenoddthm2}
				 Let $k$ be an even, non-negative integer. Assume that $\chi_1$ and $\chi_2$ are primitive characters modulo $p$ and $q$, respectively, such that one is a non-principal even character and the other is an odd character.  Then, for any $\Re{(\nu)}>0$, 
				 \begin{align*}
	\sum_{n=1}^\infty \sigma_{k,\chi_1,\chi_2}(n)n^{\frac{\nu}{2}}K_\nu(a\sqrt{nx})=&\frac{(-1)^{\frac{k}{2}}\ (aq)^{\nu} p^{\nu+k}\ x^{\frac{\nu}{2}}}{i2^{3\nu+k+2}\pi^{2\nu+k+1}}    \tau(\chi_1)\tau(\chi_2) 
					\Gamma(\nu+k+1)\sum_{n=1}^\infty \frac{\sigma_{k,\Bar{\chi_2},\Bar{\chi_1}}(n)}{\left( n+\frac{a^2pqx}{16\pi^2} \right)^{\nu+k+1} } .
\end{align*} 
			\end{theorem}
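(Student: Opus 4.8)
The plan is to proceed exactly as in the proof of Theorem~\ref{evenoddthm1}: represent the Bessel kernel by a Mellin--Barnes integral, insert the functional equation \eqref{ll(s)} of each of the two Dirichlet $L$-functions, and then evaluate the resulting contour integral termwise. The parity hypotheses (one even character, one odd character, together with $k$ even) are precisely what force the right-hand side to consist of the dual sum alone, with no polynomial main terms.

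First I would substitute the Mellin transform of the $K$-Bessel function, writing $K_\nu(a\sqrt{nx}) = \frac{1}{2\pi i}\int_{(c)} 2^{s-2}\Gamma\!\left(\tfrac{s+\nu}{2}\right)\Gamma\!\left(\tfrac{s-\nu}{2}\right)(a\sqrt{nx})^{-s}\,ds$ for $c>\Re(\nu)$, multiplying by $\sigma_{k,\chi_1,\chi_2}(n)n^{\nu/2}$ and summing over $n$. The asymptotic $K_\nu(x)\sim(\pi/2x)^{1/2}e^{-x}$ recorded above guarantees absolute convergence, so the sum and integral may be interchanged; since $\sigma_{k,\chi_1,\chi_2}$ is the Dirichlet convolution of $d^k\chi_1(d)$ with $\chi_2$, the inner Dirichlet series is $\sum_n \sigma_{k,\chi_1,\chi_2}(n)n^{(\nu-s)/2}=L\!\left(\tfrac{s-\nu}{2}-k,\chi_1\right)L\!\left(\tfrac{s-\nu}{2},\chi_2\right)$. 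After the change of variable $w=(s-\nu)/2$ this presents the left-hand side as a single contour integral of $\Gamma(w+\nu)\Gamma(w)$ against $L(w-k,\chi_1)L(w,\chi_2)$ times explicit $w$-dependent power factors.

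Next I would apply \eqref{ll(s)} to each of $L(w-k,\chi_1)$ and $L(w,\chi_2)$, converting them into $L(1-w+k,\bar{\chi}_1)$ and $L(1-w,\bar{\chi}_2)$. Here the parity enters: an even character contributes a cosine factor and an odd one a sine factor. Taking $\chi_1$ even and $\chi_2$ odd (the opposite assignment being identical after relabelling) and using that $k$ is even, the addition formula collapses the product of the two trigonometric factors to a constant multiple of $\sin(\pi w)$, namely $-2i(-1)^{k/2}\sin(\pi w)$. This is the crux of the argument: the reflection formula $\Gamma(w)\Gamma(1-w)=\pi/\sin(\pi w)$ then cancels this $\sin(\pi w)$ exactly, leaving the clean, pole-free product $\Gamma(w+\nu)\Gamma(1+k-w)$. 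It is precisely this cancellation that eliminates every residual term and explains why, in contrast to Theorems~\ref{thm1} and~\ref{thm3} (where a $\zeta$-factor arising from convolution with $1$ or $n^k$ retains its pole and generates main terms), the present statement carries only the dual series.

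Finally I would expand the dual $L$-functions into Dirichlet series, recognising $L(1-w+k,\bar{\chi}_1)L(1-w,\bar{\chi}_2)=\sum_n \sigma_{k,\bar{\chi}_2,\bar{\chi}_1}(n)\,n^{-(1-w+k)}$, interchange once more, and evaluate the remaining Mellin--Barnes integral by
\[
\frac{1}{2\pi i}\int_{(\sigma)}\Gamma(w+\nu)\Gamma(1+k-w)\,z^{w}\,dw=\Gamma(\nu+k+1)\frac{z^{1+k}}{(1+z)^{\nu+k+1}},
\]
applied with $z=\frac{16\pi^2 n}{a^2pqx}$; rewriting $z/(1+z)$ in terms of $n+\frac{a^2pqx}{16\pi^2}$ produces the summand of the theorem, and collecting the powers of $2,\pi,a,x,p,q$ together with $\tau(\chi_1)\tau(\chi_2)$ yields the stated constant. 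The main technical obstacle is the routine but necessary analytic bookkeeping: justifying the two interchanges and the use of the functional equation on a shifted contour requires bounding the growth of the Gamma- and $L$-factors on vertical lines (via Stirling's formula and convexity estimates), and fixing the abscissa $c$ so that the Dirichlet series for $\sigma_{k,\bar{\chi}_2,\bar{\chi}_1}$ converges after the functional equation has been applied. The genuinely new input, by contrast, is purely formal, being the $\sin(\pi w)$ cancellation dictated by the parities of $\chi_1$ and $\chi_2$ and the parity of $k$.
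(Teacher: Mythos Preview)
Your proposal is correct and follows essentially the same route as the paper's own proof (which simply refers back to the template of Theorem~\ref{evenoddthm1}): Mellin--Barnes representation of $K_\nu$, functional equations \eqref{ll(s)} for both $L$-factors, and evaluation of the resulting integral via the beta-type integral of Lemma~\ref{lem4}. Your emphasis on the $\sin(\pi w)$ cancellation through the reflection formula is precisely the mechanism by which the paper's residues $R_{k+1},R_1,R_0$ all vanish under the stated parity hypotheses, so the two presentations differ only cosmetically; one small caution is that the phrase ``pole-free product $\Gamma(w+\nu)\Gamma(1+k-w)$'' is a slight overstatement, since $\Gamma(1+k-w)$ still has poles to the right --- what matters is that the full integrand is holomorphic in the strip $-\Re(\nu)<\Re(w)<1+k$, which is where the contour must land before you expand the dual Dirichlet series and invoke Lemma~\ref{lem4}.
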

 The result corresponding to $\nu=0$ is as follows 
    \begin{theorem}\label{18}
       Let $k$ be an even, non-negative integer. If $\chi_1$ and $\chi_2$ are primitive characters modulo $p$ and $q$, respectively, such that one is a non-principal even character and the other is an odd character, then
\begin{align*}
     \sum_{n=1}^\infty\sigma_{k,\chi_1,\chi_2}(n) K_0(a\sqrt{nx})=\frac{1}{2}e_{k,\chi_1,\chi_2}+(-1)^{\frac{k}{2}}\frac{ik!p^k}{2 ({2\pi })^{k+1} }  \tau(\chi_1)   \tau(\chi_2)   \sum_{n=1}^\infty  \sigma_{k,\Bar{ \chi  }_2,\Bar{ \chi  }_1}(n)   \left(\frac{1}{n^{k+1}}-\frac{1}{(n+ \frac{a^2pqx}{16\pi^2} )^{k+1} } \right), 
\end{align*}
where  
\begin{align}
e_{k,\chi_1,\chi_2}=\begin{cases}\label{e_kk}
          L(-k,\chi_1)L^\prime(0,\chi_2),   \ \ \text{if} \ \chi_1  \text{ is odd and } \chi_2  \text{ is even},\\
           L^\prime(-k,\chi_1)L (0,\chi_2),  \ \ \text{if } \ \chi_1  \text{ is even and }\chi_2  \text{ is odd}.
      \end{cases}  
    \end{align}
        \end{theorem}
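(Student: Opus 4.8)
The plan is to deduce Theorem~\ref{18} from Theorem~\ref{evenoddthm2} by letting $\nu\to0$ and then recasting the result by means of the functional equation of $L(s,\chi)$. Write $C=\frac{a^2pqx}{16\pi^2}$. The decisive structural point is that the right-hand side of Theorem~\ref{evenoddthm2} is \emph{regular} at $\nu=0$: it consists of the analytic prefactor $(aq)^\nu p^{\nu+k}x^{\nu/2}2^{-3\nu}\pi^{-2\nu}$, the factor $\Gamma(\nu+k+1)$ (no $\Gamma(\nu)$ being present), and the shifted Dirichlet series $\sum_{n\geq1}\sigma_{k,\bar{\chi}_2,\bar{\chi}_1}(n)(n+C)^{-(\nu+k+1)}$. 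This last series is controlled by $\sum_{n\geq1}\sigma_{k,\bar{\chi}_2,\bar{\chi}_1}(n)n^{-w}=L(w-k,\bar{\chi}_2)L(w,\bar{\chi}_1)$, which is \emph{entire} in $w$ since both $\chi_1$ and $\chi_2$ are non-principal, so no pole arises at $w=k+1$. This contrasts sharply with the single odd-character case of Theorem~\ref{thm1}, whose companion series $\sum_{n\geq1}\bar{\sigma}_{k,\bar{\chi}}(n)n^{-w}=\zeta(w-k)L(w,\bar{\chi})$ has a pole at $w=k+1$ that must balance the pole of the accompanying $\Gamma(\nu)$, thereby producing logarithmic terms. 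As the left-hand side is likewise analytic at $\nu=0$ (the sum converges uniformly near $\nu=0$ by the exponential decay of $K_\nu$, and $K_\nu(a\sqrt{nx})\to K_0(a\sqrt{nx})$), I may simply set $\nu=0$, and no logarithms appear.

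Setting $\nu=0$ and using $1/i=-i$ yields
\begin{align*}
\sum_{n=1}^\infty\sigma_{k,\chi_1,\chi_2}(n)K_0(a\sqrt{nx})=-\frac{(-1)^{k/2}\,i\,k!\,p^k}{2^{k+2}\pi^{k+1}}\,\tau(\chi_1)\tau(\chi_2)\sum_{n=1}^\infty\frac{\sigma_{k,\bar{\chi}_2,\bar{\chi}_1}(n)}{(n+C)^{k+1}}.
\end{align*}
To reach the stated shape I add and subtract the unshifted series, i.e.\ I replace $(n+C)^{-(k+1)}$ by $n^{-(k+1)}-\bigl(n^{-(k+1)}-(n+C)^{-(k+1)}\bigr)$. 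Because $(2\pi)^{k+1}=2^{k+1}\pi^{k+1}$, the piece carrying $n^{-(k+1)}-(n+C)^{-(k+1)}$ is exactly the series of Theorem~\ref{18}, with the matching coefficient $(-1)^{k/2}\frac{ik!p^k}{2(2\pi)^{k+1}}\tau(\chi_1)\tau(\chi_2)$. Invoking the convolution identity $\sum_{n\geq1}\sigma_{k,\bar{\chi}_2,\bar{\chi}_1}(n)n^{-(k+1)}=L(1,\bar{\chi}_2)L(k+1,\bar{\chi}_1)$, it remains only to prove
\begin{align*}
-\frac{(-1)^{k/2}\,i\,k!\,p^k}{2^{k+2}\pi^{k+1}}\,\tau(\chi_1)\tau(\chi_2)\,L(1,\bar{\chi}_2)L(k+1,\bar{\chi}_1)=\tfrac12\,e_{k,\chi_1,\chi_2}.
\end{align*}

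This identity is the crux and the principal obstacle, and it is here that the functional equation and the case split \eqref{e_kk} enter. Applying the functional equation of $L(s,\chi)$ (in the form relating $L(1-s,\chi)$ to $\tau(\chi)L(s,\bar{\chi})$) at $s=k+1$ expresses $\tau(\chi_1)L(k+1,\bar{\chi}_1)$ through $L(-k,\chi_1)$, the trigonometric factor $e^{-\pi i(k+1)/2}+\chi_1(-1)e^{\pi i(k+1)/2}$ supplying the sign $(-1)^{k/2}$; taking $s=1$ ties $\tau(\chi_2)L(1,\bar{\chi}_2)$ to $L(0,\chi_2)$. The delicate feature is that precisely one trivial zero intervenes according to the parities: if $\chi_1$ is odd and $\chi_2$ even then $0$ is a trivial zero of $L(s,\chi_2)$, while if $\chi_1$ is even and $\chi_2$ odd then $-k$ is a trivial zero of $L(s,\chi_1)$ (recall $k$ is even). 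In the vanishing case the bare functional equation gives only $0=0$, so I differentiate it in $s$ at that point; since the trigonometric factor vanishes there, only its derivative contributes, and this converts the vanishing value into $L'(0,\chi_2)$ (respectively $L'(-k,\chi_1)$), producing the derivative recorded in \eqref{e_kk}. Collecting the constants $(-1)^{k/2}$, $i$, and the powers of $2$ and $\pi$ then collapses the left-hand side of the last display to $\tfrac12 L(-k,\chi_1)L'(0,\chi_2)$ in the first case and to $\tfrac12 L'(-k,\chi_1)L(0,\chi_2)$ in the second, that is, to $\tfrac12 e_{k,\chi_1,\chi_2}$, which completes the proof. Apart from this bookkeeping, the only care needed is to justify the $\nu\to0$ passage on the conditionally convergent boundary series, which is supplied by the analytic continuation noted in the first paragraph.
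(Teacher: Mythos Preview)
Your argument is correct, but it takes a different route from the paper. The paper does not pass to the limit $\nu\to0$ in Theorem~\ref{evenoddthm2}; instead it repeats the contour-integral template of \eqref{K_nuformula}--\eqref{defiJ} directly at $\nu=0$, just as in the proofs of Theorems~\ref{odd_k0thm1} and \ref{evenoddthm1nu=0}. In that setup the integrand is $\Gamma(s)^2 L(s-k,\chi_1)L(s,\chi_2)X^s$, and the constant $\tfrac12 e_{k,\chi_1,\chi_2}$ emerges as the residue at $s=0$: the double pole of $\Gamma(s)^2$ is cut down to a simple pole because exactly one of $L(-k,\chi_1)$, $L(0,\chi_2)$ vanishes (this is precisely your parity observation), and the residue is then $L(-k,\chi_1)L'(0,\chi_2)$ or $L'(-k,\chi_1)L(0,\chi_2)$ according to the case. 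The difference-series $\sum\bigl(n^{-k-1}-(n+C)^{-k-1}\bigr)$ then comes out of the evaluation of $J_k^{(0)}(X)$ exactly as in \eqref{J_k11}--\eqref{rami}.

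Your approach trades the $\nu=0$ residue computation for the functional-equation bookkeeping that converts $L(1,\bar\chi_2)L(k+1,\bar\chi_1)$ into $e_{k,\chi_1,\chi_2}$. This is a legitimate and instructive alternative: it makes transparent \emph{why} no logarithms appear here (no $\Gamma(\nu)$ is present, and the companion Dirichlet series is entire since both characters are non-principal), whereas in the paper's framework that fact is hidden in the reduction of the pole order at $s=0$. The paper's route, on the other hand, identifies the constant in one line from the residue and keeps the proof uniform with the other $\nu=0$ results. One expositional point: your displayed identity immediately after ``Setting $\nu=0$'' writes the shifted series $\sum\sigma_{k,\bar\chi_2,\bar\chi_1}(n)(n+C)^{-(k+1)}$ at the boundary of absolute convergence; it would be cleaner to perform the add-and-subtract split while $\Re(\nu)>0$ (where everything converges absolutely), recognize the unshifted piece as $L(\nu+1,\bar\chi_2)L(\nu+k+1,\bar\chi_1)$, and only then let $\nu\to0$ in each piece separately. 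You essentially say this in your final sentence, but reordering the presentation would remove any doubt.
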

		 \section{Cohen-Type Identities}\label{cohen identities...}
  This section deals with $z=-\nu$ with $\nu \notin \mathbb{Z}$ such that $\Re{(\nu)}\geq0 $. We will assume that $x$ is a strictly positive real number. 
  \subsection{Identities involving even characters and specializations.  }  In this subsection, we present the identities associated with $\sigma_{-\nu,\bar{\chi}}(n)$ and $\bar{\sigma}_{-\nu, \bar{\chi}}(n)$ when $\chi$ is a non-principal even primitive character. 
\begin{theorem}\label{evencohen}
   Let $\nu \notin \mathbb{Z}$ such that $\Re{(\nu)}\geq0 $. Let $\chi$ be a non-principal even primitive character modulo $ q$. If $N$ is any integer such that  $ N\geq \lfloor\frac{\Re{(\nu)}+1}{2}\rfloor$, then
\begin{align}\label{Cohen1}
	&8\pi x^{\nu/2}	\sum_{n=1}^{\infty} \sigma_{-\nu, \bar{\chi}}(n) n^{\nu/2}K_{\nu}(4\pi\sqrt{nx})  =-\frac{ \Gamma(\nu) L(\nu, \bar{\chi})}{(2\pi)^{\nu-1} }  +  \frac{2\Gamma(1+\nu) L(1+\nu, \bar{\chi})}{(2\pi)^{\nu+1} }x^{  -1} \nonumber \\
  & \ \ \ \ \ \   + \frac{2q^{1-\nu}  }{\tau(\chi) \sin \left(\frac{\pi \nu}{2}\right)}  
  \left\{\sum_{j=1}^{N} \zeta(2j)\ L(2j-\nu, \chi)(qx)^{2j-1} 
 + (qx)^{2N+1}\sum_{n=1}^{\infty}\bar{\sigma}_{-\nu, \chi}(n) \left(\frac{  n^{\nu-2N}-(qx)^{\nu-2N} }{ n^2-(qx)^2} \right)
    \right\},
			\end{align}
    provided 
    $qx\notin \mathbb{Z}_+$.
  \end{theorem}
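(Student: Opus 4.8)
The plan is to prove \eqref{Cohen1} by the Mellin--Barnes technique, the character analogue of the contour argument Cohen used for Proposition~\ref{Cohen-type}. Writing $\Phi(x)$ for the left-hand side of \eqref{Cohen1}, I would first insert the Mellin--Barnes representation
\begin{equation*}
K_\nu(z)=\frac{1}{2\pi i}\int_{(c)}2^{s-2}\,\Gamma\!\left(\tfrac{s-\nu}{2}\right)\Gamma\!\left(\tfrac{s+\nu}{2}\right)z^{-s}\,ds,\qquad c>\Re(\nu),
\end{equation*}
with $z=4\pi\sqrt{nx}$, and interchange summation and integration (legitimate for $c$ large by the stated asymptotics of $K_\nu$). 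The Dirichlet series produced is $\sum_{n\ge1}\sigma_{-\nu,\bar{\chi}}(n)n^{(\nu-s)/2}=\zeta\!\left(\tfrac{s-\nu}{2}\right)L\!\left(\tfrac{s+\nu}{2},\bar{\chi}\right)$, so that
\begin{equation*}
\Phi(x)=\frac{8\pi\,x^{\nu/2}}{2\pi i}\int_{(c)}2^{s-2}(4\pi)^{-s}x^{-s/2}\,\Gamma\!\left(\tfrac{s-\nu}{2}\right)\Gamma\!\left(\tfrac{s+\nu}{2}\right)\zeta\!\left(\tfrac{s-\nu}{2}\right)L\!\left(\tfrac{s+\nu}{2},\bar{\chi}\right)ds.
\end{equation*}

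Next I would move the contour to a line $\Re(s)=c_1$ lying just to the left of $s=\nu$ and to the right of every other pole of the integrand (this is possible for $\Re(\nu)>0$; the boundary case $\Re(\nu)=0$ follows by analytic continuation in $\nu$), crossing only the two rightmost poles. Since $\bar{\chi}$ is non-principal, $L(\cdot,\bar{\chi})$ is entire, so the poles crossed are the simple pole of $\zeta\!\left(\tfrac{s-\nu}{2}\right)$ at $s=\nu+2$ and that of $\Gamma\!\left(\tfrac{s-\nu}{2}\right)$ at $s=\nu$ (where $\zeta$ is evaluated at $0$). A direct residue computation at these two points gives exactly the two explicit terms $-\Gamma(\nu)L(\nu,\bar{\chi})/(2\pi)^{\nu-1}$ and $2\Gamma(1+\nu)L(1+\nu,\bar{\chi})x^{-1}/(2\pi)^{\nu+1}$. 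It is worth noting that Cohen's formula \eqref{cohen} additionally carries the term $\zeta(\nu)x^{\nu-1}/\sin(\tfrac{\pi\nu}{2})$, coming from the pole of his second zeta factor at $s=2-\nu$; this is correctly \emph{absent} from \eqref{Cohen1} because the corresponding factor $L(\tfrac{s+\nu}{2},\bar{\chi})$ is entire --- a useful consistency check.

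The heart of the argument is to transform the integral on $\Re(s)=c_1$ by applying the functional equations of $\zeta$ and of $L(\cdot,\bar{\chi})$ simultaneously. Using $\zeta(u)=2^{u}\pi^{u-1}\sin(\pi u/2)\Gamma(1-u)\zeta(1-u)$, the even-character equation $L(w,\bar{\chi})=2(2\pi)^{w-1}q^{-w}\Gamma(1-w)\sin(\pi w/2)\,\tau(\bar{\chi})L(1-w,\chi)$, and $\tau(\chi)\tau(\bar{\chi})=q$, both $\Gamma$-pairs collapse through $\Gamma(z)\Gamma(1-z)=\pi/\sin(\pi z)$ and the four resulting sines contract by product-to-sum to the single factor $\bigl[\cos(\tfrac{\pi\nu}{2})+\cos(\tfrac{\pi s}{2})\bigr]^{-1}$. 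Tracking the powers of $q$, $2\pi$ and $x$ then shows the surviving constant is precisely $2q^{1-\nu}/(\tau(\chi)\sin(\tfrac{\pi\nu}{2}))$, while the transformed Dirichlet part is the dual product $\zeta\!\left(1-\tfrac{s-\nu}{2}\right)L\!\left(1-\tfrac{s+\nu}{2},\chi\right)=\sum_{n\ge1}\bar{\sigma}_{-\nu,\chi}(n)\,n^{(s+\nu)/2-1}$, exactly the twisted divisor function $\bar{\sigma}_{-\nu,\chi}$ appearing on the right of \eqref{Cohen1}. The poles of the trigonometric factor lie at $s\equiv\nu-2\pmod 4$ and $s\equiv 2-\nu\pmod 4$; shifting leftward and collecting the residues at $s=\nu-2,\nu-6,\dots,\nu-(4N-2)$ yields, via the value $\zeta(2j)L(2j-\nu,\chi)$ of the dual product and the $1/\sin(\tfrac{\pi\nu}{2})$ coming from $\tfrac{d}{ds}\cos(\tfrac{\pi s}{2})$, the finite sum $\sum_{j=1}^{N}\zeta(2j)L(2j-\nu,\chi)(qx)^{2j-1}$. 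Expanding the dual Dirichlet series in the remaining integral and integrating termwise produces the tail $\sum_{n\ge1}\bar{\sigma}_{-\nu,\chi}(n)\bigl(n^{\nu-2N}-(qx)^{\nu-2N}\bigr)/(n^2-(qx)^2)$, the denominator arising from the pair of trig poles straddling $n=qx$.

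I expect the principal difficulty to be analytic rather than algebraic: one must justify every contour shift and each sum--integral interchange by pairing Stirling's asymptotics for the $\Gamma$-factors with a polynomial (convexity / Phragm\'en--Lindel\"of) bound for $\zeta$ and $L$ on vertical lines, so that the horizontal segments vanish and all shifted integrals converge. The hypothesis $N\ge\lfloor(\Re(\nu)+1)/2\rfloor$ is precisely what renders the truncated tail series absolutely convergent, and $qx\notin\mathbb{Z}_+$ is needed so that $n^2-(qx)^2$ never vanishes; both conditions should emerge naturally from the places where the estimates and the residue evaluation require them. The only remaining care is the bookkeeping that consolidates the several trigonometric, power-of-$q$ and Gauss-sum contributions into the single clean prefactor $2q^{1-\nu}/(\tau(\chi)\sin(\tfrac{\pi\nu}{2}))$.
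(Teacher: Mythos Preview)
Your approach is correct and is essentially the paper's own argument, up to the change of integration variable (your $s$ corresponds to the paper's $2s+\nu$) and your repackaging of the two sine factors as the single kernel $[\cos(\tfrac{\pi\nu}{2})+\cos(\tfrac{\pi s}{2})]^{-1}$; the paper keeps them as $[\sin(\tfrac{\pi s}{2})\sin(\tfrac{\pi(s-\nu)}{2})]^{-1}$ and shifts the transformed contour to the \emph{right} rather than the left, but the residue bookkeeping is the same.

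One point deserves more care in your write-up. The trig kernel has \emph{two} interlaced families of simple poles, at $s\equiv\nu-2\pmod 4$ and $s\equiv 2-\nu\pmod 4$. Only the first family supplies the $\zeta(2j)L(2j-\nu,\chi)$ terms; the second carries residues proportional to $\zeta(\nu+2r)L(2r,\chi)$, which are zero for $r\le 0$ (via $L(0,\chi)=L(-2,\chi)=\cdots=0$ for even $\chi$) but \emph{nonzero} for $r\ge 1$, and for $N$ large enough your leftward shift does cross some of these. In the paper these are the auxiliary residues $\mathcal{H}_{2r}$, and the paper shows explicitly (equations \eqref{ev1}--\eqref{JKprime}) that they recombine with pieces of the termwise expansion, after splitting $n\lessgtr qx$ and summing the resulting geometric series, into the clean tail $(n^{\nu-2N}-(qx)^{\nu-2N})/(n^2-(qx)^2)$. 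Your phrase ``the denominator arising from the pair of trig poles straddling $n=qx$'' points in the right direction, but this recombination is the one genuinely delicate step in the calculation and should be written out rather than asserted.
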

  The specialization of the above theorem to $\nu=1/2$ is as follows
	 \begin{corollary}\label{Corcohen1} We have
    \begin{align*}
 2\pi \sum_{n=1}^\infty& \sigma_{-\frac{1}{2},\Bar{\chi}}(n)e^{-4\pi\sqrt{nx}} =-\pi L(1/2,\Bar{\chi})+\frac{1}{4\pi }L(3/2,\Bar{\chi}) x^{-1}
 +\frac{2q^{3/2}}{\tau(\chi)}x\sum_{n=1}^\infty\bar{\sigma}_{-\frac{1}{2}, \chi}(n) \frac{ 1}{ (n+qx)(\sqrt{n}+\sqrt{qx})} .
    \end{align*}
     \end{corollary}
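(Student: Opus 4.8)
The plan is to obtain Corollary~\ref{Corcohen1} as a direct specialization of Theorem~\ref{evencohen} at $\nu=1/2$, the only genuine inputs being the elementary closed form of the half-integer Bessel function and a single algebraic factorization. First I would observe that for $\nu=1/2$ the admissibility condition $N\ge\lfloor(\Re(\nu)+1)/2\rfloor=\lfloor 3/4\rfloor=0$ permits the choice $N=0$, so that the finite sum $\sum_{j=1}^{N}$ appearing in \eqref{Cohen1} is empty and drops out entirely, leaving only the constant term, the $x^{-1}$ term, and the surviving infinite series.

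Next I would evaluate the left-hand side of \eqref{Cohen1} using the identity $K_{1/2}(z)=\sqrt{\pi/(2z)}\,e^{-z}$. With $z=4\pi\sqrt{nx}$ this gives $K_{1/2}(4\pi\sqrt{nx})=\tfrac{1}{2\sqrt2}(nx)^{-1/4}e^{-4\pi\sqrt{nx}}$, whence the weight $8\pi x^{1/4}n^{1/4}K_{1/2}(4\pi\sqrt{nx})$ collapses to $2\sqrt2\,\pi\,e^{-4\pi\sqrt{nx}}$, and the left-hand side of \eqref{Cohen1} becomes $2\sqrt2\,\pi\sum_{n\ge1}\sigma_{-1/2,\bar\chi}(n)e^{-4\pi\sqrt{nx}}$. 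On the right-hand side I would insert $\Gamma(1/2)=\sqrt\pi$, $\Gamma(3/2)=\tfrac12\sqrt\pi$ and $\sin(\pi/4)=1/\sqrt2$, turning the first two terms into $-\sqrt2\,\pi L(1/2,\bar\chi)$ and $\tfrac{1}{2\sqrt2\,\pi}L(3/2,\bar\chi)x^{-1}$, and the outer constant of the bracket into $2\sqrt2\,q^{1/2}/\tau(\chi)$.

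The heart of the computation is the surviving infinite series, which for $N=0$ has summand $(\sqrt n-\sqrt{qx})/(n^2-(qx)^2)$. Here I would use the factorization $n^2-(qx)^2=(\sqrt n-\sqrt{qx})(\sqrt n+\sqrt{qx})(n+qx)$, which is legitimate because the hypothesis $qx\notin\mathbb{Z}_+$ of Theorem~\ref{evencohen} guarantees $n\ne qx$ for every $n$, so the denominator never vanishes. Cancelling the numerator reduces the summand to $1/\big((n+qx)(\sqrt n+\sqrt{qx})\big)$, and together with the prefactor $(qx)^{2N+1}=qx$ and the outer constant this yields $\tfrac{2\sqrt2\,q^{3/2}}{\tau(\chi)}\,x\sum_{n\ge1}\bar\sigma_{-1/2,\chi}(n)\,\big((n+qx)(\sqrt n+\sqrt{qx})\big)^{-1}$.

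Finally I would divide the resulting identity through by $\sqrt2$, which matches the stated normalization $2\pi\sum_{n\ge1}\sigma_{-1/2,\bar\chi}(n)e^{-4\pi\sqrt{nx}}$ on the left and simultaneously produces the coefficients $-\pi L(1/2,\bar\chi)$, $\tfrac{1}{4\pi}L(3/2,\bar\chi)x^{-1}$ and $\tfrac{2q^{3/2}}{\tau(\chi)}x$ on the right, exactly as claimed. I do not anticipate any real obstacle: the argument is a specialization rather than a fresh proof, and the only points demanding care are the correct half-integer evaluation of $K_{1/2}$, the bookkeeping of the repeated $\sqrt2$ factors, and the observation that $qx\notin\mathbb{Z}_+$ is precisely what renders the telescoped denominators nonzero.
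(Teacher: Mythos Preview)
Your proposal is correct and follows exactly the approach intended by the paper, which presents the corollary simply as the specialization of Theorem~\ref{evencohen} at $\nu=1/2$ without writing out the details. Your bookkeeping of the $\sqrt{2}$ factors, the choice $N=0$, and the factorization $n^2-(qx)^2=(\sqrt n-\sqrt{qx})(\sqrt n+\sqrt{qx})(n+qx)$ are all accurate.
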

\begin{theorem}\label{barevencohen}
  Let $\nu \notin \mathbb{Z}$ such that $\Re{(\nu)}\geq 0 .$ 
  Let $\chi$ be a non-principal even primitive character modulo $ q.$  If $N$ is any integer such that  $ N\geq \lfloor\frac{\Re{(\nu)}+1}{2}\rfloor$, then
 \begin{align*}
			&8\pi x^{\nu/2}	\sum_{n=1}^{\infty} \bar{\sigma}_{-\nu, \bar{\chi}}(n) n^{\nu/2} K_{\nu}(4\pi\sqrt{nx})   
   =     
   \frac{  q }{  \tau(\chi)}     \left\{
     \frac{ L(\nu,\chi)}{ \sin \left(\frac{\pi \nu}{2}\right)}    (qx)^{\nu-1} 
    - \frac{\pi L(1+\nu,\chi)}{  \cos \left(\frac{\pi \nu}{2}\right)}(qx)^{\nu}
     \right.\notag\\&\left.\ \ 
      \ \ \ \ \ +\frac{2}{ \sin \left(\frac{\pi \nu}{2}\right)}\sum_{j=1}^N {\zeta(2j-\nu)L(2j ,\chi)(qx)^{2j-1}}  
				+ \frac{2}{  \sin \left(\frac{\pi \nu}{2}\right)}(qx)^{2N+1}\sum_{n=1}^{\infty} {\sigma}_{-\nu, \chi}(n) \left(\frac{ n^{\nu-2N}-(qx)^{\nu-2N} }{n^2-(qx)^2}\right)
    \right\},
		\end{align*} provided $qx\notin \mathbb{Z}_+$.
  \end{theorem}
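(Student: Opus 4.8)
The plan is to mirror the Mellin--Barnes argument already used for Theorem~\ref{evencohen}, with the roles of the two shifted $L$-factors interchanged. First I would insert the inverse Mellin transform of the $K$-Bessel function, $K_\nu(t)=\frac{1}{2\pi i}\int_{(c)}2^{s-2}\Gamma(\frac{s+\nu}{2})\Gamma(\frac{s-\nu}{2})\,t^{-s}\,ds$ (valid for $c>|\Re(\nu)|$), with $t=4\pi\sqrt{nx}$. Since $\bar{\sigma}_{-\nu,\bar\chi}(n)$ has the Dirichlet series $\sum_{n\geq1}\bar{\sigma}_{-\nu,\bar\chi}(n)n^{-w}=\zeta(w+\nu)L(w,\bar\chi)$, interchanging summation and integration (justified for $\Re(s)=c>2+\Re(\nu)$ by absolute convergence) and simplifying the constant $8\pi\cdot 2^{s-2}(4\pi)^{-s}=(2\pi)^{1-s}$ collapses the left side to
\[
8\pi x^{\nu/2}\sum_{n\geq1}\bar{\sigma}_{-\nu,\bar\chi}(n)n^{\nu/2}K_\nu(4\pi\sqrt{nx})=\frac{1}{2\pi i}\int_{(c)}(2\pi)^{1-s}\Gamma\!\left(\tfrac{s+\nu}{2}\right)\Gamma\!\left(\tfrac{s-\nu}{2}\right)\zeta\!\left(\tfrac{s+\nu}{2}\right)L\!\left(\tfrac{s-\nu}{2},\bar\chi\right)x^{(\nu-s)/2}\,ds.
\]

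Next I would move the line of integration to the left. The pole of $\zeta(\frac{s+\nu}{2})$ at $s=2-\nu$ yields a multiple of $L(1-\nu,\bar\chi)x^{\nu-1}$; the pole of $\Gamma(\frac{s+\nu}{2})$ at $s=-\nu$ (where $\zeta(0)=-\tfrac12$) yields a multiple of $L(-\nu,\bar\chi)x^\nu$; and the poles of $\Gamma(\frac{s-\nu}{2})$ at $s=\nu-2m$ yield multiples of $\zeta(\nu-m)L(-m,\bar\chi)x^{m}$. Because $\bar\chi$ is even and primitive, $L(-m,\bar\chi)=0$ for every even $m\geq0$, so only odd $m$ survive among the last family; likewise $\zeta(-k)=0$ for even $k\geq2$ annihilates the higher poles of $\Gamma(\frac{s+\nu}{2})$ except the surviving odd ones. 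I would then apply the functional equations of $\zeta$ and of $L(\cdot,\chi)$ for even $\chi$, namely $L(1-s,\bar\chi)=\frac{q^{s-1}\Gamma(s)}{(2\pi)^s}2\cos(\frac{\pi s}{2})\tau(\bar\chi)L(s,\chi)$, together with the reflection identity $\Gamma(w)\Gamma(1-w)=\pi/\sin(\pi w)$ and the relation $\tau(\chi)\tau(\bar\chi)=q$. A direct but careful computation shows the trigonometric factors collapse precisely to $1/\sin(\frac{\pi\nu}{2})$ for the $(qx)^{\nu-1}$ term (giving $L(\nu,\chi)$) and for the $\zeta(2j-\nu)L(2j,\chi)(qx)^{2j-1}$ terms, and to $\pi/\cos(\frac{\pi\nu}{2})$ for the $(qx)^\nu$ term (giving $-L(1+\nu,\chi)$), with the overall factor $q/\tau(\chi)$ emerging from $\tau(\bar\chi)=q/\tau(\chi)$.

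The remaining residues (the odd-$m$ poles of $\Gamma(\frac{s-\nu}{2})$ and the odd-$k$ poles of $\Gamma(\frac{s+\nu}{2})$) assemble, after the same manipulations, into the two series $\sum_{j\geq1}\zeta(2j-\nu)L(2j,\chi)(qx)^{2j-1}$ and $-\sum_{j\geq1}\zeta(2j)L(2j+\nu,\chi)(qx)^{\nu+2j-1}$, convergent for $qx<1$. The key algebraic step is then to recognise, by expanding $\frac{1}{n^2-(qx)^2}$ geometrically and summing against $\sum_{n\geq1}\sigma_{-\nu,\chi}(n)n^{-s}=\zeta(s)L(s+\nu,\chi)$, that
\[
(qx)^{2N+1}\sum_{n\geq1}\sigma_{-\nu,\chi}(n)\frac{n^{\nu-2N}-(qx)^{\nu-2N}}{n^2-(qx)^2}=\sum_{j\geq N+1}\zeta(2j-\nu)L(2j,\chi)(qx)^{2j-1}-\sum_{j\geq1}\zeta(2j)L(2j+\nu,\chi)(qx)^{\nu+2j-1}.
\]
Restoring the explicitly retained terms $j=1,\dots,N$ of the first series recovers exactly the bracketed expression in the statement. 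Since the hypothesis $N\geq\lfloor\frac{\Re(\nu)+1}{2}\rfloor$ forces the summand $\sigma_{-\nu,\chi}(n)n^{\nu-2N}/(n^2-(qx)^2)$ to decay faster than $n^{-1}$, the rational-kernel series converges absolutely for all $qx\notin\mathbb{Z}_+$; as the left side is manifestly analytic in $x$ there, this repackaged form provides the analytic continuation of the identity from the range $qx<1$ (where the contour may legitimately be pushed leftward) to all admissible $x$.

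I expect the main obstacle to be twofold. The analytic core is the resummation above: one must keep precise track of the vanishing of $L(-m,\bar\chi)$ and of $\zeta(-k)$ so that the two surviving residue families align index-for-index with the two geometric series, and then verify the displayed rational-kernel identity. The analytic bookkeeping is justifying the leftward shift---using Stirling's asymptotics for the two $\Gamma$-factors (whose reciprocal-type super-exponential decay dominates the growth of $(2\pi)^{1-s}x^{(\nu-s)/2}$) together with convexity bounds for $\zeta$ and $L$ to show the horizontal pieces vanish and the residue series may be summed in the order the poles are crossed---after which absolute rearrangement into the two series requires $qx<1$, precisely the regime from which the continuation proceeds. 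The trigonometric reductions in the functional-equation step, while elementary, must be carried out exactly to produce the stated $\sin$ and $\cos$ denominators.
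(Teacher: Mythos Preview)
Your argument is correct and reaches the stated identity, but the route differs from the paper's. The paper (which treats Theorem~\ref{evencohen} in detail and declares the present theorem ``similar'') first shifts the contour only slightly to $\Re s=-d$, collecting at most the residues at $s=0,1,1-\nu,-\nu$; it then applies the functional equations \eqref{1st_use} and \eqref{exact l} \emph{to the integrand}, producing an integral of the shape $\int_{(1+d)}\frac{\zeta(s-\nu)L(s,\chi)(qx)^s}{\sin(\pi s/2)\sin(\pi(s-\nu)/2)}\,ds$, and only then shifts \emph{right} to $\Re s=2N+\delta$, expands the Dirichlet series, and evaluates term-by-term by pushing further right or left according as $n>qx$ or $n<qx$. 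You instead shift the original integral all the way to the left, collect the full residue series (two interlaced families indexed by the trivial zeros of $\zeta$ and $L(\cdot,\bar\chi)$), apply the functional equations termwise, and then resum via the geometric expansion of $(n^2-(qx)^2)^{-1}$. What the paper's organisation buys is that the functional equations are applied once rather than to every residue, and the rightward shift after substitution is easy to justify; what your route buys is a more transparent identification of the rational-kernel series as the tail of the $L$-value series.

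One point of precision: your claim that the two $\Gamma$-factors exhibit ``reciprocal-type super-exponential decay'' dominating $(2\pi)^{1-s}x^{(\nu-s)/2}$ is not quite the mechanism. On a line $\Re s=-2M-1$ the individual $\Gamma$-factors decay like $e^{M}/M^{M}$ by reflection and Stirling, but $\zeta(\tfrac{s+\nu}{2})$ and $L(\tfrac{s-\nu}{2},\bar\chi)$ grow like $M^{M}$ via \emph{their} functional equations; the super-exponential pieces cancel, and what survives is a factor $\sim (qx)^{M}$ coming from the combined $(2\pi)$- and $q$-powers. This is precisely why the leftward shift is valid only for $qx<1$, after which you continue analytically---exactly as you say, but the justification should be phrased in terms of the product $\Gamma\zeta\cdot\Gamma L$ rather than the $\Gamma$'s alone.
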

    The result corresponding to  $\nu=1/2$ is as follows
  \begin{corollary}\label{corcohen3} We have
    \begin{align*}
 2\pi \sum_{n=1}^\infty \bar{\sigma}_{-\frac{1}{2},\Bar{\chi}}(n)e^{-4\pi\sqrt{nx}} =& \frac{q^{1/2}}{\tau(\chi)} L(1/2,\chi)x^{-\frac{1}{2}}
   -\frac{\pi q^{3/2}}{\tau(\chi)}L(3/2,\chi)x^{\frac{1}{2}}          +\frac{2q^{2}}{\tau(\chi)}x\sum_{n=1}^\infty   \frac{ {\sigma}_{-\frac{1}{2}, \chi}(n)}{ (n+qx)(\sqrt{n}+\sqrt{qx})} .
    \end{align*}
    \end{corollary}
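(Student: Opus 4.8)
The plan is to derive Corollary~\ref{corcohen3} simply by specializing Theorem~\ref{barevencohen} to $\nu=1/2$, taking the minimal admissible value $N=0$. This is permitted because $\lfloor(\Re(\nu)+1)/2\rfloor=\lfloor 3/4\rfloor=0$, and with $N=0$ the finite sum $\sum_{j=1}^{N}$ over $j$ is empty and disappears, leaving only the two $L$-value terms and the single infinite series on the right-hand side; the hypothesis $qx\notin\mathbb{Z}_+$ carries over verbatim.

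First I would rewrite the left-hand side using the elementary closed form $K_{1/2}(z)=\sqrt{\pi/(2z)}\,e^{-z}$. Taking $z=4\pi\sqrt{nx}$ gives $K_{1/2}(4\pi\sqrt{nx})=\tfrac{1}{2\sqrt2}(nx)^{-1/4}e^{-4\pi\sqrt{nx}}$, and the prefactor $8\pi x^{1/4}n^{1/4}$ exactly cancels the $(nx)^{1/4}$, collapsing the Bessel series into $2\sqrt2\,\pi\sum_{n\ge1}\bar\sigma_{-1/2,\bar\chi}(n)e^{-4\pi\sqrt{nx}}$. On the right-hand side I would substitute $\sin(\pi/4)=\cos(\pi/4)=1/\sqrt2$ together with the exponents $\nu-1=-1/2$, $\nu=1/2$, $2N+1=1$, and $\nu-2N=1/2$. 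The one genuinely algebraic step is the factorization of the summand,
\begin{equation*}
\frac{n^{1/2}-(qx)^{1/2}}{n^2-(qx)^2}
=\frac{\sqrt n-\sqrt{qx}}{(\sqrt n-\sqrt{qx})(\sqrt n+\sqrt{qx})(n+qx)}
=\frac{1}{(\sqrt n+\sqrt{qx})(n+qx)},
\end{equation*}
which turns the rational kernel of Theorem~\ref{barevencohen} into the form displayed in the Corollary.

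I do not anticipate a real obstacle, since all convergence and meromorphic-continuation issues were settled in the proof of Theorem~\ref{barevencohen}; the work is purely keeping track of constants. The only point to verify is that the factor $1/\sin(\pi/4)=\sqrt2$ carried by each surviving term on the right is matched by the factor $\sqrt2$ coming out of $K_{1/2}$ on the left, so that dividing through by this common $\sqrt2$ recovers the normalization $2\pi\sum(\cdots)$ of Corollary~\ref{corcohen3}, with coefficients $\tfrac{q^{1/2}}{\tau(\chi)}L(1/2,\chi)x^{-1/2}$, $-\tfrac{\pi q^{3/2}}{\tau(\chi)}L(3/2,\chi)x^{1/2}$, and $\tfrac{2q^2}{\tau(\chi)}x$ times the series. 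The companion Corollary~\ref{Corcohen1} follows by the identical specialization of Theorem~\ref{evencohen} at $\nu=1/2$.
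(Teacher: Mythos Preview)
Your proposal is correct and is exactly the approach the paper intends: Corollary~\ref{corcohen3} is stated immediately after Theorem~\ref{barevencohen} as ``the result corresponding to $\nu=1/2$,'' and the paper gives no separate proof, treating it as a routine specialization. Your choice $N=0$ (admissible since $\lfloor 3/4\rfloor=0$), the use of $K_{1/2}(z)=\sqrt{\pi/(2z)}\,e^{-z}$, and the factorization of $\bigl(n^{1/2}-(qx)^{1/2}\bigr)/\bigl(n^2-(qx)^2\bigr)$ are precisely the steps needed, and your tracking of the common $\sqrt2$ on both sides is accurate.
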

   \vspace{.001cm}
\subsection{Identities involving odd characters and specializations.}  In this subsection, we state the identities  associated with $\sigma_{-\nu,\bar{\chi}}(n)$ and $\bar{\sigma}_{-\nu, \bar{\chi}}(n)$  when $\chi$ is an odd primitive character. 
\begin{theorem}\label{oddcohen}
Let  $\nu \notin \mathbb{Z}$ such that $\Re{(\nu)}\geq 0 .$ Let $\chi$ be an odd primitive character modulo $ q.$  If $N$ is any integer such that  $ N\geq \lfloor\frac{\Re{(\nu)}+1}{2}\rfloor$, then
  \begin{align*}
  & 8\pi x^{\nu/2} \sum_{n=1}^{\infty} \sigma_{-\nu, \bar{\chi}}(n) n^{\nu/2} K_{\nu}(4\pi \sqrt{nx})
    =-\frac{ \Gamma(\nu) L(\nu, \bar{\chi})}{(2\pi)^{\nu-1} }  +  \frac{2\Gamma(1+\nu) L(1+\nu, \bar{\chi})}{(2\pi)^{\nu+1} }x^{-1} 
     + \frac{2iq^{1-\nu}  }{\tau(\chi) \cos \left(\frac{\pi \nu}{2}\right)} \\
     &\times\left\{  {\zeta(\nu+1)L(1,{\chi})(qx)^{\nu}}  
     -  \sum_{j=1}^{N} \zeta(2j)\ L(2j-\nu, \chi)(qx)^{2j-1} 
     - (qx)^{2N+1}\sum_{n=1}^{\infty} \frac{\bar{\sigma}_{-\nu, \chi}(n)}{n}    \left(\frac{  n^{\nu+1-2N}-(qx)^{\nu+1-2N} }{   n^2-(qx)^2}\right)
    \right\}, 
    	 \end{align*}
     provided   $qx\notin \mathbb{Z}_+$.
 \end{theorem}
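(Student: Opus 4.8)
The plan is to pass to a Mellin--Barnes representation of the Bessel factor and read off the right-hand side as a collection of residues. Using the classical Mellin transform $\int_0^\infty K_\nu(y)y^{s-1}\,dy=2^{s-2}\Gamma\!\left(\frac{s+\nu}{2}\right)\Gamma\!\left(\frac{s-\nu}{2}\right)$, I would write $K_\nu(4\pi\sqrt{nx})$ as a vertical integral, multiply by $n^{\nu/2}$, and sum over $n$. For $\Re(s)$ large the interchange of summation and integration is justified by absolute convergence and the exponential decay of $K_\nu$, and the resulting Dirichlet series factors as $\sum_{n\ge1}\sigma_{-\nu,\bar{\chi}}(n)n^{(\nu-s)/2}=\zeta\!\left(\frac{s-\nu}{2}\right)L\!\left(\frac{s+\nu}{2},\bar{\chi}\right)$. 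After multiplying by $8\pi x^{\nu/2}$ and absorbing the powers of $2$ and $\pi$, the left-hand side equals $\frac{1}{2\pi i}\int_{(c)}\Phi(s)\,ds$ with $\Phi(s)=(2\pi)^{1-s}\Gamma\!\left(\frac{s+\nu}{2}\right)\Gamma\!\left(\frac{s-\nu}{2}\right)\zeta\!\left(\frac{s-\nu}{2}\right)L\!\left(\frac{s+\nu}{2},\bar{\chi}\right)x^{(\nu-s)/2}$.

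Next I would move the line of integration to the left. The pole of $\zeta\!\left(\frac{s-\nu}{2}\right)$ at $s=\nu+2$ and the pole of $\Gamma\!\left(\frac{s-\nu}{2}\right)$ at $s=\nu$ are both simple, and a direct residue computation (each of these factors has residue $2$ in the variable $s$, and $\zeta(0)=-\tfrac12$) yields exactly the two main terms $-\frac{\Gamma(\nu)L(\nu,\bar{\chi})}{(2\pi)^{\nu-1}}$ and $\frac{2\Gamma(1+\nu)L(1+\nu,\bar{\chi})}{(2\pi)^{\nu+1}}x^{-1}$. Since $\bar{\chi}$ is odd, hence non-principal, $L(\cdot,\bar{\chi})$ is entire and contributes no further pole, so these are the only main terms.

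To produce the braced expression I would, on the shifted line, apply the functional equations of both $\zeta$ and $L(\cdot,\bar{\chi})$. The Riemann equation turns $\zeta\!\left(\frac{s-\nu}{2}\right)$ into $\zeta\!\left(1-\frac{s-\nu}{2}\right)$ with a sine factor, while the equation for the odd primitive character $\bar{\chi}$, whose gamma factor is $\Gamma\!\left(\frac{s+1}{2}\right)$ and for which $\tau(\chi)\tau(\bar{\chi})=\chi(-1)q=-q$, turns $L\!\left(\frac{s+\nu}{2},\bar{\chi}\right)$ into $L\!\left(\frac{2-s-\nu}{2},\chi\right)$ with a factor $-2i\sin$ and the constant $q$-power divided by $\tau(\chi)$. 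The four gamma factors then collapse through the reflection formula $\Gamma(z)\Gamma(1-z)=\pi/\sin(\pi z)$; the half-integer shift $\Gamma\!\left(\frac{s+1}{2}\right)$ peculiar to the odd case shifts the sine arguments by $\pi/2$ and is exactly what replaces the $\sin(\pi\nu/2)$ of the even case by $\cos(\pi\nu/2)$, leaving the prefactor $\frac{2iq^{1-\nu}}{\tau(\chi)\cos(\pi\nu/2)}$. The transformed Dirichlet series is now $\zeta\!\left(\frac{2-s+\nu}{2}\right)L\!\left(\frac{2-s-\nu}{2},\chi\right)=\sum_{n\ge1}\bar{\sigma}_{-\nu,\chi}(n)\,n^{(s+\nu-2)/2}$, the generating series of the dual twisted divisor sum $\bar{\sigma}_{-\nu,\chi}$.

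Finally I would evaluate the transformed integral by shifting it across the remaining poles: the pole at which the dual series is evaluated at $w=1$ contributes the odd-specific term $\zeta(\nu+1)L(1,\chi)(qx)^{\nu}$ (absent in the even case, where the corresponding residue is annihilated by a zero of the trigonometric factor); the poles at which the dual series is evaluated at $w=2j-\nu$ for $1\le j\le N$ give the finite sum $\sum_{j=1}^{N}\zeta(2j)L(2j-\nu,\chi)(qx)^{2j-1}$, truncated because we stop after $N$ poles; and the tail integral, after summing the dual series and recognising an elementary Mellin--Barnes integral for each $n$, produces the remainder $(qx)^{2N+1}\sum_{n\ge1}\frac{\bar{\sigma}_{-\nu,\chi}(n)}{n}\cdot\frac{n^{\nu+1-2N}-(qx)^{\nu+1-2N}}{n^2-(qx)^2}$. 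Here $qx\notin\mathbb{Z}_+$ guarantees that these poles never collide with the integers $n$ in the denominator $n^2-(qx)^2$, and $N\ge\lfloor(\Re(\nu)+1)/2\rfloor$ places the shifted line far enough to make the tail absolutely convergent. The \emph{main obstacle} is precisely the bookkeeping of this third and fourth step: tracking how the two sine factors and the reflection formula leave the net constant $\cos(\pi\nu/2)$ rather than $\sin(\pi\nu/2)$, and verifying that the pole producing $\zeta(\nu+1)L(1,\chi)(qx)^\nu$ genuinely survives for odd $\chi$; controlling the horizontal segments by Stirling's estimate for the gamma functions against the polynomial growth of $\zeta$ and $L$, so as to legitimise every contour shift, is routine but necessary for the residue accounting to be rigorous.
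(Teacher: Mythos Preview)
Your proposal is correct and follows essentially the same strategy as the paper: a Mellin--Barnes representation of the Bessel factor, a first contour shift to extract the two main terms from the poles of $\zeta$ and $\Gamma$, application of the functional equations of $\zeta$ and $L(\cdot,\bar{\chi})$ on the shifted line, and a second shift to produce the finite sum, the odd-specific $\zeta(\nu+1)L(1,\chi)(qx)^\nu$ term, and the tail series. The only cosmetic difference is your parameterisation of the integration variable (arguments $(s\pm\nu)/2$ in place of the paper's $s$ and $s+\nu$), which amounts to a linear change of variable and leaves the residue accounting unchanged; your identification of why the $\cos(\pi\nu/2)$ replaces $\sin(\pi\nu/2)$ and why the pole at $\nu+1$ survives for odd $\chi$ is exactly the mechanism the paper relies on.
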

  Setting $\nu=1/2$  in the above theorem, we obtain the following
  \begin{corollary} We have
    \begin{align*}
 2\pi \sum_{n=1}^\infty \sigma_{-\frac{1}{2},\Bar{\chi}}(n)e^{-4\pi\sqrt{nx}} =&-\pi L(1/2,\Bar{\chi})+\frac{1}{4\pi }L(3/2,\Bar{\chi}) x^{-1} +\frac{2i q}{\tau(\chi)}\zeta( 3/2)L(1,\chi)x^{1/2}\\
& -\frac{2i q^{3/2}}{\tau(\chi)}x\sum_{n=1}^\infty\bar{\sigma}_{-\frac{1}{2}, \chi}(n) \frac{(n+\sqrt{nqx}+qx )}{ n(n+qx)(n^{\frac{1}{2}}+(qx)^{\frac{1}{2}})} .
    \end{align*}
    \end{corollary}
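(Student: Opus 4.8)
The plan is to specialize Theorem~\ref{oddcohen} at $\nu = 1/2$. For this value the admissible range of $N$ is $N \geq \lfloor (\Re(\nu)+1)/2 \rfloor = \lfloor 3/4 \rfloor = 0$, so I take $N = 0$; this empties the finite sum $\sum_{j=1}^{N}$ and leaves inside the braces only the leading term $\zeta(\nu+1)L(1,\chi)(qx)^{\nu}$ together with the infinite series. The two facts that make the specialization collapse are the closed form $K_{1/2}(z) = \sqrt{\pi/(2z)}\,e^{-z}$ and the elementary evaluations $\Gamma(1/2) = \sqrt{\pi}$, $\Gamma(3/2) = \tfrac{1}{2}\sqrt{\pi}$, and $\cos(\pi\nu/2)\big|_{\nu = 1/2} = 1/\sqrt{2}$.

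First I would rewrite the left-hand side using the closed form for $K_{1/2}$. With $z = 4\pi\sqrt{nx}$ one finds $n^{1/4} K_{1/2}(4\pi\sqrt{nx}) = \tfrac{1}{2\sqrt{2}}\,x^{-1/4} e^{-4\pi\sqrt{nx}}$, so the prefactor $8\pi x^{1/4}$ combines with it to give $2\sqrt{2}\,\pi \sum_{n=1}^{\infty} \sigma_{-1/2,\bar\chi}(n)\,e^{-4\pi\sqrt{nx}}$. Hence the whole identity carries an overall factor $\sqrt{2}$ relative to the target, which I clear only at the very end; this is the sole normalization subtlety, and it is matched by the $1/\cos(\pi\nu/2) = \sqrt{2}$ inside the bracketed prefactor. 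Inserting the Gamma values and the powers of $2\pi$, the first two terms on the right become $-\pi\sqrt{2}\,L(1/2,\bar\chi)$ and $\tfrac{1}{2\sqrt{2}\,\pi}\,L(3/2,\bar\chi)\,x^{-1}$, while the bracket prefactor $\tfrac{2iq^{1-\nu}}{\tau(\chi)\cos(\pi\nu/2)}$ becomes $\tfrac{2\sqrt{2}\,i q^{1/2}}{\tau(\chi)}$, whose product with $\zeta(3/2)L(1,\chi)(qx)^{1/2}$ yields $\tfrac{2\sqrt{2}\,i q}{\tau(\chi)}\zeta(3/2)L(1,\chi)\,x^{1/2}$.

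The only genuinely algebraic step is the simplification of the rational summand. With $\nu = 1/2$ and $N = 0$ the exponent $\nu+1-2N$ equals $3/2$, so after multiplying by $(qx)^{2N+1} = qx$ the $n$th term of the series is $\tfrac{\bar\sigma_{-1/2,\chi}(n)}{n}\cdot\tfrac{n^{3/2}-(qx)^{3/2}}{n^2-(qx)^2}$. Setting $u = n^{1/2}$ and $v = (qx)^{1/2}$ and factoring $n^{3/2}-(qx)^{3/2} = (u-v)(u^2+uv+v^2)$ against $n^2-(qx)^2 = (u-v)(u+v)(u^2+v^2)$, the common factor $u-v$ cancels and the quotient reduces to $\tfrac{n+\sqrt{nqx}+qx}{(n^{1/2}+(qx)^{1/2})(n+qx)}$. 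Carrying the bracket prefactor $\tfrac{2\sqrt{2}\,iq^{1/2}}{\tau(\chi)}$ together with the sign and factor $-(qx)$ inside the sum, this contributes $-\tfrac{2\sqrt{2}\,i q^{3/2} x}{\tau(\chi)}\sum_{n=1}^{\infty}\bar\sigma_{-1/2,\chi}(n)\,\tfrac{n+\sqrt{nqx}+qx}{n(n+qx)(n^{1/2}+(qx)^{1/2})}$.

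Finally I would divide the entire identity by $\sqrt{2}$; each of the four assembled terms loses exactly one factor of $\sqrt{2}$, reproducing the stated corollary verbatim. There is no convergence obstacle, since the hypothesis $qx \notin \mathbb{Z}_+$ carried over from Theorem~\ref{oddcohen} keeps $n^2-(qx)^2 \neq 0$ for all $n$, and the simplified series converges at the same rate as the one in the parent theorem. The main point demanding care is the bookkeeping of the several $\sqrt{2}$ factors arising jointly from the $K_{1/2}$ evaluation and from $1/\cos(\pi/4)$, which is precisely why I defer the global division by $\sqrt{2}$ to the last step.
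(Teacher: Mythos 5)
Your proposal is correct and follows exactly the route the paper intends: the corollary is obtained by specializing Theorem \ref{oddcohen} at $\nu=1/2$ with $N=0$ (admissible since $\lfloor 3/4\rfloor=0$), using $K_{1/2}(z)=\sqrt{\pi/(2z)}\,e^{-z}$, and cancelling the common factor $n^{1/2}-(qx)^{1/2}$ in the rational summand. Your bookkeeping of the $\sqrt{2}$ factors from $K_{1/2}$ and $\cos(\pi/4)$ checks out term by term, so the verification is complete.
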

 \begin{theorem}\label{baroddcohen}
Let  $\nu \notin \mathbb{Z}$ such that $\Re{(\nu)}\geq 0.$ Let $\chi$ be an odd primitive character modulo $ q.$  If $N$ is any integer such that  $ N\geq \lfloor\frac{\Re{(\nu)}+1}{2}\rfloor$, then
\begin{align*}
			&8\pi x^{\nu/2}	\sum_{n=1}^{\infty} \bar{\sigma}_{-\nu, \bar{\chi}}(n) n^{\nu/2} K_{\nu}(4\pi\sqrt{nx}) 
   = \frac{2\Gamma(\nu) \zeta(\nu)  L(0, \bar{\chi})}{(2\pi)^{\nu-1 }}    
    +\frac{ i q }{  \tau(\chi)}     \left\{  \frac{ L(\nu,\chi)}{  \cos \left(\frac{\pi \nu}{2}\right)}    (qx)^{\nu-1} + \frac{\pi L(1+\nu,\chi)}{  \sin\left(\frac{\pi \nu}{2}\right)}(qx)^{ \nu}
     \right.\notag\\&\left.\ \
     +\frac{2}{  \cos\left(\frac{\pi \nu}{2}\right)}\sum_{j=1}^{N-1} {\zeta(2j+1-\nu)L(2j+1 ,\chi)(qx)^{2j}} 
      + \frac{2  }{   \cos\left(\frac{\pi \nu}{2}\right)   }(qx)^{2N}\sum_{n=1}^{\infty} {\sigma}_{-\nu, \chi}(n) \left(\frac{n^{\nu+1-2N}-(qx)^{\nu+1-2N}}{n^2-(qx)^2}  \right) \right\},
\end{align*}
    provided $qx\notin \mathbb{Z}_+$.
 \end{theorem}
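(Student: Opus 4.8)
The plan is to represent the left-hand side as a Mellin--Barnes integral, extract the boundary term as a residue, and then pass to the dual Dirichlet series through the functional equation, reading off the explicit terms as residues and the infinite series as the shifted integral.

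First I would combine the Mellin pair for the Bessel function,
\begin{align*}
\int_0^\infty y^{s-1}K_\nu(y)\,dy = 2^{s-2}\,\Gamma\!\left(\tfrac{s-\nu}{2}\right)\Gamma\!\left(\tfrac{s+\nu}{2}\right),\qquad \Re(s)>|\Re(\nu)|,
\end{align*}
with the Dirichlet series $\sum_{n=1}^\infty \bar\sigma_{-\nu,\bar\chi}(n)\,n^{-w}=\zeta(w+\nu)L(w,\bar\chi)$, which follows by writing $n=de$ and separating the two summations. Substituting $y=4\pi\sqrt{nx}$ into Mellin inversion, interchanging the sum and the integral (legitimate for $\Re(s)=c>2+\Re(\nu)$ by absolute convergence), and using $n^{\nu/2}(4\pi\sqrt{nx})^{-s}=(4\pi)^{-s}x^{-s/2}n^{(\nu-s)/2}$, I obtain
\begin{align*}
8\pi x^{\nu/2}\sum_{n=1}^\infty \bar\sigma_{-\nu,\bar\chi}(n)\,n^{\nu/2}K_\nu(4\pi\sqrt{nx}) = \frac{1}{2\pi i}\int_{(c)} \Phi(s)\,ds,
\end{align*}
where $\Phi(s)=2\pi(2\pi)^{-s}x^{(\nu-s)/2}\,\Gamma\!\left(\tfrac{s-\nu}{2}\right)\Gamma\!\left(\tfrac{s+\nu}{2}\right)\zeta\!\left(\tfrac{s+\nu}{2}\right)L\!\left(\tfrac{s-\nu}{2},\bar\chi\right)$.

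Next I would move the contour slightly to the left so as to cross only the pole of $\Gamma(\tfrac{s-\nu}{2})$ at $s=\nu$, whose residue in $s$ equals $2$ and which contributes exactly the boundary term $\tfrac{2\Gamma(\nu)\zeta(\nu)L(0,\bar\chi)}{(2\pi)^{\nu-1}}$; it is a reassuring check that this term is absent in the even-character analogue, Theorem~\ref{barevencohen}, precisely because $L(0,\bar\chi)=0$ for even $\bar\chi$. To the remaining integral I would apply the functional equation of the odd primitive $L$-function $L(\cdot,\bar\chi)$ together with that of $\zeta$; their combined effect converts $\zeta(\tfrac{s+\nu}{2})L(\tfrac{s-\nu}{2},\bar\chi)$ into the \emph{dual} Dirichlet series $\zeta(w)L(w+\nu,\chi)=\sum_{n=1}^\infty \sigma_{-\nu,\chi}(n)\,n^{-w}$ under $w=1-\tfrac{s+\nu}{2}$. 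This substitution is the source of the overall prefactor $i/\tau(\chi)$, and simplifying the accompanying quotients of $\Gamma$-functions by the reflection formula $\Gamma(z)\Gamma(1-z)=\pi/\sin\pi z$ and the duplication formula produces the trigonometric denominators $\cos(\tfrac{\pi\nu}{2})$ and $\sin(\tfrac{\pi\nu}{2})$.

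Then I would shift the resulting $w$-contour to the right. The pole of $\zeta(w)$ at $w=1$ produces the term carrying $L(1+\nu,\chi)(qx)^{\nu}$, a pole of the surviving $\Gamma$-factor at $w=0$ produces the term carrying $L(\nu,\chi)(qx)^{\nu-1}$, and the poles at $w=2j+1-\nu$ for $1\le j\le N-1$ produce the finite sum $\sum_{j=1}^{N-1}\zeta(2j+1-\nu)L(2j+1,\chi)(qx)^{2j}$ —whose summands are exactly the special values of the dual series $\zeta(w)L(w+\nu,\chi)$ at $w=2j+1-\nu$, a useful consistency check. For the tail integral, now on a line with $\Re(w)>1$, I would interchange summation and integration and evaluate, for each fixed $n$, the elementary Mellin--Barnes integral that remains (only $\Gamma$- and trigonometric factors survive, multiplied by powers of $n$ and of $qx$); computing its residues yields the kernel $\tfrac{n^{\nu+1-2N}-(qx)^{\nu+1-2N}}{n^2-(qx)^2}$, and the hypothesis $N\ge\lfloor(\Re(\nu)+1)/2\rfloor$ is exactly what renders this series absolutely convergent, its summand being $O\!\left(n^{\Re(\nu)-1-2N}\right)$.

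The routine but delicate part—and the main obstacle—is the exact bookkeeping of the constants and of the $\sin$/$\cos$ factors through the reflection and duplication simplifications, together with the justification of the two contour shifts (polynomial growth of $\zeta,L$ on vertical lines against Stirling decay of the $\Gamma$-factors) and of the termwise evaluation of the tail. Strategically the argument is the exact odd-character counterpart of the proof of Theorem~\ref{barevencohen}: replacing the even functional equation by the odd one accounts for the extra factor $i$ in $\tfrac{iq}{\tau(\chi)}$, for the interchange of the roles of $\sin(\tfrac{\pi\nu}{2})$ and $\cos(\tfrac{\pi\nu}{2})$, for the shift of the finite sum to the even powers $(qx)^{2j}$, and for the presence of the non-vanishing boundary term $\tfrac{2\Gamma(\nu)\zeta(\nu)L(0,\bar\chi)}{(2\pi)^{\nu-1}}$.
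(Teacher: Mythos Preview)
Your proposal is correct and follows essentially the same route as the paper: Mellin--Barnes representation of the Bessel sum, an initial contour shift to extract the residue giving the $\Gamma(\nu)\zeta(\nu)L(0,\bar\chi)$ term (nonzero here precisely because $\chi$ is odd), application of the functional equations of $\zeta$ and of the odd $L$-function to pass to the dual series $\sum\sigma_{-\nu,\chi}(n)n^{-w}$, a further contour shift picking up the residues that yield the $L(\nu,\chi)$, $L(1+\nu,\chi)$, and finite-sum terms, and termwise evaluation of the tail integral to produce the kernel $\frac{n^{\nu+1-2N}-(qx)^{\nu+1-2N}}{n^2-(qx)^2}$. One small point to watch in your bookkeeping: for $0\le\Re(\nu)<1$ the $\zeta$-pole at $s=2-\nu$ lies to the \emph{right} of $s=\nu$, so it is crossed already in your first shift rather than the second (equivalently, your ``$w=0$'' residue is picked up earlier than you indicate); this does not affect the final identity since the total residue contribution is the same, but it is exactly the sort of ordering issue you flag as delicate.
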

  The result corresponding to  $\nu=1/2$ is as follows
	  \begin{corollary} We have
     \begin{align*}
 2\pi \sum_{n=1}^\infty \bar{\sigma}_{-\frac{1}{2},\Bar{\chi}}(n)e^{-4\pi\sqrt{nx}} 
 =& 2\pi \zeta(1/2)L(0,\Bar{\chi})  +\frac{i q^{1/2}}{\tau(\chi)} L(1/2,\chi)x^{-\frac{1}{2}}  +\frac{\pi i q^{3/2}}{\tau(\chi)}L(3/2,\chi)x^{\frac{1}{2}}  \nonumber\\
 & +\frac{2i q }{\tau(\chi)}\sum_{n=1}^\infty {\sigma}_{-\frac{1}{2}, \chi}(n) \frac{ (n+\sqrt{nqx}+qx )}{ (n+qx)(\sqrt{n}+\sqrt{qx})}.
    \end{align*}
    \end{corollary}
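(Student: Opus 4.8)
The plan is to obtain the identity as the $\nu=\tfrac12$ specialization of Theorem~\ref{baroddcohen}. For $\nu=\tfrac12$ the admissibility condition becomes $N\ge\lfloor(\Re(\nu)+1)/2\rfloor=\lfloor 3/4\rfloor=0$, so I would take the minimal value $N=0$; then the finite sum $\sum_{j=1}^{N-1}$ is empty, and the right-hand side reduces to the leading term $\tfrac{2\Gamma(\nu)\zeta(\nu)L(0,\bar{\chi})}{(2\pi)^{\nu-1}}$, the two Dirichlet $L$-value terms involving $L(\nu,\chi)$ and $L(1+\nu,\chi)$, and a single infinite series in which $(qx)^{2N}=1$ and the exponent equals $\nu+1-2N=\tfrac32$.

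The first step is to rewrite the left-hand side using the elementary half-integer evaluation $K_{1/2}(z)=\sqrt{\pi/(2z)}\,e^{-z}$. With $z=4\pi\sqrt{nx}$ one has $K_{1/2}(4\pi\sqrt{nx})=\tfrac{1}{2\sqrt2}(nx)^{-1/4}e^{-4\pi\sqrt{nx}}$, so the prefactor $8\pi x^{1/4}$ and the inner power $n^{1/4}$ combine to give $8\pi x^{1/4}\,n^{1/4}K_{1/2}(4\pi\sqrt{nx})=2\sqrt2\,\pi\,e^{-4\pi\sqrt{nx}}$. Hence the left-hand side of Theorem~\ref{baroddcohen} equals $2\sqrt2\,\pi\sum_{n\ge1}\bar{\sigma}_{-1/2,\bar{\chi}}(n)e^{-4\pi\sqrt{nx}}$, which is the origin of the exponential series appearing in this corollary.

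On the right-hand side I would substitute the special values $\Gamma(\tfrac12)=\sqrt\pi$, $(2\pi)^{\nu-1}=(2\pi)^{-1/2}$, and $\cos(\pi\nu/2)=\sin(\pi\nu/2)=1/\sqrt2$. Each surviving term then carries the common factor $\sqrt2$: the leading term becomes $2\sqrt2\,\pi\,\zeta(1/2)L(0,\bar{\chi})$, the $L$-value terms become $\sqrt2\,\tfrac{iq^{1/2}}{\tau(\chi)}L(1/2,\chi)x^{-1/2}$ and $\sqrt2\,\tfrac{\pi i q^{3/2}}{\tau(\chi)}L(3/2,\chi)x^{1/2}$, and the infinite series picks up the prefactor $2\sqrt2\,\tfrac{iq}{\tau(\chi)}$. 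It remains to simplify the rational kernel of that series: with exponent $\tfrac32$ it is $\dfrac{n^{3/2}-(qx)^{3/2}}{n^{2}-(qx)^{2}}$, and putting $u=\sqrt n$, $v=\sqrt{qx}$ and using $\dfrac{u^{3}-v^{3}}{u^{4}-v^{4}}=\dfrac{u^{2}+uv+v^{2}}{(u+v)(u^{2}+v^{2})}$ rewrites it as $\dfrac{n+\sqrt{nqx}+qx}{(\sqrt n+\sqrt{qx})(n+qx)}$. Cancelling the overall factor $\sqrt2$ from both sides now produces exactly the stated identity; the hypothesis $qx\notin\mathbb{Z}_+$ is inherited from the theorem and guarantees $n^{2}-(qx)^{2}\neq0$.

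As this is a pure specialization, there is no conceptual difficulty. The only points demanding care are the bookkeeping of the uniform factor $\sqrt2$ generated by the half-integer values of $\Gamma$, $\sin$ and $\cos$, and the correct algebraic reduction of the kernel so that the numerator $n+\sqrt{nqx}+qx$ appears. The convergence of the exponential and rational series is that of Theorem~\ref{baroddcohen} and requires no separate argument.
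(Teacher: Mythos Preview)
Your proposal is correct and is precisely the route the paper takes: the corollary is stated immediately after Theorem~\ref{baroddcohen} as ``the result corresponding to $\nu=1/2$,'' and the paper gives no separate proof beyond this remark. Your choice $N=0$, the evaluation $K_{1/2}(z)=\sqrt{\pi/(2z)}\,e^{-z}$, the tracking of the common factor $\sqrt{2}$, and the algebraic reduction $\dfrac{n^{3/2}-(qx)^{3/2}}{n^{2}-(qx)^{2}}=\dfrac{n+\sqrt{nqx}+qx}{(\sqrt{n}+\sqrt{qx})(n+qx)}$ are all exactly what the specialization requires.
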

   \vspace{.001cm}
			\subsection{Identities involving two characters and specializations.}
			Here we state the identities corresponding to $ \sigma_{-\nu,\chi_1,\chi _2}(n)=\sum_{d/n}d^{-\nu} \chi_1(d)\chi _2(n/d)$, where $\chi_1$ and $\chi_2$ are the Dirichlet characters modulo $p$ and $ q$, respectively.
 \begin{theorem}\label{cohenee}
Let $\nu \notin \mathbb{Z}$ such that $\Re{(\nu)}\geq0$. Both $\chi_1$ and $\chi_2$  are non-principal even primitive characters modulo  $p$ and $ q$, respectively.  If $N$ is any integer such that  $ N\geq \lfloor\frac{\Re{(\nu)}+1}{2}\rfloor$, then
\begin{align*}
	&	8\pi x^{\frac{\nu}{2}}\sum_{n=1}^{\infty}\sigma_{-\nu, \bar{\chi_1},\bar{\chi_2}}(n) n^{\nu/2} K_{\nu}(4\pi\sqrt{nx})   =\frac{ 2 p^{1-\nu}q }{\tau(\chi_1)\tau(\chi_2) \sin\left(\frac{\pi \nu}{2}\right)}\left\{\sum_{j=1}^{N} L(2j,\chi_2)\ L(2j-\nu, \chi_1)(pqx)^{2j-1} 
  \right.\notag\\&\left. 
  \ \ \ \ \ \ \ \ \ \   \ \   \ \ \ \ \ \ \ \ \ \   \ \ \ \ \ \ \ \ \ \ \ \ \ \ \ \ \ \ \ +{(pqx)^{2N+1} }\sum_{n=1}^\infty{\sigma}_{-\nu, \chi_2,\chi_1}(n) {  }\left(  \frac{n^{\nu-2N}-(pqx)^{\nu-2N}}{n^2-(pqx)^2}  \right)
    \right\},
			\end{align*}
   provided $pqx\notin \mathbb{Z}_+$.
		      \end{theorem}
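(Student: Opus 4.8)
The plan is to run a Mellin--Barnes / contour argument in direct analogy with the single even-character identity of Theorem \ref{evencohen}, the one genuinely new feature being that both Dirichlet factors are now attached to non-principal characters, hence entire; this is exactly why the present statement carries no polar terms of the shape $\Gamma(\nu)L(\nu,\cdot)$. First I would insert the representation
$$K_{\nu}(z)=\frac{1}{2\pi i}\int_{(c)}2^{s-2}\,\Gamma\!\Big(\tfrac{s+\nu}{2}\Big)\Gamma\!\Big(\tfrac{s-\nu}{2}\Big)z^{-s}\,ds,\qquad c>|\Re(\nu)|,$$
with $z=4\pi\sqrt{nx}$, into the series $\sum_{n\ge1}\sigma_{-\nu,\bar\chi_1,\bar\chi_2}(n)n^{\nu/2}K_\nu(4\pi\sqrt{nx})$. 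In the region of absolute convergence (justified by Stirling for the Gamma factors, the convexity bound for $L$, and the decay $K_\nu(z)\sim\sqrt{\pi/2z}\,e^{-z}$) I interchange sum and integral and use the Euler-product factorisation $\sum_{n\ge1}\sigma_{-\nu,\bar\chi_1,\bar\chi_2}(n)n^{-w}=L(w+\nu,\bar\chi_1)\,L(w,\bar\chi_2)$. After the substitution $w=(s-\nu)/2$ this rewrites $8\pi x^{\nu/2}$ times the left-hand side as $\frac{1}{2\pi i}\int_{(c')}2^{2-2w-\nu}\pi^{1-2w-\nu}\Gamma(w)\Gamma(w+\nu)L(w+\nu,\bar\chi_1)L(w,\bar\chi_2)\,x^{-w}\,dw$ on a line $\Re(w)=c'$ far to the right.

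Next I apply the even-primitive functional equation $L(u,\bar\chi)=\tfrac{\tau(\bar\chi)}{\sqrt q}(q/\pi)^{(1-2u)/2}\tfrac{\Gamma((1-u)/2)}{\Gamma(u/2)}L(1-u,\chi)$ to both factors (with modulus $p$ for $\chi_1$), remove the Gamma quotients via duplication $\Gamma(v)=\pi^{-1/2}2^{v-1}\Gamma(v/2)\Gamma((v+1)/2)$, and pair the surviving Gammas via reflection $\Gamma(z)\Gamma(1-z)=\pi/\sin(\pi z)$. The powers of $2$ cancel, the powers of $\pi$ collapse to a single factor, and the trigonometric parts become a cosine product, so the integrand turns into
$$\frac{\pi\,\tau(\bar\chi_1)\tau(\bar\chi_2)\,p^{-\nu}}{\cos\!\big(\tfrac{\pi(w+\nu)}{2}\big)\cos\!\big(\tfrac{\pi w}{2}\big)}\,L(1-w-\nu,\chi_1)\,L(1-w,\chi_2)\,(pqx)^{-w}.$$
The Gauss sums are then converted by $\tau(\bar\chi)=\chi(-1)q/\tau(\chi)=q/\tau(\chi)$ for even $\chi$, producing the prefactor $p^{1-\nu}q/(\tau(\chi_1)\tau(\chi_2))$ of the statement.

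I then move the line of integration to $\Re(w)=c''$ with $-(2N+1)<c''<-(2N-1)$. The integrand is meromorphic with poles only at the cosine zeros. At $w=1-2j$ one computes $\Res\,\tfrac{1}{\cos\cos}=\tfrac{2}{\pi\sin(\pi\nu/2)}$, and $L(1-w-\nu,\chi_1)L(1-w,\chi_2)=L(2j-\nu,\chi_1)L(2j,\chi_2)$, so the residues for $j=1,\dots,N$ reproduce exactly the finite sum $\tfrac{2p^{1-\nu}q}{\tau(\chi_1)\tau(\chi_2)\sin(\pi\nu/2)}\sum_{j=1}^{N}L(2j,\chi_2)L(2j-\nu,\chi_1)(pqx)^{2j-1}$. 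The potential poles at positive integers and at $w=(2k-1)-\nu$ with $k\ge1$ contribute nothing, since $\chi_1,\chi_2$ are even primitive and $L(\cdot,\chi)$ vanishes at all non-positive even integers. On the remaining line $\Re(w)=c''$ lies in the convergence half-plane of $L(1-w-\nu,\chi_1)L(1-w,\chi_2)=\sum_n\sigma_{-\nu,\chi_2,\chi_1}(n)n^{w+\nu-1}$; expanding, and evaluating the elementary integrals $\tfrac{1}{2\pi i}\int_{(c'')}\tfrac{(n/pqx)^{w}}{\cos\cos}\,dw$ by residues together with geometric summation (the residues at $w=1-2j$, $j>N$, giving the $n^{\nu-2N}$ term and those at $w=(2k-1)-\nu$, $k\le0$, the $-(pqx)^{\nu-2N}$ term), yields the kernel $\tfrac{n^{\nu-2N}-(pqx)^{\nu-2N}}{n^2-(pqx)^2}$, i.e. the stated infinite sum; here $pqx\notin\mathbb{Z}_+$ ensures $n\ne pqx$ so the kernel is finite.

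The main obstacle is this final step: the careful pole bookkeeping during the shift, in particular verifying that the residues at $w=(2k-1)-\nu$ with $k\le0$ fuse with the tail into the single subtracted term $(pqx)^{\nu-2N}$ of the kernel, and rigorously justifying both the sum--integral interchange and the re-summation of the tail. The hypothesis $N\ge\lfloor(\Re(\nu)+1)/2\rfloor$ is precisely what places $c''$ inside the domain of absolute convergence of the dual Dirichlet series while fixing the number of explicitly extracted residues at $N$.
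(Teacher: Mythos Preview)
Your approach is essentially the paper's: Mellin--Barnes for $K_\nu$, functional equations for the two even $L$-functions to produce a $\frac{1}{\cos\cdot\cos}$ kernel, contour shift collecting residues, then expansion of the dual Dirichlet series. You also correctly note that because both characters are non-principal even, the polar terms at $w=0$ and $w=-\nu$ vanish (trivial zeros of $L$), which is why the right-hand side has no isolated $\Gamma(\nu)L(\nu,\cdot)$ contribution.

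There is one imprecision worth flagging. In your first shift from $\Re(w)=c'$ to $\Re(w)=c''\in(-(2N+1),-(2N-1))$ you \emph{do} cross the poles of $\cos\!\big(\tfrac{\pi(w+\nu)}{2}\big)^{-1}$ at $w=(2k-1)-\nu$ with $k\le 0$; at such points $L(1-w-\nu,\chi_1)=L(2-2k,\chi_1)\ne 0$, so these residues are nonzero and must be collected already at this stage, not only after expanding the Dirichlet series. In the paper's proof of the analogous single-character identity (Theorem~\ref{evencohen}) these appear explicitly as the residues $\mathcal H_{2r}$ at $s=\nu+2r$, and the paper then rewrites them via $L(\nu+2r,\chi_2)L(2r,\chi_1)=\sum_n \sigma_{\nu,\chi_1,\chi_2}(n)n^{-\nu-2r}$ so that they can be absorbed into the tail sum, producing exactly the $-(pqx)^{\nu-2N}$ part of the kernel. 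You identify this ``fusing'' as the main obstacle, which is right; just be aware that the bookkeeping happens in the first shift rather than the second. Apart from this, your sketch is correct and tracks the paper's argument.
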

Setting $\chi_1=\chi_2=\chi$ in the above theorem, we get the following
\begin{corollary}\label{coheneecor}
    Let $\nu \notin \mathbb{Z}$ such that $\Re{(\nu)}\geq0 $. Let $\chi$  be a non-principal even primitive character modulo $q$.  If $N$ is any integer such that  $ N\geq \lfloor\frac{\Re{(\nu)}+1}{2}\rfloor$, then
\begin{align*}
		&8\pi x^{\frac{\nu}{2}}\sum_{n=1}^{\infty} \sigma_{-\nu }(n) \Bar{\chi}(n) n^{\nu/2} K_{\nu}(4\pi\sqrt{nx})   =\frac{  2q^{2-\nu} }{\tau^2(\chi) \sin \left(\frac{\pi \nu}{2}\right)}\left\{ \sum_{j=1}^{N} L(2j,\chi)\ L(2j-\nu, \chi)(q^2x)^{2j-1}  
   \right.\notag\\&\left. 
  \ \ \ \ \ \ \ \ \ \   \ \   \ \ \ \   \ \ \ \ \ \ \ \ \ \   \ \   \ \ \ \ \ \ \ \ \ \   \ \ \ \ \ \ \ \ \ 
  + {(q^2x)^{2N+1} }\sum_{n=1}^\infty\sigma_{-\nu }(n)  {\chi}(n)  \left(  \frac{n^{\nu-2N}-(q^2x)^{\nu-2N}}{n^2-(q^2x)^2}  \right)
    \right\},
			\end{align*}
   provided $q^2x\notin \mathbb{Z}_+$.
		  \end{corollary}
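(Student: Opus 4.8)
The plan is to obtain Corollary \ref{coheneecor} as a direct specialization of Theorem \ref{cohenee}, with no new analysis required. First I would set $\chi_1 = \chi_2 = \chi$ in that theorem, which forces the two moduli to coincide, so that $p = q$. The only genuine input is the simplification of the two-character divisor sums under this collapse: since a Dirichlet character is completely multiplicative, for every divisor $d \mid n$ one has $\chi(d)\chi(n/d) = \chi(n)$, whence
$$
\sigma_{-\nu, \chi, \chi}(n) = \sum_{d \mid n} d^{-\nu} \chi(d)\chi(n/d) = \chi(n) \sum_{d \mid n} d^{-\nu} = \chi(n)\,\sigma_{-\nu}(n),
$$
and, replacing $\chi$ by $\bar\chi$, likewise $\sigma_{-\nu, \bar\chi, \bar\chi}(n) = \bar\chi(n)\,\sigma_{-\nu}(n)$.

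With these two identities in hand, I would substitute termwise into the statement of Theorem \ref{cohenee}. On the left-hand side the summand $\sigma_{-\nu, \bar\chi, \bar\chi}(n)$ becomes $\bar\chi(n)\sigma_{-\nu}(n)$, reproducing exactly the left-hand side of the corollary. On the right-hand side, the prefactor $2p^{1-\nu}q/(\tau(\chi_1)\tau(\chi_2)\sin(\frac{\pi\nu}{2}))$ collapses to $2q^{2-\nu}/(\tau^2(\chi)\sin(\frac{\pi\nu}{2}))$ upon setting $p=q$; each product $L(2j,\chi_2)L(2j-\nu,\chi_1)$ becomes $L(2j,\chi)L(2j-\nu,\chi)$; every occurrence of $pqx$ becomes $q^2x$; and the inner-sum coefficient $\sigma_{-\nu,\chi_2,\chi_1}(n) = \sigma_{-\nu,\chi,\chi}(n)$ becomes $\chi(n)\sigma_{-\nu}(n)$. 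The side condition $pqx \notin \mathbb{Z}_+$ correspondingly becomes $q^2x \notin \mathbb{Z}_+$.

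Since every transformation above is a routine substitution, there is no genuine analytic obstacle: the entire content is the completely multiplicative collapse $\sigma_{-\nu,\chi,\chi}(n)=\chi(n)\sigma_{-\nu}(n)$, which is elementary. The only point meriting a moment's care is verifying that the summation threshold $N \geq \lfloor (\Re(\nu)+1)/2 \rfloor$, the range of the index $j$, and the absolute convergence guaranteed by the exponential decay of $K_\nu$ all persist unchanged when $p=q$; they do, since none of these features depends on the two characters being distinct. Hence the identity of Corollary \ref{coheneecor} follows at once.
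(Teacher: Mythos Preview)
Your proposal is correct and matches the paper's approach exactly: the paper states the corollary immediately after Theorem \ref{cohenee} with the remark ``Setting $\chi_1=\chi_2=\chi$ in the above theorem, we get the following,'' relying on the same completely multiplicative collapse $\sigma_{-\nu,\chi,\chi}(n)=\chi(n)\sigma_{-\nu}(n)$ that you spell out.
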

	\begin{theorem}\label{cohenoo}
Let $\nu \notin \mathbb{Z}$ such that $\Re{(\nu)}\geq 0$. Both $\chi_1$ and $\chi_2$ are odd primitive characters modulo $p$ and $q$, respectively. If $N$ is any integer such that  $ N\geq \lfloor\frac{\Re{(\nu)}+1}{2}\rfloor$, then
  \begin{align*}
		&8 \pi x^{\nu/2}\sum_{n=1}^{\infty}\sigma_{-\nu, \bar{\chi_1},\bar{\chi_2}}(n) n^{\nu/2} K_{\nu}(4\pi\sqrt{nx}) =
 \Gamma(\nu)L(\nu, \bar{\chi_1})L(0, \bar{\chi_2})
        \frac{2 }{(2\pi)^{\nu-1}} \\
       & -\frac{ 2 p^{1-\nu}q}{  \tau(\chi_1)\tau(\chi_2)\sin \left(\frac{\pi \nu}{2}\right)}     \left\{ - {L(\nu+1,\chi_2)L(1,\chi_1)(pqx)^{\nu}}   
      + \sum_{j=1}^{N-1} L(2j+1,\chi_2)\ L(2j+1-\nu, \chi_1)(pqx)^{2j}
     \right.\notag\\&\left. 
   \ \   \ \ \ \   \ \   \ \ \ \   \ \ \ \ \ \ \ \ \ \ \ \ \ \   \ \   \ \ \ \ \ \ \ \   \ \   \ \ \ \ \ \
      +  (pqx)^{2N} \sum_{n=1}^{\infty}\frac{{\sigma}_{-\nu, \chi_2,\chi_1}(n)}{n}    \left( \frac{n^{\nu-2N+2}-(pqx)^{\nu-2N+2}}{n^2-(pqx)^2  } \right)
   \right\},
			\end{align*}
   provided $pqx\notin \mathbb{Z}_+$.
 \end{theorem}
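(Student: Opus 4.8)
The plan is to evaluate the left-hand series by the Mellin--Barnes method already used for the even--even analogue in Theorem \ref{cohenee}, keeping careful track of the one extra residue that survives in the odd--odd case. First I would record the Mellin transform of the $K$-Bessel function \eqref{Kbessel}, namely $\int_0^\infty K_\nu(t)t^{s-1}\,dt=2^{s-2}\Gamma\big(\tfrac{s+\nu}{2}\big)\Gamma\big(\tfrac{s-\nu}{2}\big)$ for $\Re(s)>\Re(\nu)$, and invert it with $t=4\pi\sqrt{nx}$. Multiplying by $n^{\nu/2}\sigma_{-\nu,\bar{\chi}_1,\bar{\chi}_2}(n)$, summing over $n$ (the interchange being justified by the exponential decay of $K_\nu$ and absolute convergence of the Dirichlet series for $\Re(s)$ large), and using the factorisation $\sum_{n\ge1}\sigma_{-\nu,\bar{\chi}_1,\bar{\chi}_2}(n)n^{-w}=L(w+\nu,\bar{\chi}_1)L(w,\bar{\chi}_2)$ with $w=(s-\nu)/2$, I obtain
\[
\sum_{n=1}^\infty \sigma_{-\nu,\bar{\chi}_1,\bar{\chi}_2}(n)\,n^{\nu/2}K_\nu(4\pi\sqrt{nx})=\frac{1}{2\pi i}\int_{(c)} 2^{s-2}(4\pi)^{-s}x^{-s/2}\Gamma\big(\tfrac{s+\nu}{2}\big)\Gamma\big(\tfrac{s-\nu}{2}\big)L\big(\tfrac{s+\nu}{2},\bar{\chi}_1\big)L\big(\tfrac{s-\nu}{2},\bar{\chi}_2\big)\,ds,
\]
where $c>\Re(\nu)+2$.

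Next I would apply the functional equation for odd primitive characters to each of $L\big(\tfrac{s+\nu}{2},\bar{\chi}_1\big)$ (modulus $p$) and $L\big(\tfrac{s-\nu}{2},\bar{\chi}_2\big)$ (modulus $q$); since $\chi_1,\chi_2$ are odd this produces, for argument $u$, the factor $-2i\,(2\pi)^{u-1}\Gamma(1-u)m^{-u}\tau(\bar{\chi})\cos\tfrac{\pi u}{2}$ and replaces $\bar{\chi}_j$ by $\chi_j$ at argument $1-u$. The crucial simplification is that the four gamma factors collapse by reflection, $\Gamma\big(\tfrac{s+\nu}{2}\big)\Gamma\big(1-\tfrac{s+\nu}{2}\big)\Gamma\big(\tfrac{s-\nu}{2}\big)\Gamma\big(1-\tfrac{s-\nu}{2}\big)=\pi^2/\big(\sin\tfrac{\pi(s+\nu)}{2}\sin\tfrac{\pi(s-\nu)}{2}\big)$, and combining this with the two cosines via $\sin2\theta=2\sin\theta\cos\theta$ and a product-to-sum identity turns the entire trigonometric factor into $\pi^2/\big(2[\cos\tfrac{\pi\nu}{2}-\cos\tfrac{\pi s}{2}]\big)$. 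Using $\tau(\bar{\chi}_j)=-m_j/\tau(\chi_j)$ for odd $\chi_j$, the constants telescope to $-\tfrac18/(\tau(\chi_1)\tau(\chi_2))$, and the integrand becomes (up to this constant)
\[
\frac{x^{-s/2}\,p^{(2-s-\nu)/2}\,q^{(2-s+\nu)/2}}{\cos\frac{\pi\nu}{2}-\cos\frac{\pi s}{2}}\;L\big(\tfrac{2-s-\nu}{2},\chi_1\big)L\big(\tfrac{2-s+\nu}{2},\chi_2\big).
\]

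I would then shift the contour to the left across the simple poles of $\big(\cos\tfrac{\pi\nu}{2}-\cos\tfrac{\pi s}{2}\big)^{-1}$, located at $s=\pm\nu+4k$. The residue at $s=\nu$ is the genuinely new contribution: it equals $-\tfrac18 x^{-\nu/2}p^{1-\nu}q\cdot\tfrac{2}{\pi\sin(\pi\nu/2)}L(1-\nu,\chi_1)L(1,\chi_2)/(\tau(\chi_1)\tau(\chi_2))$, and re-expressing $L(1-\nu,\chi_1)L(1,\chi_2)$ through the functional equation returns, after multiplication by $8\pi x^{\nu/2}$, exactly $\Gamma(\nu)L(\nu,\bar{\chi}_1)L(0,\bar{\chi}_2)\,2/(2\pi)^{\nu-1}$, the first term on the right. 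The residue at $s=-\nu$ yields the $-L(\nu+1,\chi_2)L(1,\chi_1)(pqx)^{\nu}$ term, and the residues at $s=\nu-4j$ for $1\le j\le N-1$ yield the finite sum $\sum_{j=1}^{N-1}L(2j+1,\chi_2)L(2j+1-\nu,\chi_1)(pqx)^{2j}$, the powers of $x,p,q$ and the $L$-arguments at each pole being read off directly. Finally, on the shifted line I would expand $L\big(\tfrac{2-s-\nu}{2},\chi_1\big)L\big(\tfrac{2-s+\nu}{2},\chi_2\big)=\sum_n\sigma_{-\nu,\chi_2,\chi_1}(n)\,n^{(s+\nu-2)/2}$ and integrate term by term; since the $s$-dependence collapses to $(n/(pqx))^{s/2}$, closing each integral to the left or right according as $n\gtrless pqx$ and summing the resulting geometric-type residue series produces the tail $(pqx)^{2N}\sum_n\tfrac{\sigma_{-\nu,\chi_2,\chi_1}(n)}{n}\cdot\tfrac{n^{\nu-2N+2}-(pqx)^{\nu-2N+2}}{n^2-(pqx)^2}$, all carrying the common prefactor $-2p^{1-\nu}q/(\tau(\chi_1)\tau(\chi_2)\sin\tfrac{\pi\nu}{2})$.

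The main obstacle is the contour bookkeeping and convergence. I must justify vertical decay in the strips (Stirling for the gamma factors in the pre-transform form, convexity bounds for the $L$-functions, and $\cos\tfrac{\pi\nu}{2}-\cos\tfrac{\pi s}{2}$ bounded away from $0$), and then handle the delicate point that for large $N$ the poles $s=-\nu-4j$ ($j\ge1$) are also crossed: these do not appear as separate terms but are reabsorbed into the summed tail, so the clean split into ``residues plus shifted integral'' must be reconciled with the closed form for all admissible $N$. This is exactly where the hypotheses $N\ge\lfloor(\Re(\nu)+1)/2\rfloor$ (fixing how many leading terms are split off and ensuring termwise convergence) and $pqx\notin\mathbb{Z}_+$ (keeping $n^2-(pqx)^2\neq0$, so the tail is well defined) are used. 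Structurally the computation mirrors the even--even case of Theorem \ref{cohenee}; the one essential difference is that for odd $\chi_2$ one has $L(0,\bar{\chi}_2)\neq0$, so the pole of $\Gamma\big(\tfrac{s-\nu}{2}\big)$ at $s=\nu$ is no longer annihilated and contributes the extra leading term, precisely as the $\Gamma(\nu)\zeta(\nu)$ term arises in Cohen's identity \eqref{cohen} of Proposition \ref{Cohen-type}.
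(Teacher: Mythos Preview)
Your approach is correct and follows the same Mellin--Barnes template the paper uses throughout Section~\ref{proof of cohen identities...}; indeed the paper proves only Theorem~\ref{evencohen} in full and says the remaining Cohen-type identities, including this one, are handled identically. The one organisational difference is that the paper first shifts the original integral $\tfrac{1}{2\pi i}\int_{(c)}\Gamma(s)\Gamma(s+\nu)L(s+\nu,\bar\chi_1)L(s,\bar\chi_2)X^s\,ds$ from $(c)$ to $(-d)$ and picks up the leading term $\Gamma(\nu)L(\nu,\bar\chi_1)L(0,\bar\chi_2)$ directly as the residue of $\Gamma(s)$ at $s=0$, and only afterwards applies the functional equations and shifts right; you instead apply the functional equations on $(c)$ and make a single leftward shift, which forces you to recognise that same term by undoing the functional equation at $s=\nu$. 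Both routes work, but the paper's ordering avoids that back-substitution, and keeping the two factors $\cos(\pi s/2)^{-1}$ and $\cos(\pi(s-\nu)/2)^{-1}$ separate (rather than combining them via product-to-sum) makes the cancellation of half the poles by trivial $L$-zeros---and hence the ``reabsorption'' issue you flag---more transparent.
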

 Taking $\chi_1=\chi_2=\chi$ in the above theorem, we get the following
 \begin{corollary}\label{cohenoocor}
      Let $\nu \notin \mathbb{Z}$ such that $\Re{(\nu)}\geq 0$. Let $\chi$  be an odd primitive character modulo $p$.  If $N$ is any integer such that  $ N\geq \lfloor\frac{\Re{(\nu)}+1}{2}\rfloor$, then
\begin{align*} 
		&8\pi x^{\frac{\nu}{2}}\sum_{n=1}^{\infty} \sigma_{-\nu }(n) \Bar{\chi}(n) n^{\nu/2} K_{\nu}(4\pi\sqrt{nx})   =
 \Gamma(\nu)L(\nu, \bar{\chi})L(0, \bar{\chi})
        \frac{2 }{(2\pi)^{\nu-1} } \notag \\  &-\frac{ 2 p^{2-\nu}}{  \tau^2(\chi) \sin \left(\frac{\pi \nu}{2}\right)}  
     \left\{ - {L(\nu+1,\chi_2)L(1,\chi_1)(p^2x)^{\nu}}   
      + \sum_{j=1}^{N-1} L(2j+1,\chi)\ L(2j+1-\nu, \chi)(p^2x)^{2j}
      \right.\notag\\&\left.\ \
      \ \ \ \ \ \ \ \ \  \ \ \ \ \ \ \ \ \ \ \ \ \ \ \ \  \ \ \ \ \ \ \ \ \ \ \ \ \ \ \ \  +  (p^2x)^{2N} \sum_{n=1}^{\infty}\frac{\sigma_{-\nu }(n)  {\chi}(n)}{n}    \left( \frac{n^{\nu-2N+2}-(p^2x)^{\nu-2N+2}}{n^2-(p^2x)^2  } \right)
   \right\},
			\end{align*}
   provided $p^2x\notin \mathbb{Z}_+$.
  \end{corollary}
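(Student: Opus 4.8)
The plan is to derive Corollary \ref{cohenoocor} directly from Theorem \ref{cohenoo} by specializing $\chi_1 = \chi_2 = \chi$. Since the corollary assumes $\chi$ is an odd primitive character modulo $p$, this choice forces $q = p$ and automatically fulfills the hypothesis of Theorem \ref{cohenoo} that both characters be odd and primitive; the side condition $pqx \notin \mathbb{Z}_+$ of the theorem then reads $p^2 x \notin \mathbb{Z}_+$, exactly as required in the corollary.

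The one substantive step is to simplify the twisted divisor sums using the fact that a Dirichlet character is completely multiplicative, as already noted in the paragraph preceding Corollary \ref{last1 cor}. For every divisor $d \mid n$ one has $\bar{\chi}(d)\bar{\chi}(n/d) = \bar{\chi}(n)$, whence
$$\sigma_{-\nu, \bar{\chi}, \bar{\chi}}(n) = \sum_{d \mid n} d^{-\nu}\,\bar{\chi}(d)\,\bar{\chi}(n/d) = \bar{\chi}(n)\sum_{d \mid n} d^{-\nu} = \bar{\chi}(n)\,\sigma_{-\nu}(n),$$
and likewise $\sigma_{-\nu, \chi, \chi}(n) = \chi(n)\,\sigma_{-\nu}(n)$. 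Substituting these identities turns the Bessel series on the left-hand side of Theorem \ref{cohenoo}, together with the arithmetic sum involving $\sigma_{-\nu,\chi_2,\chi_1}(n)/n$ on the right, into the series $\sum_n \sigma_{-\nu}(n)\bar{\chi}(n)\,(\cdots)$ and $\sum_n \sigma_{-\nu}(n)\chi(n)/n\,(\cdots)$ that appear in the corollary.

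Finally I would collapse the constant prefactors: with $\chi_1 = \chi_2 = \chi$ and $q = p$, the Gauss-sum product $\tau(\chi_1)\tau(\chi_2)$ becomes $\tau^2(\chi)$, so that the factor $2p^{1-\nu}q/(\tau(\chi_1)\tau(\chi_2)\sin(\pi\nu/2))$ becomes $2p^{2-\nu}/(\tau^2(\chi)\sin(\pi\nu/2))$, and each occurrence of $pqx$ becomes $p^2 x$; the leading term $\Gamma(\nu)L(\nu,\bar{\chi_1})L(0,\bar{\chi_2})\,2/(2\pi)^{\nu-1}$ and the finite $L$-function sum $\sum_{j=1}^{N-1} L(2j+1,\chi_2)L(2j+1-\nu,\chi_1)(pqx)^{2j}$ specialize verbatim. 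Assembling these substitutions reproduces exactly the right-hand side of Corollary \ref{cohenoocor}. Because the statement is a pure specialization of Theorem \ref{cohenoo}, there is no genuine obstacle; the only care needed is consistent bookkeeping of the constant prefactors and the uniform application of the complete-multiplicativity identity across all three divisor sums.
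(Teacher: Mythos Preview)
Your proposal is correct and matches the paper's own approach exactly: the paper states that Corollary \ref{cohenoocor} follows by taking $\chi_1=\chi_2=\chi$ in Theorem \ref{cohenoo}, and the complete-multiplicativity identity $\sigma_{-\nu,\chi,\chi}(n)=\chi(n)\sigma_{-\nu}(n)$ you invoke is precisely the observation the paper records before Corollary \ref{last1 cor}. Your write-up is in fact more detailed than the paper's one-line justification.
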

\begin{theorem}\label{coheneo}
 Let $\nu \notin \mathbb{Z}$ such that $\Re{(\nu)}\geq0$. Let  $\chi_1$ be a non-principal even primitive character modulo  $ p$ and 
$\chi_2$ be an odd primitive character modulo  $ q$. If $N$ is any integer such that  $ N\geq \lfloor\frac{\Re{(\nu)}+1}{2}\rfloor$, then
 \begin{align*}
	&8 \pi x^{\nu/2}	\sum_{n=1}^{\infty} \sigma_{-\nu, \bar{\chi_1},\bar{\chi_2}}(n) n^{\nu/2} K_{\nu}(4\pi\sqrt{nx}) =\frac{ 2 }{(2\pi)^{\nu-1}}
 \Gamma(\nu)L(\nu, \bar{\chi_1})L(0, \bar{\chi_2})
          +\frac{2i  p^{1-\nu}q}{\tau(\chi_1)\tau(\chi_2) \cos\left(\frac{\pi \nu}{2}\right)   }  \\
          &\times\left\{   \sum_{j=1}^{N-1} L(2j+1,\chi_2)  L(2j+1-\nu, \chi_1)(pqx)^{2j} 
   + (pqx)^{2N}\sum_{n=1}^{\infty} {\sigma}_{-\nu, \chi_2,\chi_1}(n) \left(\frac{n^{\nu-2N+1}-(pqx)^{\nu-2N+1}}{n^2-(pqx)^2}  \right)
 \right\},
			\end{align*}
   provided $pqx\notin \mathbb{Z}_+$.
\end{theorem}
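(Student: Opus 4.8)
The plan is to use the Mellin--Barnes/functional-equation scheme that governs the companion identities (Theorems~\ref{cohenee} and~\ref{cohenoo}), adjusting the parity bookkeeping to the mixed case $\chi_1$ even, $\chi_2$ odd, in the spirit of \cite{MR3558223,debika2023}. First I would substitute the Mellin--Barnes representation
$$K_\nu(4\pi\sqrt{nx})=\frac{1}{2\pi i}\int_{(c)}2^{s-2}\,\Gamma\!\left(\tfrac{s+\nu}{2}\right)\Gamma\!\left(\tfrac{s-\nu}{2}\right)(4\pi\sqrt{nx})^{-s}\,ds,\qquad c>\Re(\nu),$$
into the left-hand side and interchange summation and integration, which is legitimate for $c$ large because of the exponential decay of $K_\nu$. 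Since $\sum_{n\ge1}\sigma_{-\nu,\bar{\chi_1},\bar{\chi_2}}(n)\,n^{-w}=L(w+\nu,\bar{\chi_1})\,L(w,\bar{\chi_2})$, the inner series collapses, and after multiplying by $8\pi x^{\nu/2}$ the left-hand side becomes $\frac{1}{2\pi i}\int_{(c)}\Phi(s)\,ds$, where
$$\Phi(s)=8\pi\,2^{s-2}(4\pi)^{-s}x^{(\nu-s)/2}\,\Gamma\!\left(\tfrac{s+\nu}{2}\right)\Gamma\!\left(\tfrac{s-\nu}{2}\right)L\!\left(\tfrac{s+\nu}{2},\bar{\chi_1}\right)L\!\left(\tfrac{s-\nu}{2},\bar{\chi_2}\right).$$

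Next I would reflect both $L$-factors at once via their functional equations, using that $\chi_1$ is even modulo $p$ and $\chi_2$ is odd modulo $q$. Writing $w_1=\tfrac{s+\nu}{2}$ and $w_2=\tfrac{s-\nu}{2}$, so that $\Gamma(\tfrac{s+\nu}{2})=\Gamma(w_1)$ and $\Gamma(\tfrac{s-\nu}{2})=\Gamma(w_2)$, the gamma factors produced by the two functional equations combine with $\Gamma(w_1)\Gamma(w_2)$ and simplify, via the duplication and reflection formulas, to $\dfrac{2^{s-2}\pi}{\cos(\pi w_1/2)\,\sin(\pi w_2/2)}$. The two Gauss sums, through $\tau(\chi)\tau(\bar\chi)=\chi(-1)q$ and the surviving powers of $p$ and $q$, assemble toward the constant $\dfrac{2i\,p^{1-\nu}q}{\tau(\chi_1)\tau(\chi_2)}$ of the statement (the $\cos(\pi\nu/2)$ in its denominator appearing only after the residues are taken), and the reflected Dirichlet product is precisely $L(1-w_1,\chi_1)L(1-w_2,\chi_2)=\sum_{n\ge1}\sigma_{-\nu,\chi_2,\chi_1}(n)\,n^{w_1-1}$. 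Collecting the powers of $2,\pi,p,q,x$ shows that the $s$-dependent base, after including the $n^{w_1-1}$ factor, reduces to $\bigl(n/(pqx)\bigr)^{s/2}$.

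I would then move the line of integration to the left. The key parity input is that $L(\cdot,\bar{\chi_1})$ is even, hence vanishes at $0,-2,-4,\dots$, cancelling exactly the poles of $\Gamma(\tfrac{s+\nu}{2})$ at those points, while $L(\cdot,\bar{\chi_2})$ is odd, hence vanishes at $-1,-3,\dots$, cancelling half the poles of $\Gamma(\tfrac{s-\nu}{2})$; consequently the surviving poles are simple and lie only at $s=\nu-4j$ ($j\ge0$) and at $s=-\nu-2-4k$ ($k\ge0$). The pole at $s=\nu$ contributes, through the original form $\Phi$, the residue $\frac{2}{(2\pi)^{\nu-1}}\Gamma(\nu)L(\nu,\bar{\chi_1})L(0,\bar{\chi_2})$ (the main term, with $L(0,\bar{\chi_2})\neq0$ because $\chi_2$ is odd); the poles at $s=\nu-4j$ with $1\le j\le N-1$, evaluated through the reflected form, give $L(1-w_1,\chi_1)L(1-w_2,\chi_2)=L(2j+1-\nu,\chi_1)L(2j+1,\chi_2)$ with the power $(pqx)^{2j}$, while the residue of $\bigl(\cos(\pi w_1/2)\sin(\pi w_2/2)\bigr)^{-1}$ collapses (the $(-1)^{j}$ signs cancel) to $\tfrac{4}{\pi\cos(\pi\nu/2)}$; this reproduces exactly the stated finite sum together with the factor $\cos(\pi\nu/2)$, rather than the $\sin(\pi\nu/2)$ of the even--even case.

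Finally, the remaining integral is handled through the reflected representation: interchanging the now-present sum over $n$ with the integral and evaluating the one-dimensional contour integral $\frac{1}{2\pi i}\int(n/(pqx))^{s/2}\bigl(\cos(\pi w_1/2)\sin(\pi w_2/2)\bigr)^{-1}\,ds$ for each $n$ by summing the residues of the trigonometric kernel (closing to the right or left according as $n<pqx$ or $n>pqx$, which is why $pqx\notin\mathbb{Z}_+$ is assumed) yields the partial-fraction kernel $(pqx)^{2N}\dfrac{n^{\nu-2N+1}-(pqx)^{\nu-2N+1}}{n^2-(pqx)^2}$; the auxiliary poles $s=-\nu-2-4k$ are absorbed into this kernel, and $N\ge\lfloor(\Re(\nu)+1)/2\rfloor$ is exactly the threshold guaranteeing absolute convergence of the resulting series against $\sigma_{-\nu,\chi_2,\chi_1}(n)$. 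I expect the two genuine obstacles to be: (i) correctly merging the two functional equations of opposite parity, so that the trivial zeros cancel precisely the right poles and the exact constant $\tfrac{2i\,p^{1-\nu}q}{\tau(\chi_1)\tau(\chi_2)\cos(\pi\nu/2)}$ emerges; and (ii) the residue evaluation of the remainder integral together with the uniform vertical-strip bounds (Stirling for the gamma factors and convexity estimates for the Dirichlet $L$-functions) needed to justify the contour shift and the interchange of summation and integration.
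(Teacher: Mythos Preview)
Your proposal is correct and follows essentially the same route as the paper: Mellin--Barnes for $K_\nu$, reflection of both $L$-factors through their functional equations (with the mixed parity producing the kernel $1/(\cos\cdot\sin)$ and hence the $\cos(\pi\nu/2)$ in the final constant), a contour shift collecting the pole at $w_2=0$ for the main term and the finite family giving the $L(2j+1,\chi_2)L(2j+1-\nu,\chi_1)(pqx)^{2j}$ sum, and term-by-term residue evaluation of the tail to obtain the partial-fraction kernel. The only difference from the paper's template (proved in detail for Theorem~\ref{evencohen} and declared analogous for Theorem~\ref{coheneo}) is cosmetic: you work with the $\Gamma((s\pm\nu)/2)$ parametrization and shift left in one pass, whereas the paper uses $\Gamma(s)\Gamma(s+\nu)$, first shifts to $(-d)$ to isolate $R_0=\Gamma(\nu)L(\nu,\bar{\chi_1})L(0,\bar{\chi_2})$, then substitutes $s\mapsto1-s$ and shifts right to $(2N+\delta)$---the two are related by the linear change of variable and give the same residues.
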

\begin{theorem}\label{cohenoe}
  Let $\nu \notin \mathbb{Z}$ such that $\Re{(\nu)}\geq 0$. Let  $\chi_1$ be an  odd   primitive character modulo  $ p$ and 
$\chi_2$ be  a non-principal even  primitive character modulo $ q$.  If $N$ is any integer such that  $ N\geq \lfloor\frac{\Re{(\nu)}+1}{2}\rfloor$, then
\begin{align*}
				&8\pi x^{\nu/2}\sum_{n=1}^{\infty} \sigma_{-\nu, \bar{\chi_1},\bar{\chi_2}}(n) n^{\nu/2} K_{\nu}(4\pi\sqrt{nx})  
    =\frac{2i  p^{1-\nu}q}{  \tau(\chi_1)\tau(\chi_2) \cos \left(\frac{\pi \nu}{2}\right)}     \left\{  {L(\nu+1,\chi_2)L(1,\chi_1)(pqx)^{\nu}} 
     \right.\notag\\&\left.\ \
    - \sum_{j=1}^{N} L(2j,\chi_2)\ L(2j-\nu, \chi_1)(pqx)^{2j-1}
      -  (pqx)^{2N+1}\sum_{n=1}^{\infty}\frac{{\sigma}_{-\nu, \chi_2,\chi_1}(n) }{n}  \left( \frac{n^{\nu-2N+1}-(pqx)^{\nu-2N+1}}{n^2-(pqx)^2} \right)
	 \right\},
		\end{align*} 
  provided $pqx\notin \mathbb{Z}_+$.
 \end{theorem}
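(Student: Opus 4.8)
The plan is to realize the left-hand side as a Mellin--Barnes integral and evaluate it by shifting the line of integration, collecting residues, and re-expanding the remaining integral. Starting from the Mellin transform $\int_0^\infty K_\nu(t)t^{s-1}\,dt=2^{s-2}\Gamma(\tfrac{s+\nu}{2})\Gamma(\tfrac{s-\nu}{2})$, I would substitute its inversion with $t=4\pi\sqrt{nx}$, interchange summation and integration, and use the Dirichlet series $\sum_{n\ge1}\sigma_{-\nu,\bar{\chi_1},\bar{\chi_2}}(n)n^{-w}=L(w+\nu,\bar{\chi_1})L(w,\bar{\chi_2})$ to obtain, for $\Re(s)=c$ large,
\begin{align*}
8\pi x^{\nu/2}\sum_{n=1}^{\infty}\sigma_{-\nu,\bar{\chi_1},\bar{\chi_2}}(n)\,n^{\nu/2}K_{\nu}(4\pi\sqrt{nx})
=\frac{1}{2\pi i}\int_{(c)}(2\pi)^{1-s}\,x^{(\nu-s)/2}\,\Gamma\!\left(\tfrac{s+\nu}{2}\right)\Gamma\!\left(\tfrac{s-\nu}{2}\right)L\!\left(\tfrac{s+\nu}{2},\bar{\chi_1}\right)L\!\left(\tfrac{s-\nu}{2},\bar{\chi_2}\right)ds.
\end{align*}
Since $\chi_1,\chi_2$ are non-principal primitive, both $L$-factors are entire, so the only poles of the integrand are those of the two Gamma factors, at $s=\nu-2m$ and $s=-\nu-2m$ for $m\ge0$; the hypothesis $\nu\notin\mathbb{Z}$ guarantees these are distinct and simple, which is why no logarithmic ($L'$, $\zeta'$) terms appear here, in contrast to the integer cases of Section~\ref{integer results}.

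Next I would shift the contour to the left, past the poles that produce the explicit terms, and evaluate their residues using the functional equations of the odd character $\bar{\chi_1}$ and the even character $\bar{\chi_2}$. The decisive structural point is the parity selection from the trivial zeros of Dirichlet $L$-functions: because $\chi_2$ is even, $L(-m,\bar{\chi_2})=0$ for even $m$, so the residues at $s=\nu-2m$ survive only for odd $m=2j-1$, and after the two functional equations they become precisely $L(2j,\chi_2)L(2j-\nu,\chi_1)(pqx)^{2j-1}$ for $j=1,\dots,N$; in particular the $m=0$ residue vanishes since $L(0,\bar{\chi_2})=0$, which explains the absence of a leading constant term (unlike Theorem~\ref{coheneo}, where the odd $\chi_2$ makes the analogous residue survive). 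Dually, as $\chi_1$ is odd, $L(0,\bar{\chi_1})\ne0$, and the residue at $s=-\nu$ yields the term $L(\nu+1,\chi_2)L(1,\chi_1)(pqx)^{\nu}$. Throughout, the functional equations contribute $\tau(\bar{\chi_1})=\chi_1(-1)p/\tau(\chi_1)=-p/\tau(\chi_1)$ and $\tau(\bar{\chi_2})=q/\tau(\chi_2)$, the trigonometric factors $-2i\cos$ (odd) and $2\sin$ (even), and powers of $p,q,2\pi$, which I expect to assemble into the global prefactor $\tfrac{2ip^{1-\nu}q}{\tau(\chi_1)\tau(\chi_2)\cos(\pi\nu/2)}$.

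For the remaining integral on the truncated line I would apply the functional equations of $L(\cdot,\bar{\chi_1})$ and $L(\cdot,\bar{\chi_2})$ inside the integrand, converting the product to $L(\cdot,\chi_1)L(\cdot,\chi_2)$ with arguments in the half-plane of absolute convergence, expand this as $\sum_{n\ge1}\sigma_{-\nu,\chi_2,\chi_1}(n)n^{-w}$, and interchange summation and integration. Each resulting per-$n$ Mellin--Barnes integral is elementary: summing its residues produces a geometric series in $(pqx/n)^2$ whose closed form is exactly the rational kernel $\tfrac{n^{\nu-2N+1}-(pqx)^{\nu-2N+1}}{n^2-(pqx)^2}$, together with the factor $1/n$, giving the series $(pqx)^{2N+1}\sum_{n\ge1}\tfrac{\sigma_{-\nu,\chi_2,\chi_1}(n)}{n}\,\tfrac{n^{\nu-2N+1}-(pqx)^{\nu-2N+1}}{n^2-(pqx)^2}$. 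The condition $pqx\notin\mathbb{Z}_+$ ensures no term $n=pqx$ makes the kernel singular. The whole scheme runs parallel to Theorem~\ref{coheneo}, with the roles of the even and odd functional equations interchanged.

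The main obstacle I anticipate is the bookkeeping of constants: faithfully tracking the Gauss-sum ratios, the $\cos/\sin$ and $i$ factors, and the powers of $p$, $q$, $2\pi$, $x$ through two successive functional equations, and confirming that the parity of the trivial zeros routes each residue to the correct place (the finite sum versus the special $(pqx)^{\nu}$ term versus the infinite series). A secondary but essential technical point is the analytic justification: one must verify, via Stirling's formula and standard convexity bounds for $L(s,\chi)$ on vertical lines, that the horizontal segments in the contour shift vanish and that the interchange of summation and integration is legitimate. The requirement $N\ge\lfloor\tfrac{\Re(\nu)+1}{2}\rfloor$ is exactly what is needed to place the truncation line so that, after the functional-equation step, the remainder integral converges absolutely and the geometric series representing the kernel is valid.
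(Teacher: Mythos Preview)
Your proposal is correct and follows essentially the same route as the paper. The paper writes the left-hand side via Lemma~\ref{eq:bessel} as $\tfrac12 X^{\nu/2}\cdot\tfrac{1}{2\pi i}\int_{(c)}\Gamma(s)\Gamma(s+\nu)L(s+\nu,\bar\chi_1)L(s,\bar\chi_2)X^s\,ds$ with $X=1/(4\pi^2x)$; your symmetric form with $\Gamma(\tfrac{s\pm\nu}{2})$ is the same integral after the linear substitution $s\mapsto 2s+\nu$. The paper then shifts a little to the left, reflects $s\to 1-s$, applies the two functional equations \eqref{exact l} to reach an integrand of the shape $L(s,\chi_2)L(s-\nu,\chi_1)/\bigl(\sin(\tfrac{\pi s}{2})\cos(\tfrac{\pi(s-\nu)}{2})\bigr)$ on $(1+d)$, shifts right to $(2N+\delta)$, and evaluates the remainder by expanding the Dirichlet series and summing the resulting geometric series; your plan of shifting left directly and applying the functional equations to the remainder is the same computation run in the other direction. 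Your identification of which residues survive via the trivial zeros of $L(\cdot,\bar\chi_2)$ (even) and $L(\cdot,\bar\chi_1)$ (odd) is exactly the parity mechanism the paper uses, and your reading of the constant bookkeeping (Gauss sums via \eqref{both}, the $i$ and $\cos(\pi\nu/2)$ factors) is accurate. One small point to watch in your execution: the ``extra'' surviving residues from the second Gamma family at $s=-\nu-2m$ with even $m\ge 2$ do not appear among the explicit terms; as in the paper's proof of Theorem~\ref{evencohen} (see the handling of the residues $\mathcal{H}_{2r}$ around \eqref{jnuprimekk}--\eqref{JKprime}), they are absorbed into the infinite series after re-expansion, so be sure to carry them through rather than discard them.
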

 \section{Connection with Vorono\"i summation formula}\label{voronoi identities...}
  In this section, we offer Vorono\"i-type summation formulas for $ \sigma_{z,\chi}(n),\bar{\sigma}_{z,\chi}(n)$ and  $\sigma_{z,\chi_1,\chi_2}(n)$  defined in \eqref{defimainpaper}.
  \subsection{Identities involving even characters. }
 \begin{theorem}\label{vore1}
Let $0< \alpha< \beta$ and $\alpha, \beta \notin \mathbb{Z}$. Let $f$ denote a function analytic inside a closed contour strictly containing $[\alpha, \beta ]$. Assume that   $\chi$  is a  non-principal even primitive character modulo $ q$. For  $0< \Re{(\nu)}<\frac{1}{2}$, we have   
        \begin{align*} 
           &\frac{   q^{1-\frac{\nu}{2}}  } {\tau(\chi )}    \sum_{\alpha<j <\beta}    {\bar{\sigma}_{-\nu, \chi }(j)} f(j)   =   \frac{ q^{1-\frac{\nu}{2}}}{\tau(\chi)} L(1-\nu,\chi) \int_\alpha^\beta    \frac{f(t) }{t^{\nu}}  \mathrm{d}t+2\pi  \sum_{n=1}^{\infty}\sigma_{-\nu, \bar{\chi }}(n) \ n^{\nu/2}  \int_{\alpha} ^{  \beta   }f(t)  (t)^{-\frac{\nu}{2}}  \notag\\ 
  &   \times     \left\{  \left(   \frac{2}{\pi} K_{\nu}\left(4\pi  \sqrt{\frac{nt}{q}}\ \right) - Y_{\nu}\left(4\pi  \sqrt{\frac{nt}{q}}\ \right)   \right) \cos \left(\frac{\pi \nu}{2}\right) 
    -  J_{\nu}\left(4\pi  \sqrt{\frac{nt}{q}}\ \right)  \sin \left(\frac{\pi \nu}{2}\right)  \right\}  dt.
           \end{align*}
 \end{theorem}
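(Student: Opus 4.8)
The plan is to establish Theorem~\ref{vore1} by mirroring the passage from a Cohen-type identity to a Vorono\"i summation formula carried out by Berndt, Dixit, Roy and Zaharescu \cite{MR3558223} in deriving Proposition~\ref{vorlemma1} from Proposition~\ref{Cohen-type}, now twisted by an even primitive character. The arithmetic backbone is that $\sum_{n\ge 1}\bar{\sigma}_{-\nu,\chi}(n)n^{-s}=\zeta(s+\nu)L(s,\chi)$, while the arithmetic function on the right-hand side has generating series $\sum_{n\ge 1}\sigma_{-\nu,\bar{\chi}}(n)n^{-s}=\zeta(s)L(s+\nu,\bar{\chi})$. The asserted identity is the smoothed, finite-range incarnation of the functional equation relating these two Dirichlet series, and this functional equation is exactly the analytic content already packaged in the Cohen-type identity of Theorem~\ref{evencohen} (whose Bessel side carries $\sigma_{-\nu,\bar{\chi}}$ and whose rational series carries $\bar{\sigma}_{-\nu,\chi}$, matching the two sides here).

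First I would write the left-hand sum as a Perron/Mellin contour integral. Since $\alpha,\beta\notin\mathbb{Z}$ and $f$ is analytic on a contour enclosing $[\alpha,\beta]$, the transform $\int_\alpha^\beta f(t)t^{s-1}\,dt$ is the Mellin transform of $f\cdot\mathbf 1_{(\alpha,\beta)}$, so for $c>1$ one has $\sum_{\alpha<j<\beta}\bar{\sigma}_{-\nu,\chi}(j)f(j)=\frac{1}{2\pi i}\int_{(c)}\zeta(s+\nu)L(s,\chi)\big(\int_\alpha^\beta f(t)t^{s-1}\,dt\big)\,ds$; the analyticity of $f$ lets one deform the inner $t$-contour to gain the vertical decay needed to move the $s$-line. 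I then shift the line leftward. Because $\chi$ is non-principal, $L(s,\chi)$ is entire, so the only pole crossed is the simple pole of $\zeta(s+\nu)$ at $s=1-\nu$, whose residue contributes $L(1-\nu,\chi)\int_\alpha^\beta f(t)t^{-\nu}\,dt$; multiplying the whole identity through by the normalising factor $q^{1-\nu/2}/\tau(\chi)$ (which also absorbs the Gauss-sum and conductor factors thrown off in the next step) turns this into the stated main term.

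On the shifted line I would substitute the functional equation of $\zeta(s+\nu)L(s,\chi)$. The crucial structural point is that $\chi$ is \emph{even}, so $L(s,\chi)$ carries the same $\Gamma(s/2)$-type gamma factor as $\zeta(s)$; hence the gamma-quotient produced by the functional equation is identical to the untwisted case, and only the conductor $q$ and the Gauss sum $\tau(\chi)$ enter, which is precisely what rescales the eventual Bessel argument from $4\pi\sqrt{nt}$ to $4\pi\sqrt{nt/q}$. After replacing $\zeta(s+\nu)L(s,\chi)$ by the gamma factor times its reflected companion $\zeta(1-s-\nu)L(1-s,\bar{\chi})=\sum_{n\ge1}\sigma_{-\nu,\bar{\chi}}(n)\,n^{\nu-1+s}$ and interchanging summation with the $s$-integral, each term becomes an inverse Mellin transform of a ratio of gamma functions. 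I would evaluate this Mellin--Barnes integral as the Vorono\"i kernel $\big(\tfrac{2}{\pi}K_{\nu}-Y_{\nu}\big)\cos(\tfrac{\pi\nu}{2})-J_{\nu}\sin(\tfrac{\pi\nu}{2})$ at argument $4\pi\sqrt{nt/q}$ (the same combination that appears in Proposition~\ref{vorlemma1}), with the residual $n^{\nu-1}$ recombining with the normalisation into the weight $n^{\nu/2}$; collecting the $n$-sum yields the Bessel series on the right.

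The main obstacle is the analytic bookkeeping of the contour shift: one must justify interchanging the infinite sum with the $s$-integral on the shifted line and the absolute convergence of the resulting double series. This is exactly where the hypothesis $0<\Re(\nu)<\tfrac12$ is used, since it permits pushing the line to $\Re(s)<0$, where the reflected series $\sum_n\sigma_{-\nu,\bar{\chi}}(n)n^{\nu-1+s}$ converges, without meeting any further singularity, and it keeps the Bessel kernel integral convergent. A secondary but delicate point is the precise evaluation of the Mellin--Barnes integral as that specific $K_{\nu},Y_{\nu},J_{\nu}$ combination with the correct trigonometric coefficients, together with tracking the powers of $q$ and the factor $\tau(\chi)$ so that the normalising constant $q^{1-\nu/2}/\tau(\chi)$ emerges exactly; here one may instead read these kernel and gamma evaluations directly off Theorem~\ref{evencohen}, whose right-hand side already encodes them, rather than recomputing them from scratch.
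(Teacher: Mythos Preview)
Your plan is a legitimate route, but it is \emph{not} the one the paper follows, despite your opening claim to mirror \cite{MR3558223}. You propose a direct $s$-plane argument: write the finite sum via Perron/Mellin as $\frac{1}{2\pi i}\int_{(c)}\zeta(s+\nu)L(s,\chi)\big(\int_\alpha^\beta f(t)t^{s-1}dt\big)ds$, shift the line past the pole at $s=1-\nu$, apply the functional equation, and recognise the resulting Mellin--Barnes integral as the Vorono\"i kernel. The paper instead works entirely in the $x$-plane. It takes the Cohen-type identity of Theorem~\ref{evencohen} with $N=1$, analytically continues it from $x>0$ to the cut plane, and substitutes $x=\pm iz/q$; adding the two specialisations yields an identity $\Lambda(z,\nu)=\Psi_1(z,\nu)$ on $|\arg z|<\tfrac{\pi}{2}$, where $\Psi_1$ is an explicit meromorphic function with simple poles at $z=\pm in$ carrying residues proportional to $\bar{\sigma}_{-\nu,\chi}(n)$, and $\Lambda$ is the Bessel series in $\sigma_{-\nu,\bar{\chi}}$. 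One then integrates $f(z)\Psi_1(iz,\nu)$ around the given closed contour: the residues at integers in $(\alpha,\beta)$ produce the left-hand sum, while on the upper and lower arcs $\Psi_1(\pm iz,\nu)$ is replaced by $\Lambda(\pm iz,\nu)$. A second application of the residue theorem to the (analytic) Bessel integrands, together with the identity $K_\nu(ix)+K_\nu(-ix)=-\pi\big(J_\nu(x)\sin\tfrac{\pi\nu}{2}+Y_\nu(x)\cos\tfrac{\pi\nu}{2}\big)$, collapses everything to real integrals over $[\alpha,\beta]$ and yields the stated kernel.

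What each approach buys: yours makes the origin of the main term and the role of the functional equation transparent, but the step you correctly flag as the obstacle---the truncated Mellin transform $\int_\alpha^\beta f(t)t^{s-1}dt$ decays only like $1/|s|$, which is not enough once the gamma factors from the functional equation are introduced---is genuine, and the contour deformation in $t$ you allude to is, once unpacked, essentially the paper's $z$-plane manoeuvre in disguise. The paper's route sidesteps this entirely: the finite sum never appears as a Perron integral but as a sum of residues of an already-proved meromorphic identity, so there is no separate convergence analysis in $s$ to carry out, and the Bessel kernel falls out from the known formula for $K_\nu(ix)+K_\nu(-ix)$ rather than from a Mellin--Barnes computation.
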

\begin{theorem}\label{vore2}
Let $0< \alpha< \beta$ and $\alpha, \beta \notin \mathbb{Z}$. Let $f$ denote a function analytic inside a closed contour strictly containing $[\alpha, \beta ]$. Assume that   $\chi$  is a  non-principal even primitive character modulo $ q$.  For  $0< \Re{(\nu)}<\frac{1}{2}$, we have  
\begin{align*}
&\frac{   q^{1+\frac{\nu}{2}}  } {\tau(\chi )}    \sum_{\alpha<j <\beta}    { {\sigma}_{-\nu, \chi }(j)} f(j)   =   \frac{ q^{1+\frac{\nu}{2}}}{\tau(\chi)} L(1+\nu,\chi) \int_\alpha^\beta     {f(t) } \mathrm{d}t
 + 2 \pi \sum_{n=1}^{\infty}\bar{\sigma}_{-\nu, \bar{\chi }}(n) \ n^{\nu/2} \int_{\alpha} ^{  \beta   }f(t) (t)^{-\frac{\nu}{2}}  \notag\\ 
  &   \times     \left\{  \left(   \frac{2}{\pi} K_{\nu}\left(4\pi  \sqrt{\frac{nt}{q}}\ \right) - Y_{\nu}\left(4\pi  \sqrt{\frac{nt}{q}}\ \right)   \right) \cos \left(\frac{\pi \nu}{2}\right) 
    -  J_{\nu}\left(4\pi  \sqrt{\frac{nt}{q}}\ \right)  \sin \left(\frac{\pi \nu}{2}\right)  \right\}  dt.
\end{align*}
\end{theorem}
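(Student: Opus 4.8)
The plan is to follow the contour-integration method by which Berndt, Dixit, Roy and Zaharescu pass from the Cohen-type identity \eqref{cohen} to Proposition \ref{vorlemma1}, using here the character analogue Theorem \ref{barevencohen} in place of \eqref{cohen}. The structural input is that the Dirichlet series attached to the two sides are
$$\sum_{n=1}^\infty \sigma_{-\nu,\chi}(n)\,n^{-s}=\zeta(s)L(s+\nu,\chi),\qquad \sum_{n=1}^\infty \bar{\sigma}_{-\nu,\bar{\chi}}(n)\,n^{-s}=\zeta(s+\nu)L(s,\bar{\chi}),$$
and that these are interchanged by the combined functional equations of $\zeta(s)$ and of $L(s,\chi)$, the latter for an even primitive $\chi$ supplying the Gauss sum $\tau(\chi)$ and the relevant powers of $q$. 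Because $\chi$ is non-principal, $L(s+\nu,\chi)$ is entire, so the only pole of the left-hand series is that of $\zeta(s)$ at $s=1$, with residue $L(1+\nu,\chi)$; this is the unique source of the main term $\frac{q^{1+\nu/2}}{\tau(\chi)}L(1+\nu,\chi)\int_\alpha^\beta f(t)\,dt$, exactly as the single pole there forces a single integral term (compare the two residues producing the two main terms in Proposition \ref{vorlemma1}).

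First I would work on the conjectured Bessel term. Writing $B_\nu(y):=\bigl(\tfrac{2}{\pi}K_\nu(y)-Y_\nu(y)\bigr)\cos\tfrac{\pi\nu}{2}-J_\nu(y)\sin\tfrac{\pi\nu}{2}$ for the kernel in the statement, I would interchange summation and integration to rewrite that term as $\int_\alpha^\beta f(t)\,t^{-\nu/2}\Psi(t)\,dt$, where $\Psi(t)=2\pi\sum_{n=1}^\infty \bar{\sigma}_{-\nu,\bar{\chi}}(n)\,n^{\nu/2}B_\nu(4\pi\sqrt{nt/q})$. The $K_\nu$-part of $\Psi$ is controlled directly by Theorem \ref{barevencohen} (with $x=t/q$), and the restriction $0<\Re(\nu)<\tfrac{1}{2}$ together with $K_\nu(x)\sim(\pi/2x)^{1/2}e^{-x}$ makes that sum absolutely convergent, legitimizing the interchange there. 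For the oscillatory, only conditionally convergent $Y_\nu$- and $J_\nu$-parts I would instead represent $B_\nu(4\pi\sqrt{nt/q})$ by a Mellin--Barnes integral $\frac{1}{2\pi i}\int_{(c)}g_\nu(s)(nt/q)^{-s/2}\,ds$, in which $g_\nu(s)$ is precisely the gamma-plus-trigonometric factor attached to the functional equation above.

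With that representation, the $n$-summation collapses $\Psi$ onto a shifted copy of the dual series $\zeta(\,\cdot+\nu)L(\,\cdot,\bar{\chi})$; applying the functional equation converts $g_\nu(s)$ times this dual series into a clean multiple of $\zeta(s)L(s+\nu,\chi)$, together with the correct power of $t$ and the factor $\tau(\chi)\,q^{-1-\nu/2}$. Shifting the resulting Mellin--Barnes contour across $s=1$ then produces the residue $L(1+\nu,\chi)$ (the main term) plus a remaining contour integral which, by Mellin inversion and the hypothesis that $f$ is analytic inside a contour strictly containing $[\alpha,\beta]$ with $\alpha,\beta\notin\mathbb{Z}$, reassembles exactly $\frac{q^{1+\nu/2}}{\tau(\chi)}\sum_{\alpha<j<\beta}\sigma_{-\nu,\chi}(j)f(j)$. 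Collecting the pieces yields the asserted identity.

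The main obstacle will be twofold. First, justifying the interchange of summation and integration and the attendant contour shifts for the conditionally convergent $Y_\nu$- and $J_\nu$-parts of the kernel; this is exactly where $0<\Re(\nu)<\tfrac{1}{2}$ and the analyticity of $f$ are indispensable. Second, computing the Mellin transform of the three-term kernel $B_\nu$ and matching it on the nose to the gamma and trigonometric factors of the functional equation of $\zeta(s)L(s+\nu,\chi)$, while tracking the powers of $q$ and of $4\pi$ and the Gauss sum $\tau(\chi)$ that distinguish this even-character case from Proposition \ref{vorlemma1} and from the odd-character statements.
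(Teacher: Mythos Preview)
Your proposal correctly identifies the ingredients --- Theorem \ref{barevencohen} as the starting point, the functional equation linking $\zeta(s+\nu)L(s,\bar{\chi})$ to $\zeta(s)L(s+\nu,\chi)$, and the single pole at $s=1$ producing the lone main term --- but the mechanism you describe for recovering the finite sum $\sum_{\alpha<j<\beta}\sigma_{-\nu,\chi}(j)f(j)$ does not work as written. A Mellin--Barnes integral in the spectral variable $s$, integrated in $t$ over $[\alpha,\beta]$, does not ``reassemble'' into a sum over the integers lying in $(\alpha,\beta)$; Mellin inversion returns a function of $t$, not a discrete sum, and the analyticity of $f$ plays no role in such a step. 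This is the essential gap.

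The paper's method (following Berndt--Dixit--Roy--Zaharescu) is different in kind: one analytically continues the Cohen-type identity of Theorem \ref{barevencohen} from $x>0$ to $-\pi<\arg x<\pi$, then substitutes $x=\pm iz/q$ and adds. The left side becomes a combination of $K_\nu$ at rotated arguments, while the right side becomes a function $\Psi(z,\nu)$ whose dominant piece is the rational-function series $\frac{q^{1+\nu/2}}{\pi\tau(\chi)}\sum_{n\ge1}\frac{\sigma_{-\nu,\chi}(n)}{n^2+z^2}\,z$. The crucial point is that $\Psi(iz,\nu)$ has simple poles at the \emph{integers} $z=j$ with residues $\frac{q^{1+\nu/2}}{2\pi i\,\tau(\chi)}\sigma_{-\nu,\chi}(j)$. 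Now the analyticity of $f$ is used exactly once: integrating $f(z)\Psi(iz,\nu)$ around the given contour and applying Cauchy's residue theorem in the \emph{spatial} variable $z$ picks up precisely $\sum_{\alpha<j<\beta}\sigma_{-\nu,\chi}(j)f(j)$. One checks $\Psi(iz,\nu)+\Psi(-iz,\nu)=-\frac{q^{1+\nu/2}}{\tau(\chi)}L(1+\nu,\chi)$ on the right half-plane (this supplies the main term), replaces $\Psi$ by the Bessel side along the upper and lower arcs, collapses both arcs to $[\alpha,\beta]$ by analyticity, and finally invokes the connection formula $K_\nu(ix)+K_\nu(-ix)=-\pi\bigl(J_\nu(x)\sin\tfrac{\pi\nu}{2}+Y_\nu(x)\cos\tfrac{\pi\nu}{2}\bigr)$ to produce the kernel $B_\nu$. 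Your Mellin--Barnes outline misses this residue-in-$z$ step, which is where both the finite sum and the hypotheses $\alpha,\beta\notin\mathbb{Z}$ and ``$f$ analytic inside a contour containing $[\alpha,\beta]$'' actually enter.
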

 \vspace{.001cm}
\subsection{Identities involving odd characters.  }
\begin{theorem}\label{voro1}
 Let $0< \alpha< \beta$ and $\alpha, \beta \notin \mathbb{Z}$. Let $f$ denote a function analytic inside a closed contour strictly containing $[\alpha, \beta ]$. Assume that  $\chi$ is an odd primitive character modulo  $q$.   For  $0 < \Re{(\nu)}<\frac{1}{2}$, we have  
 \begin{align*}     
           & \frac{   q^{1-\frac{\nu}{2}}  } {\tau(\chi )}    \sum_{\alpha<j <\beta}    \frac{\bar{\sigma}_{-\nu, \chi }(j)}{j} f(j)  = \frac{ q^{1-\frac{\nu}{2}}}{\tau(\chi)} L(1-\nu,\chi) \int_\alpha^\beta    \frac{f(t) }{t^{\nu+1}}  \mathrm{d}t-2\pi i  \sum_{n=1}^{\infty}\sigma_{-\nu, \bar{\chi }}(n) \ n^{\nu/2}  \int_{\alpha} ^{  \beta   }f(t)  (t)^{-\frac{\nu}{2}-1}  \notag\\ 
  &   \times     \left\{  \left(   \frac{2}{\pi} K_{\nu}\left(4\pi  \sqrt{\frac{nt}{q}}\ \right) - Y_{\nu}\left(4\pi  \sqrt{\frac{nt}{q}}\ \right)   \right) \sin \left(\frac{\pi \nu}{2}\right)  
    + J_{\nu}\left(4\pi  \sqrt{\frac{nt}{q}}\ \right)  \cos \left(\frac{\pi \nu}{2}\right)   \right\}  dt.
           \end{align*}
    \end{theorem}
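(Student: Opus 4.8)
The plan is to deduce this odd-character Vorono\"i formula from the Cohen-type identity already established in Theorem \ref{oddcohen}, exactly as Berndt--Dixit--Roy--Zaharescu obtained Proposition \ref{vorlemma1} from Cohen's identity \eqref{cohen}, and in parallel with the even-character Theorems \ref{vore1} and \ref{vore2}. The pairing is dictated by the coefficients: the summand $\bar\sigma_{-\nu,\chi}(j)/j$ on the left and the coefficients $\sigma_{-\nu,\bar\chi}(n)$ on the right of the statement are precisely the arithmetic functions occurring, respectively, in the rational series and in the $K$-Bessel series of Theorem \ref{oddcohen}. Hence Theorem \ref{oddcohen} is the correct input, and its dual Dirichlet series $\zeta(s)L(s+\nu,\bar\chi)$ (whose coefficients are the $\sigma_{-\nu,\bar\chi}(n)$) is what controls the right-hand side.

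First I would interchange the summation over $n$ and the integration over $t$ on the right-hand side; this is legitimate for $0<\Re(\nu)<\tfrac12$ by the exponential decay of the Bessel functions recorded in Section \ref{integer results} together with the analyticity of $f$ on $[\alpha,\beta]$. For each fixed $t$ the inner series $\sum_{n}\sigma_{-\nu,\bar\chi}(n)\,n^{\nu/2}\{\cdots\}$ is, up to the elementary factor $t^{-\nu/2-1}$, a Bessel series of the same shape as the left-hand side of Theorem \ref{oddcohen} evaluated at $x=t/q$, except that the kernel is now the full combination $(\tfrac{2}{\pi}K_\nu-Y_\nu)\sin(\tfrac{\pi\nu}{2})+J_\nu\cos(\tfrac{\pi\nu}{2})$ rather than $K_\nu$ alone. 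I would therefore first upgrade Theorem \ref{oddcohen} to this full-kernel companion by analytically continuing in $x$: rotating $x\mapsto x e^{\pm i\pi/2}$ and applying the connection formulas that express $K_\nu$ of a rotated argument through $J_\nu$ and $Y_\nu$. The $\cos(\tfrac{\pi\nu}{2})$ appearing in the denominator of Theorem \ref{oddcohen} (in contrast to the $\sin(\tfrac{\pi\nu}{2})$ of the even case, Theorem \ref{evencohen}) is exactly what produces the weights $\sin(\tfrac{\pi\nu}{2})$ and $\cos(\tfrac{\pi\nu}{2})$, and the factor $i$, in the Vorono\"i kernel.

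With the full-kernel identity in hand, I would substitute it for the inner bracket and integrate term by term against $f(t)\,t^{-\nu/2-1}$ over $[\alpha,\beta]$. The explicit (polynomial-type) terms integrate directly and, after the odd-character functional equation is used to rewrite the relevant $L$-value, combine with the stated constant to give the main term $\tfrac{q^{1-\nu/2}}{\tau(\chi)}L(1-\nu,\chi)\int_\alpha^\beta f(t)\,t^{-\nu-1}\,dt$. The rational series, which in the variable $t$ reads $t^{2N+1}\sum_{n}\tfrac{\bar\sigma_{-\nu,\chi}(n)}{n}\cdot\tfrac{n^{\nu+1-2N}-t^{\nu+1-2N}}{n^2-t^2}$, is the crux: although the combined numerator makes each summand regular at the integers, I would split the numerator into its two pieces, each of which has genuine simple poles at $t=n$, and deform the contour of integration off the segment $[\alpha,\beta]$ into the complex plane. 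The residues collected at the integers $j\in(\alpha,\beta)$ are proportional to $\tfrac{\bar\sigma_{-\nu,\chi}(j)}{j}f(j)$ and sum to the left-hand side, while the remaining displaced integrals cancel. Rearranging, and keeping the common prefactor $q^{1-\nu/2}/\tau(\chi)$, yields the assertion; the hypothesis $\alpha,\beta\notin\mathbb{Z}$ guarantees that no integer $t=j$ (where the residues live) sits at an endpoint of the segment or on the deformed contour.

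The hard part will be the bookkeeping in the middle step. The analytic continuation to the full Bessel kernel must be carried out with the correct branch choices, since it is precisely there that the imaginary unit and the interchange of $\sin(\tfrac{\pi\nu}{2})$ with $\cos(\tfrac{\pi\nu}{2})$ (relative to the even-character Theorem \ref{vore1}) are generated; a stray sign or phase propagates into the Gauss-sum constant $q^{1-\nu/2}/\tau(\chi)$ and into the $-2\pi i$ in front of the Bessel sum. Secondarily, one must justify the termwise integration and the contour deformation rigorously, which is exactly where the restriction $0<\Re(\nu)<\tfrac12$ is needed to keep both the Bessel series and the dual Dirichlet series $\zeta(s)L(s+\nu,\bar\chi)$ within their regions of absolute convergence.
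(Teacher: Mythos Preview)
Your plan is correct in outline and matches the paper's intended route: the paper proves only Theorem \ref{vore1} in detail and says the remaining Vorono\"i theorems, including this one, are obtained ``similarly'' by running the same Berndt--Dixit--Roy--Zaharescu machinery with the matching Cohen-type identity. You have correctly identified that the input here is Theorem \ref{oddcohen}, and that the parity flip (odd $\chi$) is what replaces $\sin(\tfrac{\pi\nu}{2})$ by $\cos(\tfrac{\pi\nu}{2})$, produces the factor $i$, and brings in the extra $t^{-1}$ (hence $\bar\sigma_{-\nu,\chi}(j)/j$ on the left and $t^{-\nu/2-1}$ on the right).

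The one place where your description departs from what actually works is the residue-extraction step. You write that the rational series in Theorem \ref{oddcohen} has summands \emph{regular} at the integers, and propose to ``split the numerator into its two pieces, each of which has genuine simple poles,'' then deform and collect residues. As stated this fails: if you split $n^{\nu+1-2N}-(qx)^{\nu+1-2N}$ over $n^2-(qx)^2$ into two pieces and integrate each over a deformed contour, the residues of the two pieces at $qx=n$ are equal and opposite and cancel, so no discrete sum survives. The paper's mechanism (visible in the proof of Theorem \ref{vore1}) is different: one first replaces $x$ by $iz/q$ and by $-iz/q$ in Theorem \ref{oddcohen} and \emph{adds} the two resulting identities, obtaining an equation $z^{-1}\Lambda(z,\nu)=\Psi(z,\nu)$ in which $\Psi$ now contains the rational series $\sum_n \tfrac{\bar\sigma_{-\nu,\chi}(n)}{n(n^2+z^2)}\,z$ with \emph{honest} simple poles at $z=\pm in$. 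It is the contour integral of $f(z)\Psi(iz,\nu)$ around the given closed contour that picks up the residues $\bar\sigma_{-\nu,\chi}(j)/j\cdot f(j)$; one then trades $\Psi(\pm iz)$ for $\Lambda(\pm iz)$ on the upper and lower arcs (where these agree), deforms each arc to $[\alpha,\beta]$, and finally invokes the connection formula $K_\nu(ix)-K_\nu(-ix)=-i\pi\bigl(J_\nu(x)\cos(\tfrac{\pi\nu}{2})-Y_\nu(x)\sin(\tfrac{\pi\nu}{2})\bigr)$ (the ``minus'' companion of \eqref{Z101}) to assemble the Bessel kernel. So your ``upgrade by rotation'' step is exactly right, but the poles you need come from the \emph{summed} rotated identity $\Psi$, not from artificially splitting the original regular series.
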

\begin{theorem}\label{voro2}
Let $0< \alpha< \beta$ and $\alpha, \beta \notin \mathbb{Z}$. Let $f$ denote a function analytic inside a closed contour strictly containing $[\alpha, \beta ]$. Assume that   $\chi$  is an odd primitive character modulo  $q$. For $0< \Re{(\nu)}<\frac{1}{2}$, we have  
\begin{align*}
&\frac{   q^{1+\frac{\nu}{2}}  } {\tau(\chi )}    \sum_{\alpha<j <\beta}    { {\sigma}_{-\nu, \chi }(j)} f(j)   =   \frac{ q^{1+\frac{\nu}{2}}}{\tau(\chi)} L(1+\nu,\chi) \int_\alpha^\beta     {f(t) } \mathrm{d}t
 + 2 \pi i \sum_{n=1}^{\infty}\bar{\sigma}_{-\nu, \bar{\chi }}(n) \ n^{\nu/2} \int_{\alpha} ^{  \beta   }f(t) (t)^{-\frac{\nu}{2}}  \notag\\ 
  &   \times     \left\{  \left(   \frac{2}{\pi} K_{\nu}\left(4\pi  \sqrt{\frac{nt}{q}}\ \right) + Y_{\nu}\left(4\pi  \sqrt{\frac{nt}{q}}\ \right)   \right)  \sin \left(\frac{\pi \nu}{2}\right)  
    -  J_{\nu}\left(4\pi  \sqrt{\frac{nt}{q}}\ \right)  \cos \left(\frac{\pi \nu}{2}\right)   \right\}  dt.
\end{align*}
\end{theorem}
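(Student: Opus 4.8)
The plan is to adapt the method of Berndt--Dixit--Roy--Zaharescu behind Proposition \ref{vorlemma1} to the odd primitive character $\chi$, the analytic engine being the functional equation of the Dirichlet series attached to $\sigma_{-\nu,\chi}$, whose content is already packaged in the Cohen-type identity of Theorem \ref{baroddcohen}. Writing $\varphi(s):=\sum_{n=1}^\infty \sigma_{-\nu,\chi}(n)n^{-s}=\zeta(s)L(s+\nu,\chi)$ and $\psi(s):=\sum_{n=1}^\infty \bar{\sigma}_{-\nu,\bar{\chi}}(n)n^{-s}=\zeta(s+\nu)L(s,\bar{\chi})$, the two series appearing on the two sides of the asserted identity are interchanged by the functional equations of $\zeta(s)$ and of $L(\cdot,\chi)$ for odd $\chi$. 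Since $\chi$ is odd it is non-principal, so $L(s+\nu,\chi)$ is entire and the only pole of $\varphi$ in the region of interest is the simple pole of $\zeta(s)$ at $s=1$, with residue $L(1+\nu,\chi)$; this is what will produce the single main term $L(1+\nu,\chi)\int_\alpha^\beta f(t)\,\mathrm{d}t$.

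First I would represent the finite sum as a Mellin--Barnes integral. With $M_f(s):=\int_\alpha^\beta f(t)t^{s-1}\,\mathrm{d}t$, which is entire because the interval is compact and $f$ is analytic there, one shows by a Perron-type argument that $\sum_{\alpha<j<\beta}\sigma_{-\nu,\chi}(j)f(j)=\frac{1}{2\pi i}\int_{(c)}\varphi(s)M_f(s)\,\mathrm{d}s$ for $c>1$; the hypotheses $\alpha,\beta\notin\mathbb{Z}$ guarantee that no endpoint contributes a half-weight and that this representation is exact. Next I would move the line of integration to some $c'<0$. Crossing $s=1$ picks up the residue $L(1+\nu,\chi)M_f(1)=L(1+\nu,\chi)\int_\alpha^\beta f(t)\,\mathrm{d}t$, the main term. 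On the shifted line I would substitute the functional equation $\varphi(s)=H(s)\psi(1-s)$, where $H(s)$ is the quotient of $\Gamma$- and trigonometric factors coming from the $\zeta$- and odd-$L$-functional equations and carrying the Gauss sum $\tau(\chi)$ and the powers of $q$; expanding $\psi(1-s)$ as its Dirichlet series (legitimate since $\Re(1-s)>1$) and interchanging summation and integration then gives $\sum_{n}\bar{\sigma}_{-\nu,\bar{\chi}}(n)$ times an inner integral of $H(s)(nt)^{s}$.

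The inner integral is an inverse Mellin transform of the gamma quotient $H(s)$, and evaluating it against $nt/q$ is where the Bessel kernel is born; this is the step I expect to be the main obstacle. For the odd character the completed $L$-function uses $\Gamma(\tfrac{s+1}{2})$ rather than $\Gamma(\tfrac{s}{2})$, so the reflection formulas convert the $\sin$ and $\cos$ weightings of the even case (Theorems \ref{vore1}, \ref{vore2}) into their swapped counterparts and flip the sign in front of $Y_\nu$, while the extra factor $i$ carried by the odd functional equation is what upgrades the prefactor $2\pi$ of the even case to the $2\pi i$ seen here; a standard Mellin--Barnes evaluation then identifies the kernel as $\left(\tfrac{2}{\pi}K_\nu+Y_\nu\right)\sin(\tfrac{\pi\nu}{2})-J_\nu\cos(\tfrac{\pi\nu}{2})$ evaluated at $4\pi\sqrt{nt/q}$. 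Keeping these trigonometric factors, signs, and the $q^{1+\nu/2}/\tau(\chi)$ normalization mutually consistent throughout is the delicate bookkeeping; equivalently, the transformation needed at this step can be read off from Theorem \ref{baroddcohen}, which is precisely the $K_\nu$-specialization of this kernel. Finally I would collect the residue term and the Bessel series, multiply through by $q^{1+\nu/2}/\tau(\chi)$, and check convergence: the restriction $0<\Re(\nu)<\tfrac12$ is what both legitimizes the contour shift (it controls the growth of $\varphi$ on vertical lines) and secures the absolute convergence of the resulting series against the oscillatory kernel, while the finiteness of the sum together with $\alpha,\beta\notin\mathbb{Z}$ removes any boundary or divisibility subtleties.
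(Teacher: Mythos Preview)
Your proposal is not the method the paper uses, and in the form you sketch it has a genuine gap.

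The paper's proof (stated for Theorem~\ref{vore1} and declared to go through verbatim for the present theorem) is the Berndt--Dixit--Roy--Zaharescu argument in the \emph{$z$-variable}, not in the Mellin variable $s$. Concretely, one takes the Cohen-type identity of Theorem~\ref{baroddcohen} with $N=1$, analytically continues it from $x>0$ to $-\pi<\arg x<\pi$, substitutes $x=iz/q$ and $x=-iz/q$, and combines the two resulting identities to obtain an equation of the shape $\Lambda(z,\nu)=\Psi(z,\nu)$ valid for $-\frac{\pi}{2}<\arg z<\frac{\pi}{2}$, where $\Psi$ is a rational-type series in $z$ with simple poles at the integers carrying the residues $\sigma_{-\nu,\chi}(n)$, and $\Lambda$ is a Bessel series. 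One then integrates $f(z)\Psi(iz,\nu)$ around a bounded closed contour containing $[\alpha,\beta]$: the residues deliver the finite sum $\sum_{\alpha<j<\beta}\sigma_{-\nu,\chi}(j)f(j)$, the identity $\Psi=\Lambda$ on the two arcs converts the contour integral into Bessel integrals over $[\alpha,\beta]$, and the elementary relation $K_\nu(ix)-K_\nu(-ix)=-i\pi\bigl(J_\nu(x)\cos\tfrac{\pi\nu}{2}-Y_\nu(x)\sin\tfrac{\pi\nu}{2}\bigr)$ (the odd-character companion of \eqref{Z101}) produces the stated kernel. No unbounded contour and no growth estimate for a Dirichlet series on a vertical line is ever needed.

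What you outline instead is the classical Mellin--Barnes route: write $\sum_{\alpha<j<\beta}\sigma_{-\nu,\chi}(j)f(j)=\frac{1}{2\pi i}\int_{(c)}\varphi(s)M_f(s)\,\mathrm{d}s$ with $M_f(s)=\int_\alpha^\beta f(t)t^{s-1}\,\mathrm{d}t$, then shift $c$ to the left past $s=1$. The difficulty is the shift. Since $f$ is merely analytic on a neighbourhood of $[\alpha,\beta]$ and need not vanish at the endpoints, one integration by parts gives $M_f(s)=\bigl(f(\beta)\beta^s-f(\alpha)\alpha^s\bigr)/s+O(|s|^{-2})$, so $M_f(s)$ decays only like $1/|s|$ on vertical lines. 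On the other hand, for $\Re(s)=c'\le 0$ the functional equations force $\varphi(c'+it)=\zeta(c'+it)L(c'+\nu+it,\chi)$ to grow like a positive power of $|t|$; the product $\varphi(s)M_f(s)$ is therefore not integrable on the new line, and the horizontal integrals in the rectangle do not tend to $0$ as $T\to\infty$. Your claim that the restriction $0<\Re(\nu)<\tfrac12$ ``legitimizes the contour shift'' is not correct: that restriction governs the convergence of the Bessel series at the end, not the decay of $M_f$. The Mellin route can be salvaged (smooth cutoffs, or a more delicate treatment of the boundary terms), but that extra work is exactly what the $z$-contour method of the paper is designed to bypass; you have also misidentified which argument is ``the BDRZ method''.
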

\vspace{.001cm}
 \subsection{Identities involving two characters.}
			In this subsection, we state Vorono\"i-type summation formula associated with $ \sigma_{-\nu,\chi_1,\chi _2}(n)=\sum_{d/n}d^{-\nu} \chi_1(d)\chi _2(n/d)$, where $\chi_1$ and $\chi_2$ are  Dirichlet characters modulo  $ p$ and $ q,$ respectively.
\begin{theorem}\label{voree} 
Let $0< \alpha< \beta$ and $\alpha, \beta \notin \mathbb{Z}$. Let $f$ denote a function analytic inside a closed contour strictly containing $[\alpha, \beta ]$. Assume that $\chi_1$ and $\chi_2$ are non-principal even primitive characters modulo  $ p$ and $ q,$ respectively. For  $0< \Re{(\nu)}<\frac{1}{2}$, we have 
\begin{align*}     
          & \frac{  p^{1-{\frac{\nu}{2}}}q^{1+{\frac{\nu}{2}}} }{  \tau(\chi_1)\tau(\chi_2)}   \sum_{\alpha<j <\beta}    {\sigma}_{-\nu, \chi_2,\chi_1}(j)  f(j)   = 2\pi  \sum_{n=1}^{\infty}\sigma_{-\nu, \bar{\chi_1},\bar{\chi_2}}(n) \ n^{\nu/2} \int_{\alpha} ^{  \beta   }f(t)  (t)^{-\frac{\nu}{2}}  \notag\\ 
  &   \times     \left\{  \left(   \frac{2}{\pi} K_{\nu}\left(4\pi  \sqrt{\frac{nt}{pq}}\ \right) - Y_{\nu}\left(4\pi  \sqrt{\frac{nt}{pq}}\ \right)   \right) \cos \left(\frac{\pi \nu}{2}\right) 
    -  J_{\nu}\left(4\pi  \sqrt{\frac{nt}{pq}}\ \right)  \sin \left(\frac{\pi \nu}{2}\right)  \right\}  dt.
           \end{align*}
 \end{theorem}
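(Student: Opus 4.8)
The plan is to deduce Theorem \ref{voree} from the Cohen-type identity of Theorem \ref{cohenee}, in the spirit of Berndt, Dixit, Roy and Zaharescu \cite{MR3558223}. The arithmetic function in play has Dirichlet series $\sum_{n\geq 1}\sigma_{-\nu,\chi_2,\chi_1}(n)n^{-s}=L(s,\chi_1)L(s+\nu,\chi_2)$, which is entire because both $\chi_1$ and $\chi_2$ are non-principal; this is precisely why no integral main term appears on the right-hand side, in contrast with Theorems \ref{vore1} and \ref{vore2}, where a factor of $\zeta$ contributes a pole. First I would record the functional equations of $L(s,\chi_1)$ and $L(s,\chi_2)$ in the even case, since the quotient of their Gamma-factors is exactly what materialises as the Bessel kernel $\frac{2}{\pi}K_\nu\cos(\tfrac{\pi\nu}{2})-Y_\nu\cos(\tfrac{\pi\nu}{2})-J_\nu\sin(\tfrac{\pi\nu}{2})$ and is the source of the Gauss-sum constant $p^{1-\nu/2}q^{1+\nu/2}/(\tau(\chi_1)\tau(\chi_2))$.

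The core of the argument would start from the right-hand side. After justifying the interchange of summation and integration (valid for $0<\Re{(\nu)}<\tfrac{1}{2}$ by the exponential decay of $K_\nu$ and the $O(y^{-1/2})$ behaviour of $J_\nu,Y_\nu$ together with the convergence of the $\sigma_{-\nu,\bar{\chi_1},\bar{\chi_2}}$-series), I am left with $2\pi\int_\alpha^\beta f(t)\,t^{-\nu/2}\,\mathcal{S}(t)\,dt$, where $\mathcal{S}(t)=\sum_{n\geq1}\sigma_{-\nu,\bar{\chi_1},\bar{\chi_2}}(n)\,n^{\nu/2}\bigl\{(\tfrac{2}{\pi}K_\nu-Y_\nu)\cos(\tfrac{\pi\nu}{2})-J_\nu\sin(\tfrac{\pi\nu}{2})\bigr\}(4\pi\sqrt{nt/(pq)})$. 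The $K_\nu$-part of $\mathcal{S}(t)$ is handled immediately by Theorem \ref{cohenee}: setting $x=t/(pq)$ makes the argument $4\pi\sqrt{nt/(pq)}=4\pi\sqrt{nx}$ and, crucially, turns the scaling $pqx$ appearing there into $t$, so the $K_\nu$-sum is replaced by the elementary finite sum $\sum_{j=1}^N L(2j,\chi_2)L(2j-\nu,\chi_1)t^{2j-1}$ plus the resolvent series $t^{2N+1}\sum_n\sigma_{-\nu,\chi_2,\chi_1}(n)\frac{n^{\nu-2N}-t^{\nu-2N}}{n^2-t^2}$.

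For the $Y_\nu$ and $J_\nu$ parts I would derive the two companion identities to Theorem \ref{cohenee} by analytically continuing it in $x$: rotating $x\mapsto xe^{\pm i\pi}$ and expressing $K_\nu$ of the rotated argument through the Hankel functions $H^{(1)}_\nu,H^{(2)}_\nu$, hence through $J_\nu$ and $Y_\nu$, converts the exponentially decaying sum into the two oscillatory sums. Adding the three pieces with the weights $\cos(\tfrac{\pi\nu}{2}),\cos(\tfrac{\pi\nu}{2}),\sin(\tfrac{\pi\nu}{2})$ dictated by the kernel, the smooth (elementary) contributions are arranged to cancel and the combined series $\mathcal{S}(t)$ acquires genuine simple poles at the integers $t=j$, with residues proportional to $\sigma_{-\nu,\chi_2,\chi_1}(j)$. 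Because $f$ is analytic on a contour strictly enclosing $[\alpha,\beta]$ and $\alpha,\beta\notin\mathbb{Z}$, integrating $2\pi f(t)t^{-\nu/2}\mathcal{S}(t)$ over $[\alpha,\beta]$ and invoking Cauchy's residue theorem collects exactly $\sum_{\alpha<j<\beta}\sigma_{-\nu,\chi_2,\chi_1}(j)f(j)$, carrying out the stated constant.

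The main obstacle is the rigorous treatment of the $J_\nu$ and $Y_\nu$ sums: unlike the $K_\nu$ series these converge only conditionally (the Bessel factors decay merely like $y^{-1/2}$), so the contour rotation, the companion identities, and the interchange of summation with integration must all be justified within the strip $0<\Re{(\nu)}<\tfrac{1}{2}$, where the defining series are controlled. Equivalently, one must prove that the three oscillatory/decaying contributions combine into the stated point-mass behaviour at the integers with no extraneous boundary term — this is where the hypotheses $\alpha,\beta\notin\mathbb{Z}$, the analyticity of $f$, and $pqx\notin\mathbb{Z}_+$ (keeping the resolvent kernel regular away from the collected poles) are all used. Once the interchange and the residue evaluation are secured, the remaining identification of constants is a bookkeeping exercise with the two even-character functional equations.
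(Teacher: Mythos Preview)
Your proposal assembles the right ingredients (Theorem~\ref{cohenee}, analytic continuation in the $x$-variable, the Bessel relation between $K_\nu(\pm ix)$ and $J_\nu,Y_\nu$, and the residue theorem), and you correctly explain why no polynomial main term appears: the Dirichlet series $L(s,\chi_1)L(s+\nu,\chi_2)$ is entire. However, the way you organise the endgame does not work as stated.

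The gap is in the sentence ``the combined series $\mathcal{S}(t)$ acquires genuine simple poles at the integers $t=j$ \ldots\ integrating $2\pi f(t)t^{-\nu/2}\mathcal{S}(t)$ over $[\alpha,\beta]$ and invoking Cauchy's residue theorem collects exactly $\sum_{\alpha<j<\beta}\sigma_{-\nu,\chi_2,\chi_1}(j)f(j)$''. The function $\mathcal{S}(t)$, being a convergent sum of $K_\nu,J_\nu,Y_\nu$ evaluated at real arguments, is smooth on $(0,\infty)$; it has no poles at all, so there is nothing for a real integral over $[\alpha,\beta]$ to ``collect''. What your three rotated Cohen identities actually give you is $\mathcal{S}(t)=\text{(resolvent expression)}$ with poles \emph{on the right-hand side}, valid only for $t\notin\mathbb{Z}$; the poles of the individual resolvent pieces must therefore cancel in the combination, not survive. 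Consequently the residue theorem cannot be applied to an integral over the real segment $[\alpha,\beta]$, and the arithmetic sum on the left of Theorem~\ref{voree} never materialises from your scheme.

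The paper's route (carried out in detail for Theorem~\ref{vore1} and indicated as identical for Theorem~\ref{voree}) avoids this by working with a single complex variable from the start. One takes $N=1$ in Theorem~\ref{cohenee}, analytically continues to $-\pi<\arg x<\pi$, substitutes $x=iz/(pq)$ and $x=-iz/(pq)$, and \emph{adds} the two resulting identities. This produces one equation $\Lambda(z,\nu)=\Phi(z,\nu)$ valid for $-\tfrac{\pi}{2}<\arg z<\tfrac{\pi}{2}$, where $\Lambda$ is a fixed linear combination of $K_\nu$'s and $\Phi(z,\nu)$ is the explicit meromorphic function $C\sum_{n\ge 1}\sigma_{-\nu,\chi_2,\chi_1}(n)\,z/(n^2+z^2)$. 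One then applies the residue theorem to $\oint_{\gamma'} f(z)\Phi(iz,\nu)\,dz$ over the \emph{closed} contour $\gamma'$ enclosing $[\alpha,\beta]$; this is where the poles at $z=j\in\mathbb{Z}$ and the sum $\sum_{\alpha<j<\beta}\sigma_{-\nu,\chi_2,\chi_1}(j)f(j)$ legitimately appear. The two halves of the contour integral are then converted into integrals of $\Lambda(\pm iz,\nu)$ using the identity, deformed back to $[\alpha,\beta]$ (here the analyticity of $f$ is used), and finally the real-axis integrand $K_\nu(iy)+K_\nu(-iy)+2\cos(\tfrac{\pi\nu}{2})K_\nu(y)$ is rewritten via \eqref{Z101} as the stated $J_\nu,Y_\nu,K_\nu$ kernel. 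Your rotations $x\mapsto xe^{\pm i\pi}$ are a correct first step toward this, but the object to which the residue theorem is applied must be the closed-contour integral of the resolvent side $\Phi$, not a real integral of the Bessel side $\mathcal{S}$.
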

Substituting $\chi_1=\chi_2=\chi$ in the above theorem, we get the following 
\begin{corollary}
 Let $0< \alpha< \beta$ and $\alpha, \beta \notin \mathbb{Z}$. Let $f$ denote a function analytic inside a closed contour strictly containing $[\alpha, \beta ]$. Assume that $\chi$  is a non-principal even primitive character modulo   $ q.$  For  $0< \Re{(\nu)}<\frac{1}{2}$, we have 
\begin{align*}     
          & \frac{   q^2 }{  \tau^2(\chi) }   \sum_{\alpha<j <\beta}    {\sigma}_{-\nu }(j) \chi(j) f(j)   = 2 \pi \sum_{n=1}^{\infty}\sigma_{-\nu}(n)\Bar{\chi}(j) \ n^{\nu/2} \int_{\alpha} ^{  \beta   }f(t)  (t)^{-\frac{\nu}{2}}  \notag\\ 
  &   \times     \left\{  \left(   \frac{2}{\pi} K_{\nu}\left(4\pi  \sqrt{\frac{nt}{q^2}}\ \right) - Y_{\nu}\left(4\pi  \sqrt{\frac{nt}{q^2}}\ \right)   \right) \cos \left(\frac{\pi \nu}{2}\right) 
    -  J_{\nu}\left(4\pi  \sqrt{\frac{nt}{q^2}}\ \right)  \sin \left(\frac{\pi \nu}{2}\right)  \right\}  dt.
           \end{align*}
\end{corollary}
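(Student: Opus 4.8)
\noindent\emph{Plan of proof.} The starting point is that the generating Dirichlet series of the two–character divisor sum factors as a product of Dirichlet $L$–functions: writing $n=de$ in the definition \eqref{defimainpaper} one gets
\[
\sum_{n=1}^{\infty}\frac{\sigma_{-\nu,\chi_2,\chi_1}(n)}{n^{s}}=L(s+\nu,\chi_2)\,L(s,\chi_1),
\]
and, in the same way, the dual coefficients satisfy $\sum_{n}\sigma_{-\nu,\bar\chi_1,\bar\chi_2}(n)n^{-s}=L(s+\nu,\bar\chi_1)L(s,\bar\chi_2)$. Since $\chi_1,\chi_2$ are non-principal, both factors are entire; consequently, in contrast with the single-character formulas of Theorems \ref{vore1} and \ref{vore2}, where the pole of the accompanying Riemann zeta factor produces an integral main term, here there is no polar contribution. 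This is precisely why the right-hand side of the asserted identity consists of the Bessel-kernel sum alone, with no accompanying $\int_\alpha^\beta$ main term.

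The analytic backbone is the functional equation of the product. Multiplying the functional equations of the two even $L$-functions gives
\[
L(s+\nu,\chi_2)L(s,\chi_1)=\frac{\tau(\chi_1)\tau(\chi_2)}{\sqrt{pq}}\Bigl(\tfrac p\pi\Bigr)^{\!\frac12-s}\Bigl(\tfrac q\pi\Bigr)^{\!\frac12-s-\nu}\frac{\Gamma(\frac{1-s}{2})\Gamma(\frac{1-s-\nu}{2})}{\Gamma(\frac s2)\Gamma(\frac{s+\nu}{2})}\,L(1-s,\bar\chi_1)L(1-s-\nu,\bar\chi_2),
\]
where the two Gauss sums of \eqref{Gauss} appear with the powers of $p$ and $q$. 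The crucial point is that the quotient of four Gamma factors is, up to elementary powers of $2$ and $\pi$, exactly the Mellin transform of the combination $\bigl(\tfrac2\pi K_\nu-Y_\nu\bigr)\cos(\tfrac{\pi\nu}{2})-J_\nu\sin(\tfrac{\pi\nu}{2})$ appearing in the statement; this is the classical Mellin--Barnes identification of a product of two "even-type" Gamma ratios with that Bessel combination. The $L(1-s,\bar\chi_1)L(1-s-\nu,\bar\chi_2)$ factor is nothing but the dual series in the variable $1-s-\nu$, which is how the coefficients $\sigma_{-\nu,\bar\chi_1,\bar\chi_2}(n)$ enter the final sum.

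Concretely, and matching the paper's route of deriving Vorono\"i formulas from Cohen-type identities (as in Proposition \ref{vorlemma1}), I would begin from the Cohen-type identity of Theorem \ref{cohenee} and pass from its variable $x$ to $t=pqx$, so that the Bessel argument becomes $4\pi\sqrt{nt/(pq)}$ and the rational-function series on the right of Theorem \ref{cohenee} becomes a meromorphic function of $t$ whose relevant singularities, after separating the partial fractions, sit at the positive integers. Multiplying by $f$ and integrating over a closed contour $\mathcal C$ strictly enclosing $[\alpha,\beta]$, the residue theorem collapses the integer contributions into the finite sum $\sum_{\alpha<j<\beta}\sigma_{-\nu,\chi_2,\chi_1}(j)f(j)$, carrying the normalisation $p^{1-\nu/2}q^{1+\nu/2}/(\tau(\chi_1)\tau(\chi_2))$. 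On the other side, interchanging the sum over $n$ with the $t$-integration reproduces the series $\sum_n\sigma_{-\nu,\bar\chi_1,\bar\chi_2}(n)n^{\nu/2}\int_\alpha^\beta f(t)t^{-\nu/2}K_\nu(4\pi\sqrt{nt/(pq)})\,dt$ inherited directly from Theorem \ref{cohenee}, and the $Y_\nu$- and $J_\nu$-terms complete it to the full kernel via the Gamma-quotient computation of the previous paragraph.

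The main obstacle is analytic rather than algebraic: one must justify the interchange of the infinite summation with the contour integration and control the tails uniformly, and here the hypothesis $0<\Re(\nu)<\tfrac12$ is exactly what secures absolute convergence of the relevant series together with the correct decay of the Bessel kernel, while the analyticity of $f$ in a region strictly containing $[\alpha,\beta]$ and the condition $\alpha,\beta\notin\mathbb{Z}$ let the residue calculation proceed without boundary corrections. The remainder is careful bookkeeping—tracking the powers of $p$ and $q$ and the two Gauss sums through both functional equations so that the constant $p^{1-\nu/2}q^{1+\nu/2}/(\tau(\chi_1)\tau(\chi_2))$ and the factor $2\pi$ emerge correctly. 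Finally, the corollary obtained by setting $\chi_1=\chi_2=\chi$ follows at once from $\sigma_{-\nu,\chi,\chi}(n)=\chi(n)\sigma_{-\nu}(n)$.
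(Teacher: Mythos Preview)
Your final line is all that the paper uses: the corollary is obtained from Theorem~\ref{voree} by the substitution $\chi_1=\chi_2=\chi$, together with the multiplicativity $\sigma_{-\nu,\chi,\chi}(n)=\chi(n)\sigma_{-\nu}(n)$. So the specialisation step is fine and matches the paper exactly.

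What precedes it, however, is your own sketch of a proof of Theorem~\ref{voree}, and that sketch has a genuine gap. You propose to take the Cohen identity of Theorem~\ref{cohenee}, set $t=pqx$, multiply by $f(t)$, and integrate over a closed contour containing $[\alpha,\beta]$, collecting residues at the positive integers. But the rational series on the right of Theorem~\ref{cohenee} has only \emph{removable} singularities at $t=n$: in each term
\[
\frac{n^{\nu-2N}-t^{\nu-2N}}{n^{2}-t^{2}}
\]
both numerator and denominator vanish at $t=n$, and the quotient is regular there. The left side (the $K_\nu$-series) is likewise analytic for $t>0$. So a straight contour integral of the Cohen identity collapses to $0=0$ and yields nothing; ``separating the partial fractions'' does not help, because the two pieces you would separate have equal and opposite residues. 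Your Mellin--Barnes remarks in the second paragraph are correct heuristics for why the particular Bessel combination appears, but they do not supply the missing poles.

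The paper's mechanism (carried out for Theorem~\ref{vore1} and declared analogous for Theorem~\ref{voree}) is different and is the step you are missing. One first analytically continues the Cohen identity to complex $x$, then substitutes $x=iz/(pq)$ and $x=-iz/(pq)$ and \emph{adds}. This produces an identity $\Lambda(z,\nu)=\Psi(z,\nu)$ in the right half-plane, where now $\Psi$ is built from $\sum_n\sigma_{-\nu,\chi_2,\chi_1}(n)/(n^{2}+z^{2})$ and therefore $\Psi(iz,\nu)$ has \emph{genuine} simple poles at the positive integers with residues proportional to $\sigma_{-\nu,\chi_2,\chi_1}(j)$. Integrating $f(z)\Psi(iz,\nu)$ around a contour through $\alpha,\beta$ then gives the finite sum on the left of the Vorono\"\i\ formula, while on the Bessel side the combination $K_\nu(4\pi i\sqrt{\cdot})+K_\nu(-4\pi i\sqrt{\cdot})$ is converted to the $J_\nu,Y_\nu$ terms via
\[
K_\nu(ix)+K_\nu(-ix)=-\pi\Bigl(J_\nu(x)\sin\tfrac{\pi\nu}{2}+Y_\nu(x)\cos\tfrac{\pi\nu}{2}\Bigr).
\]
It is this rotation to the imaginary axis, not a direct contour integral of the real-variable Cohen identity, that creates the residues and simultaneously brings in $J_\nu$ and $Y_\nu$.
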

 \begin{theorem}\label{voroo}
Let $0< \alpha< \beta$ and $\alpha, \beta \notin \mathbb{Z}$. Let $f$ denote a function analytic inside a closed contour strictly containing $[\alpha, \beta ]$. Assume that $\chi_1$ and $\chi_2$ are odd primitive characters modulo  $ p$ and $ q$, respectively. For  $0< \Re{(\nu)}<\frac{1}{2}$, we have     
        \begin{align*}     
          & \frac{  p^{1-{\frac{\nu}{2}}}q^{1+{\frac{\nu}{2}}} }{  \tau(\chi_1)\tau(\chi_2)}   \sum_{\alpha<j <\beta}   \frac{{\sigma}_{-\nu, \chi_2,\chi_1}(j)  f(j) }{j}   = -2 \pi \sum_{n=1}^{\infty}\sigma_{-\nu, \bar{\chi_1},\bar{\chi_2}}(n) \ n^{\nu/2} \int_{\alpha} ^{  \beta   }f(t)  (t)^{-\frac{\nu}{2}-1}  \notag\\ 
  &   \times     \left\{  \left(   \frac{2}{\pi} K_{\nu}\left(4\pi  \sqrt{\frac{nt}{pq}}\ \right) + Y_{\nu}\left(4\pi  \sqrt{\frac{nt}{pq}}\ \right)   \right) \cos \left(\frac{\pi \nu}{2}\right) 
    + J_{\nu}\left(4\pi  \sqrt{\frac{nt}{pq}}\ \right)  \sin \left(\frac{\pi \nu}{2}\right)  \right\}  dt.
           \end{align*}
\end{theorem}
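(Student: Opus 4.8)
The plan is to obtain Theorem \ref{voroo} from its companion Cohen-type identity, Theorem \ref{cohenoo}, following the method by which Berndt, Dixit, Roy and Zaharescu deduced the Vorono\"i formula of Proposition \ref{vorlemma1} from the Cohen identity of Proposition \ref{Cohen-type} in \cite{MR3558223}. The analytic input common to both statements is the functional equation of the Dirichlet series attached to the coefficients: $\sum_{n\ge 1}\sigma_{-\nu,\chi_2,\chi_1}(n)n^{-s}=L(s+\nu,\chi_2)L(s,\chi_1)$, with dual series $\sum_{n\ge 1}\sigma_{-\nu,\bar\chi_1,\bar\chi_2}(n)n^{-s}=L(s+\nu,\bar\chi_1)L(s,\bar\chi_2)$. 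Since $\chi_1$ and $\chi_2$ are odd and primitive, each factor satisfies the asymmetric functional equation whose conversion factor carries $\Gamma(s)$, a factor $\sin(\tfrac{\pi s}{2})$, and the Gauss-sum normalisation $\tau(\chi_i)/i$; the product of the two shifted functional equations therefore produces a gamma--trigonometric factor whose inverse Mellin transform is precisely the Bessel combination $(\tfrac{2}{\pi}K_\nu+Y_\nu)\cos(\tfrac{\pi\nu}{2})+J_\nu\sin(\tfrac{\pi\nu}{2})$ recorded on the right-hand side.

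First I would write down this combined functional equation in asymmetric form, tracking the prefactor $p^{1-\nu/2}q^{1+\nu/2}/(\tau(\chi_1)\tau(\chi_2))$ and the trigonometric factors $\cos(\tfrac{\pi\nu}{2}),\sin(\tfrac{\pi\nu}{2})$. A structural point worth isolating at once is that, because both characters are non-principal, $L(s+\nu,\chi_2)L(s,\chi_1)$ is \emph{entire}. This is exactly why Theorem \ref{voroo} (like the even--even Theorem \ref{voree}, and unlike the single-character Theorems \ref{voro1} and \ref{voro2}) carries no leading integral term: there is no $\zeta$-factor, hence no pole to contribute a residue.

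Next I would represent the finite sum $\sum_{\alpha<j<\beta}\sigma_{-\nu,\chi_2,\chi_1}(j)f(j)/j$ as a contour integral. Since $\alpha,\beta\notin\mathbb{Z}$ and $f$ is analytic on a contour strictly containing $[\alpha,\beta]$, I would pair the weighted Dirichlet series $L(s+1+\nu,\chi_2)L(s+1,\chi_1)$---the shift by one encoding the weight $1/j$, which is also what converts the kernel weight $t^{-\nu/2}$ of Theorem \ref{voree} into the $t^{-\nu/2-1}$ appearing here---against the Mellin transform $\int_\alpha^\beta f(t)t^{s-1}\,dt$ along a vertical line with $\Re(s)$ large, where all series converge absolutely. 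Applying the functional equation and moving the line leftwards across the critical strip, the absence of poles leaves no residual terms, and the shifted integral reassembles, after interchanging summation and integration, into the claimed expression $-2\pi\sum_{n}\sigma_{-\nu,\bar\chi_1,\bar\chi_2}(n)n^{\nu/2}\int_\alpha^\beta f(t)t^{-\nu/2-1}(\cdots)\,dt$ with the stated Bessel kernel.

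The step demanding the most care, and the main obstacle, is pinning down the kernel. One must evaluate the inverse Mellin transform of the product of the two odd-character gamma factors and verify that it equals $(\tfrac{2}{\pi}K_\nu+Y_\nu)\cos(\tfrac{\pi\nu}{2})+J_\nu\sin(\tfrac{\pi\nu}{2})$ at argument $4\pi\sqrt{nt/(pq)}$, cross-checking against Theorem \ref{cohenoo}. It is the product-to-sum collapse of the two $\sin(\tfrac{\pi s}{2})$-type factors that selects the $+Y_\nu$ and the $+J_\nu\sin(\tfrac{\pi\nu}{2})$ combination distinguishing the present odd--odd case from the even--even case of Theorem \ref{voree}, and that fixes the overall sign $-2\pi$. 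The remaining technicalities---justifying the contour shift and the interchange of summation and integration for $0<\Re(\nu)<\tfrac12$---are routine using the decay $K_\nu(x)\sim(\pi/2x)^{1/2}e^{-x}$ recorded earlier, together with the standard order estimates for $Y_\nu$ and $J_\nu$, which guarantee absolute convergence of the Bessel series uniformly on $[\alpha,\beta]$.
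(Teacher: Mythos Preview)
Your proposal names the right ingredients---Theorem \ref{cohenoo} and the Berndt--Dixit--Roy--Zaharescu scheme of \cite{MR3558223}---but the execution you sketch is not that scheme, and the step where you ``pair the weighted Dirichlet series against the Mellin transform $\int_\alpha^\beta f(t)\,t^{s-1}\,dt$ and move the line leftwards'' has a gap. The truncated transform $s\mapsto\int_\alpha^\beta f(t)\,t^{s-1}\,dt$ is entire, and (as you yourself note) $L(s+1+\nu,\chi_2)L(s+1,\chi_1)$ is entire too; shifting the contour therefore crosses no poles at all, and more fundamentally there is no identity expressing your contour integral as the finite sum $\sum_{\alpha<j<\beta}\sigma_{-\nu,\chi_2,\chi_1}(j)f(j)/j$ to begin with. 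A Perron-- or Mellin--type pairing does not directly produce a finite sum over an arbitrary window $(\alpha,\beta)$ against an arbitrary analytic weight $f$; that is precisely the content one is trying to prove.

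The paper's route (which \emph{is} the actual BDRZ method) is different. One analytically continues the identity of Theorem \ref{cohenoo} in $x$ to the slit plane $-\pi<\arg x<\pi$, sets $N=1$, substitutes $x=\pm iz/(pq)$, and adds. For $-\tfrac{\pi}{2}<\arg z<\tfrac{\pi}{2}$ this gives an equality $\Lambda(z,\nu)=\Phi(z,\nu)$, where $\Lambda$ is a $K_\nu$--series and $\Phi$ is, up to constants, $\sum_{n\ge 1}\sigma_{-\nu,\chi_2,\chi_1}(n)\,z\big/\!\bigl(n(n^2+z^2)\bigr)$ plus a term analytic off $z=0$. Thus $\Phi(iz,\nu)$ has simple poles exactly at the positive integers $j$ with residues proportional to $\sigma_{-\nu,\chi_2,\chi_1}(j)/j$, and Cauchy's residue theorem applied to $\oint f(z)\,\Phi(iz,\nu)\,dz$ over the given closed contour around $[\alpha,\beta]$ produces the left-hand finite sum directly---this residue step is the missing mechanism in your sketch. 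One then checks $\Phi(iz,\nu)+\Phi(-iz,\nu)=0$ in the right half-plane (this cancellation, rather than a missing $s$-plane pole, is why no leading integral term appears), replaces $\Phi(\pm iz,\nu)$ by $\Lambda(\pm iz,\nu)$ on the lower and upper arcs respectively, collapses both contour integrals to $\int_\alpha^\beta$, and finally converts the resulting combination $K_\nu(4\pi i\sqrt{\cdot})+K_\nu(-4\pi i\sqrt{\cdot})-2\cos(\tfrac{\pi\nu}{2})K_\nu(4\pi\sqrt{\cdot})$ into the stated kernel $(\tfrac{2}{\pi}K_\nu+Y_\nu)\cos(\tfrac{\pi\nu}{2})+J_\nu\sin(\tfrac{\pi\nu}{2})$ via the identity $K_\nu(ix)+K_\nu(-ix)=-\pi\bigl(J_\nu(x)\sin\tfrac{\pi\nu}{2}+Y_\nu(x)\cos\tfrac{\pi\nu}{2}\bigr)$.
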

 Substituting $\chi_1=\chi_2=\chi$ in the above theorem, we get the following
\begin{corollary}
 Let $0< \alpha< \beta$ and $\alpha, \beta \notin \mathbb{Z}$. Let $f$ denote a function analytic inside a closed contour strictly containing $[\alpha, \beta ]$. Assume that $\chi$  is an odd primitive character modulo  $ q.$ For  $0< \Re{(\nu)}<\frac{1}{2}$, we have
\begin{align*}     
          & \frac{   q^2}{  \tau^2(\chi )}   \sum_{\alpha<j <\beta}   \frac{ {\sigma}_{-\nu }(j) \chi(j) f(j) }{j}    = -2 \pi \sum_{n=1}^{\infty}\sigma_{-\nu}(n)\Bar{\chi}(j) \ n^{\nu/2} \int_{\alpha} ^{  \beta   }f(t)  (t)^{-\frac{\nu}{2}-1}  \notag\\ 
  &   \times     \left\{  \left(   \frac{2}{\pi} K_{\nu}\left(4\pi  \sqrt{\frac{nt}{q^2}}\ \right) + Y_{\nu}\left(4\pi  \sqrt{\frac{nt}{q^2}}\ \right)   \right) \cos \left(\frac{\pi \nu}{2}\right) 
    + J_{\nu}\left(4\pi  \sqrt{\frac{nt}{q^2}}\ \right)  \sin \left(\frac{\pi \nu}{2}\right)  \right\}  dt.
           \end{align*}
\end{corollary}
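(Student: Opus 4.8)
The plan is to deduce this corollary as an immediate specialization of Theorem \ref{voroo}. In that theorem $\chi_1$ and $\chi_2$ are odd primitive characters to moduli $p$ and $q$; to obtain the corollary I would set $\chi_1=\chi_2=\chi$, which forces $p=q$. Consequently every occurrence of $pq$ becomes $q^2$, and the ambient modulus throughout is simply $q$.

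The central step is the factorization of the twisted divisor sum once the two characters coincide. Using the complete multiplicativity of $\chi$, I would record
\begin{align*}
\sigma_{-\nu,\chi,\chi}(j)=\sum_{d\mid j} d^{-\nu}\chi(d)\chi(j/d)=\chi(j)\sum_{d\mid j}d^{-\nu}=\chi(j)\sigma_{-\nu}(j),
\end{align*}
and likewise $\sigma_{-\nu,\bar{\chi},\bar{\chi}}(n)=\bar{\chi}(n)\sigma_{-\nu}(n)$. Feeding these into Theorem \ref{voroo}, the summand $\sigma_{-\nu,\chi_2,\chi_1}(j)$ on the left collapses to $\chi(j)\sigma_{-\nu}(j)$ and the coefficient $\sigma_{-\nu,\bar{\chi_1},\bar{\chi_2}}(n)$ on the right collapses to $\bar{\chi}(n)\sigma_{-\nu}(n)$, producing exactly the divisor sums appearing in the corollary.

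What remains is purely to collect constants. The Gauss-sum prefactor becomes $\tau(\chi_1)\tau(\chi_2)=\tau^2(\chi)$, while the modulus powers combine as $p^{1-\nu/2}q^{1+\nu/2}=q^{1-\nu/2}q^{1+\nu/2}=q^2$, so the leading factor is $q^2/\tau^2(\chi)$. Since $pq=q^2$, the Bessel functions $K_\nu$, $Y_\nu$, $J_\nu$ acquire the argument $4\pi\sqrt{nt/q^2}$, while the trigonometric weights $\cos\left(\frac{\pi\nu}{2}\right)$ and $\sin\left(\frac{\pi\nu}{2}\right)$, the overall sign $-2\pi$, and the power $t^{-\nu/2-1}$ all transfer verbatim. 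I expect no genuine difficulty here: the only point requiring care is the harmless merging of the two moduli into one and the correct bookkeeping of the two Gauss sums collapsing to a single square, both of which follow at once from the character factorization above.
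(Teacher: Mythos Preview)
Your proposal is correct and follows exactly the paper's approach: the corollary is stated immediately after Theorem~\ref{voroo} with the remark ``Substituting $\chi_1=\chi_2=\chi$ in the above theorem, we get the following,'' and your argument supplies precisely the bookkeeping behind that substitution.
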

 \begin{theorem}\label{voreo}
Let $0< \alpha< \beta$ and $\alpha, \beta \notin \mathbb{Z}$. Let $f$ denote a function analytic inside a closed contour strictly containing $[\alpha, \beta ]$. Assume that   $\chi_1$ is a non-principal  even primitive character modulo $ p$ and 
$\chi_2$ is an odd primitive character modulo  $ q.$ For  $0< \Re{(\nu)}<\frac{1}{2}$, we have  
 \begin{align*}     
          & \frac{  p^{1-{\frac{\nu}{2}}}q^{1+{\frac{\nu}{2}}} }{  \tau(\chi_1)\tau(\chi_2)}   \sum_{\alpha<j <\beta}   {\sigma}_{-\nu, \chi_2,\chi_1}(j)  f(j)   = 2 \pi i \sum_{n=1}^{\infty}\sigma_{-\nu, \bar{\chi_1},\bar{\chi_2}}(n) \ n^{\nu/2} \int_{\alpha} ^{  \beta   }f(t)  (t)^{-\frac{\nu}{2} }  \notag\\ 
  &   \times     \left\{  \left(   \frac{2}{\pi} K_{\nu}\left(4\pi  \sqrt{\frac{nt}{pq}}\ \right) + Y_{\nu}\left(4\pi  \sqrt{\frac{nt}{pq}}\ \right)   \right)  \sin   \left(\frac{\pi \nu}{2}\right) 
    - J_{\nu}\left(4\pi  \sqrt{\frac{nt}{pq}}\ \right)  \cos \left(\frac{\pi \nu}{2}\right)  \right\}  dt.
           \end{align*}
    \end{theorem}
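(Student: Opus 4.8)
The plan is to adapt the contour-integration argument of Berndt--Dixit--Roy--Zaharescu behind Proposition \ref{vorlemma1} (see \cite{MR3558223}) to the mixed even--odd product, taking as analytic input the functional equations of $L(s,\chi_1)$ and $L(s+\nu,\chi_2)$ that already underlie the Cohen-type identity of Theorem \ref{coheneo}. The Dirichlet series attached to the arithmetic function on the left-hand side is, after writing $n=de$,
\begin{equation*}
D(s):=\sum_{n=1}^{\infty}\frac{\sigma_{-\nu,\chi_2,\chi_1}(n)}{n^{s}}=L(s+\nu,\chi_2)\,L(s,\chi_1).
\end{equation*}
Since $\chi_1$ is even non-principal and $\chi_2$ is odd primitive, both factors are entire, so $D$ is entire; this is exactly why the statement of Theorem \ref{voreo} carries no diagonal main-term integral, in contrast to the single-character Theorems \ref{vore1}--\ref{voro2}, whose Dirichlet series contain a $\zeta$-factor contributing a pole.

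First I would set $\tilde f(s):=\int_{\alpha}^{\beta}f(t)\,t^{s-1}\,dt$ and record the identity
\begin{equation*}
\sum_{\alpha<j<\beta}\sigma_{-\nu,\chi_2,\chi_1}(j)\,f(j)=\frac{1}{2\pi i}\int_{(c)}D(s)\,\tilde f(s)\,ds,
\end{equation*}
valid for $c$ in the half-plane of absolute convergence; it follows from Mellin inversion applied to $f\cdot\mathbf{1}_{[\alpha,\beta]}$ and term-by-term integration, the endpoint terms being absent because $\alpha,\beta\notin\mathbb{Z}$. Next I would move the line of integration to $\Re(s)=1-\nu-c$ and apply
\begin{align*}
L(s,\chi_1)&=\frac{\tau(\chi_1)}{\sqrt p}\Bigl(\frac{p}{\pi}\Bigr)^{\frac12-s}\frac{\Gamma\bigl(\frac{1-s}{2}\bigr)}{\Gamma\bigl(\frac{s}{2}\bigr)}L(1-s,\bar\chi_1),\\
L(s+\nu,\chi_2)&=\frac{\tau(\chi_2)}{i\sqrt q}\Bigl(\frac{q}{\pi}\Bigr)^{\frac12-s-\nu}\frac{\Gamma\bigl(\frac{2-s-\nu}{2}\bigr)}{\Gamma\bigl(\frac{s+\nu+1}{2}\bigr)}L(1-s-\nu,\bar\chi_2).
\end{align*}
Multiplying these and using $L(1-s,\bar\chi_1)\,L(1-s-\nu,\bar\chi_2)=\tilde D(1-\nu-s)$, where $\tilde D(w):=\sum_n\sigma_{-\nu,\bar\chi_1,\bar\chi_2}(n)\,n^{-w}=L(w+\nu,\bar\chi_1)\,L(w,\bar\chi_2)$, I obtain a functional equation $D(s)=\varepsilon\,\Delta(s)\,\tilde D(1-\nu-s)$, with $\varepsilon=\tau(\chi_1)\tau(\chi_2)/(i\sqrt{pq})$ and $\Delta(s)$ an explicit ratio of four Gamma functions times a power of $\pi^{2}/(pq)$. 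As $D$ is entire the shift produces no residues; expanding $\tilde D(1-\nu-s)=\sum_n\sigma_{-\nu,\bar\chi_1,\bar\chi_2}(n)\,n^{s+\nu-1}$ and interchanging reduces everything to evaluating, for each $n$,
\begin{equation*}
\frac{1}{2\pi i}\int_{(c')}\Delta(s)\,(nt)^{s}\,ds.
\end{equation*}

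The heart of the proof, and the step I expect to be the main obstacle, is this last evaluation: I would match $\Delta(s)$ against the standard Mellin--Barnes representations of $J_\nu$, $Y_\nu$ and $K_\nu$ (as in \cite{MR1349110} and \cite{MR3558223}), so that the integral collapses to the combination $\bigl(\tfrac{2}{\pi}K_\nu+Y_\nu\bigr)\sin\bigl(\tfrac{\pi\nu}{2}\bigr)-J_\nu\cos\bigl(\tfrac{\pi\nu}{2}\bigr)$ evaluated at $4\pi\sqrt{nt/(pq)}$, multiplied by $t^{-\nu/2}$ and $n^{\nu/2}$. The half-integer shift of one Gamma argument --- the signature of the odd character $\chi_2$ --- is precisely what turns the even--even kernel of Theorem \ref{voree} into this $\sin/\cos$-weighted kernel, while the factor $1/i$ inside $\varepsilon$ supplies the $i$ in the prefactor $2\pi i$; the exact power of $t$ is likewise fixed by the total half-integer shift coming from the two parities. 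Collecting constants, multiplying through by $p^{1-\nu/2}q^{1+\nu/2}/(\tau(\chi_1)\tau(\chi_2))$, and justifying the interchanges, the leftward shift, and the convergence of the resulting Bessel series by Stirling's asymptotics for the Gamma quotient together with the hypotheses $0<\Re(\nu)<\tfrac12$ and the analyticity of $f$ on a neighbourhood of $[\alpha,\beta]$, then yields the stated identity.
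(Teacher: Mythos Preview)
Your approach is correct in outline but takes a genuinely different route from the paper's. The paper does \emph{not} work directly with the Mellin transform $\tilde f(s)=\int_\alpha^\beta f(t)t^{s-1}\,dt$ and a contour shift for $D(s)\tilde f(s)$. Instead, following \cite{MR3558223}, it bootstraps from the Cohen-type identity of Theorem~\ref{coheneo}: one sets $N=1$ there, analytically continues in $x$ to $-\pi<\arg x<\pi$, substitutes $x=\pm iz/(pq)$, and adds. This produces an equality $\Lambda(z,\nu)=\Phi(z,\nu)$ on $|\arg z|<\pi/2$, where $\Lambda$ is a rapidly convergent $K_\nu$-series and $\Phi$ is (essentially) a rational series $\sum_n \sigma_{-\nu,\chi_2,\chi_1}(n)/(n^2+z^2)$. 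One then integrates $f(z)\Phi(iz,\nu)$ around a contour through $\alpha$ and $\beta$; the residues at integer $j\in(\alpha,\beta)$ give the left-hand side, while replacing $\Phi(\pm iz)$ by $\Lambda(\pm iz)$ on the upper/lower arcs and collapsing to $[\alpha,\beta]$ yields integrals of $K_\nu(\pm 4\pi i\sqrt{nt/pq})$ and $K_\nu(4\pi\sqrt{nt/pq})$. The Bessel combination in the statement then drops out of the elementary identity $K_\nu(ix)-K_\nu(-ix)=-i\pi\bigl(J_\nu(x)\cos\tfrac{\pi\nu}{2}-Y_\nu(x)\sin\tfrac{\pi\nu}{2}\bigr)$, with the mixed parity of $(\chi_1,\chi_2)$ selecting the ``minus'' variant and supplying the factor $i$.

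What each approach buys: the paper's route is modular---all the Gamma/functional-equation bookkeeping is already absorbed into Theorem~\ref{coheneo}, and the remaining analysis (uniform convergence of the $K_\nu$-series on compacta, residue calculus in $z$) is soft. Your route is more self-contained but front-loads two genuine obstacles you only sketch. First, the contour shift for $\int_{(c)}D(s)\tilde f(s)\,ds$: since $\tilde f(s)$ decays only like $|s|^{-1}$ on vertical lines (the boundary terms $f(\beta)\beta^s/s$, $f(\alpha)\alpha^s/s$ do not vanish), and $D(s)$ has polynomial growth, absolute convergence fails and one must argue via oscillatory integrals or a truncation-plus-error scheme; ``Stirling's asymptotics'' alone does not settle this. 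Second, the Mellin--Barnes identification of the mixed Gamma quotient $\Gamma(\tfrac{1-s}{2})\Gamma(1-\tfrac{s+\nu}{2})\big/\Gamma(\tfrac{s}{2})\Gamma(\tfrac{s+\nu+1}{2})$ with the specific kernel $(\tfrac{2}{\pi}K_\nu+Y_\nu)\sin\tfrac{\pi\nu}{2}-J_\nu\cos\tfrac{\pi\nu}{2}$ is correct but is itself a computation of the same order of difficulty as the whole theorem; in the paper's scheme this step is replaced by the one-line identity for $K_\nu(ix)-K_\nu(-ix)$.
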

 \begin{theorem}\label{voroe}
 Let $0< \alpha< \beta$ and $\alpha, \beta \notin \mathbb{Z}$. Let $f$ denote a function analytic inside a closed contour strictly containing $[\alpha, \beta ]$. 
Assume that  $\chi_1$ is an odd primitive character modulo  $ p$ and $\chi_2$ is a non-principal even primitive character modulo  $ q.$  For  $0< \Re{(\nu)}<\frac{1}{2}$, we have  
 \begin{align*}     
          & \frac{  p^{1-{\frac{\nu}{2}}}q^{1+{\frac{\nu}{2}}} }{  \tau(\chi_1)\tau(\chi_2)}   \sum_{\alpha<j <\beta}    \frac{{\sigma}_{-\nu, \chi_2,\chi_1}(j)f(j)}{j}    =- 2 \pi i\sum_{n=1}^{\infty}\sigma_{-\nu, \bar{\chi_1},\bar{\chi_2}}(n) \ n^{\nu/2} \int_{\alpha} ^{  \beta   }f(t)  t^{-\frac{\nu}{2}-1}  \notag\\ 
  &   \times     \left\{  \left(   \frac{2}{\pi} K_{\nu}\left(4\pi  \sqrt{\frac{nt}{pq}}\ \right) - Y_{\nu}\left(4\pi  \sqrt{\frac{nt}{pq}}\ \right)   \right)\sin \left(\frac{\pi \nu}{2}\right)
    + J_{\nu}\left(4\pi  \sqrt{\frac{nt}{pq}}\ \right) \cos \left(\frac{\pi \nu}{2}\right)     \right\}  dt.
           \end{align*}
  \end{theorem}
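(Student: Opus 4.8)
The plan is to derive this Voronoï-type summation formula by combining the Cohen-type identity of Theorem \ref{cohenoe} (the odd-$\chi_1$, even-$\chi_2$ case) with the general machinery that produced Proposition \ref{vorlemma1} in \cite{MR3558223}. The guiding principle, following \cite{MR3558223}, is that a Cohen-type identity encodes precisely the Bessel-function ingredients needed for a Voronoï formula, and the passage from one to the other is effected by a contour-integral / Mellin-transform argument against the analytic test function $f$. Since the left-hand side here carries a factor $1/j$ and the character pair is (odd, even), I expect the output to inherit the $-2\pi i$ prefactor and the particular trigonometric combination $(\tfrac{2}{\pi}K_\nu - Y_\nu)\sin(\tfrac{\pi\nu}{2}) + J_\nu\cos(\tfrac{\pi\nu}{2})$ already visible in Theorem \ref{voro1}, the odd-character analogue with a single character.

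The key steps, in order, would be as follows. First I would express $\sum_{\alpha<j<\beta}\sigma_{-\nu,\chi_2,\chi_1}(j)f(j)/j$ as a contour integral $\frac{1}{2\pi i}\oint f(t)\,G(t)\,dt$ over a contour strictly enclosing $[\alpha,\beta]$, where $G(t)=\sum_j \sigma_{-\nu,\chi_2,\chi_1}(j)/(j(t-j))$ is the relevant generating kernel; the residues at the integer points $j$ recover the discrete sum. Second, I would replace $G(t)$ by its Voronoï-dual expression, obtained by feeding the Cohen-type identity of Theorem \ref{cohenoe} into this kernel: the identity rewrites the relevant Dirichlet series in terms of the series $\sum_n \sigma_{-\nu,\bar\chi_1,\bar\chi_2}(n)(n+\cdots)^{-\nu-\cdots}$ together with finite $L$-value terms, and these in turn match the Mellin transforms of the Bessel kernel $\tfrac{2}{\pi}K_\nu - Y_\nu$ and $J_\nu$. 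Third, I would move the contour and interchange the order of summation over $n$ and integration over $t$, using the $0<\Re(\nu)<\tfrac12$ hypothesis together with the asymptotics of $K_\nu,Y_\nu,J_\nu$ to justify absolute convergence; the analyticity of $f$ inside the contour guarantees the interchange is legitimate. Finally, collecting terms, the $1/j$ weight on the left produces the $t^{-\nu/2-1}$ weight inside the integral on the right, and the (odd, even) character parity fixes the sign and the specific $\sin/\cos$ assignment, yielding the stated formula.

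The main obstacle will be the bookkeeping in the second and third steps: correctly matching the finite sum of $L$-values and the main polar terms from Theorem \ref{cohenoe} against the integral transforms of the three Bessel functions, and in particular verifying that the non-Bessel contributions cancel so that no extra main term survives on the right-hand side (unlike Theorems \ref{vore1} and \ref{vore2}, this formula has no leading integral term, a feature it shares with the odd case Theorem \ref{voroo}). This cancellation is a parity phenomenon: it should follow from $\chi_1$ being odd, which forces the relevant $L(0,\chi_1)$-type constant terms to vanish or recombine. I would therefore treat the Bessel-transform identities as the technical core — most likely citing or adapting the Mellin-transform computations of \cite{MR3558223} and the preliminary results assembled in Section \ref{preliminary} — and devote the bulk of the argument to the convergence justifications for the interchange of summation and integration, which is where the restriction $\Re(\nu)<\tfrac12$ is genuinely used.
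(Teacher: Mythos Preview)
Your overall framework is right—start from Theorem \ref{cohenoe}, encode the discrete sum as residues of a contour integral, and land on Bessel integrals on the dual side—and you correctly predict that no main-integral term survives. But the mechanism you describe for passing from the Cohen identity to the Bessel kernel is not the one the paper (following \cite{MR3558223}) actually uses, and as written your plan has a gap at precisely that step. You propose to build a kernel $G(t)=\sum_j \sigma_{-\nu,\chi_2,\chi_1}(j)/(j(t-j))$ and then ``replace $G(t)$ by its Vorono\"i-dual expression'' via Mellin-transform matching against $\tfrac{2}{\pi}K_\nu-Y_\nu$ and $J_\nu$. Theorem \ref{cohenoe} does not hand you such a dual for $G$ directly, and no Mellin-transform matching takes place. (Incidentally, Theorem \ref{cohenoe} has terms of the shape $(n^{\nu-2N+1}-(pqx)^{\nu-2N+1})/(n^2-(pqx)^2)$, not $(n+\cdots)^{-\nu-\cdots}$; the latter form belongs to the integer-$k$ identities of Section \ref{integer results}.)

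The paper's route is this: analytically continue the identity of Theorem \ref{cohenoe} from $x>0$ to $-\pi<\arg x<\pi$, then substitute $x=iz/(pq)$ and $x=-iz/(pq)$ and add. With $N=1$ and $0<\Re(\nu)<\tfrac12$ this collapses to an identity $z^{-1}\Lambda(z,\nu)=\Phi(z,\nu)$, where $\Lambda$ is a sum of $K_\nu$'s at arguments $4\pi e^{\pm i\pi/4}\sqrt{nz/(pq)}$ and $\Phi$ is a rational function of $z$ whose only singularities in the right half-plane are simple poles at $z=n\in\mathbb{Z}$ coming from $\sum_n \sigma_{-\nu,\chi_2,\chi_1}(n)\,z/(n(n^2+z^2))$. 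One then applies Cauchy's theorem to $f(z)\Phi(iz,\nu)$ on the given contour; the residues at integers $j$ produce the left-hand side with its $1/j$ weight, and the identity $\Phi(iz,\nu)+\Phi(-iz,\nu)=0$ (valid by analytic continuation from a pole-free interval) explains why no main term appears. The final conversion to $J_\nu,Y_\nu,K_\nu$ at real arguments is not a Mellin computation but the elementary connection formula
\[
K_\nu(ix)-K_\nu(-ix)=-i\pi\bigl(J_\nu(x)\cos\tfrac{\pi\nu}{2}-Y_\nu(x)\sin\tfrac{\pi\nu}{2}\bigr),
\]
applied after one more use of the residue theorem to collapse the two contour pieces to $\int_\alpha^\beta$. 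The restriction $\Re(\nu)<\tfrac12$ is used exactly where you expect, to justify the summation--integration interchange.
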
 
 \vspace{.1cm}
		\section{Preliminaries}\label{preliminary}
			We begin this section by recalling and proving some important results which will be used throughout the paper.
			\par The Mellin transform of a locally integrable function $f(x)$ on $(0, \infty)$ is defined by
			\begin{align}\label{Mellin}
				\mathcal{M}[f; s] = F(s) =\int_{0}^{\infty} f(t) \ t^{s-1} dt,
			\end{align}
			provided the integral converges. The basic properties of the Mellin transform follow
			immediately from those of the Laplace transform since these transforms are intimately connected.  The integral in \eqref{Mellin} defines the Mellin transform in a vertical strip  in the $s$ plane whose boundaries are determined by the analytic structure of $f(x)$ as $x\rightarrow 0+$ and $x \rightarrow +\infty$. If we assume that $f(x)$ satisfies the following growth condition
			\begin{align}\label{cond}
				f(x)=\begin{cases}
					& O( x^{-a-\varepsilon} ) \ \ \mbox{as} \ x\rightarrow 0+,\\
					& O( x^{-b+\varepsilon} ) \ \ \mbox{as} \ x\rightarrow +\infty,
				\end{cases}
			\end{align}
		where $\varepsilon > 0$ and $a<b$, then the integral \eqref{Mellin} converges absolutely in the strip $a < \Re(s) < b$ and defines an analytic function there in the strip.
			This strip is known as the strip of analyticity of $\mathcal{M}[f; s]$.
			Furthermore, the inversion formula for \eqref{Mellin} follows directly from the corresponding inversion formula for the bilateral Laplace transform. Thus,
			\begin{align} \label{inverse Mellin}
				f(x)=\frac{1}{2\pi i} \int_{c-i\infty}^{c+i \infty} x^{-s} \mathcal{M}[f; s] ds \  \ \ \  (a < c< b), 
			\end{align}
			which is valid at all points $x \geq 0$ where $f(x)$ is continuous.
For example, $\mathcal{M}[e^x; s]=\Gamma(s)$ for $\Re(s)>0$, and we have the corresponding Mellin's inversion formula 
\begin{align*}
   e^{-y}=  \frac{1}{2\pi i}\int_{(c)}\Gamma(s)y^{-s}\mathrm{d}s, 
\end{align*}
valid for $\Re{(y)}>0$. The functional relations for $\Gamma(s)$ are given by \cite[p.~73]{MR1790423}
				\begin{align}
					 \Gamma(s+1)=&s \Gamma(s), \ \ \ \ \ \ \ \ \Gamma(s)\Gamma\left(s+\frac{1}{2}\right)=2^{1-2s}\sqrt{\pi}\Gamma(2s),
     \label{relation of gamma}
			\\
        & \Gamma(s)\Gamma(1-s)=\frac{\pi}{\sin (\pi s
)}.\label{reflection formula for gamma add}
   \end{align}
The following lemma states the asymptotic behaviour of $\Gamma(s)$.
			\begin{lemma}\label{Gamma}
				\cite[p.~38]{MR0364103} In a vertical strip, for s=$\sigma+it$ with $a\leq \sigma \leq b$ and $|t|\geq 1$,
				\begin{align*}
					|\Gamma(s)|=(2\pi)^\frac{1}{2}|t|^{\sigma-\frac{1}{2}}\exp^{-\frac{1}{2}\pi |t|}\left( 1+O\left(\frac{1}{|t|}\right)\right).
				\end{align*}
			\end{lemma}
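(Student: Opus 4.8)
The plan is to obtain this estimate from the classical Stirling asymptotic expansion of $\log\Gamma(s)$ and then extract the modulus by taking real parts. First I would record Stirling's formula in the form
\begin{equation*}
\log\Gamma(s) = \left(s - \tfrac12\right)\log s - s + \tfrac12\log(2\pi) + O\!\left(\tfrac{1}{|s|}\right),
\end{equation*}
valid uniformly as $|s|\to\infty$ in any sector $|\arg s| \le \pi - \delta$. Since we work in the vertical strip $a \le \sigma \le b$ with $|t| \ge 1$, the point $s = \sigma + it$ lies well inside such a sector and $|s| \asymp |t|$, so the remainder is $O(1/|t|)$ uniformly in $\sigma$. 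By the conjugation symmetry $\overline{\Gamma(s)} = \Gamma(\bar s)$ one has $|\Gamma(\sigma+it)| = |\Gamma(\sigma-it)|$, so it suffices to treat $t > 0$ and then replace $t$ by $|t|$ at the end.

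Next I would take real parts. Writing $\log s = \log|s| + i\theta$ with $\theta = \arg s$ and $s - \tfrac12 = (\sigma - \tfrac12) + it$, the real part of $(s-\tfrac12)\log s$ equals $(\sigma - \tfrac12)\log|s| - t\theta$, while $\re(-s) = -\sigma$. Hence
\begin{equation*}
\log|\Gamma(s)| = \left(\sigma - \tfrac12\right)\log|s| - t\theta - \sigma + \tfrac12\log(2\pi) + O\!\left(\tfrac{1}{|t|}\right).
\end{equation*}

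The core computation is the asymptotics of $\log|s|$ and $t\theta$ as $t\to+\infty$ with $\sigma$ confined to $[a,b]$. From $|s| = t\sqrt{1 + \sigma^2/t^2}$ I get $\log|s| = \log t + O(1/t^2)$, and from $\cos\theta = \sigma/|s|$ with $\theta \in (0,\pi)$ I get $\theta = \tfrac{\pi}{2} - \sigma/t + O(1/t^3)$, whence $t\theta = \tfrac{\pi t}{2} - \sigma + O(1/t^2)$. Substituting these and noting the clean cancellation of the two $\sigma$-terms (the $+\sigma$ produced by $-t\theta$ against the explicit $-\sigma$) leaves
\begin{equation*}
\log|\Gamma(s)| = \left(\sigma - \tfrac12\right)\log t - \tfrac{\pi t}{2} + \tfrac12\log(2\pi) + O\!\left(\tfrac{1}{t}\right).
\end{equation*}
Exponentiating, using $e^{O(1/t)} = 1 + O(1/t)$, and restoring $t = |t|$ then yields the claimed formula.

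The main obstacle will be keeping every error term genuinely uniform in $\sigma\in[a,b]$: I must verify that the implied constants in the Stirling remainder and in the expansions of $\log|s|$ and $\arg s$ depend only on $a$ and $b$, which rests on $|s|$ remaining comparable to $|t|$ throughout the strip. The only real subtlety is the computation of $\arg s$, where one must confirm that it is precisely the first-order term $-\sigma/t$ that feeds the cancellation; the remaining manipulations are routine.
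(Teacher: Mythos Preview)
Your derivation is correct and is the standard route to this estimate: Stirling's formula for $\log\Gamma(s)$, take real parts, expand $\log|s|$ and $\arg s$ to the needed order, observe the cancellation of the $\sigma$-terms, and exponentiate. The uniformity in $\sigma\in[a,b]$ is handled exactly as you say, via $|s|\asymp|t|$.

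The paper itself does not prove this lemma at all; it is quoted as a known result from the cited reference (p.~38 of \cite{MR0364103}) and used as a black box to control integrands on horizontal segments when shifting contours. So there is no ``paper's proof'' to compare against: you have supplied a full argument where the authors simply invoke the literature. Your write-up is more than adequate for the purpose.
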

			In our investigation, we shall require the following results related to the Mellin transform of derivatives of a function. 
			{ \begin{lemma}\label{lem1}
					Let $n \in \mathbb{N}$. Assume that $\phi$ is $n$-times differentiable function and
					\begin{align}\label{M1}
						\mathcal{M}[\phi(t); s]=\int_{0}^{\infty} \phi(t) t^{s-1} dt=\Phi(s).
					\end{align}
					If $\phi$ satisfies \eqref{cond}, then     
					\begin{align}\label{M2}
						\mathcal{M}[\phi^{(n)}(t) t^n; s]=(-1)^n\frac{\Gamma(s+n)}{\Gamma(s)}\Phi(s),
					\end{align}
					where $s\in \{w \in \mathbb{C}; \ a< \Re(w) <b\}$, provided 
					\begin{align}\label{M3}
						\lim_{t \rightarrow 0, \infty} t^{s+n-j-1} \phi^{(n-j-1)}(t)=0 \ \ j=0, 1, \cdots, n-1.
					\end{align}
				\end{lemma}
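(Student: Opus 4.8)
The plan is to establish \eqref{M2} by induction on $n$, peeling off one derivative at a time via integration by parts. First I would absorb the factor $t^n$ into the power of $t$, rewriting the left-hand side of \eqref{M2} as
\begin{equation*}
\mathcal{M}[\phi^{(n)}(t)\, t^n; s] = \int_0^\infty \phi^{(n)}(t)\, t^{s+n-1}\, dt,
\end{equation*}
which is legitimate for $s$ in the strip $a < \Re(s) < b$ determined by \eqref{cond}. The base case $n = 0$ is precisely the definition \eqref{M1}, yielding $\Phi(s)$.

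For the inductive step I would integrate by parts once, taking $u = t^{s+n-1}$ and $dv = \phi^{(n)}(t)\, dt$, so that
\begin{equation*}
\int_0^\infty \phi^{(n)}(t)\, t^{s+n-1}\, dt = \left[\phi^{(n-1)}(t)\, t^{s+n-1}\right]_0^\infty - (s+n-1)\int_0^\infty \phi^{(n-1)}(t)\, t^{s+n-2}\, dt.
\end{equation*}
The boundary term is exactly the $j = 0$ instance of the hypothesis \eqref{M3}, hence vanishes at both endpoints, and the surviving integral is $\mathcal{M}[\phi^{(n-1)}(t)\, t^{n-1}; s]$. A key bookkeeping observation is that the conditions \eqref{M3} indexed by $j = 1, \dots, n-1$ coincide with the full set of boundary conditions needed to apply the statement at order $n-1$; thus the induction hypothesis applies to $\phi$ at order $n-1$ with the same $s$, giving
\begin{equation*}
\mathcal{M}[\phi^{(n-1)}(t)\, t^{n-1}; s] = (-1)^{n-1}\frac{\Gamma(s+n-1)}{\Gamma(s)}\Phi(s).
\end{equation*}
Substituting this in and invoking the recurrence $\Gamma(s+n) = (s+n-1)\Gamma(s+n-1)$ from \eqref{relation of gamma} collapses the factor $-(s+n-1)$ into the Gamma quotient and produces \eqref{M2}.

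The genuine content lies not in the algebra but in the justification of the endpoint limits: I expect the main obstacle to be confirming that each boundary term vanishes and that the intermediate integrals converge, so that the integration by parts is valid. This is precisely what the indexed conditions \eqref{M3} are engineered to supply, the $j$-th condition killing the boundary term produced at the $(j{+}1)$-th integration by parts, so the work reduces to matching these up carefully. An equivalent and perhaps more transparent route would be to forgo induction and integrate by parts $n$ times in succession: the boundary terms then vanish one by one by \eqref{M3} for $j = 0, 1, \dots, n-1$, and the accumulated constant is the rising factorial $(s)(s+1)\cdots(s+n-1) = \Gamma(s+n)/\Gamma(s)$, which gives \eqref{M2} directly.
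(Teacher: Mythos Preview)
Your proposal is correct and follows essentially the same approach as the paper: induction on $n$ with a single integration by parts at each step, using the boundary conditions \eqref{M3} to kill the endpoint terms and the recurrence $\Gamma(s+n)=(s+n-1)\Gamma(s+n-1)$ to assemble the Gamma quotient. The only cosmetic difference is that the paper anchors the induction at $n=1$ (computing $\mathcal{M}[x\phi'(x);s]=-s\Phi(s)$ directly) rather than at the trivial $n=0$, and in the base case invokes \eqref{cond} rather than \eqref{M3} to dispose of the boundary term.
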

			}
			\begin{proof}
			The proof relies on mathematical induction.	Using integration by parts, we have
				\begin{align*}
					\mathcal{M}[x\phi^{\prime}(x) ; s] &=\int_{0}^{\infty} \phi^{\prime}(t) \ t^{s} dt=\left[t^{s}\phi(t)\right]_{0}^{\infty}-s\int_{0}^{\infty} \phi(t) \ t^{s-1} dt.
				\end{align*}
				Noting $\phi(t)$ satisfies \eqref{cond}, we can claim that
				\begin{align*}
					\mathcal{M}[x\phi^{\prime}(x) ; s] =-s \Phi(s) \ \ \mbox{for}  \ a < \Re(s) < b. 
				\end{align*}
				Suppose the statement of the theorem is true for $n=N$ and $\phi$ is $N+1$-times differentiable function and satisfies \eqref{M3}. Then
				\begin{align*}
					\mathcal{M}[x^{N+1} \phi^{(N+1)}(x) ; s] &=\int_{0}^{\infty} t^{N+1}\phi^{(N+1) }(t) \ t^{s-1} dt\\
					&=\left[t^{s+N} \phi^{(N)}(t)\right]_{0}^{\infty}-(s+N)\int_{0}^{\infty} t^{N}\phi^{(N)}(t) \ t^{s-1} dt.
				\end{align*}
				As $\phi$  satisfies \eqref{M3}, so we have
				\begin{align*}
					\mathcal{M}[\phi^{(N+1)}(t) t^{N+1}; s]=-(s+N)\int_{0}^{\infty} t^{N}\phi^{(N)}(t) \ t^{s-1} dt=(-1)^{N+1} \frac{\Gamma(s+N+1)}{\Gamma(s)}\Phi(s),
				\end{align*}
		and this completes the proof.
	 \end{proof}
	 	\begin{lemma}\label{lem2} 
				\cite[p.~91, Formula (3.3.9)]{MR1854469} We have 
				\begin{align*}
					\mathcal{M}[ (1+x)^{-a}; s]=\frac{ \Gamma(s)\Gamma(a-s)}{\Gamma(a)},
				\end{align*}
				for $0<\Re(s)<\Re(a)$.
			\end{lemma}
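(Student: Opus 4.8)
The plan is to recognize the integral as an Euler Beta function. Writing out the Mellin transform,
\[
\mathcal{M}[(1+x)^{-a}; s] = \int_0^\infty \frac{x^{s-1}}{(1+x)^a}\, dx,
\]
I would first pin down convergence. Since $(1+x)^{-a} \sim 1$ as $x \to 0^+$, the factor $x^{s-1}$ is integrable near the origin precisely when $\Re(s) > 0$; and since $(1+x)^{-a} \sim x^{-a}$ as $x \to \infty$, the integrand behaves like $x^{\Re(s)-\Re(a)-1}$ there, which is integrable at infinity exactly when $\Re(s) < \Re(a)$. Hence the integral converges absolutely throughout the strip $0 < \Re(s) < \Re(a)$, which is the region where the identity is asserted, and the growth condition \eqref{cond} is met with $a=0$ and $b=\Re(a)$.

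Next I would substitute $t = x/(1+x)$, equivalently $x = t/(1-t)$, which maps $(0,\infty)$ bijectively onto $(0,1)$. Under this change $1+x = (1-t)^{-1}$, $dx = (1-t)^{-2}\, dt$, and $x^{s-1} = t^{s-1}(1-t)^{-(s-1)}$, so the integrand transforms to
\[
\frac{x^{s-1}}{(1+x)^a}\, dx = t^{s-1}(1-t)^{-(s-1)} (1-t)^{a}(1-t)^{-2}\, dt = t^{s-1}(1-t)^{(a-s)-1}\, dt.
\]
The resulting integral $\int_0^1 t^{s-1}(1-t)^{(a-s)-1}\, dt$ is the Beta integral $B(s,\, a-s)$, which converges exactly when $\Re(s)>0$ and $\Re(a-s)>0$, i.e.\ on the strip identified above, and equals $\Gamma(s)\Gamma(a-s)/\Gamma(a)$ by the standard relation $B(u,v)=\Gamma(u)\Gamma(v)/\Gamma(u+v)$ together with \eqref{relation of gamma}. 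This gives the claimed formula.

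A self-contained alternative, avoiding the Beta identity, is to insert the representation $(1+x)^{-a} = \Gamma(a)^{-1}\int_0^\infty u^{a-1} e^{-(1+x)u}\, du$ for $\Re(a)>0$, interchange the order of integration, and evaluate the inner $x$-integral as $\int_0^\infty x^{s-1} e^{-xu}\, dx = \Gamma(s)\, u^{-s}$; the remaining integral $\int_0^\infty u^{a-s-1} e^{-u}\, du = \Gamma(a-s)$ then reproduces the same answer. In either route the only point needing care is the justification of the change of variables with a complex exponent (first route) or the Fubini interchange (second route), both of which follow from the absolute convergence established at the start; the identity then extends to all complex $a$ with $\Re(a) > \Re(s) > 0$ by analytic continuation in $a$. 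The ``hard part'' here is thus nothing more than this convergence bookkeeping, as the algebra is immediate once the Beta-integral shape is exposed.
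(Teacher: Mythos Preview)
Your proof is correct and is the standard derivation via the Beta integral (the alternative Gamma-representation route is equally valid). Note, however, that the paper does not supply its own proof of this lemma: it is simply quoted from the cited reference, so there is no in-paper argument to compare against.
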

			 As an immediate consequence of Lemma \ref{lem2} we get,
			\begin{lemma}\label{lem3}
				For any $n \in \mathbb{N}$, 
				\begin{align*}
					\mathcal{M}\left[ \frac{a(a+1) \cdots (a+n-1) x^{n} }{(1+x)^{a+n} }; s\right]=\frac{ \Gamma(s+n) \Gamma(a-s)}{ \Gamma(a)},
				\end{align*}
				whenever $0<\Re(s)<\Re(a)$. 
			\end{lemma}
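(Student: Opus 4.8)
The plan is to obtain the identity as a direct rescaling of Lemma \ref{lem2}, using only the elementary shift property of the Mellin transform together with the recurrence for $\Gamma$ recorded in \eqref{relation of gamma}. First I would pull the constant out of the integrand and write
\[
\frac{a(a+1)\cdots(a+n-1)\,x^{n}}{(1+x)^{a+n}}
=\frac{\Gamma(a+n)}{\Gamma(a)}\,x^{n}(1+x)^{-(a+n)},
\]
where the identification of the rising factorial $a(a+1)\cdots(a+n-1)$ with $\Gamma(a+n)/\Gamma(a)$ is an immediate iteration of $\Gamma(w+1)=w\Gamma(w)$.

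Second, I would absorb the factor $x^{n}$ into the exponent of the Mellin transform. Since $\mathcal{M}[x^{n}f(x);s]=\int_{0}^{\infty}f(x)\,x^{(s+n)-1}\,\mathrm{d}x=\mathcal{M}[f;s+n]$, this gives
\[
\mathcal{M}\!\left[\frac{a(a+1)\cdots(a+n-1)\,x^{n}}{(1+x)^{a+n}};\,s\right]
=\frac{\Gamma(a+n)}{\Gamma(a)}\,\mathcal{M}\!\left[(1+x)^{-(a+n)};\,s+n\right].
\]
Third, I would apply Lemma \ref{lem2} with $a$ replaced by $a+n$ and $s$ replaced by $s+n$. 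This is legitimate precisely on the strip $0<\Re(s+n)<\Re(a+n)$, i.e. $-n<\Re(s)<\Re(a)$, which contains the hypothesized region $0<\Re(s)<\Re(a)$; hence
\[
\mathcal{M}\!\left[(1+x)^{-(a+n)};\,s+n\right]
=\frac{\Gamma(s+n)\,\Gamma\big((a+n)-(s+n)\big)}{\Gamma(a+n)}
=\frac{\Gamma(s+n)\,\Gamma(a-s)}{\Gamma(a+n)}.
\]
Multiplying by $\Gamma(a+n)/\Gamma(a)$ cancels the factor $\Gamma(a+n)$ and yields $\Gamma(s+n)\Gamma(a-s)/\Gamma(a)$, which is exactly the asserted formula.

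There is essentially no obstacle here: the only points requiring a word of care are the bookkeeping of the shifted domain of validity when invoking Lemma \ref{lem2} (verifying that $0<\Re(s+n)<\Re(a+n)$ holds throughout the asserted strip) and the identification of the rising factorial with a quotient of $\Gamma$-values. As an alternative route one could appeal to Lemma \ref{lem1}: taking $\phi(x)=(1+x)^{-a}$, whose transform $\Phi(s)$ is furnished by Lemma \ref{lem2}, one computes $\phi^{(n)}(x)=(-1)^{n}a(a+1)\cdots(a+n-1)(1+x)^{-(a+n)}$, so that the left-hand integrand equals $(-1)^{n}x^{n}\phi^{(n)}(x)$. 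One then checks that the growth condition \eqref{cond} and the vanishing of the boundary terms \eqref{M3} both hold on $0<\Re(s)<\Re(a)$, and formula \eqref{M2} produces two factors of $(-1)^{n}$ that cancel, giving the same conclusion. I would record the rescaling argument, as it is shorter and makes the claimed status of ``immediate consequence of Lemma \ref{lem2}'' transparent.
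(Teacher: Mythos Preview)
Your argument is correct. The paper, however, takes precisely the alternative route you sketch at the end: it sets $\phi(t)=(1+t)^{-a}$, invokes Lemma~\ref{lem2} to obtain $\Phi(s)=\Gamma(s)\Gamma(a-s)/\Gamma(a)$, and then applies the derivative formula of Lemma~\ref{lem1} to $\phi^{(n)}(t)t^{n}$, with the two factors of $(-1)^{n}$ cancelling. Your primary approach---pulling out the rising factorial as $\Gamma(a+n)/\Gamma(a)$, using the shift $\mathcal{M}[x^{n}f(x);s]=\mathcal{M}[f;s+n]$, and reapplying Lemma~\ref{lem2} with parameters $(a+n,s+n)$---is genuinely more elementary: it bypasses Lemma~\ref{lem1} entirely and makes the ``immediate consequence'' framing literal. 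The paper's route, by contrast, exercises the integration-by-parts machinery of Lemma~\ref{lem1} and requires checking the boundary conditions~\eqref{M3}, which is slightly more work but illustrates that lemma in action. Either is fine; yours is the cleaner derivation.
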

			\begin{proof}
				By Lemma \ref{lem2}, we can write
				\begin{align*}
					\mathcal{M}[(1+x)^{-a} ; s]=\frac{ \Gamma(s)\Gamma(a-s)}{\Gamma(a)},
				\end{align*}
				for $0<\Re(s)<\Re(a)$. The function $\phi(t)= \frac{1}{(1+t)^{a}}$ for $t\geq 0$ is a continuous function and satisfies all the conditions of Lemma \ref{lem1}. Furthermore, $$\phi^{(n)}(t) = (-1)^n\frac{a(a+1) \cdots (a+n-1)  }{(1+t)^{(a+n)}}.$$  We have $$\Phi(s)=\frac{\Gamma(a-s)\Gamma(s)}{\Gamma(a)} \ \  \mbox{for} \  0<\Re(s)<\Re(a).$$ Hence by Lemma \ref{lem1},
				\begin{align*}
					\mathcal{M}\left[  \frac{a(a+1) \cdots (a+n-1) t^n }{(1+x)^{(a+1)}} ; s\right]=\frac{\Gamma(s+n)}{\Gamma(s)}\Phi(s)=\frac{\Gamma(s+n) \Gamma(a-s)}{\Gamma(a)},
				\end{align*}
				for $0<\Re(s)<\Re(a)$. 
			\end{proof}
			\begin{lemma}\label{lem4}
				Let $n\geq0$ be any integer and $t>0$ be any real number. Then
				\begin{align*}
				 	\frac{1} {2\pi i}\int_{(c)}  { \Gamma(s+n) \Gamma(a-s)} t^{-s}\mathrm{d}s= \frac{ \Gamma(a+n)     }{(1+t)^{a+n} }t^{n},
				\end{align*}
				for $0<c<\Re(a)$.
			\end{lemma}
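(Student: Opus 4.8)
The plan is to obtain Lemma~\ref{lem4} as the inverse Mellin transform of the identity already established in Lemma~\ref{lem3}. First I would observe that the rising factorial appearing there telescopes into a ratio of Gamma values, namely $a(a+1)\cdots(a+n-1)=\Gamma(a+n)/\Gamma(a)$, by iterating the functional equation $\Gamma(s+1)=s\Gamma(s)$ recorded in \eqref{relation of gamma}. Substituting this into the statement of Lemma~\ref{lem3} and clearing the factor $\Gamma(a)$ produces the clean Mellin pair
\begin{align*}
\mathcal{M}\left[\frac{\Gamma(a+n)\,t^n}{(1+t)^{a+n}};\,s\right]=\Gamma(s+n)\,\Gamma(a-s),\qquad 0<\Re(s)<\Re(a).
\end{align*}

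Next I would invoke the Mellin inversion formula \eqref{inverse Mellin}. Setting $\phi(t)=\Gamma(a+n)\,t^n/(1+t)^{a+n}$, which is continuous (indeed smooth) on all of $(0,\infty)$, a short check of its endpoint behaviour shows $\phi(t)=O(t^n)$ as $t\rightarrow 0+$ and $\phi(t)=O(t^{-\Re(a)})$ as $t\rightarrow+\infty$. In the language of the growth condition \eqref{cond} this places the strip of analyticity of its Mellin transform at $-n<\Re(s)<\Re(a)$, which comfortably contains the prescribed range $0<c<\Re(a)$. Applying \eqref{inverse Mellin} with any such $c$, and using that $\phi$ is continuous at every $t>0$, yields
\begin{align*}
\frac{\Gamma(a+n)\,t^n}{(1+t)^{a+n}}=\frac{1}{2\pi i}\int_{(c)}\Gamma(s+n)\,\Gamma(a-s)\,t^{-s}\,\mathrm{d}s,
\end{align*}
which is exactly the claimed identity after a trivial rearrangement.

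I do not expect any genuine obstacle, since all the analytic content is already packaged in Lemmas~\ref{lem1}--\ref{lem3}; the only points demanding a sliver of care are confirming that the contour abscissa $c$ lies strictly inside the strip of analyticity and that $\phi$ is continuous at the evaluation point $t$, so that \eqref{inverse Mellin} may be applied verbatim. Both are immediate from the explicit rational-power form of $\phi$, and thus the lemma follows at once.
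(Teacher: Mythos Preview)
Your proposal is correct and takes essentially the same approach as the paper, which states in one line that the result follows by combining Lemmas~\ref{lem2} and~\ref{lem3} with the Mellin inversion formula~\eqref{inverse Mellin}. Your added checks on the strip of analyticity and the continuity of $\phi$ spell out details the paper leaves implicit; the only reason the paper cites Lemma~\ref{lem2} separately is that Lemma~\ref{lem3} is stated for $n\in\mathbb{N}$, so the case $n=0$ comes from Lemma~\ref{lem2} directly---a distinction your Gamma-ratio rewriting absorbs uniformly.
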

			\begin{proof}
				We get our desired result by combining Lemmas \ref{lem2} and \ref{lem3} and applying Mellin's inversion formula.
			\end{proof}
		 \begin{lemma}\label{G9} 
 \cite[p.~346, Formula (20)]{MR0061695}  We have
			\begin{align*}
					\mathcal{M}\left[ \frac{\log t}{t-1}; s \right] = \frac{\pi^2}{\sin^2(\pi s)} ,
				\end{align*}	 
				for $0<\Re(s)<1$.  The integral is convergent in the sense of Cauchy's principal value.
			\end{lemma}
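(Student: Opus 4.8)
The plan is to compute $J(s) := \mathcal{M}\!\left[\frac{\log t}{t-1}; s\right] = \mathrm{P.V.}\int_0^\infty \frac{\log t}{t-1}\,t^{s-1}\,\mathrm{d}t$ directly, and the crucial first move is to dispose of the principal-value singularity at $t=1$ by folding the tail $\int_1^\infty$ onto $(0,1)$. Splitting at $t=1$ and substituting $t\mapsto 1/u$ on $(1,\infty)$ (so that $\log t\mapsto-\log u$, $t-1\mapsto (1-u)/u$, $t^{s-1}\mapsto u^{1-s}$, and $\mathrm{d}t\mapsto-u^{-2}\,\mathrm{d}u$) turns $\int_1^\infty$ into $\int_0^1\frac{\log u}{u-1}\,u^{-s}\,\mathrm{d}u$. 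Adding the two pieces gives
\begin{equation*}
J(s)=\int_0^1\frac{\log t}{t-1}\bigl(t^{s-1}+t^{-s}\bigr)\,\mathrm{d}t .
\end{equation*}
Since $\frac{\log t}{t-1}\to 1$ as $t\to 1$, the integrand is now bounded near $t=1$, so for $0<\Re(s)<1$ this is an ordinary, absolutely convergent integral and the principal value is no longer needed.

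Next I would expand $\frac{1}{t-1}=-\sum_{n\ge 0}t^{n}$ on $(0,1)$ and integrate term by term, using the elementary evaluation $\int_0^1 t^{a-1}\log t\,\mathrm{d}t=-a^{-2}$ (valid for $\Re(a)>0$). For real $s\in(0,1)$ the summands $-\log t\,(t^{n+s-1}+t^{n-s})$ are nonnegative on $(0,1)$, so Tonelli's theorem legitimizes the interchange of sum and integral. This produces
\begin{equation*}
J(s)=\sum_{n=0}^{\infty}\frac{1}{(s+n)^2}+\sum_{n=0}^{\infty}\frac{1}{(n+1-s)^2}
=\sum_{n=0}^{\infty}\frac{1}{(s+n)^2}+\sum_{n=1}^{\infty}\frac{1}{(n-s)^2}.
\end{equation*}

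Finally, I would recognize the right-hand side as the partial-fraction expansion $\sum_{n\in\mathbb{Z}}(s+n)^{-2}=\pi^2/\sin^2(\pi s)$, split into its $n\ge 0$ and $n\le -1$ parts (the latter being $\sum_{m\ge 1}(s-m)^{-2}=\sum_{m\ge1}(m-s)^{-2}$); this matches the previous display exactly, so $J(s)=\pi^2/\sin^2(\pi s)$ for real $s\in(0,1)$. Both sides are holomorphic in the strip $0<\Re(s)<1$, so the identity extends to the whole strip by analytic continuation. The one genuinely delicate point is the handling of the $t=1$ singularity: the folding step is what simultaneously makes sense of the principal value and renders the subsequent series manipulation clean, so that is where I would be most careful. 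As an alternative, one could first establish $\mathrm{P.V.}\int_0^\infty t^{s-1}/(t-1)\,\mathrm{d}t=-\pi\cot(\pi s)$ by the same folding together with the digamma reflection formula (the logarithmic derivative of \eqref{reflection formula for gamma add}), and then differentiate in $s$, since $\partial_s t^{s-1}=t^{s-1}\log t$ gives $J(s)=\frac{\mathrm{d}}{\mathrm{d}s}\bigl(-\pi\cot\pi s\bigr)=\pi^2/\sin^2(\pi s)$.
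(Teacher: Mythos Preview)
Your proof is correct. The paper does not actually prove this lemma; it simply quotes the formula from a table of integral transforms (the citation to \cite[p.~346, Formula (20)]{MR0061695}), so your argument supplies what the paper omits.

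A small remark on the principal-value discussion: the integrand $\frac{\log t}{t-1}\,t^{s-1}$ has only a \emph{removable} singularity at $t=1$ (since $\frac{\log t}{t-1}\to 1$), so the original integral is already absolutely convergent on $(0,\infty)$ for $0<\Re(s)<1$ and no principal value is genuinely required. Your folding step is still a clean way to reduce to $(0,1)$, but it is not doing the delicate work you suggest. The principal-value subtlety is real in your alternative route through $\mathrm{P.V.}\int_0^\infty \frac{t^{s-1}}{t-1}\,\mathrm{d}t=-\pi\cot(\pi s)$, where the pole at $t=1$ is simple and non-integrable; differentiating that identity in $s$ is a perfectly good second proof. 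Either way, the Tonelli justification and the identification with $\sum_{n\in\mathbb{Z}}(s+n)^{-2}=\pi^2/\sin^2(\pi s)$ are fine, and the analytic continuation to the full strip is routine.
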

			  \begin{lemma}\label{sinsquare}
    We have
				\begin{align}\label{sinpi2}
     \mathcal{M}\left[\frac{4 \ log \ x }{x^2-1}\ ; s\right]= \frac{\pi^2}{   \sin^2\left( \frac{\pi s}{2} \right)},
				\end{align}
    for $0<\Re(s)<2$. The integral is convergent in the sense of Cauchy's principal value.
			\end{lemma}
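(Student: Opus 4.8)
The plan is to reduce Lemma \ref{sinsquare} directly to Lemma \ref{G9} by a change of variables that converts the quadratic denominator $x^2-1$ into the linear denominator $u-1$ appearing there. Concretely, I would start from the definition
$$\mathcal{M}\left[\frac{4\log x}{x^2-1}; s\right] = \int_{0}^{\infty} \frac{4\log x}{x^2-1}\, x^{s-1}\, dx,$$
read as a Cauchy principal value about the point $x=1$, and substitute $u = x^2$.

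Under this substitution one has $x = u^{1/2}$, $\log x = \tfrac{1}{2}\log u$, $x^{s-1} = u^{(s-1)/2}$, and $dx = \tfrac{1}{2}u^{-1/2}\, du$. The first step is to track the constants carefully: the factor $4$ combines with the $\tfrac{1}{2}$ coming from $\log x$ and the $\tfrac{1}{2}$ coming from $dx$ to leave coefficient $1$, while the exponents of $u$ collapse via $\tfrac{s-1}{2}-\tfrac{1}{2}=\tfrac{s}{2}-1$. The upshot is
$$\int_{0}^{\infty} \frac{4\log x}{x^2-1}\, x^{s-1}\, dx = \int_{0}^{\infty} \frac{\log u}{u-1}\, u^{s/2-1}\, du = \mathcal{M}\left[\frac{\log u}{u-1}; \frac{s}{2}\right].$$
I would then invoke Lemma \ref{G9} with its argument specialized to $s/2$, which gives $\dfrac{\pi^2}{\sin^2(\pi s/2)}$ at once. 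For the range of validity, Lemma \ref{G9} requires $0<\Re(s/2)<1$, which is precisely the stated hypothesis $0<\Re(s)<2$.

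The only point needing genuine care---and the closest thing to an obstacle---is the principal-value interpretation, since both integrands are singular at the matched points $x=1$ and $u=1$. Because $u=x^2$ is a smooth, strictly increasing bijection of $(0,\infty)$ sending $x=1$ to $u=1$ with nonvanishing derivative there, a symmetric excision of $(1-\varepsilon,1+\varepsilon)$ in the $x$-variable maps to an excision of $u=1$ that is symmetric to leading order in $\varepsilon$, so the principal value is preserved. I would make this explicit by writing the left-hand side as $\lim_{\varepsilon\to 0^{+}}\int_{(0,1-\varepsilon)\cup(1+\varepsilon,\infty)}$, substituting inside the truncated integral, and verifying that the deleted $u$-interval is symmetric about $u=1$ up to an $O(\varepsilon^2)$ correction that does not affect the limit. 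With this justification in place the identity follows, completing the proof.
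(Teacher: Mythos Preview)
Your proof is correct and takes essentially the same approach as the paper, which simply states that the lemma is a direct consequence of Lemma~\ref{G9}; your substitution $u=x^2$ is precisely the natural way to make that reduction explicit. Your additional care with the principal-value interpretation is a welcome elaboration that the paper omits.
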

			\begin{proof}
				This is a direct consequence of Lemma \ref{G9}.
			\end{proof}
		 Now, we  record a few important results related to the modified $K$-Bessel function $K_\nu(x)$ defined by \eqref{Kbessel}.
			\begin{lemma}\label{eq:bessel}
   \cite[p.~10, Lemma 3.3]{debika2023}
				Let $\nu \in \mathbf{C}$. For any $c> \max\{0,-\Re(\nu)\}$, we have
				\begin{align*}
					t^{\frac{\nu}{2}}K_{\nu}(a\sqrt{tx})=\frac{1}{2}\left( \frac{2}{a\sqrt{x}}\right)^\nu \frac{1}{2\pi i}
					\int_{(c)}\Gamma(s) \Gamma(s+\nu)\left( \frac{4}{a^2x}\right)^st^{-s} ds.   \end{align*}
			\end{lemma}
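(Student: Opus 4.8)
The plan is to recognize the right-hand side as the inverse Mellin transform in the variable $t$ of the left-hand side, so I would first compute $\mathcal{M}[t^{\nu/2}K_\nu(a\sqrt{tx}); s]$ in closed form and then invoke the inversion formula \eqref{inverse Mellin}. The only nonelementary ingredient is the classical Mellin transform of the $K$-Bessel function,
\begin{equation*}
\int_0^\infty K_\nu(u)\, u^{w-1}\, du = 2^{w-2}\,\Gamma\left(\frac{w+\nu}{2}\right)\Gamma\left(\frac{w-\nu}{2}\right), \qquad \Re(w)>|\Re(\nu)|,
\end{equation*}
which one obtains by inserting the integral representation $K_\nu(u)=\int_0^\infty e^{-u\cosh r}\cosh(\nu r)\,dr$ (the natural generalization of the formula for $K_0(u)$ recalled in Section \ref{integer results}), interchanging the order of integration, evaluating the inner integral as $\Gamma(w)(\cosh r)^{-w}$, and reducing the remaining $\cosh$-power integral to a Beta integral.

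With this formula available, the heart of the argument is a single change of variables. Writing $\mathcal{M}[t^{\nu/2}K_\nu(a\sqrt{tx}); s]=\int_0^\infty t^{\,s+\nu/2-1}K_\nu(a\sqrt{tx})\,dt$ and substituting $u=a\sqrt{tx}$ converts this into a constant multiple of $\int_0^\infty u^{\,2s+\nu-1}K_\nu(u)\,du$. Applying the transform above with parameter $2s+\nu$ in place of $w$ is the decisive step: the half-arguments collapse cleanly, since $\tfrac{(2s+\nu)+\nu}{2}=s+\nu$ and $\tfrac{(2s+\nu)-\nu}{2}=s$, so the two half-integer $\Gamma$-factors become exactly $\Gamma(s)\Gamma(s+\nu)$ and no duplication formula is required. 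Collecting the powers of $2$, $a$, and $x$ arising from the Jacobian and from $2^{2s+\nu-2}$ then gives
\begin{equation*}
\mathcal{M}\left[t^{\nu/2}K_\nu(a\sqrt{tx}); s\right]=\frac{1}{2}\left(\frac{2}{a\sqrt{x}}\right)^\nu\left(\frac{4}{a^2x}\right)^s\Gamma(s)\Gamma(s+\nu).
\end{equation*}

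Substituting this expression into the Mellin inversion formula \eqref{inverse Mellin} and pulling the constant $\frac{1}{2}(2/(a\sqrt{x}))^{\nu}$ outside the contour integral reproduces the stated identity, once the line of integration is admissible. The convergence of the defining transform at $u=0$, where $K_\nu(u)=O(u^{-|\Re(\nu)|})$, together with the requirement that the vertical line $\Re(s)=c$ lie to the right of every pole of $\Gamma(s)\Gamma(s+\nu)$ (these occur at $s\in\{0,-1,-2,\dots\}$ and $s\in\{-\nu,-\nu-1,\dots\}$), both translate into exactly the stated condition $c>\max\{0,-\Re(\nu)\}$; absolute convergence of the resulting contour integral follows from the exponential decay of $\Gamma(s)\Gamma(s+\nu)$ along vertical lines furnished by the Stirling estimate of Lemma \ref{Gamma}. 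I expect the main difficulty to be not any single computation but the careful verification of the strip of analyticity and the legitimacy of the interchange of integration; granting the Mellin transform of $K_\nu$, everything else is routine bookkeeping.
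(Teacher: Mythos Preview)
Your argument is correct: the change of variables $u=a\sqrt{tx}$ reduces the Mellin transform in $t$ to the classical Mellin transform of $K_\nu$, the parameter $w=2s+\nu$ makes the Gamma factors collapse to $\Gamma(s)\Gamma(s+\nu)$ without a duplication formula, and the condition $\Re(w)>|\Re(\nu)|$ translates precisely into $c>\max\{0,-\Re(\nu)\}$.

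There is nothing to compare with, however. The paper does not give its own proof of this lemma; it is simply quoted from \cite[p.~10, Lemma~3.3]{debika2023} and used as a black box. What you have written is the standard derivation that one would expect to find in the cited reference.
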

			\medskip
		 We first observe that the generating functions for $\sigma_{z, \chi}(n)$ and $\Bar{\sigma}_{z,\chi}(n)$ and $\sigma_{z, \chi_1, \chi_2}(n)$ defined in \eqref{defimainpaper} are the following
			\begin{align}
				\zeta(s)L(s-z, \chi)&=\sum_{m=1}^{\infty} \frac{1}{m^s} \sum_{d=1}^{\infty} \frac{d^z \chi(d)}{d^s} =  \sum_{n=1}^{\infty} \frac{\sigma_{z, \chi}(n)}{n^s}, \label{Lfz1} \\
				\zeta(s-z)L(s,\chi)&=\sum_{m=1}^{\infty} \frac{1}{m^{s-z}} \sum_{d=1}^{\infty} \frac{ \chi(d)}{d^s} =\sum_{n=1}^\infty \frac{\Bar{\sigma}_{z,\chi}(n)}{n^s},\label{Lfz2}\\
				L(s-z,\chi_1)L(s,\chi_2)&=\sum_{d=1}^{\infty} \frac{d^z \chi_1(d)}{d^s} \sum_{m=1}^{\infty} \frac{\chi_2(m)}{m^s} =\sum_{n=1}^\infty \frac{ \sigma_{z,\chi_1,\chi_2}(n)}{n^s},\label{Lfz3}
			\end{align} 
			for $\Re(s)> max(\Re(z)+1, 1)$, where $\zeta(s)$  denotes the the Riemann zeta function and $L(s,\chi)$ denotes the Dirichlet $L$-function defined by \eqref{Lfunctiondefi} for $\Re(s)>1$.
			We recall that the functional equation of $\zeta(s)$ \cite[p.~234]{MR3363366}
 \begin{align}\label{lq2}
 \zeta(s) = 2^s \pi^{s-1}\sin \left(\frac{\pi s}{2}\right)   \Gamma(1-s)\zeta(1-s).
   \end{align}
   Replacing $s$ by $1-s$ in \eqref{lq2}, we obtain
			\begin{align}\label{1st_use}
				\Gamma(s)  \zeta(s) & = \frac{\pi^{s} \zeta(1-s)}{2^{1-s} \cos\left(\frac{\pi s}{2}\right) }. 
			\end{align}
			 Next, we write the functional equation for $L(s, \chi)$ \cite[p.~71]{MR1790423}
\begin{align}\label{lq}
L(s, \chi) = \frac{\tau(\chi)}{i^{\kappa}\sqrt{q}}\left(\frac{\pi}{q}\right)^{s-1/2} \frac{\Gamma(\frac{1-s+\kappa}{2})}{\Gamma(\frac{s+\kappa}{2})}  L(1-s, \bar{\chi}),  
\end{align}
where 
			 \begin{align*}
				\kappa=\kappa(\chi)=\begin{cases}
					  0, &\quad \mbox{if} \ \ \chi(-1)=1,\\
					  1, &\quad \mbox{if} \  \ \chi(-1)=-1.\\
				\end{cases}
			\end{align*}
Employing \eqref{relation of gamma} and \eqref{reflection formula for gamma add} in \eqref{lq}, we obtain \cite[Corrolary 10.9, p.~333]{MR2378655}
			\begin{align}\label{ll(s)}
				L(s, \chi) & =i^{-\kappa} \frac{\tau(\chi)}{ \pi}\left(\frac{(2\pi)}{q}\right)^{s} \Gamma(1-s) \sin \frac{\pi (s+\kappa)}{2} L(1-s, \bar{\chi}).  
			\end{align}
			 Now replacing $s$ by $s-z$ in \eqref{ll(s)}, we get
			\begin{align*}
				L(s-z, \chi) & =i^{-\kappa} \frac{\tau(\chi)}{\pi}\left(\frac{(2\pi)}{q}\right)^{s-z} \Gamma(1+z-s) \sin \frac{\pi (s+\kappa-z) }{2} L(1+z-s, \bar{\chi}). 
			\end{align*}
			So, we can rewrite the above equation as
   \begin{align}\label{exact l}
       \Gamma(1+z-s)L(1+z-s, \bar{\chi})= i^{\kappa}\frac{\pi}{\tau(\chi)}\left( \frac{q}{2\pi}\right)^{s-z}\frac{L(s-z, \chi)}{\sin{\pi(\frac{s+\kappa-z}{2})}}.
   \end{align}
	 We will also note that  \cite[p. 69, p. 71]{MR1790423}
			\begin{align} 
				\tau(\chi)\tau( \bar{\chi})= 
				\begin{cases}
					-q, &\quad  \text{for odd primitive }\chi \ mod \ q, \\
					q, &\quad   \text{for even non-principal primitive }\chi \ mod \ q.\label{both}
				\end{cases} 
			\end{align}
   \section{Proof of  results  when \texorpdfstring{$z \in\mathbb{Z}_{+}$}{TEXT}   }\label{proof of integer nu}
		    We will start this section by considering a more general setup. Let $\chi$ be any Dirichlet character modulo $q$ and $z\in \mathbb{C}$. Let $f_{z}(n)$ be one of the arithmetical functions
      $\sigma_{z, \chi }(n)$  or $\Bar{\sigma}_{z,\chi}(n)$  or $\sigma_{z,\chi_1,\chi_2}(n)$ defined in \eqref{defimainpaper}. We denote 
      \begin{align}\label{Dirichletseries}
      F_{z}(s):= \sum_{n=1}^{\infty} \frac{f_{z}(n)}{n^s}, \ \ \Re(s)>1.
      \end{align}
 Hence $F_{z}(s)$ is one of the  Dirichlet series given in \eqref{Lfz1} or \eqref{Lfz2} or \eqref{Lfz3}. As mentioned in the previous section, we will consider $\Re(\nu)> 0$ and $\nu=0$. Employing   Lemma \ref{eq:bessel} with $t=n$ and subsequently interchanging the summation and integration, we get
\begin{align}\label{change_summation_integration_00}
				\sum_{n=1}^{\infty} f_{z}(n) n^{\nu/2} K_{\nu}(a\sqrt{nx}) & = \frac{1}{2} \left(\frac{2}{a \sqrt{x}}\right)^{ \nu}  \frac{1}{2\pi i} \int_{(c)}  \Gamma(s) \Gamma(s + \nu) \left( \frac{4}{a^2 x } \right)^s  \sum_{n=1}^{\infty} f_{z}(n) n^{-s} \mathrm{d}s \nonumber \\ 
				& = \frac{1}{2}  X^{\nu/2} \frac{1}{2\pi i} \int_{(c)}  \Gamma(s) \Gamma(s + \nu)   F_{z}(s) X^s \mathrm{d}s,
			\end{align}
			where $c>\Re{(z)}+1$ and $X = \frac{4}{a^2 x}$. Here the notation $(c)$ denotes the vertical line $[c-i\infty, c+i\infty]$. Next, we investigate the following integral
			\begin{align}\label{rr1}
				I_{z}^{(\nu)}(X):=\frac{1}{2\pi i} \int_{(c)}  \Gamma(s + \nu)  \Gamma(s) F_{z}(s) X^s \mathrm{d}s.
			\end{align}
We shall use the Cauchy residue theorem to evaluate this line integral in \eqref{rr1}. We consider the contour formed by the line
segments $ [c- i T, c+ i T], [c+ i T, -d + i T], [-d + i T, -d - i T], [-d - iT, c-iT]$, where the choice for $d$ is as follows: $ 0< d< \min \{1, \Re(\nu)\}$ whenever $\Re(\nu)>0$ and $0<d<1$ otherwise. Here, $T$ is taken to be a large positive number. The possible poles of the integrand function in \eqref{rr1} are at $s=0, 1$ and $z+1$. Now letting $T \rightarrow \infty$ and invoking Lemma \ref{Gamma}, one can show that the integrals along the horizontal segments $[c+iT, -d+iT]$ and $[-d-iT, c-iT]$ vanish and get\begin{align}\label{applying_CRT_00}
				I_{z}^{(\nu)}(X) &= R_{z+1}+R_1+R_0+\frac{1}{2\pi i} \int_{(-d)}  \Gamma(s + \nu)  \Gamma(s)  F_{z}(s)  X^s \mathrm{d}s, 
			\end{align}
where $R_{z+1}$, $R_1$ and $R_0$ are the residues at $s=z+1,1$ and $s=0$, respectively. It is easy to see that $R_{z+1}=0$ whenever $z=0$.  Hence combining \eqref{change_summation_integration_00} and \eqref{rr1} together with \eqref{applying_CRT_00}, we obtain
\begin{align}\label{K_nuformula}
 \sum_{n=1}^{\infty} f_{z}(n) n^{\nu/2} K_{\nu}(a\sqrt{nx}) = \frac{1}{2} X^{\nu/2} \left(R_{z+1}+R_1+R_0+J_{z}^{(\nu)}(X)\right),  
\end{align}
where $J_{z}^{(\nu)}(X)$ is defined by
\begin{align}\label{defiJ}
J_{z}^{(\nu)}(X):= \frac{1}{2\pi i} \int_{(-d)}  \Gamma(s + \nu)  \Gamma(s)  F_{z}(s)  X^s \mathrm{d}s.    
\end{align}
Next, we will offer the proofs of the theorems corresponding to $z=k$, where $k$ is a non-negative integer.
	 \vspace{.2cm}	
		 \begin{proof}[Theorem \rm{\ref{thm1}}][]  Letting $f_{k}(n)=\sigma_{k, \chi}(n)$ where $\chi$ being an odd primitive character modulo $q$ and $k$  an even, non-negative integer in \eqref{K_nuformula}, we obtain 
   \begin{align}\label{K_nuformula 11}
    \sum_{n=1}^{\infty} \sigma_{k, \chi}(n) n^{\nu/2} K_{\nu}(a\sqrt{nx}) = \frac{1}{2} X^{\nu/2} \left(R_{k+1}+R_1+R_0+J_{z}^{(\nu)}(X)\right),     \end{align}
    where $\Re(\nu)>0$ and $ J_{k}^{(\nu)}(X)$ is defined in \eqref{defiJ} with $F_{k}(s)=\zeta(s) L(s-k, \chi)$. It is easy to see that $R_{k+1}=0$ as the integrand function in \eqref{defiJ} does not have any pole at $s=k+1$. Here, one can notice that $L(s-k, \chi)$ has a zero at $s=1$ when $k\geq 2$ is an even integer and $\chi$ is odd. Therefore, we will not get any contribution from the pole of $\zeta(s)$ at $s=1$. However, if $k=0$, the integrand in \eqref{defiJ} will encounter a pole at $s=1$. Therefore, we can get
   \begin{align}
     R_1=  \begin{cases}
      0,&\ \text{if} \ k>0,\\
          \Gamma(1+\nu)L(1,\chi) X,  & \ \text{if} \ k=0.
       \end{cases}\label{R_1FOR0}
   \end{align}
  The integrand also has a pole at $s=0$ with residue $R_0$ given by 
   \begin{align} 
				R_0 =- \frac{\Gamma(\nu) L(-k, \chi)}{2}  
				=\frac{(-1)^{\frac{k}{2}}i \tau(\chi)}{2{ \pi}}\left(\frac{(2\pi)}{q}\right)^{-k} \Gamma(1+k) \Gamma(\nu)L(1+k, \bar{\chi}), \label{r_0}
			\end{align}
where in the last step, we have applied functional equation \eqref{ll(s)}. Collecting \eqref{R_1FOR0} and \eqref{r_0} and $R_{k+1}=0$ and then substituting them in \eqref{K_nuformula 11}, we get
    \begin{align}\label{K_nuformula1}
X^{-\frac{\nu}{2}}	\sum_{n=1}^\infty\sigma_{k,\chi}(n)n^{\frac{\nu}{2}}K_\nu(a\sqrt{nx})=& \frac{(-1)^{\frac{k}{2}}i \tau(\chi)}{4{ \pi}}\left(\frac{(2\pi)}{q}\right)^{-k} \Gamma(1+k) \Gamma(\nu) L(1+k,\Bar{\chi})\nonumber \\
&+\delta_k \frac{\Gamma(1+\nu)L(1,\chi)}{2} X +\frac12 J_{k}^{(\nu)}(X),        \end{align}
where $\delta_k$ is defined in \eqref{d_k number}. To evaluate $J_{k}^{(\nu)}(X)$ defined in \eqref{defiJ}, we invoke the functional equations  \eqref{1st_use} and \eqref{ll(s)},
	\begin{align*}
				J_{k}^{(\nu)}(X) &=\frac{h_k}{2\pi i} \int_{(-d)} \Gamma(s + \nu)  \Gamma(1+k-s)  \zeta(1-s) L(1-s+k, \bar{\chi} ) Y^s \mathrm{d}s \nonumber \\
				&=\frac{ Y h_k}{2\pi i} \int_{(1+d)}  {\Gamma(1-s + \nu)  \Gamma(k+s)  \zeta(s) L(s+k, \bar{\chi} )}  Y^{-s} \mathrm{d}s \nonumber \\
				&= Y h_k\sum_{n=1}^{\infty} \sigma_{-k, \bar{\chi}}(n) \   \frac{1}{2\pi i} \int_{(1+d)} {\Gamma(1-s + \nu)  \Gamma(k+s)}  (nY)^{-s} \mathrm{d}s, 
			\end{align*}
   where $h_k=\frac{ (-1)^{1+\frac{k}{2}}i\tau(\chi)}{2 \pi} \left(\frac{q}{2\pi}\right)^{k}$ and $Y=\frac{4\pi^2}{q}X$ with $X=\frac{4}{a^2 x}$. As $0<d<\Re(\nu)$, we can apply Lemma \ref{lem4} with $n =k$ and $a=1+\nu$ to obtain
			\begin{align}\label{k odd}
				J_{k}^{(\nu)}(X) &=Y^{k+1} \Gamma(1+\nu) h_k\sum_{n=1}^{\infty} \sigma_{-k, \bar{\chi}}(n) \frac{(\nu+1) \cdots (\nu+k) n^{k} }{(1+nY)^{1+\nu+k} }\nonumber \\
				&=Y^{k+1} \Gamma(1+\nu+k) h_k\sum_{n=1}^{\infty} \frac{ \Bar{\sigma}_{k,\chi}(n) }{(1+nY)^{1+\nu+k}},
			\end{align}
		where in the penultimate step we have used the fact $  ~~  n^k\sigma_{-k,\chi}(n)=\Bar{\sigma}_{k,\chi}(n)$. Therefore, remarking $Y=\frac{16\pi^2}{a^2 q x}$ and inserting \eqref{k odd} in \eqref{K_nuformula1} and simplifying, we can complete the proof. 
			\end{proof}	
 \begin{proof}[Theorem \rm{\ref{odd_k0thm1}}][]
Let us begin the proof by taking $f_{k} (n)=\sigma_{k, \chi}(n)$ with $\chi$ being an odd primitive character modulo $q$ and $k\geq 0$ an even integer and $\nu=0$ in \eqref{K_nuformula}. The corresponding Dirichlet series, in this case, is $F_{k}(s)=\zeta(s) L(s-k, \chi)$. Therefore, we obtain
\begin{align}\label{K_nuformula12}
\sum_{n=1}^\infty\sigma_{k,\chi}(n) K_0(a\sqrt{nx})=\frac12(R_{k+1}+R_1+R_0+J_{k}^{(0)}(X)),
\end{align}
where $ J_{k}^{(0)}(X)$ is defined in \eqref{defiJ}.
 It is clear that $R_{k+1}=0$ for $k\geq0$. $L(s-k, \chi)$ has a zero at $s=1$ in case $k\geq 2$ is an even integer, and $\chi$ is odd. So we will not get any contribution from the pole of $\zeta(s)$ at $s=1$. However, in the case of $k=0$, the integrand in \eqref{defiJ} will encounter a pole at $s=1$. Hence, we can write 
\begin{align}\label{R111_0}
R_1=\begin{cases}
0,  & \ \    \text{if }  k>0,\\
 L(1,\chi)X,  & \ \    \text{if }  k=0,
\end{cases}
\end{align}
 and the integrand in \eqref{defiJ}  encounters a double pole at $s=0$ with residue $R_0$ given by
\begin{align}\label{00r0}
    R_0=-\frac{ L(-k,\chi) }{2}\left(  \log(2\pi X) +\frac{ L^\prime(-k,\chi)}{ L(-k,\chi)} -2\gamma \right).
\end{align}
Now using \eqref{R111_0} and \eqref{00r0} and the fact $R_{k+1}=0$ in \eqref{K_nuformula12}, we obtain
\begin{align}\label{K_nuformula211}
  \sum_{n=1}^\infty\sigma_{k,\chi}(n) K_0(a\sqrt{nx})=&\frac{\delta_k}{2}  L(1,\chi)X -\frac{ L(-k,\chi) }{4}\left(  \log(2\pi X) +\frac{ L^\prime(-k,\chi)}{ L(-k,\chi)} -2\gamma \right)+\frac12 J_{k}^{(0)}(X), 
\end{align}
where $\delta_k$ is defined in \eqref{d_k number}.
For $J_{k}^{(0)}(X)$, we employ the functional equations \eqref{1st_use} and \eqref{ll(s)} to obtain 
	\begin{align}\label{J_k11}
				J_{k}^{(0)}(X) &=\frac{h_k}{2\pi i} \int_{(-d)} \Gamma(s )  \Gamma(1+k-s)  \zeta(1-s) L(1-s+k, \bar{\chi} ) Y^s \mathrm{d}s \nonumber \\
				&=\frac{ Y h_k}{2\pi i} \int_{(1+d)}  {\Gamma(1-s  )  \Gamma(k+s)  \zeta(s) L(s+k, \bar{\chi} )}  Y^{-s} \mathrm{d}s \nonumber \\
				&=  Y h_k\sum_{n=1}^{\infty} \sigma_{-k, \bar{\chi}}(n) \   \frac{1}{2\pi i} \int_{(1+d)} {\Gamma(1-s )  \Gamma(k+s)}  (n Y)^{-s} \mathrm{d}s  \nonumber \\
      &= \pi  Y h_k\sum_{n=1}^{\infty} \sigma_{-k, \bar{\chi}}(n) \ \frac{ 1}{2\pi i} \int_{(1+d)}  \frac{\Gamma(k+s)}{\Gamma(s) \sin(\pi s)}  (n Y)^{-s} \mathrm{d}s \nonumber \\
    &=\pi  Y h_k \left(\sum_{n\leq Y^{-1}}+\sum_{n>Y^{-1}}\right) \sigma_{-k, \bar{\chi}}(n) \frac{1}{2\pi i} \int_{(1+d)}  \frac{\Gamma(k+s)}{\Gamma(s) \sin(\pi s)} (n Y)^{-s} \mathrm{d}s,
  	\end{align}
   where $h_k=\frac{ (-1)^{1+\frac{k}{2}}i\tau(\chi)}{2 \pi} \left(\frac{q}{2\pi}\right)^{k}$ and $Y=\frac{4\pi^2}{q}X$ with $X=\frac{4}{a^2 x}$. In the second last step, we have used the reflection formula \eqref{reflection formula for gamma add}. 
   \par We will first investigate the infinite sum $\sum_{n>Y^{-1}}$. To evaluate this inner line integral in \eqref{J_k11}, we shall use the Cauchy residue theorem with the contour consisting of the line segments  $ [1+d- i T, 1+d+ i T], [1+d+ i T, M+\frac{1}{2}+ i T], [M+\frac{1}{2} + i T,  M+\frac{1}{2} - i T], [M+\frac{1}{2} - iT, 1+d-iT]$ where $M \in \mathbb{N}$ is a large number and $T$ is a large positive number. The poles of the integrand function in \eqref{J_k11} are at $2$, $3, \cdots$, $M$, and they are simple. The residue at $s=m$ is given by 
  \begin{align}\label{residues_2}
\mathcal{R}_m:= \frac{1}{\pi}(-1)^m m (m+1)...  (m+k-1) (n Y)^{-m},    
  \end{align}
 where $m=2$, $3, \cdots$, $M$. Employing Lemma \ref{Gamma}, we can show that both the integrals along the horizontal lines $[1+d+iT, M+\frac12+iT]$ and $[M+\frac12-iT, 1+d-iT]$  vanish as $T \rightarrow \infty$. From \eqref{residues_2}, we arrive at
 \begin{align*}
      \frac{1}{2\pi i} \int_{(1+d)}  \frac{\Gamma(k+s)}{\Gamma(s) \sin(\pi s)}  (n Y)^{-s} \mathrm{d}s =&-\sum_{m=2}^M \mathcal{R}_m+  \frac{1}{2\pi i} \int_{(M+\frac12)}  \frac{\Gamma(k+s)}{\Gamma(s) \sin(\pi s)} (n Y)^{-s}\mathrm{d}s \nonumber\\
      = &-\frac{1}{\pi }\sum_{m=2}^M  (-1)^m m(m+1)...  (m+k-1) (n Y)^{-m}+O_k\left(  \frac{M^{k}}{\left( nY\right)^{M+1/2}}  \right),\notag
    \end{align*}
where we have used $|\sin \pi (\sigma+it)|\gg e^{\pi |t|}$ for $|t|\geq 1$ to bound the integral $\int_{(M+\frac12)}$ and the implied constant depends on $k$. Next, allowing $M \rightarrow \infty$, the error term goes to $0$ as $n>Y^{-1}$. Now simplifying, we readily obtain that 
 \begin{align*}
      \frac{1}{2\pi i} \int_{(1+d)}  \frac{\Gamma(k+s)}{\Gamma(s) \sin(\pi s)} (n Y)^{-s}\mathrm{d}s =&-\frac{k!}{\pi } \left(  \frac{1}{nY} \right) \left(1-\frac{n^{k+1}}{(Y^{-1}+n)^{k+1}}\right),  
      \end{align*}
and we easily deduce from the above expression that
      \begin{align}\label{sigma1}
     \sum_{n>Y^{-1}} \sigma_{-k, \bar{\chi}}(n) \frac{1}{2\pi i} \int_{(1+d)}  \frac{\Gamma(k+s)}{\Gamma(s) \sin(\pi s)} (n Y)^{-s} \mathrm{d}s =&-\frac{k!}{\pi Y } \sum_{n>Y^{-1}} \frac{\sigma_{-k, \bar{\chi}}(n)}{n} \left(1-\frac{n^{k+1}}{(Y^{-1}+n)^{k+1}}\right).  
      \end{align}
Similarly, by shifting the line of integration to the left, we obtain
			\begin{align}\label{sigma2}
	 \sum_{n\leq Y^{-1}} \sigma_{-k, \bar{\chi}}(n) \frac{1}{2\pi i} \int_{(1+d)}  \frac{\Gamma(k+s)}{\Gamma(s) \sin(\pi s)} (n Y)^{-s} \mathrm{d}s &=-\frac{k!}{\pi Y } \sum_{n\leq Y^{-1}} \frac{\sigma_{-k, \bar{\chi}}(n) }{n} \left(1-\frac{n^{k+1}}{(Y^{-1}+n)^{k+1}}\right).  
			\end{align}
Inserting \eqref{sigma1} and \eqref{sigma2} in \eqref{J_k11},
			\begin{align}\label{rami} 
				J_{k}^{(0)}(X)=-k! h_k \sum_{n=1}^\infty \frac{\sigma_{-k, \bar{\chi}}(n)}{n}  \left(1-\frac{n^{k+1}}{(Y^{-1}+n)^{k+1}}\right).  
			\end{align}
We finish the proof by noting $Y=\frac{16\pi^2}{a^2 q x}$, substituting \eqref{rami} in \eqref{K_nuformula211} and then simplifying.
			 \end{proof}
 \begin{proof}[Theorem \rm{\ref{thm3}}][]
   Here we will take  $f_{k}(n)=\Bar{\sigma}_{k, \chi}(n)$ and $\Re{(\nu)}>0$ in \eqref{K_nuformula}. Similar to the previous theorems, $\chi$ is  odd and $k\geq 2$ is an even integer, and the corresponding Dirichlet series is $F_{k}(s)=\zeta(s-k) L(s, \chi)$.  It is clear that $R_{1}=0$. When $k\geq 2$ is an even integer, $\zeta(s-k)$ has a zero at $s=0$. Therefore, we will not get any contribution from the pole of $\Gamma(s)$ at $s=0$. But the integrand in \eqref{defiJ} will encounter a pole at $s=k+1$ with the residue $R_{k+1}$ given by
  \begin{align*}
R_{k+1}&=\Gamma(k+1)\Gamma(\nu+k+1)L(k+1,\chi)X^{k+1}.
    \end{align*}
  The calculation for $J_{k}^{(0)}(X)$ will be similar as given in the proof of Theorem \rm{\ref{thm1}}. To avoid repetition, we skip the detail of the proof.
  \end{proof}
\begin{proof}[Theorem \rm{\ref{1thm1}}][]
 Here we will consider $f_{k}(n)=\Bar{\sigma}_{k,\chi}(n)$ with $\chi$ being an odd primitive character modulo $q$ and $k\geq 2$ an even integer and $\nu=0$ in \eqref{K_nuformula}. We skip the detail of the proof because of its similarity with the proof of Theorem \ref{odd_k0thm1}.
    \end{proof}
 %
   \vspace{.1cm}
 \noindent\textit{Proofs of Theorems} \rm{\ref{thmeven1}} and \rm{\ref{7}}. Here, we will take $f_{k}(n)= {\sigma}_{k,\chi}(n)$ and $\chi$ being a non-principal even primitive Dirichlet character modulo $q$ and $k\geq 1$ an odd integer in \eqref{K_nuformula}. We can see that Theorem \ref{thmeven1} deals with the case $\Re(\nu)>0$ while Theorem \ref{7} concerns with $\nu=0$. Proceeding by almost identically the same argument as in the proof of Theorems \ref{thm1} and \rm{\ref{odd_k0thm1}}, one can deduce Theorems \ref{thmeven1} and \rm{\ref{7}}, respectively. We leave the details of the proofs for the reader.
 \hfill \qed
\vspace{0.1cm}\\
\noindent\textit{Proofs of Theorems} \rm{\ref{thmeven2}} and \rm{\ref{10}}. The proofs  are similar to the corresponding proofs of Theorems \ref{thm3} and \ref{1thm1} for the odd character.\hfill\qed
\vspace{0.1cm}
\begin{proof}[Theorem \rm{\ref{sigmanu=0}}][]
 It deals with the special case $k=0$ and $\nu=0$ when $\chi$ is a non-principal even primitive character modulo $q$. Thus setting $f_{0}(n)=d_{\chi}(n)$ in \eqref{K_nuformula}, we obtain
\begin{align} \label{w4}
	 \sum_{n=1}^\infty d_{\chi}(n) K_0(a\sqrt{nx})=\frac12(R_1+R_0+J_{0}^{(0)}(X)),
	\end{align}
where the residues $R_1$ and $R_0$ are given by
\begin{align}
R_1=& L (1,\chi)X,\label{w3}\\
R_0=&-\frac{1}{2}L^\prime(0,\chi)=-\frac{\tau(\chi)}{4 }L (1,\bar{\chi}) \label{w2},
\end{align}
where  in \eqref{w2}, we have used \cite[p.~181,  equation (3.2)]{MR749681}. Next, we evaluate $J_{0}^{(0)}(X)$ defined in \eqref{defiJ} with $F_{0}(s)=\zeta(s) L(s, \chi)$. Utilizing the functional equations \eqref{1st_use} and \eqref{ll(s)}, one can get
\begin{align}\label{w11} 
J_{0}^{(0)}(X)=\frac{\tau(\chi) }{4}Y \sum_{n=1}^\infty d_{\Bar{\chi}}(n)  \frac{1}{2\pi i }\int_{(1+d)}\frac{(nY)^{-s}}{\sin^2(\pi s/2)}\mathrm{d}s ,
\end{align}
 where $Y= \frac{4\pi^2 X}{q}$. As $0<d<1$, applying inverse Mellin transform to \eqref{sinpi2} of Lemma \ref{sinsquare} and then employing the formula in \eqref{w11}, we deduce that
 \begin{align}\label{w1}
   J_{0}^{(0)}(X)= \frac{\tau(\chi) }{\pi^2 }Y \sum_{n=1}^\infty d_{\Bar{\chi}}(n)  \frac{\log (nY)}{(nY)^2-1}.
 \end{align}
 Inserting \eqref{w3}, \eqref{w2} and \eqref{w1} in \eqref{w4} and noting $Y=\frac{16\pi^2}{a^2 q x}$, one can complete the proof.
 \end{proof}
 Next, we are going to investigate the identities involving two characters. 
   \begin{proof}[Theorem \rm{\ref{evenoddthm1}}][]
We will take $f_{k}(n)=\sigma_{k,\chi_1,\chi_2}(n)$ and $k\geq 1$  an odd integer and $\Re(\nu)>0$ in \eqref{K_nuformula}. By assumption, $\chi_1$ and $\chi_2$ are primitive characters modulo $ p$ and $q$, respectively, such that either both are non-principal even characters or both are odd characters. In the notation of \eqref{Dirichletseries}, $ F_{k}(s)=L(s-k,\chi_1) L(s, \chi_2)$.  We get \begin{align} \label{s100}
	 \sum_{n=1}^\infty\sigma_{k,\chi_1,\chi_2}(n) K_{\nu}(a\sqrt{nx})=\frac{1}{2}X^{\nu/2}(R_{k+1}+R_1+R_0+J_{k}^{(\nu)}(X)),
			\end{align} 
where $J_{k}^{(\nu)}(X)$ is defined in \eqref{defiJ}. It is easy to see that $R_{k+1}=0$ and $R_1=0$. When both  $\chi_1$ and $\chi_2$ are non-principal even primitive characters, $L(s, \chi_2)$ has a zero at $s=0$. Hence we will not be getting any contribution from the pole of $\Gamma(s)$ at $s=0$. As a result, we will get $R_0=0$. If both $\chi_1$ and $\chi_2$ are odd primitive characters, $L(s-k, \chi_1)$ has a zero at $s=0$ since $k$ is an odd integer. Again, there will be no contribution of the pole from $\Gamma(s)$ at $s=0$. Therefore $R_0=0$. Now utilizing the facts $R_{k+1}=0$, $R_1=0$ and $R_0=0$ in \eqref{s100}, we obtain
\begin{align} \label{s1}
	 \sum_{n=1}^\infty\sigma_{k,\chi_1,\chi_2}(n) K_{\nu}(a\sqrt{nx})=\frac12X^{\nu/2}J_{k}^{(\nu)}(X).
			\end{align} 
To evaluate $J_{k}^{(\nu)}(X) $, we utilize the functional equations \eqref{ll(s)}, \eqref{exact l} with \eqref{both}
	 \begin{align}\label{s2}
				J_{k}^{(\nu)}(X) 
				&= Y g_k\sum_{n=1}^{\infty} \sigma_{-k, \bar{\chi_1},\bar{\chi_2}}(n) \   \frac{1}{2\pi i} \int_{(1+d)}  \Gamma(1-s + \nu)  \Gamma(k+s)  (nY)^{-s} \mathrm{d}s ,
			\end{align}
    where $g_k=\frac{ (-1)^{ \frac{k+1}{2}}p^k\tau(\chi_1) \tau(\chi_2)}{(2 \pi)^{k+1}}  $ and $Y=\frac{4\pi^2}{pq}X$. As $0<d<1$, appealing to Lemma \ref{lem4} with $n =k$ and $a=1+\nu$, we deduce  
    \begin{align}\label{s3}
     J_{k}^{(\nu)}(X)= Y^{k+1} g_k \Gamma(1+\nu+k)\sum_{n=1}^{\infty}   \frac{ \sigma_{k,  \bar{\chi_2},\bar{\chi_1} }(n)}{{(1+nY)}^{1+\nu+k} },  
    \end{align}
   where we have used the fact  $\sigma_{-k, \bar{\chi_1}, \bar{\chi_2}}(n)=n^{-k}\sigma_{k,  \bar{\chi_2},\bar{\chi_1} }(n) $. We complete the proof by  substituting \eqref{s3} in \eqref{s1} and remarking $Y=\frac{16\pi^2}{a^2 pq x}$.
\end{proof}
\begin{proof}[Theorem \rm{\ref{evenoddthm1nu=0}}][]
We leave the proof to the reader for its similarity with the proofs of Theorems \ref{odd_k0thm1} and \ref{7}. 
 \end{proof}
\noindent\textit{Proofs of Theorems} \rm{\ref{EVENEVENNU=0k=0}} and \rm{\ref{ODDODDnu0k0}}.    
We begin the proof by setting $k=0$ and $\nu=0$ and $f_{0}(n)=d_{\chi_1, \chi_2}(n)$ in \eqref{K_nuformula}. This will give 
\begin{align}\label{s11}
   \sum_{n=1}^\infty d_{\chi_1,\chi_2}(n)  K_0(a\sqrt{nx})= & \frac12(R_0+R_1+J_{0}^{(0)}(X)),
\end{align}
 where $J_{0}^{(0)}(X))$ is defined in \eqref{defiJ} with $F_0(z)=L(s, \chi_1)L(s, \chi_2)$. Here we will have $R_1=0$. Now we will discuss the following two cases.\\
 {\it Case 1: When $\chi_1 $ and $\chi_2$ are even non-principal primitive characters modulo $ p$ and $q$, respectively.} 
 Both $L(s,\chi_1)$ and $L(s,\chi_2)$ have simple zero at $s=0$ which will get cancelled by the double pole of $\Gamma^2(s)$ at $s=0$. Hence we have $R_0=0$. Employing functional relation \eqref{ll(s)}, we obtain
\begin{align}\label{doneearlier}
    J_{0}^{(0)}(X)= \frac{\tau(\chi_1)\tau(\chi_2) }{4}Y \sum_{n=1}^\infty d_{\Bar{\chi}_1,\Bar{\chi}_2}(n) \frac{1}{2\pi i } \int_{(1+d)}  \frac{(n Y)^{-s}}{\sin^2\left(\frac{\pi s}{2}\right)}\mathrm{d}s ,
\end{align}
 where $Y=\frac{4\pi^2}{pq}X$. Note that integral in \eqref{doneearlier} can be treated similarly as in the proof of Theorem \rm{\ref{sigmanu=0}}. To avoid repetitions, we skip the detail. \\
{\it Case 2: When $\chi_1 $ and $\chi_2$ are odd primitive characters modulo $ p$ and $q$, respectively.}
  In this case,  the integrand will encounter a double pole at $s=0$. Hence the residue $R_0$ is given by
  \begin{align}\label{ss2}
				 R_0= L (0,\chi_1)L (0,\chi_2) \left( -2\gamma+  \log  \left( X \right)    +\frac{ L ^\prime(0,\chi_1)}{ L (0,\chi_1)}+\frac{L^\prime (0,\chi_2) }{L (0,\chi_2) } \right). 
    \end{align}
   Employing \eqref{ss2} and $R_1=0$ in \eqref{s11}, we obtain 
   \begin{align}\label{s110}
   \sum_{n=1}^\infty d_{\chi_1,\chi_2}(n)  K_0(a\sqrt{nx})= & \frac12L (0,\chi_1)L (0,\chi_2) \left(-2\gamma+  \log  \left( X \right)    +\frac{ L ^\prime(0,\chi_1)}{ L (0,\chi_1)}+\frac{L^\prime (0,\chi_2) }{L (0,\chi_2) }\right)+\frac{1}{2}J_{0}^{(0)}(X) .
\end{align}
 Now appealing to functional relation \eqref{ll(s)}, we will have
 \begin{align} \label{integralev}
     J_{0}^{(0)}(X)= &-\frac{\tau(\chi_1)\tau(\chi_2) }{4 }Y \sum_{n=1}^\infty d_{\Bar{\chi}_1,\Bar{\chi}_2}(n)  \frac{1}{2\pi i } \int_{(1+d)}  \frac{(nY)^{-s}}{\cos^2\left(\frac{\pi s}{2}\right)}\mathrm{d}s  \nonumber  \\
     =&-\frac{\tau(\chi_1)\tau(\chi_2) Y}{4 } \left( \sum_{n <Y^{-1}}+\sum_{n>Y^{-1}}\right) d_{\Bar{\chi}_1,\Bar{\chi}_2}(n) \frac{1}{2\pi i } \int_{(1+d)}  \frac{\left(  {nY}\right)^{-s}}{\cos^2\left(\frac{\pi s}{2}\right)}   \mathrm{d}s ,
\end{align}   where $Y=\frac{16\pi^2}{a^2 pq x}$ and $Y^{-1} \notin \mathbb{Z}_+$. \\
We first evaluate the inner line integral on the sum $\sum_{n>Y^{-1}} $. We shall use the Cauchy residue theorem with the contour formed by the lines $ [1+d- i T, 1+d+ i T], [1+d+ i T, M+\frac{1}{2}+ i T], [M+\frac{1}{2} + i T, M+\frac{1}{2} - i T], [M+\frac{1}{2} - iT, 1+d-iT]$ where $M \in \mathbb{N}$ is any odd large number and $T$ is a large positive number. The poles of the integrand function in \eqref{integralev} are at $ 3,5, \cdots, M$, and they are double poles. The residue at $s=m$ is given by 
  \begin{align}\label{residues_22}
\mathcal{R}_m:= -\frac{4}{\pi^2}\left({nY}\right)^{-m} \log \left( {nY}\right),    
  \end{align}
 where $m=3$, $5, \cdots$, $M$. Employing Lemma \ref{Gamma}, we can show both the integrals along the horizontal line segments $[1+d+iT, M+\frac12+iT]$ and $[M+\frac12-iT, 1+d-iT]$ vanish as $T \rightarrow \infty$. Utilising \eqref{residues_22}, we arrive at
\begin{align}
  \frac{1}{2\pi i } \int_{(1+d)}  \frac{(n Y)^{-s}}{\cos^2\left(\frac{\pi s}{2}\right)}   \mathrm{d}s &=-\sum_{m=1}^{  \frac{M-1}{2} } \mathcal{R}_{2m+1}+  \frac{1}{2\pi i } \int_{(M+\frac12)}  \frac{(n Y)^{-s}}{\cos^2\left(\frac{\pi s}{2}\right)}  \mathrm{d}s \notag\\
     &=\frac{4}{\pi^2}\sum_{m=1}^{  \frac{M-1}{2} } \frac{ \log \left( {nY}\right)}{ \left( {nY}\right)^{2m+1}}+O\left(    \frac{1}{\left(nY\right)^{M+1/2}}\right).\label{iddd} \end{align}
 Letting $M \rightarrow \infty$, the error term in \eqref{iddd} goes to $0$ as $n>Y^{-1}$. Thus simplifying, we can readily deduce  that 
\begin{align*}
    \frac{1}{2\pi i } \int_{(1+d)}  \frac{(n Y)^{-s}}{\cos^2\left(\frac{\pi s}{2}\right)}  \mathrm{d}s  =&\frac{4}{\pi^2}\sum_{m=1}^\infty \frac{ \log \left( {nY}\right)}{ \left( {nY}\right)^{2m+1}}=\frac{4}{\pi^2 Y^3} \frac{1}{n(n^2-Y^{-2})}\log \left({nY}\right),
\end{align*}
 and subsequently, we get
\begin{align}\label{k11}
    \sum_{n>Y^{-1}}  d_{\Bar{\chi}_1,\Bar{\chi}_2}(n) \frac{1}{2\pi i } \int_{(1+d)}  \frac{(n Y)^{-s}}{\cos^2\left(\frac{\pi s}{2}\right)} \mathrm{d}s   =\frac{4}{\pi^2}\sum_{n>Y^{-1}}  d_{\Bar{\chi}_1,\Bar{\chi}_2}(n) \frac{Y^{-3}}{n(n^2-Y^{-2})}\log \left( {nY}\right).
\end{align}
 Similarly, by shifting the integration line to the left, we get
  \begin{align}\label{k22}
   \sum_{n< Y^{-1}}  d_{\Bar{\chi}_1,\Bar{\chi}_2}(n) \frac{1}{2\pi i } \int_{(1+d)}  \frac{(n Y)^{-s}}{\cos^2\left(\frac{\pi s}{2}\right)} \mathrm{d}s  =\frac{4}{\pi^2}\sum_{n\leq Y^{-1}}  d_{\Bar{\chi}_1,\Bar{\chi}_2}(n)  \frac{Y^{-3}}{n(n^2-Y^{-2})}\log \left( {nY}\right).  
  \end{align}
   Hence combining \eqref{k11} and \eqref{k22} with \eqref{integralev}, we obtain
  \begin{align}\label{ss3}
     J_{0}^{(0)}(X)= -\frac{\tau(\chi_1)\tau(\chi_2) }{\pi^2 } \sum_{n=1}^\infty d_{\Bar{\chi}_1,\Bar{\chi}_2}(n)   \frac{Y^{-2}}{n(n^2-Y^{-2})}\log \left( {nY}\right).
\end{align}
 Inserting \eqref{ss3} in \eqref{s110} and remarking that $Y=\frac{16\pi^2}{a^2 pq x}$, we get the desired result.
 \hfill\qed
  \vspace{0.1cm}\\
 \noindent\textit{Proofs of Theorems}  \rm{\ref{evenoddthm2}} and  \rm{\ref{18}}.  The proofs are similar to the proofs of Theorems \rm{\ref{evenoddthm1}} and  \rm{\ref{evenoddthm1nu=0}}. 
  \hfill\qed
  \section{ Proof  of Cohen-Type Identities}\label{proof of cohen identities...}
  This section is devoted to the proof of Cohen-type identities.
Throughout this section, we will deal with $z=-\nu \notin \mathbb{Z}$ and $\Re(\nu)\geq 0$. 
In general set up, if we consider $\Re(\nu)\geq 0$ with $\nu \notin \mathbb{Z}$ and $a=4\pi$, then \eqref{K_nuformula} becomes
 \begin{align}\label{K_nucohen}
 \sum_{n=1}^{\infty} f_{-\nu}(n) n^{\nu/2} K_{\nu}(4\pi\sqrt{nx}) = \frac{1}{2} X^{\nu/2} \left(R_{1-\nu}+R_1+R_0+R_{-\nu}+J_{-\nu}^{(\nu)}(X)\right),  
\end{align}
where $R_{-\nu}$ is the residue corresponding to the pole $s=-\nu$. It is easy to see that the pole at $s=-\nu$ appears if $\Re(\nu)=0$ and $\nu \notin \mathbb{Z}$. The expression $J_{-\nu}^{(\nu)}(X)$ defined in \eqref{defiJ}, can be rewritten as
\begin{align}\label{defiJcohen}
J_{-\nu}^{(\nu)}(X):= \frac{1}{2\pi i} \int_{(-d)}  \Gamma(s + \nu)  \Gamma(s)  F_{-\nu}(s)  X^s \mathrm{d}s,    \end{align} 
where $F_{-\nu}(s)$ defined in \eqref{Dirichletseries} is the Dirichlet series associated with the arithmetical function $f_{-\nu}(n)$. We will note that $X=\frac{1}{4\pi^2 x}$.
		\begin{proof}[Theorem \rm{\ref{evencohen}}][] 	Letting  $f_{-\nu}(n)=\sigma_{-\nu, \bar{\chi}}(n)$  where $\chi$ being a non-principal even primitive character modulo $q$ in \eqref{K_nucohen}, we obtain
			    \begin{align}\label{twistedLLP}
  \sum_{n=1}^{\infty} \sigma_{-\nu, \bar{\chi}}(n) n^{\nu/2}K_{\nu}(4\pi\sqrt{nx})  &=\frac12X^{\nu/2} ( R_{1-\nu}+ R_{1}+R_{0}+R_{-\nu}+J_{ -\nu}^{(\nu)}(X)),\end{align}
			where $J_{ -\nu}^{(\nu)}(X)$ is given in \eqref{defiJcohen} and $F_{-\nu}(s)=\zeta(s) L(s+\nu, \bar{\chi})$. The integrand in \eqref{defiJcohen} will encounter simple poles at $s=1$ and $s=0$ with residues $R_1$ and $R_0$ given by
			\begin{align}\label{residueLL}
   R_{1} &= \Gamma(1+\nu) L(1+\nu, \bar{\chi}) X, \ \ \mbox{and} \ \
				R_0 =-\frac{ \Gamma(\nu) L(\nu, \bar{\chi})}{2},
			\end{align}
			respectively. It is easy to see that $R_{1-\nu}=0$. As $\bar{\chi}$ is a non-principal even primitive character, $L(s+\nu, \bar{\chi})$ has a zero at $s=-\nu$. Therefore, we will not be getting any contribution from the pole of $\Gamma(s+\nu)$ at $s=-\nu$. Hence $R_{-\nu}=0$. Now employing \eqref{residueLL} together with the facts $R_{1-\nu}=0$ and $R_{-\nu}=0$ in \eqref{twistedLLP}, we obtain
    \begin{align}\label{twistedLL}
  \sum_{n=1}^{\infty} \sigma_{-\nu, \bar{\chi}}(n) n^{\nu/2}K_{\nu}(4\pi\sqrt{nx})  &=\frac12X^{\nu/2} \left(\Gamma(1+\nu) L(1+\nu, \bar{\chi}) X-\frac{ \Gamma(\nu) L(\nu, \bar{\chi})}{2}+J_{ -\nu}^{(\nu)}(X)\right),\end{align}
where $X=\frac{1}{4\pi^2 x}$. Next, we evaluate the following integral $J_{ -\nu}^{(\nu)}(X)$. Replacing $s$ by $1-s$ and then employing \eqref{1st_use}, \eqref{exact l}, we obtain \begin{align}\label{jnuprime1}
				J_{-\nu}^{(\nu)}(X) &=\frac{1}{2\pi i} \int_{(1+d)}  \Gamma(1-s+\nu) \Gamma(1-s)   \zeta(1-s) L(1-s+\nu, \bar{\chi}) X^{1-s} \mathrm{d}s \nonumber \\
				&= \left(\frac{2 \pi}{q}\right)^{\nu} \frac{\pi^{2} X }{\tau(\chi)}
   \frac{1}{2\pi i} \int_{(1+d)}  
   \frac{\zeta(s)L(s-\nu,  \chi) } {((2 \pi)^{2} 
   q^{-1}X)^s \sin\left(\frac{\pi s}{2}\right)  \sin\left(\frac{\pi (s-\nu)}{2}\right)}  \mathrm{d}s \nonumber \\
				&=  \frac{(2 \pi)^\nu q^{-\nu}}{4x\tau(\chi)} \frac{1}{2\pi i} \int_{(1+d)}   \frac{\zeta(s)L(s-\nu, \chi) (qx)^{s}}{ \sin\left(\frac{\pi s}{2}\right)  \sin\left(\frac{\pi (s-\nu)}{2}\right)}  \mathrm{d}s.  
			\end{align} 
To evaluate the integral in \eqref{jnuprime1},
 we employ the Cauchy residue theorem with the rectangular contour formed by the lines $ [1+d- i T, 1+d+ i T], [1+d+ i T, 2N+\delta + i T], [2N+\delta + i T, 2N+\delta - i T], [2N+\delta - iT, 1+d-iT]$ with $ N\geq  [\frac{\Re(\nu)+1}{2}] $ and $\{\Re(\nu)+1\}<\delta<1$ and $T$ is a large positive number. One can note that  the simple poles of $\sin^{-1}(\frac{\pi(s-\nu)}{2})$ at $s=\nu,\nu-2,\cdots$ will get canceled by the simple zeroes of $L(s-\nu, \chi)$. 
 Hence the poles of the integrand function in \eqref{jnuprime1} are at $ 2, 4, \cdots, 2N, $ and $\nu+2,\cdots,\nu+2  b_N$ where $b_N=\lfloor \frac{2N+\delta-\nu}{2} \rfloor$, and they are simple. Utilising the fact $|\sin \pi (\sigma+it)|\gg e^{\pi |t|}$ for $|t|\geq 1$, one can see that 
 the integrals along the horizontal lines $[1+d+iT, 2N+\delta+iT]$ and $[2N+\delta-iT, 1+d-iT]$ vanish as $T \rightarrow \infty$. Hence we get
\begin{align}\label{jnuprimekk}
		\frac{1}{2\pi i} \int_{(1+d)}   \frac{\zeta(s)L(s-\nu, \chi) (qx)^{s}}{ \sin\left(\frac{\pi s}{2}\right)  \sin\left(\frac{\pi (s-\nu)}{2}\right)}  \mathrm{d}s=  -\sum_{j=1}^N H_{2j}-\sum_{r=1}^{b_N}\mathcal{H}_{2r}+\frac{1}{2\pi i} \int_{(2N+\delta)}   \frac{\zeta(s)L(s-\nu, \chi) (qx)^{s}}{ \sin\left(\frac{\pi s}{2}\right)  \sin\left(\frac{\pi (s-\nu)}{2}\right)}  \mathrm{d}s, 
		\end{align}
  where $H_{2j}$ is the residue at $s=2j$ given by 
     \begin{align*} 
      H_{2j}=&-\frac{2\zeta(2j)L(2j-\nu,\chi)(qx)^{2j}}{\pi\sin(\frac{\pi\nu}{2})},
      \end{align*}
 for $j=1, 2, \cdots, N$ and $\mathcal{H}_{2r}$ is the residue at $s=\nu+2r$ given by
       \begin{align*}
      \mathcal{H}_{2r} &= 2 \zeta(\nu+2r)L(2r,\chi)\frac{(qx)^{\nu+2r}}{\pi\sin(\frac{\pi\nu}{2})}
=\frac{2}{\pi\sin(\frac{\pi\nu}{2})}\sum_{n=1}^\infty \sigma_{\nu,\chi} (n)  (n^{-1}qx)^{\nu+2r}, 
      \end{align*}
  for $r=1, 2, \cdots, b_N$. In the above expression, we have applied the series representation of function $\zeta(\nu+2r)L(2r,\chi)$ for $r\geq 1$. 
   Now let us evaluate the integral in \eqref{jnuprimekk}:
			\begin{align} \label{jof first case}
			&\frac{1}{2\pi i} \int_{(2N+\delta)}   \frac{\zeta(s) L(s-\nu, \chi) (qx)^{s}}{ \sin\left(\frac{\pi s}{2}\right)  \sin\left(\frac{\pi (s-\nu)}{2}\right)}  \mathrm{d}s \nonumber \\
				&=\sum_{n=1}^{\infty} \sigma_{\nu, \chi}(n) \frac{1}{2\pi i} \int_{(2N+\delta)} \frac{(n^{-1} qx)^{s}}{ \sin\left(\frac{\pi s}{2}\right)  \sin\left(\frac{\pi (s-\nu)}{2}\right)}  \mathrm{d}s  \nonumber \\
				&=\left(\sum_{n< qx}+\sum_{n>qx}\right) \sigma_{\nu, \chi}(n) \frac{1}{2\pi i} \int_{(2N+\delta)} \frac{(n^{-1} qx)^{s}}{ \sin\left(\frac{\pi s}{2}\right)  \sin\left(\frac{\pi (s-\nu)}{2}\right)}  \mathrm{d}s,
    \end{align}
    noting that $qx \notin \mathbb{Z}_+$.
    \par Next, we will first investigate the sum $\sum_{n>qx}$.
			To evaluate this inner line integral in \eqref{jof first case}, we shall use the Cauchy residue theorem with the contour consisting of the lines $ [2N+\delta- i T, 2N+\delta+ i T], [2N+\delta+ i T, 2M+\frac{1}{2}+ i T], [2M+\frac{1}{2} + i T,  2M+\frac{1}{2} - i T], [2M+\frac{1}{2} - iT, 2N+\delta-iT]$ where $M \in \mathbb{N}$ is a large number and $T$ is a large positive number. The poles of the integrand function in \eqref{jof first case} are at $ 2N+2, 2N+4, \cdots, 2M$ and $\nu+2b_N+2,\nu+2b_N+4, \cdots, \nu+2a_M$ where
   $a_M= \lfloor M +\frac{1}{4}-\frac{\nu}{2} \rfloor$, and they are simple. Now taking into account the fact that both the integrals along the horizontal lines $[2M+\frac12-iT, 2N+\delta-iT]$ and $[2N+\delta+iT, 2M+\frac12+iT]$ vanish as $T \rightarrow \infty$, we obtain
			\begin{align*}
				&\frac{1}{2\pi i}  \int_{(2N+\delta)} \frac{(n^{-1} qx)^{s}}{ \sin\left(\frac{\pi s}{2}\right)  \sin\left(\frac{\pi (s-\nu)}{2}\right)}  \mathrm{d}s \nonumber \\
    & =\frac{2 }{\pi \sin\left(\frac{\pi \nu}{2}\right)} \left(\sum_{r=N+1}^{M }    (n^{-1}qx)^{2r}-\sum_{r=b_N+1}^{a_M} (n^{-1}qx)^{\nu+2r}\right) 
				+\frac{1}{2\pi i}  \int_{(2M+\frac{1}{2} )} \frac{(n^{-1} qx)^{s}}{ \sin\left(\frac{\pi s}{2}\right)  \sin\left(\frac{\pi (s-\nu)}{2}\right)}  \mathrm{d}s\\
				&=\frac{2 }{\pi \sin\left(\frac{\pi \nu}{2}\right)} \left(\sum_{r=N+1}^{M }    (n^{-1}qx)^{2r}-\sum_{r=b_N+1}^{a_M} (n^{-1}qx)^{\nu+2r}\right) +O\left((n^{-1}qx)^{2M+\frac{1}{2}}\right). 
			\end{align*}
			 Letting $M \rightarrow \infty$, the error term in the above expression goes to $0$ since $n^{-1}qx <1$. Therefore, we get
			\begin{align*}
				\frac{1}{2\pi i}  \int_{(2N+\delta)} \frac{(n^{-1} qx)^{s}}{ \sin\left(\frac{\pi s}{2}\right)  \sin\left(\frac{\pi (s-\nu)}{2}\right)}  \mathrm{d}s
    &=\frac{2 }{\pi \sin\left(\frac{\pi \nu}{2}\right)} \left(\sum_{r=N+1}^{\infty } (n^{-1}qx)^{2r}-\sum_{r=b_N+1}^{\infty}(n^{-1}qx)^{\nu+2r}\right), 
			\end{align*}
			which in turn will give 
			\begin{align*}
				&\sum_{n>qx}\sigma_{\nu, \chi}(n) \frac{1}{2\pi i} \int_{(2N+\delta)} \frac{(n^{-1} qx)^{s}}{ \sin\left(\frac{\pi s}{2}\right)  \sin\left(\frac{\pi (s-\nu)}{2}\right)}  \mathrm{d}s \nonumber \\
    &=\frac{2 }{\pi \sin\left(\frac{\pi \nu}{2}\right)} \sum_{n>qx} {\sigma}_{\nu, \chi}(n)  \left(\sum_{r=N+1}^{\infty } (n^{-1}qx)^{2r}-\sum_{r=b_N+1}^{\infty}(n^{-1}qx)^{\nu+2r}\right). 
			\end{align*}
   From the above expression, one can deduce
\begin{align}\label{ev1}
				&\sum_{n>qx}\sigma_{\nu, \chi}(n) \frac{1}{2\pi i} \int_{(2N+\delta)} \frac{(n^{-1} qx)^{s}}{ \sin\left(\frac{\pi s}{2}\right)  \sin\left(\frac{\pi (s-\nu)}{2}\right)}  \mathrm{d}s -\frac{2}{\pi\sin(\frac{\pi\nu}{2})}\sum_{n>qx} \sigma_{\nu,\chi} (n) \sum_{r=1}^{b_N} (n^{-1}qx)^{\nu+2r}\nonumber \\
    &=\frac{2 }{\pi \sin\left(\frac{\pi \nu}{2}\right)} \sum_{n>qx} {\sigma}_{\nu, \chi}(n)  \left(\sum_{r=N+1}^{\infty } (n^{-1}qx)^{2r}-\sum_{r=1}^{\infty}(n^{-1}qx)^{\nu+2r}\right)\nonumber\\
    &=\frac{2 }{\pi \sin\left(\frac{\pi \nu}{2}\right)}\sum_{n>qx} {\sigma}_{\nu, \chi}(n)\frac{(qx)^{2N+2} }{ n^{\nu}     } \frac{ \left( n^{\nu-2N}-(qx)^{\nu-2N}\right)}{n^2-q^2x^2}
    \notag\\
  &=\frac{2(qx)^{2N+2} }{  \pi \sin\left(\frac{\pi \nu}{2}\right)   }  \sum_{n>qx}\bar{\sigma}_{-\nu, \chi}(n)   \left(\frac{ n^{\nu-2N}-(qx)^{\nu-2N} }{n^2-q^2x^2}\right).
			\end{align}
 Similarly, by shifting the line of integration to the left, $\sum_{n \leq qx}$ can be evaluated as
			\begin{align}\label{ev2}
				&\sum_{n< qx}\sigma_{\nu, \chi}(n) \frac{1}{2\pi i} \int_{(2N+\delta)} \frac{(n^{-1} qx)^{s}}{ \sin\left(\frac{\pi s}{2}\right)  \sin\left(\frac{\pi (s-\nu)}{2}\right)}  \mathrm{d}s -\frac{2}{\pi\sin(\frac{\pi\nu}{2})}\sum_{n<qx} \sigma_{\nu,\chi} (n) \sum_{r=1}^{b_N} (n^{-1}qx)^{\nu+2r} \nonumber \\
    &=-\frac{2 }{\pi \sin\left(\frac{\pi \nu}{2}\right)} \sum_{n< qx} {\sigma}_{\nu, \chi}(n)  \left(\sum_{r=-N}^{\infty } (n(qx)^{-1})^{2r}-\sum_{r=0}^{\infty}(n(qx)^{-1})^{-\nu+2r}\right)\nonumber \\
    &=-\frac{2 }{\pi \sin\left(\frac{\pi \nu}{2}\right)} \sum_{n< qx} {\sigma}_{\nu, \chi}(n)\left(  \left(\frac{qx}{n}\right)^{2N} \frac{(qx)^2}{(qx)^2-n^2}- \left(\frac{qx}{n}\right)^\nu \frac{(qx)^2}{(qx)^2-n^2} \right)\nonumber \\
      &=\frac{2(qx)^{2N+2} }{  \pi \sin\left(\frac{\pi \nu}{2}\right)   }  \sum_{n<qx}\bar{\sigma}_{-\nu, \chi}(n)   \left(\frac{ n^{\nu-2N}-(qx)^{\nu-2N} }{n^2-q^2x^2}\right).  
     \end{align}
		Now substituting \eqref{ev1} and \eqref{ev2} in \eqref{jof first case},
  \begin{align}\label{JKprime}
				\frac{1}{2\pi i} \int_{(2N+\delta)}   \frac{\zeta(s)L(s-\nu, \chi) (qx)^{s}}{ \sin\left(\frac{\pi s}{2}\right)  \sin\left(\frac{\pi (s-\nu)}{2}\right)}  \mathrm{d}s &=\frac{2}{\pi\sin(\frac{\pi\nu}{2})}\sum_{n=1}^\infty \sigma_{\nu,\chi} (n) \sum_{r=1}^{b_N} (n^{-1}qx)^{\nu+2r} \nonumber \\
   &+\frac{2(qx)^{2N+2} }{  \pi \sin\left(\frac{\pi \nu}{2}\right)   }  \sum_{n=1}^{\infty}\bar{\sigma}_{-\nu, \chi}(n)   \left(\frac{ n^{\nu-2N}-(qx)^{\nu-2N} }{n^2-q^2x^2}\right).
			\end{align}
			Inserting \eqref{JKprime} in \eqref{jnuprimekk} and then simplifying, we obtain
			\begin{align}\label{raaag}
				\frac{1}{2\pi i} \int_{(1+d)}   \frac{\zeta(s)L(s-\nu, \chi) (qx)^{s}}{ \sin\left(\frac{\pi s}{2}\right)  \sin\left(\frac{\pi (s-\nu)}{2}\right)}  \mathrm{d}s&= \frac{2}{ \pi \sin \left(\frac{\pi \nu}{2}\right)}\sum_{j=1}^{N} \zeta(2j)\ L(2j-\nu, \chi)(qx)^{2j} 
				\notag \\&+ \frac{2}{ \pi \sin \left(\frac{\pi \nu}{2}\right)}(qx)^{2N+2}\sum_{n=1}^{\infty}\bar{\sigma}_{-\nu, \chi}(n)  \left(\frac{ n^{\nu-2N}-(qx)^{\nu-2N} }{n^2-q^2x^2}\right).
			\end{align}
		 Combining \eqref{raaag} with \eqref{jnuprime1}, we deduce that
			\begin{align}\label{sad songs}
				  J_{-\nu}^{(\nu)}(X)&=\frac{(2 \pi)^\nu q^{-\nu}}{2\pi x\tau(\chi)  \sin \left(\frac{\pi \nu}{2}\right)}\left\{\sum_{j=1}^{N} \zeta(2j)\ L(2j-\nu, \chi)(qx)^{2j} 
 \right.\notag\\&\left.\ \  
     \hspace{5cm} +(qx)^{2N+2}\sum_{n=1}^{\infty}\bar{\sigma}_{-\nu, \chi}(n)  \left(\frac{ n^{\nu-2N}-(qx)^{\nu-2N} }{n^2-q^2x^2}\right)
    \right\}.
			\end{align}
			Next, by substituting \eqref{sad songs} in \eqref{twistedLL}, one can finish the proof.
		\end{proof}	
  \vspace{.2cm}
The proofs of other remaining theorems in Section \ref{cohen identities...} can be proved similarly. We leave the explanations for the readers.  

		 \section{Proof of Vorono\"i-type Summation formulas }\label{proof of voronoi...}
      In this section, we prove Theorem \ref{vore1}. The proofs of other theorems will be similar, so we will skip the proofs of other theorems. To prove Theorem \ref{vore1}, we will adapt the method introduced by B. C. Berndt, A. Dixit, A. Roy, and A. Zaharescu in \cite{MR3558223}. 
  \begin{proof}[Theorem \rm{\ref{vore1}}][]
  	 Let us recall the Theorem  \ref{evencohen}. One can see that identity \eqref{Cohen1} in Theorem \ref{evencohen} is valid not only for $x>0$ but also for $-\pi< \arg x < \pi$ by analytic continuation. If we set $N=1$ in \eqref{Cohen1}, then the condition $\lfloor\frac{\Re{(\nu)}+1}{2}\rfloor \leq 1$ will
imply that $0\leq \Re{(\nu)} <3$. We consider $0<\Re{(\nu)} < \frac{1}{2}.$
Replace $x$ by $iz/q$ in \eqref{Cohen1} for $-\pi< \arg z < \frac{\pi}{2}$ and then   by $-iz/q$ for $- \frac{\pi}{2}< \arg z < \pi$. Now the common region of the resultant identities is $- \frac{\pi}{2}< \arg z < \frac{\pi}{2}$. So we add the resulting two identities and simplify, in the region $- \frac{\pi}{2}< \arg z  < \frac{\pi}{2}$, to obtain
\begin{align}
  \Lambda(z,\nu)=\Psi_1(z,\nu), \label{e1_1}
\end{align}
 where
\begin{align} \label{maine1}
&\Lambda(z,\nu)= 2z^{-\frac{\nu}{2}}   \    \sum_{n=1}^{\infty}\sigma_{-\nu, \bar{\chi}}(n) \ n^{\nu/2} \left\{ e^{\frac{i\pi \nu}{4}}  K_{\nu}\left(4\pi  e^{\frac{i\pi}{4}}\sqrt{\frac{nz}{q}} \right) 
  +e^{\frac{-i\pi \nu}{4}}  K_{\nu}\left(4\pi  e^{\frac{-i\pi}{4}}\sqrt{\frac{nz}{q}}\ \right)   \right\} , 
  \end{align}
  and
  \begin{align}
  \Psi_1(z,\nu) =  -\frac{ q^{\frac{\nu}{2}}\Gamma(\nu) L(\nu, \bar{\chi})}{(2\pi)^{\nu} } z^{-\nu}
+ \frac{ q^{1-{\frac{\nu}{2}}} }{\pi \tau(\chi) }     \sum_{n=1}^\infty    \frac{\Bar{\sigma}_{-\nu, \chi }(n) }{n^2+z^2} \ z . \label{main_e1}
\end{align}
  Note that $\Psi_1(z,\nu) $ is an analytic function of $z$ in $\mathbb{C}$ except on negative real axis and at $z=i  n $ where $n\in \mathbb{Z}$.
 Hence $\Psi_1(iz,\nu)$ is analytic in $\mathbb{C}$ except on the positive imaginary axis and at $z \in \mathbb{Z}$. 
 Similarly, $\Psi_1(-iz,\nu)$ is analytic in $\mathbb{C}$ except on the negative imaginary  axis and at $z\in \mathbb{Z}$. 
 We deduce $\Psi_1(iz,\nu)+\Psi_1(-iz,\nu)$ is analytic in both the left and right half plane, except possibly when $z$ is an integer. Since
\begin{align*}
\lim_{z\to  \mp n } (z \pm n) \Psi_1(iz,\nu) = \frac{q^{1-{\frac{\nu}{2}}}}{{2\pi i \tau(\chi) } }   \Bar{\sigma}_{-\nu, \chi }(n), \ \ 
  \lim_{z\to \mp n} (z \pm n) \Psi_1(-iz,\nu) = -\frac{q^{1-{\frac{\nu}{2}}}}{{2\pi i \tau(\chi) } }   \Bar{\sigma}_{-\nu, \chi }(n),
\end{align*}
 so we have
\begin{align*}
\lim_{z\to \mp n} (z \pm n) \left( \Psi_1(iz,\nu)+\Psi_1(-iz,\nu) \right)   =0.
\end{align*}
 Hence $\Psi_1(iz,\nu)+\Psi_1(-iz,\nu)$ is analytic in the entire right half plane. From \eqref{main_e1}, we observe that for $z$ lying inside an interval $(a,b)$ on the positive real line not containing an integer, we have
\begin{align}
    \Psi_1(iz,\nu)+\Psi_1(-iz,\nu)=-\frac{2 q^{\frac{\nu}{2}}\Gamma(\nu) L(\nu, \bar{\chi})}{(2\pi)^{\nu} }\cos{\left(\frac{\pi \nu}{2}\right)}\frac{1}{z^{\nu}}   . \label{e1_11}
\end{align}
  Since both sides are analytic in the right half-complex plane as a function of $z$, by analytic continuation, the identity \eqref{e1_11} holds for any $z$ in the right half-plane. Next employing functional equation for $L$-function \eqref{ll(s)} in \eqref{e1_11} and simplifying, we obtain for $- \frac{\pi}{2}< \arg z   < \frac{\pi}{2}$,
\begin{align}
 \Psi_1(iz,\nu)+\Psi_1(-iz,\nu)=-\frac{ q^{1-\frac{\nu}{2}}}{\tau(\chi)} L(1-\nu,\chi) \frac{1}{z^{\nu}}. \label{e1_2}
 \end{align}
Next, Let
   $f$ be an analytic function of $z$ in a closed contour $\mathcal{\gamma^\prime}$ intersecting the real axis in $\alpha$  and $\beta$ where $0<\alpha< \beta ,\ { m-1}< \alpha< { m}, \  { n-1}<\beta<{ n} $ and $m, n \in \mathbb{Z}$. Now $\mathcal{\gamma^\prime}$ consists of two parts $\gamma_1$ and $\gamma_2$ where $\gamma_1$ is the portion of the contour in the upper half-plane, and $\gamma_2$ is the portion corresponding to lower half-plane. Now $\alpha \gamma_1 \beta$ and $\alpha \gamma_2 \beta$ denote the paths from $\alpha $ to $ \beta$ in the upper and lower half planes, respectively.  
By the Cauchy residue theorem, we have 
 \begin{align*}
  \int_{\alpha \gamma_2 \beta \gamma_1 \alpha  }f(z) \Psi_1(iz,\nu) dz =  \frac{q^{1-{\frac{\nu}{2}}}}{{ \tau(\chi) } }\sum_{\alpha   < j < \beta}     \Bar{\sigma}_{-\nu, \chi }(j)f(j),
\end{align*}
where $\frac{q^{1-{\frac{\nu}{2}}}}{{2\pi i \tau(\chi) } }   \Bar{\sigma}_{-\nu, \chi }(j)f(j)$ is the residue of $f(z)\Psi_1(iz,\nu)$  at each integer $j$ where $  \alpha   < j < \beta$. Hence the above expression can be rewritten as
\begin{align}
    \frac{   q^{1-\frac{\nu}{2}}  } {\tau(\chi )}    &\sum_{\alpha<j <\beta}    {\bar{\sigma}_{-\nu, \chi }(j)}  f(j)  
   =\int_{\alpha \gamma_2 \beta   }f(z) \Psi_1(iz,\nu) dz -   \int_{\alpha \gamma_1 \beta   }f(z) \Psi_1(iz,\nu) dz   \notag\\
     &=\int_{\alpha \gamma_2 \beta   }f(z) \Psi_1(iz,\nu) dz +   \int_{\alpha \gamma_1 \beta   }f(z)  \left\{  \Psi_1(-iz,\nu)  +\frac{ q^{1-\frac{\nu}{2}}}{\tau(\chi)}L(1-\nu,\chi)  \frac{1}{z^{\nu}} \right\} dz   ,\label{e1_3}
 \end{align} 
  where in the last step, we used \eqref{e1_2}. Again we make use of the Cauchy residue theorem and  obtain  
 \begin{align}
\frac{ q^{1-\frac{\nu}{2}}}{\tau(\chi)} L(1-\nu,\chi)\int_{\alpha \gamma_1 \beta   }   \frac{f(z)}{z^{\nu}}  \mathrm{d}z=\frac{ q^{1-\frac{\nu}{2}}}{\tau(\chi)} L(1-\nu,\chi) \int_\alpha^\beta    \frac{f(t) }{t^{\nu}}  \mathrm{d}t.\label{M4}
 \end{align}
From \eqref{e1_1}, 
$\Lambda(z,\nu)= \Psi_1(z,\nu)$ for  $- \frac{\pi}{2}< \arg z  < \frac{\pi}{2}$. So it is easy to see that $\Lambda(iz,\nu)= \Psi_1(iz,\nu)$     holds for  $- \pi < \arg z  < 0$,  and $\Lambda(-iz,\nu)= \Psi_1(-iz,\nu)$     holds for  $0 < \arg z  < \pi$. Thus 
\begin{align}\begin{cases}
  \int_{\alpha \gamma_2 \beta   }f(z) \Psi_1(iz,\nu) dz=\int_{\alpha \gamma_2 \beta   }f(z) \Lambda(iz,\nu) dz, \\
  \int_{\alpha \gamma_1 \beta   }f(z) \Psi_1(-iz,\nu) dz = \int_{\alpha \gamma_1 \beta   }f(z) \Lambda(-iz,\nu) dz .  
\end{cases}\label{e1_4}
 \end{align}
 Here we notice that the series $ \Lambda(iz,\nu)$ in \eqref{maine1} is uniformly convergent in compact subintervals of   $- \pi < \arg z  < 0$, and series $\Lambda(-iz,\nu)$
  is uniformly convergent in compact subintervals of $0 < \arg z  < \pi$. Thus, interchanging the order of summation and integration in \eqref{e1_4} and inserting them in \eqref{e1_3} together with \eqref{M4}, we get
   \begin{align*}
  & \frac{   q^{1-\frac{\nu}{2}}  } {\tau(\chi )}    \sum_{\alpha<j <\beta}    {\bar{\sigma}_{-\nu, \chi }(j)} f(j)   =   \frac{ q^{1-\frac{\nu}{2}}}{\tau(\chi)} L(1-\nu,\chi) \int_\alpha^\beta    \frac{f(t) }{t^{\nu}}  \mathrm{d}t
 + 2 \sum_{n=1}^{\infty}\sigma_{-\nu, \bar{\chi }}(n) \ n^{\nu/2}    \\
   & \ \ \ \ \ \   \times  \int_{\alpha \gamma_2 \beta   }f(z)  (iz)^{-\frac{\nu}{2}} \left\{ e^{\frac{i\pi \nu}{4}}   K_{\nu}\left(4\pi  e^{\frac{i\pi}{4}}\sqrt{\frac{inz}{q}}\ \right)  +e^{\frac{-i\pi \nu}{4}}  K_{\nu}\left(4\pi  e^{\frac{-i\pi}{4}}\sqrt{\frac{inz}{q}}\ \right)   \right\} \mathrm{d}z \\
 +   &2 \sum_{n=1}^{\infty}\sigma_{-\nu, \bar{\chi }}(n) \ n^{\nu/2}   \int_{\alpha \gamma_1 \beta   }f(z)  (-iz)^{-\frac{\nu}{2}}  
  \left\{ e^{\frac{i\pi \nu}{4}}   K_{\nu}\left(4\pi  e^{\frac{i\pi}{4}}\sqrt{\frac{-inz}{q}}\ \right) 
  +e^{\frac{-i\pi \nu}{4}}  K_{\nu}\left(4\pi  e^{\frac{-i\pi}{4}}\sqrt{\frac{-inz}{q}}\ \right)   \right\} \mathrm{d}z. 
\end{align*}
  Simplifying we get
  \begin{align*}
  & \frac{   q^{1-\frac{\nu}{2}}  } {\tau(\chi )}    \sum_{\alpha<j <\beta}    {\bar{\sigma}_{-\nu, \chi }(j)} f(j)   =   \frac{ q^{1-\frac{\nu}{2}}}{\tau(\chi)} L(1-\nu,\chi) \int_\alpha^\beta    \frac{f(t) }{t^{\nu}}  \mathrm{d}t  \\
   +& 2 \sum_{n=1}^{\infty}\sigma_{-\nu, \bar{\chi }}(n) \ n^{\nu/2}   \int_{\alpha \gamma_2 \beta   }f(z)  z^{-\frac{\nu}{2}} \left\{   K_{\nu}\left(4\pi  i\sqrt{\frac{nz}{q}}\ \right)  +e^{\frac{-i\pi \nu}{2}}  K_{\nu}\left(4\pi  \sqrt{\frac{nz}{q}}\ \right)   \right\} \mathrm{d}z \\
   &+2 \sum_{n=1}^{\infty}\sigma_{-\nu, \bar{\chi }}(n) \ n^{\nu/2}   \int_{\alpha \gamma_1 \beta   }f(z)  z^{-\frac{\nu}{2}} \left\{ e^{\frac{i\pi \nu}{2}}   K_{\nu}\left(4\pi  \sqrt{\frac{nz}{q}}\ \right)  +  K_{\nu}\left(-4\pi i \sqrt{\frac{nz}{q}}\ \right)    \right\} \mathrm{d}z .
    \end{align*}
   Employing the residue theorem again, this time for each of the integrals inside the two sums, and simplifying, we obtain
 \begin{align}
    &\frac{   q^{1-\frac{\nu}{2}}  } {\tau(\chi )}    \sum_{\alpha<j <\beta}    {\bar{\sigma}_{-\nu, \chi }(j)} f(j)   =   \frac{ q^{1-\frac{\nu}{2}}}{\tau(\chi)} L(1-\nu,\chi) \int_\alpha^\beta    \frac{f(t) }{t^{\nu}}  \mathrm{d}t+2 \sum_{n=1}^{\infty}\sigma_{-\nu, \bar{\chi }}(n) \ n^{\nu/2}  \notag\\ 
&   \times  \int_{\alpha} ^{  \beta   }f(t)  t^{-\frac{\nu}{2}}   \left\{   K_{\nu}\left(4\pi  i\sqrt{\frac{nt}{q}}\ \right)   
+   K_{\nu}\left(-4\pi  i\sqrt{\frac{nt}{q}}\ \right)+ 2 \cos\left(\frac{\pi \nu}{2} \right)  K_{\nu}\left(4\pi  \sqrt{\frac{nt}{q}}\ \right)  \right\} \mathrm{d}t.   
 \label{e1_5}
   \end{align}
    Here by \cite[p.~ 848, equation (7.15)]{MR3558223}, we have
     \begin{align}
     K_{\nu}(ix)+ K_{\nu}(-ix) 
   = -\pi \left( J_\nu(x) \sin(\frac{\pi \nu}{2})+       Y_\nu(x) \cos(\frac{\pi \nu}{2}) \right),\label{Z101}
\end{align} 
where $J_\nu$ and $Y_\nu$ are the Bessel functions defined in  \eqref{J_bessel} and \eqref{Y_bessel}, respectively. 
   Now, we replace $x$ by $4\pi  \sqrt{nt/q} $ in \eqref{Z101} and substitute in \eqref{e1_5}, to get
  the desired result.
 \end{proof}

 		\section{Acknowledgement}
We thank Prof Atul Dixit for his valuable input. Research of the second author was supported by the University Grants Commission, Department of Higher Education, Government of India, under NTA Ref. no. 191620205105. They are grateful to the referee for several helpful suggestions and comments.
\bibliographystyle{IEEEtran}
		 	 	\bibliography{bibliography.bib}
		\end{document}